\newtheorem{thmintro}{Theorem}
\newtheorem{corintro}[thmintro]{Corollary}
\newtheorem*{thmintro*}{Theorem}
\theoremstyle{definition}
\newtheorem{rkintro}{Remark}
\theoremstyle{plain}
\newtheorem{theorem}{Theorem}[section]
\newtheorem{proposition}[theorem]{Proposition}
\newtheorem{lemma}[theorem]{Lemma}
\newtheorem{corollary}[theorem]{Corollary}
\theoremstyle{definition}
\newtheorem{definition}[theorem]{Definition}
\newtheorem{convention}[theorem]{Convention}
\newtheorem{example}[theorem]{Example}
\newtheorem{remark}[theorem]{Remark}
\renewcommand{\phi}{\varphi}
\renewcommand{\epsilon}{\varepsilon}
\newcommand{\NN}{\mathbb{N}}
\newcommand{\FF}{\mathbb{F}}
\newcommand{\mc}[1]{\mathcal{#1}}
\DeclareMathOperator{\Aut}{Aut}
\DeclareMathOperator{\proj}{proj}
\begin{document}

\title[Uncountably many $2$-spherical groups of Kac-Moody type]{Uncountably many $\mathbf{2}$-spherical groups of Kac-Moody type of rank $\mathbf{3}$ over $\FF_2$}

\author{Sebastian Bischof}

\thanks{email: sebastian.bischof@math.uni-paderborn.de}

\thanks{Mathematisches Institut, Arndtstra\ss e 2, 35392 Gie\ss en, Germany}

\thanks{Keywords: RGD-systems, Groups of Kac-Moody type, Kac-Moody groups, Twin buildings, Buildings, totally disconnected locally compact groups}

\thanks{Mathematics Subject Classification 2020: Primary: 20E06, 20E42; Secondary: 20F65, 51E42}

\begin{abstract}
	In this paper we show that Weyl-invariant commutator blueprints of type $(4, 4, 4)$ are faithful. As a consequence we answer a question of Tits from the late $1980$s about twin buildings. Moreover, we obtain the first example of a $2$-spherical Kac-Moody group over a finite field which is not finitely presented.
\end{abstract}

\maketitle

\section{Introduction}

\subsection*{Motivation and goals}

In \cite{Ti92} J.~Tits stated a local-to-global conjecture for Kac-Moody buildings of $2$-spherical type. This conjecture was proved for Kac-Moody buildings over fields of cardinality at least $4$ in \cite{MR95}. In \cite{AM97} it was observed that Tits' local-to-global conjecture is closely related to Curtis-Tits-presentations of $2$-spherical Kac-Moody groups. In fact, it is proved in that paper, that the Curtis-Tits presentation for BN-pairs of spherical type (see \cite[Theorem~$13.32$]{Ti74}) generalizes to $2$-spherical Kac-Moody groups over fields of cardinality  at least $4$. It follows from this result, that a $2$-spherical Kac-Moody group over a finite field of cardinality at least $4$ is finitely presented. Up until now it was an open question whether the local-to-global principle and the Curtis-Tits presentation hold without the restriction on the ground field. Our following result answers those questions.

\begin{thmintro*}
	Let $G$ be a Kac-Moody group of compact hyperbolic type $(4, 4, 4)$ over the field $\FF_2$. Then the following hold:
	\begin{itemize}
		\item The group $G$ is not finitely presented (cf.\ Theorem~\ref{Main theorem: KM group finitely presented})
		
		\item The local-to-global principle does not hold for the Kac-Moody building associated with $G$ (cf.\ Theorem~\ref{Main result: Extension theorem}).
	\end{itemize}
\end{thmintro*}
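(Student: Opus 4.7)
The strategy is to deploy the paper's main technical result — that Weyl-invariant commutator blueprints of type $(4,4,4)$ are faithful — in order to exhibit uncountably many pairwise non-isomorphic RGD-systems of type $(4,4,4)$ over $\FF_2$ sharing the same rank-$2$ residues. Both assertions of the theorem then follow by cardinality and local-to-global considerations applied to this family.

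First I would construct the family. By faithfulness, every Weyl-invariant commutator blueprint $\mc{M}$ of the specified type determines an RGD-system in which the prescribed root-group commutators hold exactly; two blueprints with distinct commutator prescriptions therefore give non-isomorphic systems. The rank-$2$ residues, however, are fixed by the Coxeter data $(4,4,4)$ and by the choice of field $\FF_2$, so every member of the family has identical local structure. Exhibiting continuum-many such blueprints up to isomorphism amounts to producing continuum-many independent parameters — for example by choosing, at each element of a countable index set of Weyl-orbits of prenilpotent pairs of positive roots, one of two admissible commutator prescriptions — and then verifying that distinct parameter choices separate the resulting groups up to isomorphism.

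Given the family, both conclusions fall out. For the first, if $G$ were finitely presented, a Curtis-Tits--type argument in the spirit of \cite{AM97} would force any RGD-system of type $(4,4,4)$ over $\FF_2$ with the same rank-$2$ residues to be a quotient of $G$; since a finitely presented group has only countably many isomorphism classes of quotients, this contradicts our uncountable family of pairwise non-isomorphic such groups. For the second, any two non-isomorphic members of the family produce twin buildings with isomorphic rank-$2$ residues but non-isomorphic global structure, which is precisely the obstruction to the local-to-global principle for the Kac-Moody building associated with $G$.

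The main obstacle is faithfulness itself. Showing that a Weyl-invariant blueprint does not collapse — that the abstractly prescribed commutator relations hold without forcing additional identifications among the root groups — requires delicate control of iterated commutators in the positive unipotent part $U^+$. In compact hyperbolic type $(4,4,4)$ this part is infinite-dimensional and of unbounded nilpotence class, so the specific combinatorics of $(4,4,4)$, in particular the structure of its prenilpotent pairs, must be exploited; once faithfulness is in hand, the separation of family members and the two conclusions above are essentially book-keeping.
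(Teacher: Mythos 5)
Your high-level framing — deduce faithfulness, build many RGD-systems of type $(4,4,4)$ over $\FF_2$ with identical local structure, then extract the two conclusions — matches the paper's strategy up to the point where the two conclusions actually have to be extracted. That is exactly where your proposal has real gaps.

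\textbf{Non-finite presentation.} Your cardinality argument does not prove the stated theorem. First, the claim that finite presentation of the Kac–Moody group $\mathcal{G}$ would ``force any RGD-system of type $(4,4,4)$ over $\FF_2$ with the same rank-$2$ residues to be a quotient of $G$'' is not a consequence of finite presentability; it is an instance of the Curtis–Tits/local-to-global principle, which is precisely what is being shown to fail here, so invoking it is circular. Second, a finitely presented (even finitely generated) group can have continuum-many quotients, so ``uncountably many pairwise non-isomorphic groups'' produces no contradiction with $\mathcal{G}$ being finitely presented. At best, a counting argument shows that \emph{some} finitely generated group in the uncountable family fails to be finitely presented — it does not single out the Kac–Moody group. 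The paper instead argues directly: it writes down the Steinberg presentation of $\mathcal{G}$ on generators $u_{\alpha_s}, \tau_s$, reduces a hypothetical finite presentation to a finite subset $F$ of these relations via Lemma~\ref{Lemma: finite presentation with finite generating set}, takes $k = \max\{ k_\alpha : u_\alpha$ appears in $F\}$, and then uses the RGD-system $\mathcal{D}_k$ of Corollary~\ref{Corollary: D_n} — which obeys all commutator relations witnessed in $F$ but has some nested pair $\delta \subsetneq \gamma$ with $[V_\delta,V_\gamma]\neq 1$. The finite presentation would then yield a homomorphism $\mathcal{G} \to \mathcal{D}_k$ forcing $[V_\delta,V_\gamma]=1$, a contradiction. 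This is a quantitative ``approximate, then separate'' argument with no counting involved, and it is essential, because the conclusion concerns one specific group.

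\textbf{Failure of the local-to-global principle.} Your appeal to ``isomorphic rank-$2$ residues but non-isomorphic global structure'' is misleading: the underlying \emph{buildings} associated to all RGD-systems of type $(4,4,4)$ over $\FF_2$ are in fact isomorphic (cf.\ \cite{BCM21}, referenced in the Remark after Corollary~\ref{Main result: Existence}). The local-to-global principle concerns extending an isometry defined on $E_2(c_+) \cup \{d\}$ — with $d$ a chamber opposite $c_+$ — to an isometry of twin buildings, so non-isomorphism of buildings is neither available nor the right invariant. What has to be done, and what the paper does, is (i) actually construct a local isometry between $E_2(c_+) \cup \{d\}$ and $E_2(c_+') \cup \{d'\}$ for two RGD-systems with distinct commutator blueprints, and (ii) show that extending it to a global isometry would induce an isomorphism of automorphism groups carrying $U_\alpha$ to $U_\alpha'$, which would force the two commutator blueprints to agree by uniqueness of the product decomposition in the $U_w$'s — contradicting the choice. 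Both steps are missing from your sketch. The plan is salvageable, but each of these two deductions requires an argument that is conceptually different from the one you propose.
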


In particular, we obtain the existence of a $2$-spherical Kac-Moody group over a finite field which is not finitely presented. In order to prove the theorem above, one has to construct exotic Kac-Moody buildings of type $(4, 4, 4)$ over $\FF_2$. The strategy developed for constructing such buildings yields the following result.

\begin{thmintro*}
	There exist uncountably many, pairwise non-isomorphic groups of Kac-Moody type over $\FF_2$ whose Weyl group is the compact hyperbolic group $(4, 4, 4)$ (cf.\ Corollary~\ref{Main result: Existence}).
\end{thmintro*}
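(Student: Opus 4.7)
The approach is to combine the paper's main technical result — faithfulness of Weyl-invariant commutator blueprints of type $(4,4,4)$ — with a continuum-sized family of such blueprints, and then distinguish the resulting groups of Kac-Moody type up to abstract isomorphism.

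First, I would exhibit a family of $2^{\aleph_0}$ pairwise distinct Weyl-invariant commutator blueprints of type $(4,4,4)$. The Weyl group $W$ of the compact hyperbolic $(4,4,4)$-triangle group is infinite, and its action on the set of prenilpotent pairs of positive roots has infinitely many orbits. A Weyl-invariant commutator blueprint is determined by one prescription per orbit, subject to the consistency axioms governing the commutator calculus. The combinatorial input is to identify an infinite set $\{O_n\}_{n\in\NN}$ of orbits whose commutator data can be prescribed independently; for each subset $S \subseteq \NN$ one then defines a blueprint $\mc{B}_S$ by making the ``non-trivial'' choice at $O_n$ precisely for $n \in S$. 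By the faithfulness theorem, each $\mc{B}_S$ is realized by a genuine RGD system over $\FF_2$ with Weyl group $W$, yielding a group $G_S$ of Kac-Moody type of type $(4,4,4)$ over $\FF_2$ in which the root-group commutator relations are exactly those prescribed by $\mc{B}_S$.

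Second — and this is where the real work lies — one must show that distinct $S, S' \subseteq \NN$ produce non-isomorphic groups. The plan is a rigidity argument: one recovers the RGD datum of a $2$-spherical group of Kac-Moody type from abstract group-theoretic invariants (maximal pro-$2$-like subgroups recover a Borel, centralizer/derived-series data recover individual root groups, etc.), and $\FF_2$ admits no non-trivial field automorphisms while the diagram $(4,4,4)$ has only a finite automorphism group. Consequently, an abstract isomorphism $G_S \cong G_{S'}$ must match the RGD data up to a finite group of symmetries, and hence identify $\mc{B}_S$ with $\mc{B}_{S'}$ up to a prescribed finite ambiguity. Since the family $\{\mc{B}_S\}_{S \subseteq \NN}$ has cardinality $2^{\aleph_0}$ and each equivalence class is finite, one extracts $2^{\aleph_0}$ pairwise non-isomorphic groups.

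The main obstacle is the rigidity step. Producing uncountably many blueprints and realizing them as RGD systems is essentially immediate from the faithfulness theorem combined with the combinatorial richness of the $(4,4,4)$ root system; by contrast, ruling out ``hidden'' abstract isomorphisms $G_S \to G_{S'}$ that do not a priori respect any RGD structure requires recognition theorems identifying the root groups intrinsically inside the ambient group. This recognition is the technically delicate ingredient of the argument, and the place where one must invoke structural results specific to $2$-spherical groups of Kac-Moody type.
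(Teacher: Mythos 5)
Your outline tracks the paper's own route exactly: construct a continuum of Weyl-invariant commutator blueprints of type $(4,4,4)$, invoke the faithfulness/integrability theorem (Theorem~\ref{Main result: Theorem integrable Weyl-invariant}) to realize each as an RGD-system over $\FF_2$, and then apply a rigidity result for the isomorphism problem to separate the resulting abstract groups into uncountably many classes. The paper delegates the first and third steps to companion results (\cite[Theorem~D]{BiConstruction} for the blueprint construction, \cite[Theorem~A]{BiIsomorphism} for the rigidity), precisely the two ingredients you flag as the nontrivial combinatorial input and the ``technically delicate'' recognition step, so your sketch is a faithful reconstruction of the argument's architecture.
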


\subsection*{Main result}

In \cite{Ti92} J.~Tits introduced RGD-systems in order to describe groups of Kac-Moody type (e.g.\ Kac-Moody groups over fields). Each RGD-system has a type which is given by a Coxeter system, and to any Coxeter system one can associate a set $\Phi$ of roots (viewed as half-spaces). An \emph{RGD-system of type $(W, S)$} is a pair $(G, (U_{\alpha})_{\alpha \in \Phi})$ consisting of a group $G$ together with a family of subgroups $(U_{\alpha})_{\alpha \in \Phi}$ (called \emph{root groups}) indexed by the set of roots $\Phi$ satisfying some axioms. The most important axiom is the existence of commutation relations between root groups corresponding to prenilpotent pairs of roots. In this context there appears naturally a family $(U_w)_{w\in W}$ of subgroups of $G$ indexed by the Coxeter group $W$.

In \cite{BiRGD} we introduced the notion of commutator blueprints (we refer to Section~\ref{Section: commutator blueprint of type ((W, S), D)} for the precise definition). These purely combinatorial objects can be seen as blueprints for constructing RGD-systems \emph{over $\FF_2$} (i.e.\ each root group has exactly $2$ elements) with prescribed commutation relations. By definition, each commutator blueprint gives rise to a family of abstract groups $(U_w)_{w\in W}$. To each RGD-system over $\FF_2$ one can associate a commutator blueprint. The blueprints arising in this way are called \emph{integrable}. One can show that integrable commutator blueprints satisfy the following two properties (cf.\ Definition~\ref{Definition: Properties of Commutator blueprints}): They are \emph{Weyl-invariant} (roughly speaking: the commutation relations are Weyl-invariant) and -- due to a result of J.~Tits \cite{Ti86} -- \emph{faithful} (for each $w\in W$ the canonical morphism $U_w \to U_+ := \lim U_w$ is injective, where $U_+$ is the direct limit of the family $(U_w)_{w\in W}$). In general it is a difficult problem to decide whether a given commutator blueprint is faithful. In this article we prove the following main result (cf.\ Corollary~\ref{Corollary: non-trivial}):

\begin{thmintro}\label{Main result: Weyl-invariant implies faithful}
	Weyl-invariant commutator blueprints of type $(4, 4, 4)$ are faithful.
\end{thmintro}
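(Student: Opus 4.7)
The plan is to show that every element of $U_w$ mapping to the identity in $U_+$ is already trivial in $U_w$, proceeding by induction on the Coxeter length $\ell(w)$. The base cases $\ell(w)\le 1$ are immediate: $U_e$ is trivial, and each $U_s$ has order $2$, with its non-trivial element detected in $U_+$ by the abelianisation map sending every generator $u_\alpha$ to $1\in\FF_2$ (well-defined because every blueprint relation is a commutator identity and hence trivial modulo $[U_+,U_+]$). For the first non-trivial step I would handle the rank-$2$ residues: each parabolic $W_{\{s,t\}}$ is dihedral of order $8$, and a Weyl-invariant blueprint restricted to such a residue is determined by finitely many integrability conditions. Since RGD-systems of type $I_2(4)$ over $\FF_2$ exist (coming e.g.\ from the rank-$2$ Kac--Moody group itself), the rank-$2$ restriction is integrable, giving a faithful model into which every $U_w$ with $w\in W_{\{s,t\}}$ embeds.

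Having settled rank-$2$ residues, I would extend faithfulness to arbitrary $w$ by gallery induction. Writing $w = w's$ with $\ell(w's) > \ell(w')$, one has a decomposition $U_w = U_{w'} \cdot \{1, u_{w'\alpha_s}\}$, where $u_{w'\alpha_s}$ is the non-trivial element of the root group indexed by the new positive root. The inductive hypothesis controls $U_{w'}$, while Weyl-invariance transports the rank-$2$ faithful embeddings along a minimal gallery from $e$ to $w$: conjugating by suitable elements of $W$ moves the local model into position while preserving the blueprint relations by hypothesis. Therefore an element of $U_w$ vanishing in $U_+$ must already vanish in some local rank-$2$ model and is thus trivial.

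The hard part will be ruling out global collapse arising from commutator relations between pairs of roots whose walls intersect \emph{outside} any single rank-$2$ residue. The compact hyperbolic Coxeter complex of type $(4, 4, 4)$ is a tiling of the hyperbolic plane by triangles with all angles equal to $\pi/4$, and it carries infinitely many prenilpotent pairs of roots, each contributing a prescribed commutator. To verify that these prescriptions are jointly consistent in $U_+$, I plan to use Weyl-invariance to reduce the consistency check to a finite set of configurations around a fixed vertex of the tiling; these configurations are in turn controlled by the spherical rank-$2$ residues handled in the base case, whose vertex-links tile the star of the vertex. If this geometric reduction goes through, it closes the induction and yields faithfulness, with the interplay between the compact-hyperbolic geometry and the Weyl-invariance hypothesis being the essential ingredient that makes the $(4,4,4)$ case tractable.
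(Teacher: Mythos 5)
Your proposal correctly formulates the goal (showing each $U_w \to U_+ = \lim U_w$ is injective) but the strategy you outline would not succeed, and the paper's argument is entirely different. The gallery induction on $\ell(w)$ is the wrong induction: the step $U_w = U_{w'}\cdot\{1, u_{w'\alpha_s}\}$ with $\ell(w)=\ell(w')+1$ is already handled by the definition of a commutator blueprint (axiom (CB$1$) together with \cite[Lemma $3.6$]{BiRGD} give injectivity of $U_{w'} \to U_w$), so it says nothing about faithfulness, which concerns the behaviour of $U_w$ inside the full direct limit. Your abelianization observation only detects a single generator being non-trivial in $U_+$; it gives no control over whether the product map $U_{\alpha_1}\times\cdots\times U_{\alpha_k}\to U_+$ stays injective, which is the actual content. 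And the ``hard part'' you isolate -- ruling out collapse coming from commutators of root groups spread across many rank-$2$ residues -- is precisely where the entire proof lives; the localization sketch (reducing to configurations around a vertex) does not produce an argument, because prenilpotent pairs of positive roots have walls intersecting in residues arbitrarily far from $1_W$, and no finite vertex-link check controls all of them.

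The paper's proof instead builds an exhaustion of $U_+$ by groups $G_i$ obtained by amalgamating the $U_w$ (and certain index-two subgroups $V_{w'}$) over increasingly large subsets $C_i \subseteq W$, identifies $U_+ \cong \lim G_i$ (Lemma~\ref{Lemma: U_+ isomorphic to G}), and proves that each $G_i \to G_{i+1}$ is injective by exhibiting $G_{i+1}$ as a tree product $\star_{G_i} B_P$ over the ``boundary residues'' $P\in\mathcal{T}_i$ (Proposition~\ref{Giinjective}). The injectivity rests on the Karrass--Solitar and Serre results on trees of groups (Propositions~\ref{treeproducts} and~\ref{treeofgroupsinjective}) applied to a long hierarchy of explicitly constructed tree products ($V_R, O_R, H_R, G_R, J_{R,t}, E_{R,s}, X_R, C_{(R,R')}, \dots$) whose subgroup intersections are computed by hand in Section~\ref{Section: Locally Weyl-invariant Commutator blueprints}. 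Most importantly, the base case -- that $G_0$ is ``natural'' (Lemma~\ref{Lemma: G0 natural}) -- is not established intrinsically: it requires mapping $G_0$ into the actual split Kac--Moody group of type $(4,4,4)$ over $\FF_2$ to certify that $G_0$ has not collapsed. This use of an independently existing RGD-system is an essential external input that your outline does not supply, and the abelianization trick is not a substitute for it.
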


Combining Theorem~\ref{Main result: Weyl-invariant implies faithful} with \cite[Theorem~A]{BiRGD}, we obtain the following equivalence which allows us to construct new RGD-systems of type $(4, 4, 4)$ over $\FF_2$:

\begin{thmintro}\label{Main result: Theorem integrable Weyl-invariant}
	For any commutator blueprint $\mathcal{M}$ of type $(4, 4, 4)$ the following are equivalent:
	\begin{enumerate}[label=(\roman*)]
		\item $\mathcal{M}$ is integrable.
		
		\item $\mathcal{M}$ is Weyl-invariant.
	\end{enumerate}
\end{thmintro}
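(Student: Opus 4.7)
The statement is essentially a bookkeeping consequence of Theorem~\ref{Main result: Weyl-invariant implies faithful} together with \cite[Theorem~A]{BiRGD}, so the plan is to assemble these two ingredients rather than to prove anything substantially new.

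For the implication (i) $\Rightarrow$ (ii), I would simply appeal to the remark already made in the introduction: any commutator blueprint that arises from an actual RGD-system over $\FF_2$ inherits Weyl-invariance from the conjugation action of the standard $\mathrm{rank}\ 1$ elements on the root groups (together with faithfulness via Tits' result \cite{Ti86}). This direction does not use anything special about the type $(4,4,4)$, so it should be cited from \cite{BiRGD} with at most one line of explanation.

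For the non-trivial direction (ii) $\Rightarrow$ (i), the plan is to chain two results. First, apply Theorem~\ref{Main result: Weyl-invariant implies faithful} of the present paper to conclude that a Weyl-invariant commutator blueprint $\mathcal{M}$ of type $(4,4,4)$ is automatically faithful, i.e.\ each structural map $U_w \to U_+ = \varinjlim U_w$ is injective. Then invoke \cite[Theorem~A]{BiRGD}, which (in the formulation relevant here) asserts precisely that a commutator blueprint is integrable if and only if it is both Weyl-invariant and faithful. Together these two inputs produce an RGD-system over $\FF_2$ whose associated commutator blueprint is $\mathcal{M}$, establishing integrability.

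There is no genuine obstacle in this deduction; all the real work is concentrated in Theorem~\ref{Main result: Weyl-invariant implies faithful}, whose proof lies elsewhere in the paper. The only care needed is to verify that the formulation of \cite[Theorem~A]{BiRGD} that I quote matches the definitions of \emph{integrable}, \emph{Weyl-invariant} and \emph{faithful} used here (cf.\ Definition~\ref{Definition: Properties of Commutator blueprints}), so that the two conditions combine correctly; this is a routine check of conventions rather than a mathematical step.
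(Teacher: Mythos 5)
Your proposal takes exactly the route the paper itself indicates: the author introduces the theorem with the words ``Combining Theorem~\ref{Main result: Weyl-invariant implies faithful} with \cite[Theorem~A]{BiRGD}, we obtain the following equivalence,'' and your two-step chain (Weyl-invariant $\Rightarrow$ faithful via Theorem~\ref{Main result: Weyl-invariant implies faithful}; then integrable $\Leftrightarrow$ Weyl-invariant and faithful via \cite[Theorem~A]{BiRGD}, with the trivial direction handled the same way) is precisely this. Your closing caveat about matching conventions is sensible but, as you anticipate, routine; one tiny additional point you could make explicit is that Weyl-invariance formally implies local Weyl-invariance, which is the hypothesis carried through Sections 3--6 into Corollary~\ref{Corollary: non-trivial}.
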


\subsection*{Consequences}

In the rest of the introduction we discuss several consequences of Theorem~\ref{Main result: Theorem integrable Weyl-invariant}, which reduces the question of existence of RGD-systems of type $(4, 4, 4)$ over $\FF_2$ with prescribed commutation relations to the existence of the corresponding Weyl-invariant commutator blueprints. Such blueprints were already constructed in \cite[Theorem~D]{BiConstruction}. Together with Theorem~\ref{Main result: Theorem integrable Weyl-invariant} we obtain the following result:

\begin{corintro}\label{Main result: uncountably many RGD-systems}
	There exist uncountably many RGD-systems of type $(4, 4, 4)$ over $\FF_2$.
\end{corintro}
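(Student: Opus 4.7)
The plan is to deduce the corollary by combining Theorem~\ref{Main result: Theorem integrable Weyl-invariant} of the present paper with the existence result already established in \cite[Theorem~D]{BiConstruction}. The point is that the hard analytic/combinatorial work of constructing the blueprints has already been done externally, and the equivalence ``Weyl-invariant $\Leftrightarrow$ integrable'' proved above converts those blueprints directly into RGD-systems over $\FF_2$.

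Concretely, I would proceed in three short steps. First, invoke \cite[Theorem~D]{BiConstruction} to obtain an uncountable family $(\mathcal{M}_i)_{i \in I}$ of pairwise distinct Weyl-invariant commutator blueprints of type $(4,4,4)$. Second, apply Theorem~\ref{Main result: Theorem integrable Weyl-invariant} individually to each $\mathcal{M}_i$ to conclude that every $\mathcal{M}_i$ is integrable; by the definition of integrability recalled in the introduction, this means that for each $i \in I$ there exists an RGD-system of type $(4,4,4)$ over $\FF_2$ whose associated commutator blueprint is exactly $\mathcal{M}_i$. Third, since distinct commutator blueprints encode distinct commutation relations among the root groups, the corresponding RGD-systems cannot coincide, so one obtains uncountably many RGD-systems of type $(4,4,4)$ over $\FF_2$.

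I do not anticipate any genuine obstacle at this stage of the argument: the substantive content is already absorbed into Theorem~\ref{Main result: Weyl-invariant implies faithful}, and the deduction is essentially a bookkeeping step. If anything needs care, it is only the verification in the third step that pairwise different $\mathcal{M}_i$ yield pairwise different RGD-systems; this amounts to unwinding the definition of the map from RGD-systems over $\FF_2$ to commutator blueprints and observing that this map is injective on the level of data encoded by $(\mathcal{M}_i)_{i \in I}$. The stronger pairwise non-isomorphism statement appearing in the second introductory theorem would require an additional argument distinguishing the associated groups of Kac-Moody type, but for the present corollary the cardinality claim alone suffices and follows immediately.
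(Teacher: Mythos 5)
Your proposal matches the paper's argument exactly: the paper deduces Corollary~\ref{Main result: uncountably many RGD-systems} by citing the uncountable family of Weyl-invariant commutator blueprints from \cite[Theorem~D]{BiConstruction} and then applying Theorem~\ref{Main result: Theorem integrable Weyl-invariant} to convert each into an integrable blueprint, hence into an RGD-system over $\FF_2$. Your observation in the third step — that distinct blueprints must come from distinct RGD-systems because the assignment $\mathcal{D} \mapsto \mathcal{M}_{\mathcal{D}}$ is a well-defined function — is the right (if implicit in the paper) justification for the cardinality count.
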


We say that a group $G$ is \emph{of $(4, 4, 4)$-Kac-Moody type over $\FF_2$} if there exists a family of subgroups $(U_{\alpha})_{\alpha \in \Phi}$ such that $(G, (U_{\alpha})_{\alpha \in \Phi})$ is an RGD-system of type $(4, 4, 4)$ over $\FF_2$. In \cite[Theorem~A]{BiIsomorphism} we have studied the isomorphism problem for groups of $(4, 4, 4)$-Kac-Moody type over $\FF_2$. Thus Theorem~\ref{Main result: Theorem integrable Weyl-invariant} together with \cite[Theorem~A]{BiIsomorphism} and \cite[Theorem~D]{BiConstruction} yields the following:

\begin{corintro}\label{Main result: Existence}
	There exist uncountaly many isomorphism classes of groups of $(4, 4, 4)$-Kac-Moody type over $\FF_2$.
\end{corintro}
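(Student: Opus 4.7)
The plan is to chain the three inputs cited in the paragraph preceding the corollary. By \cite[Theorem~D]{BiConstruction} one has an uncountable family $(\mathcal{M}_i)_{i \in I}$ of pairwise distinct Weyl-invariant commutator blueprints of type $(4,4,4)$. Applying Theorem~\ref{Main result: Theorem integrable Weyl-invariant}, each $\mathcal{M}_i$ is integrable, so each gives rise to an RGD-system $(G_i, (U_\alpha^{(i)})_{\alpha \in \Phi})$ of type $(4,4,4)$ over $\FF_2$; in particular every $G_i$ is a group of $(4,4,4)$-Kac-Moody type over $\FF_2$. This already recovers Corollary~\ref{Main result: uncountably many RGD-systems}, but to promote it to the statement about isomorphism classes of groups we must control the fibres of the assignment $\mathcal{M}_i \mapsto G_i$.

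This is exactly what \cite[Theorem~A]{BiIsomorphism} supplies: it characterises when two groups of $(4,4,4)$-Kac-Moody type over $\FF_2$ are isomorphic in terms of a small, essentially combinatorial, amount of data relating their commutator blueprints (e.g.\ a Coxeter-diagram symmetry and a relabelling of the root groups compatible with the blueprint). In particular, the set of indices $j \in I$ for which $G_j \cong G_i$ is countable for every fixed $i$. Therefore, partitioning $I$ by the equivalence relation $i \sim j \Longleftrightarrow G_i \cong G_j$ yields classes of size at most countable, and since $I$ itself is uncountable the quotient $I/{\sim}$ must also be uncountable.

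Assembling these observations produces an uncountable family of pairwise non-isomorphic groups of $(4,4,4)$-Kac-Moody type over $\FF_2$, proving the corollary. The argument itself is a short concatenation; essentially all of the real content is packaged into the three cited theorems, and the only genuine obstacle is the quoted fibre bound from \cite[Theorem~A]{BiIsomorphism}, which is what prevents the uncountable family of blueprints from collapsing to countably many isomorphism classes of groups.
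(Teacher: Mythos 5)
Your proof follows the same route as the paper: the paper states this corollary in the introduction with no separate proof beyond citing \cite[Theorem~D]{BiConstruction} for the uncountable family of Weyl-invariant blueprints, Theorem~\ref{Main result: Theorem integrable Weyl-invariant} to convert them into RGD-systems over $\FF_2$, and \cite[Theorem~A]{BiIsomorphism} to pass from RGD-systems to isomorphism classes of groups. Your elaboration of the last step — that the isomorphism theorem bounds the fibres of $\mathcal{M}_i \mapsto G_i$ so only countably many blueprints can produce isomorphic groups, whence the quotient by $\cong$ remains uncountable — is the intended cardinality argument and is correctly identified as the crux; the only caveat is that you are paraphrasing the content of \cite[Theorem~A]{BiIsomorphism} rather than quoting it, so the fibre bound rests on your (reasonable) reading of that theorem, which cannot be verified from the present paper alone.
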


\begin{rkintro}
	\begin{enumerate}[label=(\alph*)]
		\item Let $\mathcal{D} = (G, (U_{\alpha})_{\alpha \in \Phi})$ be an RGD-system of type $(4, 4, 4)$ over $\FF_2$ such that $G = \langle U_{\alpha} \mid \alpha \in \Phi \rangle$. By \cite[Theorem A]{Bischof_On_Growth_Functions_of_Coxeter_Groups} $\mathcal{D}$ is a \emph{twin building lattice} (cf.\ \cite{CR09}). Such (irreducible) lattices are studied in \cite{CR09}.
		
		\item The existence of non-isomorphic Kac-Moody groups with isomorphic buildings is already known (cf.\ \cite{Re02}). As the buildings associated to RGD-systems of type $(4, 4, 4)$ over $\FF_2$ are isomorphic (cf.\ \cite{BCM21}), Corollary~\ref{Main result: Existence} provides uncountably many isomorphism classes of groups of $(4, 4, 4)$-Kac-Moody type over $\FF_2$ with isomorphic buildings.
	\end{enumerate}
\end{rkintro}

Next we will discuss finiteness properties. Abramenko and M\"uhlherr have shown in \cite{AM97} that $2$-spherical Kac-Moody groups over finite fields of cardinality at least $4$ are finitely presented. We obtain the first $2$-spherical Kac-Moody group (in the sense of \cite{Ti92}) over a finite field which is not finitely presented (cf.\ Theorem \ref{Theorem: Kac-Moody group of type (4,4,4) over F_2 not finitely presented}):

\begin{thmintro}\label{Main theorem: KM group finitely presented}
	Kac-Moody groups of type $(4, 4, 4)$ over $\FF_2$ are not finitely presented.
\end{thmintro}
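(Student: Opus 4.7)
I would argue by contradiction, leveraging Corollary~\ref{Main result: Existence} together with the flexibility of the commutator blueprint construction in \cite{BiConstruction}. Let $\widetilde G$ denote the Kac-Moody group of type $(4,4,4)$ over $\FF_2$, which is the RGD-system associated with a distinguished integrable Weyl-invariant blueprint $\mathcal M_{\mathrm{KM}}$. Assume for contradiction that $\widetilde G$ is finitely presented. Since the six non-trivial elements of the simple root groups $U_{\pm\alpha_s}\cong\FF_2$ already generate $\widetilde G$, one may write $\widetilde G=\langle S\mid R\rangle$ with $S$ and $R$ both finite.

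Each relation $r\in R$ is a word of bounded length, and verifying $r=1$ in $\widetilde G$ amounts to applying finitely many commutation rules drawn from $\mathcal M_{\mathrm{KM}}$, each indexed by some element of $W$. Collecting these indices over all $r\in R$ yields a finite subset $\Omega\subset W$ such that the validity of $R$ in any RGD-system of type $(4,4,4)$ over $\FF_2$ is determined purely by the restriction of its underlying commutator blueprint to $\Omega$. Consequently, whenever $\mathcal M'$ is a Weyl-invariant commutator blueprint agreeing with $\mathcal M_{\mathrm{KM}}$ on $\Omega$, its associated RGD-system $G'$ (integrable by Theorem~\ref{Main result: Theorem integrable Weyl-invariant}) satisfies all the relations in $R$, and the identification of root-group elements extends to a surjection $\widetilde G\twoheadrightarrow G'$.

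To upgrade this surjection to an isomorphism I would combine faithfulness (Theorem~\ref{Main result: Weyl-invariant implies faithful}) with the Bruhat–Birkhoff normal form: the root groups of both $\widetilde G$ and $G'$ inject into the ambient group, and both groups admit matching decompositions whose factors have identical cardinality $2$. A surjection that preserves the decomposition and is bijective on each factor is forced to be bijective. Now invoke \cite[Theorem~D]{BiConstruction}: its construction depends on infinitely many independent binary parameters, only finitely many of which are pinned once we require agreement with $\mathcal M_{\mathrm{KM}}$ on the finite set $\Omega$. Hence there remain uncountably many Weyl-invariant blueprints agreeing with $\mathcal M_{\mathrm{KM}}$ on $\Omega$, and by \cite[Theorem~A]{BiIsomorphism} they correspond to pairwise non-isomorphic groups $G_i$ of $(4,4,4)$-Kac-Moody type over $\FF_2$. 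By the previous step each $G_i$ would be isomorphic to $\widetilde G$, a contradiction.

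The main obstacle is the first step of pinning down $\Omega$: one has to argue rigorously that every word in the finite set $R$ reduces to the identity via a fixed finite list of commutation and multiplication rules indexed by elements of $W$, so that blueprints agreeing with $\mathcal M_{\mathrm{KM}}$ on $\Omega$ automatically satisfy $R$. This requires a careful analysis of how elements of $\widetilde G$ are rewritten as products of root-group elements and how those rewriting steps pull structural data from the blueprint. The secondary delicate point is ensuring that the parameter space in \cite{BiConstruction} indeed retains uncountably many degrees of freedom after any prescribed finite portion of the blueprint has been fixed, which again relies on the independence of the binary choices in that construction.
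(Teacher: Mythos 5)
The core of your approach --- take a putative finite presentation, observe that the relations only ``see'' finitely many roots, and use the blueprint machinery to produce a competing RGD-system agreeing on that finite portion but disagreeing deeper --- is essentially the paper's. But there are two issues that change the structure of the argument.

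First, once you have the surjection $\phi\colon \widetilde G\twoheadrightarrow G'$ sending $u_{\alpha_s}\mapsto u_{\alpha_s}$ and $\tau_s\mapsto\tau_s$, you are already done: $\phi$ forces $u_\alpha\mapsto u_\alpha$ for \emph{every} root $\alpha$ (via the conjugation formula $u_\alpha=u_{\alpha_s}^{\tau_k\cdots\tau_1}$), so a trivial nested commutator $[u_\alpha,u_\beta]=1$ in the split Kac-Moody group must map to $[u_\alpha,u_\beta]$ in $G'$, which is nontrivial by construction --- a contradiction on the spot. This is precisely how the paper concludes in the proof of Theorem~\ref{Theorem: Kac-Moody group of type (4,4,4) over F_2 not finitely presented}. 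Your subsequent attempt to upgrade $\phi$ to an isomorphism and then appeal to Corollary~\ref{Main result: Existence} is not merely unnecessary; it is internally inconsistent, since the surjection cannot exist, hence \emph{a fortiori} cannot be an isomorphism. Moreover, ``a surjection that preserves the Bruhat--Birkhoff decomposition and is bijective on each factor is bijective'' is not a valid inference in general, so the upgrade step as written would not go through anyway.

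Second, the point you flag at the end as the ``main obstacle'' is a genuine gap in your setup. If $R$ is an arbitrary finite relation set for $\widetilde G$, each $r\in R$ is merely a consequence of the infinite Steinberg relation set, and the derivation witnessing $r=1$ may pass through conjugates by words of unbounded length, hence involve roots of unbounded depth. Agreement of $\mathcal M'$ with $\mathcal M_{\mathrm{KM}}$ on a finite $\Omega\subset W$ therefore does not by itself guarantee that $r$ holds in $G'$. The paper avoids this via Lemma~\ref{Lemma: finite presentation with finite generating set} (B.H. Neumann): a finitely presented group $\langle X\mid R_\infty\rangle$ with $\lvert X\rvert<\infty$ admits a presentation $\langle X\mid F\rangle$ where $F$ is a \emph{finite subset of $R_\infty$}. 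Each $f\in F$ is then literally one of the Steinberg commutation or dihedral relations and involves only finitely many roots; the correct replacement for your $\Omega$ is $k:=\max\{k_\alpha\mid u_\alpha\text{ appears in some }f\in F\}$, and Corollary~\ref{Corollary: D_n} supplies the competing system $\mathcal D_k$ with trivial nested commutators up to depth $k$ and a nontrivial one deeper out.
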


\begin{rkintro}
	Theorem~\ref{Main theorem: KM group finitely presented} only makes a statement about Kac-Moody groups and not about general groups of Kac-Moody type. We expect that the methods proving Theorem~\ref{Main theorem: KM group finitely presented} provide at least infinitely many groups of $(4, 4, 4)$-Kac-Moody type over $\FF_2$ which are not finitely presented. The question whether any group of $(4, 4, 4)$-Kac-Moody type over $\FF_2$ is finitely presented is much harder.
\end{rkintro}

In \cite{Ab04} P.~Abramenko considered finiteness properties of parabolic subgroups of Kac-Moody groups. He announced that the stabilizer of a chamber in certain Kac-Moody groups of compact hyperbolic type of rank $3$ over $\FF_2$ is not finitely generated (cf.\ \cite[Counter-Example $1(2)$]{Ab04}). A consequence of the proof of our Theorem~\ref{Main result: Weyl-invariant implies faithful} confirms Abramenko's claim (cf.\ Theorem \ref{Uplusnotfinitelygenerated}):

\begin{thmintro}\label{Main result: U+ not finitely generated}
	Let $\mathcal{D} = (G, (U_{\alpha})_{\alpha \in \Phi})$ be an RGD-system of type $(4, 4, 4)$ over $\FF_2$. Then the stabilizer $U_+ = \mathrm{Stab}_G(c)$ of a chamber $c$ is not finitely generated.
\end{thmintro}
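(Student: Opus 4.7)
The plan is to use the faithfulness provided by Theorem~\ref{Main result: Weyl-invariant implies faithful} to show that the abelianization $U_+^{\mathrm{ab}}$ is infinite-dimensional over $\FF_2$, which precludes finite generation of $U_+$ (since the abelianization of a finitely generated group is finitely generated).

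First I would note that the commutator blueprint $\mathcal{M}$ associated with $\mathcal{D}$ is Weyl-invariant --- being the blueprint of an RGD-system, it is integrable --- and hence faithful by Theorem~\ref{Main result: Weyl-invariant implies faithful}. Consequently, $U_+$ admits a presentation with generators $\{u_\alpha : \alpha \in \Phi^+\}$ (each of order two, one per positive root) subject to the commutator relations of $\mathcal{M}$, and each $U_w$ is realised as an honest subgroup of $U_+$ of cardinality $2^{\ell(w)}$. Passing to the abelianization, $U_+^{\mathrm{ab}}$ becomes the $\FF_2$-vector space spanned by the classes $[u_\alpha]$, modulo the relations obtained by abelianising the commutator formulas: each formula $[u_\alpha, u_\beta] = \prod_i u_{\gamma_i}$ in $\mathcal{M}$ contributes the single linear relation $\sum_i [u_{\gamma_i}] = 0$.

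Next I would exhibit an infinite linearly independent family in $U_+^{\mathrm{ab}}$. The abelianised relations are indexed by prenilpotent pairs of positive roots, and in type $(4,4,4)$ each such pair sits inside a dihedral residue of the Coxeter complex of order at most $8$; by Weyl-invariance, the relations fall into finitely many $W$-orbits. The crucial point is \emph{locality}: each relation only involves roots from a single dihedral residue. Choosing positive roots $\alpha_n$ of increasing depth in pairwise disjoint residues of the hyperbolic $(4,4,4)$-Coxeter complex would then yield classes $[u_{\alpha_n}]$ that no finite combination of abelianised relations can express in terms of the other $[u_{\alpha_m}]$, so the $[u_{\alpha_n}]$ remain linearly independent in $U_+^{\mathrm{ab}}$. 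Infinite-dimensionality of $U_+^{\mathrm{ab}}$ then forbids finite generation of $U_+$.

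The principal obstacle is the linear-independence step, where one must simultaneously control \emph{all} the abelianised commutator relations coming from a Weyl-invariant blueprint in type $(4,4,4)$ and rule out any collapse across residues. Doing so relies on the explicit description of such blueprints developed in the proof of Theorem~\ref{Main result: Weyl-invariant implies faithful}, which is precisely why the present theorem is stated as a consequence of the \emph{proof} of Theorem~\ref{Main result: Weyl-invariant implies faithful} rather than merely of its statement.
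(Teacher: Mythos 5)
Your proposal takes a genuinely different route from the paper, and the route as written does not close. The paper's proof does not go through abelianization at all: having shown $U_+ \cong G = \lim_i G_i$ (Lemma~\ref{Lemma: U_+ isomorphic to G}), that each $G_i \to G_{i+1}$ is injective (Theorem~\ref{Ginatural} together with Proposition~\ref{Giinjective}), and that $G_i \to G_{i+1}$ is never surjective (since $C_i \subsetneq C_{i+1}$ contributes new generators $x_\alpha$), one observes that a finite generating set of $U_+$ would lie in some $G_i$, forcing $U_+ = G_i$ and contradicting non-surjectivity of $G_i \hookrightarrow G_{i+1}$. This is a direct chain-of-proper-subgroups argument, using exactly the injectivity machinery built to prove Theorem~\ref{Main result: Weyl-invariant implies faithful}.

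Your abelianization plan has concrete gaps. The "locality" claim --- that every relation in the presentation of $U_+$ only involves roots from a single rank-$2$ residue --- is false. A commutator blueprint specifies $M_{\alpha,\beta}^G$ for \emph{all} prenilpotent pairs $\alpha \leq_G \beta$ in $\Phi(G)$, including nested pairs $\alpha \subsetneq \beta$ with $o(r_\alpha r_\beta) = \infty$, whose interval $[\alpha,\beta]$ crosses many residues. Weyl-invariance imposes no vanishing on these, and indeed Corollary~\ref{Corollary: D_n} of this paper produces RGD-systems of type $(4,4,4)$ over $\FF_2$ with $[U_\delta, U_\gamma] \neq 1$ for some nested pair $\delta \subsetneq \gamma$; so the relations are not confined to dihedral residues. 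Relatedly, the relations do \emph{not} fall into finitely many $W$-orbits: the orbits of nested prenilpotent pairs are infinite in number. Finally, the linear-independence step --- the actual content --- is only sketched and would have to be established uniformly for \emph{every} integrable blueprint (the theorem is for an arbitrary RGD-system of this type), which is not obvious and not done; an infinite group can have finite abelianization, so infinite-dimensionality of $U_+^{\mathrm{ab}}$ needs genuine proof. The paper's strictly-increasing-chain argument sidesteps all of these issues.
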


\begin{rkintro}
	By \cite[Section~$1.2$]{As23} the automorphism group of the Kac-Moody building of type $(4, 4, 4)$ over $\FF_2$ does not have Property (T). This result can be deduced from our Theorem~\ref{Main result: U+ not finitely generated} as follows: By \cite[Theorem $6.8$]{CR09b} and \cite[Theorem A]{Bischof_On_Growth_Functions_of_Coxeter_Groups} the group $U_+ = \mathrm{Stab}_G(c)$ is a lattice in $\Aut(\Delta_-)$. It is well-known that lattices of groups with Property (T) are finitely generated. But $U_+$ is not finitely generated by Theorem~\ref{Main result: U+ not finitely generated}.
\end{rkintro}

We now focus on Property (FPRS) of RGD-systems introduced by Caprace and Rémy in \cite[Section~$2.1$]{CR09}. This property makes a statement about the set of fixed points of the action of the root groups on the associated building. It  implies that every root group is contained in a suitable contraction group. Property (FPRS) is used in \cite{CR09} to show that under some mild conditions the \emph{geometric completion} of an RGD-system (cf.\ \cite{RR06}) is topologically simple. Caprace and Rémy have shown in \cite{CR09} that almost all RGD-systems of $2$-spherical type as well as all Kac-Moody groups satisfy this property. According to \cite{CR09} it has been known that there exist RGD-systems that do not satisfy Property (FPRS). These are of right-angled type and are constructed by Abramenko-M\"uhlherr (cf.\ \cite[Remark before Lemma $5$]{CR09} and also \cite[Corollary~B]{BiRGD}). Until now it was unclear whether there are also examples of $2$-spherical type which do not satisfy (FPRS). We provide the existence of a $2$-spherical RGD-system which does not satisfy Property (FPRS) (cf.\ Theorem \ref{Theorem: not FPRS}):

\begin{thmintro}
	There exists an RGD-system of type $(4, 4, 4)$ over $\FF_2$ which does not satisfy Property (FPRS).
\end{thmintro}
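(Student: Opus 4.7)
The plan is to apply Theorem~\ref{Main result: Theorem integrable Weyl-invariant} to a carefully chosen Weyl-invariant commutator blueprint. Failure of Property (FPRS) for an RGD-system $(G, (U_\alpha)_{\alpha\in\Phi})$ is witnessed by a root $\alpha$ together with a non-trivial element $1 \neq u \in U_\alpha$ whose fixed-point set on the associated building $\Delta_-$ contains chambers arbitrarily deep inside $-\alpha$. It therefore suffices to exhibit a Weyl-invariant commutator blueprint $\mathcal{M}$ of type $(4,4,4)$ from which such an element arises: by Theorem~\ref{Main result: Theorem integrable Weyl-invariant}, $\mathcal{M}$ is integrable, and by Theorem~\ref{Main result: Weyl-invariant implies faithful} the non-trivial generator of $U_\alpha$ remains non-trivial in the resulting group $G$.

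To build $\mathcal{M}$ I would work inside the parametrized family of Weyl-invariant commutator blueprints produced in \cite[Theorem~D]{BiConstruction}. Among these uncountably many blueprints there is enough flexibility to prescribe commutator expressions on large collections of prenilpotent pairs; I would choose parameters that force the commutator of the generator $u_\alpha$ of $U_\alpha$ with the generator $u_\gamma$ of $U_\gamma$ to be trivial for every root $\gamma$ in an explicit infinite ``staircase'' of roots descending into $-\alpha$. In the resulting RGD-system $\mathcal{D}$, the element $u_\alpha$ then commutes with the subgroup $U$ generated by the staircase root groups $U_\gamma$; since $U$ contains elements pointwise fixing chambers arbitrarily deep in $-\alpha$ and these elements commute with $u_\alpha$, the element $u_\alpha$ fixes those chambers pointwise as well. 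This contradicts Property (FPRS).

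The main obstacle is purely combinatorial: one must verify that the specified staircase-degenerate blueprint actually exists inside the family of \cite[Theorem~D]{BiConstruction}, i.e.\ that trivializing this chain of commutators is compatible with the blueprint axioms and with Weyl-invariance. This amounts to a careful case analysis of the prenilpotent pairs encountered along the staircase, and is the hard work hidden behind the clean application of Theorems~\ref{Main result: Weyl-invariant implies faithful} and~\ref{Main result: Theorem integrable Weyl-invariant}. A secondary technical point is the passage from ``$u_\alpha$ commutes with $U$'' to ``$u_\alpha$ fixes the chambers stabilized by $U$'': this is a direct consequence of the fact that $u_\alpha$ already stabilizes $\alpha$-side chambers, so conjugation by $u_\alpha$ preserves the chamber-wise fixer of any chamber in $-\alpha$ lying in the $U$-orbit, and combining this with the centralizer property pins $u_\alpha$ down to act trivially there.
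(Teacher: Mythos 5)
Your strategy of invoking Theorems~\ref{Main result: Weyl-invariant implies faithful} and~\ref{Main result: Theorem integrable Weyl-invariant} together with the parametrized family of Weyl-invariant blueprints from \cite[Theorem~D]{BiConstruction} is correct, but the specific mechanism you propose for violating (FPRS) is flawed in several places and is essentially the opposite of what the paper does.

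First, your characterization of failure of (FPRS) is not right. (FPRS) asks that for any sequence of positive roots $(\alpha_n)$ whose walls recede from $c_+$, the radii $r(U_{\alpha_n})$ of the balls around the base chamber fixed pointwise by $U_{\alpha_n}$ tend to infinity. Its failure is therefore witnessed by a \emph{sequence} of increasingly deep roots whose root groups keep moving some chamber at bounded distance from the base; it is not witnessed by a single root $\alpha$ whose root group fixes chambers arbitrarily deep in $-\alpha$. Fixing more things is the tame behaviour (FPRS) asserts, not the pathology you need.

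Second, given the correct reading of (FPRS), the blueprint you want should prescribe \emph{non-trivial} commutators along the staircase, not trivial ones. The paper takes $K=\NN_{\geq 3}$, $L^j_n=\{2\}$, giving a family where $[u_{\alpha_0},u_{\alpha_n}]$ equals a fixed product of shallow root group elements $u_{\omega_2}u_{\omega_2'}\neq 1$. If (FPRS) held, $U_{\alpha_n}$ would fix a ball of radius $10$ for large $n$, hence so would the conjugate $u_{\alpha_0}^{-1}u_{\alpha_n}u_{\alpha_0}$ and so would the commutator — but the commutator visibly moves chambers in that ball, giving the contradiction. By contrast, trivializing commutators as you propose makes the example look \emph{more} like a Kac--Moody group, which does satisfy (FPRS) (cf.\ the remark in the introduction that by \cite[Lemma~$5$]{CR09}-type arguments one can produce infinitely many examples of type $(4,4,4)$ \emph{with} (FPRS)).

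Third, the step from ``$u_\alpha$ commutes with $U$'' to ``$u_\alpha$ fixes the chambers that $U$ stabilizes'' does not hold: commuting with a group that fixes a chamber does not force one to fix it, and the auxiliary argument you sketch about conjugation preserving fixers does not close this gap. In short, you have the right high-level frame (Theorem~\ref{Main result: Theorem integrable Weyl-invariant} plus \cite[Theorem~D]{BiConstruction}) but the wrong notion of (FPRS) failure and the wrong choice of prescribed commutators; the argument as written would not go through.
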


\begin{rkintro}
	\begin{enumerate}[label=(\alph*)]
		\item Using similar arguments as in \cite[Lemma $5$]{CR09} one can construct infinitely many RGD-systems of type $(4, 4, 4)$ over $\FF_2$ satisfying Property (FPRS). The geometric completion of such groups belongs to the class $\mathcal{S}$ consisting of topologically simple, non-discrete, compactly generated, totally disconnected, locally compact groups, for which Caprace, Reid and Willis initiated a systematic study in \cite{CRW17b}.
		
		\item By \cite[Corollary~$3.1$]{CM11} the geometric completion of any RGD-system of irreducible type with finite root groups contains a closed cocompact normal subgroup which is topologically simple and, in particular, belongs to the class $\mathcal{S}$. Thus the geometric completion of each example mentioned in Corollary~\ref{Main result: uncountably many RGD-systems} gives rise to a group in $\mathcal{S}$. The question whether these examples are pairwise non-isomorphic is a difficult problem.
	\end{enumerate}
\end{rkintro}

Finally, we come back to Tits' local-to-global conjecture about buildings. More precisely, the conjecture is about the question whether the extension theorem for isometries of spherical buildings -- the decisive step in Tits' classification of irreducible spherical buildings of rank at least three (cf.\ \cite{Ti74}) -- can be carried over to $2$-spherical twin buildings (cf.\ \cite[Remark~$5.9$(f) and Conjecture~1~\&~1']{Ti92}). For more information about the extension problem we refer to \cite{MR95} and \cite{BM23}. 

In \cite{MR95} M\"uhlherr and Ronan confirmed the conjecture under some mild condition -- they called (co) -- which excludes a very short list of small residues of rank $2$. First it was expected that condition (co) is merely needed in their proof and can be dropped in general. However, after a while experts started to have serious doubts about the general validity of Tits' conjecture. In this article we confirm those doubts (cf.\ Theorem~\ref{Theorem: extension theorem}):

\begin{thmintro}\label{Main result: Extension theorem}
	The local-to-global principle does not hold for thick $2$-spherical twin buildings.
\end{thmintro}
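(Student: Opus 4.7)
My approach is to deduce the failure of Tits' local-to-global principle for the thick $2$-spherical twin building associated with the Kac-Moody group of type $(4,4,4)$ over $\FF_2$ from the fact that this group is not finitely presented. The bridge between the two statements is the link, made explicit by Abramenko--M\"uhlherr in \cite{AM97} and already highlighted in the introduction, between the local-to-global extension principle for $2$-spherical twin buildings and the existence of a Curtis-Tits presentation of the associated Kac-Moody group.

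More precisely, the implication proved in \cite{AM97} is that whenever the local-to-global principle holds for the twin building of a $2$-spherical Kac-Moody group, the group admits a Curtis-Tits presentation on its rank-$1$ and rank-$2$ subgroups; over a finite ground field such a presentation involves only finitely many generators and finitely many relations, forcing the group to be finitely presented. The contrapositive is precisely what I need: a $2$-spherical Kac-Moody group over a finite field that fails to be finitely presented cannot have a twin building satisfying the local-to-global principle. By Theorem~\ref{Main theorem: KM group finitely presented}, the Kac-Moody group of type $(4,4,4)$ over $\FF_2$ is not finitely presented, hence its twin building -- which is thick and of $2$-spherical type -- furnishes the sought counter-example to the local-to-global principle.

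The main obstacle is therefore entirely absorbed into the independently established Theorem~\ref{Main theorem: KM group finitely presented}, whose proof rests on the faithfulness of Weyl-invariant commutator blueprints (Theorem~\ref{Main result: Weyl-invariant implies faithful}) and the resulting construction of uncountably many pairwise non-isomorphic RGD-systems of type $(4,4,4)$ over $\FF_2$, one of which is the Kac-Moody group. A secondary point that must be checked is that the part of the Abramenko--M\"uhlherr argument passing from the local-to-global principle to a Curtis-Tits presentation (and thence to finite presentability over a finite field) uses the cardinality hypothesis $|K|\ge 4$ only to invoke \cite{MR95}; granting that this is indeed the sole place where the restriction enters, Theorem~\ref{Main result: Extension theorem} follows as a short formal consequence of Theorem~\ref{Main theorem: KM group finitely presented} via this mechanism, without any further direct construction of exotic twin buildings at the level of foundations.
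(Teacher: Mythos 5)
Your proposal takes a genuinely different route from the paper, but it contains a gap that you yourself flag and then try to minimize — and that gap is in fact the crux of the whole argument.

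The paper proves Theorem~\ref{Main result: Extension theorem} (= Theorem~\ref{Theorem: extension theorem}) \emph{directly}: it picks two different integrable commutator blueprints $\mathcal{M}\neq\mathcal{M}'$ from Theorem~\ref{Maintheorem}, forms the associated RGD-systems $\mathcal{D},\mathcal{D}'$ and twin buildings $\Delta,\Delta'$, builds an explicit local isometry $\phi\colon E_2(c_+)\cup\{d\}\to E_2(c_+')\cup\{d'\}$ (using transitivity on chambers and root automorphisms of the rank~$2$ residues, all isomorphic to the $C_2(2)$-quadrangle), and then observes that any global extension $\Psi$ would induce an isomorphism $\Aut(\Delta)\to\Aut(\Delta')$ carrying $U_\alpha$ to $U_\alpha'$, forcing the commutation relations of $\mathcal{D}$ and $\mathcal{D}'$ to coincide and contradicting $M(\mathcal{D})_{\alpha,\beta}^H\neq M(\mathcal{D}')_{\alpha,\beta}^H$ for some $(H,\alpha,\beta)$. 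This is a self-contained, constructive counterexample.

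Your proposal instead tries to derive Theorem~\ref{Main result: Extension theorem} formally from Theorem~\ref{Main theorem: KM group finitely presented} via the chain \emph{``local-to-global $\Rightarrow$ Curtis--Tits presentation $\Rightarrow$ finitely presented over finite fields.''} The problem is the first arrow. You assert that this implication is ``proved in \cite{AM97}'' and then relegate to a ``secondary point'' the verification that \cite{AM97} uses $|K|\geq 4$ only to invoke \cite{MR95}. That is not secondary; it is the entire content of your proof. \cite{AM97} establishes the Curtis--Tits presentation for $2$-spherical Kac--Moody groups over fields with $|K|\geq 4$, and the cardinality hypothesis is interwoven with the analysis of rank~$2$ residues, not merely a clean black-box citation of the extension theorem. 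Without checking carefully that every use of $|K|\geq 4$ in \cite{AM97} can be replaced by ``assume the extension theorem,'' the contrapositive you want to take is not available. Note also that the paper itself proves the two assertions of the introductory theorem (non-finite-presentability and failure of local-to-global) by two separate arguments rather than deducing one from the other; this is a strong hint that the authors did not regard the reduction as straightforward. Even if the implication could be salvaged, your route would still be less informative: it would prove an existential statement by contradiction, whereas the paper exhibits a concrete pair of thick $2$-spherical twin buildings and a concrete local isometry with no global extension.
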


\subsection*{Overview}

In Section~\ref{Section: Preliminaries} we fix notation and recall some facts about Coxeter systems and trees of groups. In Section~\ref{Section: commutator blueprint of type ((W, S), D)} we recall the definition of commutator blueprints of type $(4, 4, 4)$, which are the central objects in this paper. In Section~\ref{Section: Locally Weyl-invariant Commutator blueprints} we introduce some tree products related to locally Weyl-invariant commutator blueprints of type $(4, 4, 4)$. We prove some subgroup and isomorphism properties of those tree products. We highly recommend considering the diagrams in the appendix when reading Section~\ref{Section: Locally Weyl-invariant Commutator blueprints}. All statements look rather technical, but have a nice geometric interpretation which resolves the technicalities. In Section~\ref{Section: Natural subgroups} we define a sequence of groups $(G_i)_{i\in \NN}$. Each group $G_i$ is given by a presentation. Roughly speaking, it is generated by elements $u_{\alpha}$, where $\alpha$ is a positive root which does not contain a suitable $n$-ball around $1_W$, and the fundamental relations are only the \emph{obvious} relations. We show that the direct limit of the family $(G_i)_{i\in \NN}$ is isomorphic to the group $U_+ := \lim U_w$ (cf.\ Lemma~\ref{Lemma: U_+ isomorphic to G}). To show that any locally Weyl-invariant commutator blueprint of type $(4, 4, 4)$ is faithful, we have to show that the canonical homomorphisms $U_w \to U_+$ are injective. A priori it is not clear whether this is the case. However, this follows if all the homomorphisms $G_i \to G_{i+1}$ are injective. We end Section~\ref{Section: Natural subgroups} by introducing what it means for the group $G_i$ to be \emph{natural}. The main goal of Section~\ref{Section: Faithful commutator blueprints} is Proposition~\ref{Giinjective}, where we prove that the canonical homomorphism $G_i \to G_{i+1}$ is injective provided that $G_i$ is natural. Section~\ref{Section: Main result} is devoted to the proof that for each $i \geq 0$ the group $G_i$ is natural. This is done by induction on $i$. In Section~\ref{Section: Applications} we prove many consequences of this result or, more precisely, of Theorem~\ref{Main result: Theorem integrable Weyl-invariant}.

\begin{rkintro}
	We should mention here that in the proof of the statement that $G_0$ is natural (cf.\ Lemma~\ref{Lemma: G0 natural}) we use the existence of the Kac-Moody group $\mathcal{G}$ of type $(4, 4, 4)$ over $\FF_2$ as well as the existence of a canonical homomorphism $G_0 \to \mathcal{G}$. This ensures that $G_0$ is not \emph{too small}. For details we refer to \cite{BiRGDandTreeproducts}.
\end{rkintro}

\subsection*{Acknowledgement}

I would like to thank Bernhard M\"uhlherr for drawing my attention to this questions. I would like to thank him, Richard Weidmann and Stefan Witzel for helpful discussions.

\section{Preliminaries}\label{Section: Preliminaries}

\subsection*{Coxeter systems}

Let $(W, S)$ be a Coxeter system and let $\ell$ denote the corresponding length function. The \emph{rank} of the Coxeter system is the cardinality of the set $S$. For the purpose of this paper, we assume that all Coxeter systems are of finite rank.

\begin{convention}
	In this paper we let $(W, S)$ be a Coxeter system of finite rank.
\end{convention}

It is well-known that for each $J \subseteq S$ the pair $(\langle J \rangle, J)$ is a Coxeter system (cf.\ \cite[Ch. IV, §$1$ Theorem $2$]{Bo68}). For $s, t \in S$ we denote the order of $st$ in $W$ by $m_{st}$. The \emph{Coxeter diagram} corresponding to $(W, S)$ is the labeled graph $(S, E(S))$, where $E(S) = \{ \{s, t \} \mid m_{st}>2 \}$ and where each edge $\{s,t\}$ is labeled by $m_{st}$ for all $s, t \in S$. We say that $(W, S)$ is \emph{of type $(4, 4, 4)$} if $(W, S)$ is of rank $3$ and $m_{st} = 4$ for all $s\neq t \in S$.

A subset $J \subseteq S$ is called \textit{spherical} if $\langle J \rangle$ is finite. The Coxeter system is called \textit{$2$-spherical} if $\langle J \rangle$ is finite for all $J \subseteq S$ containing at most $2$ elements (i.e.\ $m_{st} < \infty$ for all $s, t \in S$). Given a spherical subset $J$ of $S$, there exists a unique element of maximal length in $\langle J \rangle$, which we denote by $r_J$ (cf.\ \cite[Corollary $2.19$]{AB08}).

\begin{lemma}[see {\cite[Lemma $3.4$]{Bischof_On_Growth_Functions_of_Coxeter_Groups}} and {\cite[Lemma $2.16$]{BiConstruction}}]\label{Lemma: not both down}
	Suppose $(W, S)$ is of type $(4, 4, 4)$ and $S = \{r, s, t\}$. Let $w \in W$ with $\ell(ws) = \ell(w) +1 = \ell(wt)$. Then $\ell(w) +2 \in \{ \ell(wsr), \ell(wtr) \}$. Moreover, if $\ell(wsr) = \ell(w)$, then $\ell(wsrt) = \ell(w)+1$.
\end{lemma}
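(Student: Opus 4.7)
The plan is to combine the ``tent function'' description of the length on rank-2 residues with induction on $n := \ell(w)$. Since every Coxeter exponent is $4$, every rank-2 residue of $W$ is an octagon, and on any such residue $R$ the length satisfies $\ell(c) = \ell(\mathrm{proj}_R(1)) + d_R(\mathrm{proj}_R(1), c)$, making $\ell|_R$ a tent function with a unique minimum at $\mathrm{proj}_R(1)$ and a unique maximum at the opposite chamber. I dispose of $w = 1$ directly: $\ell(sr) = \ell(tr) = 2$. For $w \neq 1$, since $w$ has at least one right descent but (by hypothesis) neither $s$ nor $t$ does, necessarily $\ell(wr) = n - 1$.

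For the main claim, suppose for contradiction $\ell(wsr), \ell(wtr) \neq n + 2$, so both equal $n$. In the octagon $R_1 = R_{\{s,r\}}(w)$, the consecutive chambers $w, ws, wsr$ have lengths $n, n+1, n$, so $ws$ is the unique maximum of $\ell|_{R_1}$. This pins down $\mathrm{proj}_{R_1}(1) = wrsr$ with $\ell(wrsr) = n - 3$; symmetrically $\ell(wrtr) = n - 3$ in $R_2 = R_{\{t,r\}}(w)$, and $\ell(wrs) = \ell(wrt) = n - 2$. For small $n$ this is immediately contradictory: $n = 3$ forces $wrsr = wrtr = 1$ and hence $s = t$; for $n = 4$, the identity $(wrsr)^{-1}(wrtr) = rstr$ (reduced in $W$, since no braid move or cancellation applies) gives $d(wrsr, wrtr) = 4$, violating the triangle bound $d(wrsr, wrtr) \leq 2 \ell(wrsr) = 2$.

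For $n \geq 5$, I descend through the octagon $R_3 = R_{\{s,t\}}(wr)$: its chambers $wrs, wr, wrt$ have lengths $n-2, n-1, n-2$, so $wr$ is the unique maximum, and $u := \mathrm{proj}_{R_3}(1) = wr \cdot (st)^2$ has length $n - 5$. Using the braid $(st)^2 = (ts)^2$, one also computes $u = wr \cdot (ts)^2$, and hence $\ell(us) = \ell(ut) = n - 4 = \ell(u) + 1$, so $u$ itself satisfies the hypothesis of the lemma. Since $\ell(u) < n$, the inductive hypothesis gives $n - 3 \in \{\ell(usr), \ell(utr)\}$. After braid simplification $us = wrtst$ and $ut = wrsts$, so one of $\ell(wrtstr), \ell(wrstsr)$ equals $n - 3$. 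Cross-referencing this with the tent function on the rank-2 residues containing these elements (and the already-determined lengths around $w$ and $wr$) produces the desired contradiction. In particular, the case $n = 5$ is directly verifiable since $u = 1$ would force $w = tstsr$, for which a direct computation gives $\ell(wrtr) = \ell(stsr) = 4 \neq 2$, contradicting $\ell(wrtr) = n - 3 = 2$.

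For the ``moreover'' clause, given $\ell(wsr) = n$, the main claim forces $\ell(wtr) = n + 2$. Analyze the octagon $R_4 = R_{\{r,t\}}(ws)$: its chamber lengths include $\ell(ws) = n + 1$, $\ell(wsr) = n$ (descent via $r$), and $\ell(wst) = n + 2$ (the latter obtained from the octagon $R_{\{s,t\}}(w)$, where $w$ is the minimum and $wst$ is two steps away). The tent function on $R_4$ places $\mathrm{proj}_{R_4}(1) = wsr$ and $ws$ at position $c_1$, so the $t$-edge from $wsr$ ascends and yields $\ell(wsrt) = n + 1$. The main obstacle will be the cross-referencing step in the inductive case $n \geq 5$: one needs to identify which octagons around $u$, $wr$, and $w$ force incompatible lengths on the elements $wrtstr$ and $wrstsr$, and this appears to require tracking a short chain of tent functions rather than a single one.
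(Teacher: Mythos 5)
Your strategy---induction on $n = \ell(w)$ via the gate-property tent function on rank-$2$ residues, which are all octagons in type $(4,4,4)$---is natural, and the base cases through $n=5$ go through. But the proposal has two genuine gaps. For the main claim, the inductive step for $n \geq 6$ stops at ``one of $\ell(wrtstr), \ell(wrstsr)$ equals $n-3$'' and you explicitly acknowledge that the cross-referencing contradiction is not worked out. It does exist, and a single further octagon suffices: if, say, $\ell(wrtstr) = n-3$, then in $R_{\{r,t\}}(wrtst)$ the chamber $wrtst$ has length $n-4$ while both of its neighbours $wrts$ and $wrtstr$ have length $n-3$, so $wrtst$ is the minimum and the tent gives $\ell(wrtsr) = (n-4)+2 = n-2$; yet in $R_{\{s,r\}}(wrt)$ the chamber $wrt$ has length $n-2$ while both its neighbours $wrts, wrtr$ have length $n-3$, so $wrt$ is the maximum and the tent gives $\ell(wrtsr) = (n-2)-2 = n-4$, a contradiction (the case $\ell(wrstsr)=n-3$ is symmetric under $s \leftrightarrow t$). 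As written, however, this step is missing.

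The more serious issue is the ``moreover'' clause, where the gap goes unflagged. You assert $\mathrm{proj}_{R_4}(1_W) = wsr$, but the three lengths you have in hand---$\ell(wsr)=n$, $\ell(ws)=n+1$, $\ell(wst)=n+2$---form a strictly increasing run of consecutive chambers and are equally compatible with the minimum of $R_4$ sitting at $wsrt$ (giving $\ell(wsrt)=n-1$) or even further along the octagon. Knowing that $wsr$ is the gate of $R_4$ is precisely equivalent to knowing that $t$ is not a right descent of $wsr$, which is the very statement to be proved, so the argument is circular. The extra fact $\ell(wtr)=n+2$ does not help directly, since $wtr \notin R_4$; this part needs input beyond the single tent function on $R_4$.
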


\subsection*{Buildings}

A \textit{building of type $(W, S)$} is a pair $\Delta = (\mc{C}, \delta)$ where $\mc{C}$ is a non-empty set and where $\delta: \mc{C} \times \mc{C} \to W$ is a \textit{distance function} satisfying the following axioms, where $x, y\in \mc{C}$ and $w = \delta(x, y)$:
\begin{enumerate}[label=(Bu\arabic*)]
	\item $w = 1_W$ if and only if $x=y$;
	
	\item if $z\in \mc{C}$ satisfies $s := \delta(y, z) \in S$, then $\delta(x, z) \in \{w, ws\}$, and if, furthermore, $\ell(ws) = \ell(w) +1$, then $\delta(x, z) = ws$;
	
	\item if $s\in S$, there exists $z\in \mc{C}$ such that $\delta(y, z) = s$ and $\delta(x, z) = ws$.
\end{enumerate}
The \textit{rank} of $\Delta$ is the rank of the underlying Coxeter system. The elements of $\mc{C}$ are called \textit{chambers}. Given $s\in S$ and $x, y \in \mc{C}$, then $x$ is called \textit{$s$-adjacent} to $y$, if $\delta(x, y) = s$. The chambers $x, y$ are called \textit{adjacent}, if they are $s$-adjacent for some $s\in S$. A \textit{gallery} from $x$ to $y$ is a sequence $(x = x_0, \ldots, x_k = y)$ such that $x_{l-1}$ and $x_l$ are adjacent for all $1 \leq l \leq k$; the number $k$ is called the \textit{length} of the gallery. Let $(x_0, \ldots, x_k)$ be a gallery and suppose $s_i \in S$ with $\delta(x_{i-1}, x_i) = s_i$. Then $(s_1, \ldots, s_k)$ is called the \textit{type} of the gallery. A gallery from $x$ to $y$ of length $k$ is called \textit{minimal} if there is no gallery from $x$ to $y$ of length $<k$. In this case we have $\ell(\delta(x, y)) = k$ (cf.\ \cite[Corollary $5.17(1)$]{AB08}). Let $x, y, z \in \mc{C}$ be chambers such that $\ell(\delta(x, y)) = \ell(\delta(x, z)) + \ell(\delta(z, y))$. Then the concatenation of a minimal gallery from $x$ to $z$ and a minimal gallery from $z$ to $y$ yields a minimal gallery from $x$ to $y$.

Given a subset $J \subseteq S$ and $x\in \mc{C}$, the \textit{$J$-residue of $x$} is the set $R_J(x) := \{y \in \mc{C} \mid \delta(x, y) \in \langle J \rangle \}$. Each $J$-residue is a building of type $(\langle J \rangle, J)$ with the distance function induced by $\delta$ (cf.\ \cite[Corollary $5.30$]{AB08}). A \textit{residue} is a subset $R$ of $\mc{C}$ such that there exist $J \subseteq S$ and $x\in \mc{C}$ with $R = R_J(x)$. Since the subset $J$ is uniquely determined by $R$, the set $J$ is called the \textit{type} of $R$ and the \textit{rank} of $R$ is defined to be the cardinality of $J$. A residue is called \textit{spherical} if its type is a spherical subset of $S$. A \textit{panel} is a residue of rank $1$. An \textit{$s$-panel} is a panel of type $\{s\}$ for $s\in S$. The building $\Delta$ is called \textit{thick}, if each panel of $\Delta$ contains at least three chambers.

Given $x\in \mc{C}$ and a $J$-residue $R \subseteq \mc{C}$, then there exists a unique chamber $z\in R$ such that $\ell(\delta(x, y)) = \ell(\delta(x, z)) + \ell(\delta(z, y))$ holds for each $y\in R$ (cf.\ \cite[Proposition $5.34$]{AB08}). The chamber $z$ is called the \textit{projection of $x$ onto $R$} and is denoted by $\proj_R x$. Moreover, if $z = \proj_R x$ we have $\delta(x, y) = \delta(x, z) \delta(z, y)$ for each $y\in R$.

An \textit{(type-preserving) automorphism} of a building $\Delta = (\mc{C}, \delta)$ is a bijection $\phi:\mc{C} \to \mc{C}$ such that $\delta(\phi(c), \phi(d)) = \delta(c, d)$ holds for all chambers $c, d \in \mc{C}$. We remark that some authors distinguish between automorphisms and type-preserving automorphisms. An automorphism in our sense is type-preserving. We denote the set of all automorphisms of the building $\Delta$ by $\Aut(\Delta)$.

\begin{example}
	We define $\delta: W \times W \to W, (x, y) \mapsto x^{-1}y$. Then $\Sigma(W, S) := (W, \delta)$ is a building of type $(W, S)$, which we call the \emph{Coxeter building} of type $(W, S)$. The group $W$ acts faithfully on $\Sigma(W, S)$ by multiplication from the left, i.e.\ $W \leq \Aut(\Sigma(W, S))$.
\end{example}

A subset $\Sigma \subseteq \mc{C}$ is called \textit{convex} if for any two chambers $c, d \in \Sigma$ and any minimal gallery $(c_0 = c, \ldots, c_k = d)$, we have $c_i \in \Sigma$ for all $0 \leq i \leq k$. A subset $\Sigma \subseteq \mc{C}$ is called \textit{thin} if $P \cap \Sigma$ contains exactly two chambers for every panel $P \subseteq \mc{C}$ which meets $\Sigma$. An \textit{apartment} is a non-empty subset $\Sigma \subseteq \mc{C}$, which is convex and thin.

\subsection*{Roots}

A \textit{reflection} is an element of $W$ that is conjugate to an element of $S$. For $s\in S$ we let $\alpha_s := \{ w\in W \mid \ell(sw) > \ell(w) \}$ be the \textit{simple root} corresponding to $s$. A \textit{root} is a subset $\alpha \subseteq W$ such that $\alpha = v\alpha_s$ for some $v\in W$ and $s\in S$. We denote the set of all roots by $\Phi := \Phi(W, S)$. The set $\Phi_+ = \{ \alpha \in \Phi \mid 1_W \in \alpha \}$ is the set of all \textit{positive roots} and $\Phi_- = \{ \alpha \in \Phi \mid 1_W \notin \alpha \}$ is the set of all \textit{negative roots}. For each root $\alpha \in \Phi$, the complement $-\alpha := W \backslash \alpha$ is again a root; it is called the root \emph{opposite} to $\alpha$. We denote the unique reflection which interchanges these two roots by $r_{\alpha} \in W \leq \Aut(\Sigma(W, S))$. For $w\in W$ we define $\Phi(w) := \{ \alpha \in \Phi_+ \mid w \notin \alpha \}$. Note that for $w\in W$ and $s\in S$ we have $\Phi(sw) \backslash \{ \alpha_s \} = s\left( \Phi(w) \backslash \{ \alpha_s \} \right) = \{ s\alpha \mid \alpha \in \Phi(w) \backslash \{ \alpha_s\} \}$. In particular, for $s\in S$ and $\alpha \in \Phi_+ \backslash \{ \alpha_s \}$ we have $s\alpha \in \Phi_+$. A pair $\{ \alpha, \beta \}$ of roots is called \textit{prenilpotent} if both $\alpha \cap \beta$ and $(-\alpha) \cap (-\beta)$ are non-empty sets. For such a pair we will write $\left[ \alpha, \beta \right] := \{ \gamma \in \Phi \mid \alpha \cap \beta \subseteq \gamma \text{ and } (-\alpha) \cap (-\beta) \subseteq -\gamma \}$ and $(\alpha, \beta) := \left[ \alpha, \beta \right] \backslash \{ \alpha, \beta \}$. A pair $\{ \alpha, \beta \} \subseteq \Phi$ of two roots is called \textit{nested}, if $\alpha \subseteq \beta$ or $\beta \subseteq \alpha$.

\begin{lemma}\label{Lemma: intersection of roots}
	For $s\neq t \in S$ we have $\alpha_t \subseteq (-\alpha_s) \cup t\alpha_s$.
\end{lemma}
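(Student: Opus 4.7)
The plan is to unfold the definitions and reduce the set-theoretic inclusion to a statement about lengths, which is then handled by the strong exchange condition. An element $w \in W$ lies in $\alpha_t$ exactly when $\ell(tw) = \ell(w) + 1$, in $-\alpha_s$ exactly when $\ell(sw) = \ell(w) - 1$, and in $t\alpha_s$ if and only if $tw \in \alpha_s$, that is, $\ell(s \cdot tw) = \ell(tw) + 1$. Thus the content of the lemma is the following implication: if $w \in W$ satisfies $\ell(tw) = \ell(w)+1$ and $\ell(sw) = \ell(w)+1$, then $\ell(stw) = \ell(w) + 2$.

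To prove this, fix such a $w$ and choose a reduced expression $w = s_1 \cdots s_k$. Since $\ell(tw) = k+1$, the word $t \cdot s_1 \cdots s_k$ is a reduced expression for $tw$. As $\ell(stw) \in \{\ell(tw) - 1, \ell(tw) + 1\}$, it suffices to rule out $\ell(stw) = \ell(w)$. Assume to the contrary that $\ell(stw) < \ell(tw)$. The strong exchange condition, applied to the reduced word $t\, s_1 \cdots s_k$ and the reflection $s$, then provides a position in this word whose removal yields an expression for $stw$.

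If the deleted letter is the initial $t$, then $stw = s_1 \cdots s_k = w$, which forces $st = 1_W$ and contradicts $s \neq t$. Otherwise the deleted letter is some $s_i$ and we obtain $stw = t \cdot s_1 \cdots \widehat{s_i} \cdots s_k$; multiplying on the left by $t$ gives $sw = s_1 \cdots \widehat{s_i} \cdots s_k$, so $\ell(sw) \leq k - 1 < \ell(w)$, contradicting the hypothesis $\ell(sw) = \ell(w)+1$. Both cases being impossible, we conclude $\ell(stw) = \ell(w) + 2$, which is exactly $w \in t\alpha_s$. I do not see a substantive obstacle in this argument; the only delicate point is to treat the deletion of the initial $t$ separately from the deletion of some $s_i$, the former being ruled out by $s \neq t$ and the latter by $\ell(sw) > \ell(w)$.
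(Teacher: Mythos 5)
Your reduction of the lemma to a length statement is correct: the claim is equivalent to showing that if $\ell(sw) = \ell(w)+1$ and $\ell(tw) = \ell(w)+1$, then $\ell(stw) = \ell(w)+2$, and this is indeed (implicitly) what the paper's proof asserts.

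However, the second case of your exchange-condition argument contains a genuine error. From $stw = t\,s_1\cdots\widehat{s_i}\cdots s_k$, multiplying on the left by $t$ gives $tstw = s_1\cdots\widehat{s_i}\cdots s_k$, \emph{not} $sw = s_1\cdots\widehat{s_i}\cdots s_k$. Since $tst \neq s$ whenever $m_{st} \geq 3$ (in this paper $m_{st}=4$), you have only shown $\ell(tstw) < \ell(w)$, which does not contradict $\ell(sw) > \ell(w)$. In fact $\ell(tstw) < \ell(w)$ merely says $w \notin t\alpha_s$, which is a restatement of the assumption $\ell(stw) < \ell(tw)$ rather than a contradiction of any hypothesis. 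So the exchange-condition route, as written, does not close.

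The gap is exactly the content of the standard result about minimal coset representatives (sometimes called the gate property or Deodhar/Kilmoyer lemma): if $\ell(sw) > \ell(w)$ for every $s$ in a spherical $J \subseteq S$, then $\ell(uw) = \ell(u) + \ell(w)$ for every $u \in W_J$, see e.g.\ \cite[Proposition~2.20]{AB08}. That is the fact the paper's one-line proof invokes when it writes ``and hence $\ell(stw) = \ell(w)+2$.'' Proving this from the exchange condition alone requires an induction in which the troublesome case (deletion of a letter from the $w$-part of the word) produces a reflection $u' s (u')^{-1} \in W_J$ with $\ell\bigl(u's(u')^{-1}w\bigr) < \ell(w)$, and one must then argue that no reflection of $W_J$ can lower the length of $w$; this needs a further step (e.g.\ a second induction, or an appeal to wall-crossing counting) that your argument does not supply. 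The cleanest fix is simply to cite the coset-representative lemma, which is the route the paper takes.
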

\begin{proof}
	Let $w\in \alpha_t$. If $\ell(sw) < \ell(w)$, then $w\in (-\alpha_s)$ and we are done. Thus we can assume $\ell(sw) > \ell(w)$. As $w\in \alpha_t$, we have $\ell(tw) > \ell(w)$ and hence $\ell(stw) = \ell(w) +2 > \ell(tw)$. This implies $tw \in \alpha_s$ and we infer $w\in t\alpha_s$.
\end{proof}

\begin{lemma}[{\cite[Lemma~2.7]{BiRGDandTreeproducts}}]\label{Lemma: Subset contained in certain root}
	Suppose $(W, S)$ is of type $(4, 4, 4)$ and $S = \{r, s, t\}$. Then we have $tstr\alpha_s \cap stsr\alpha_t \cap (W\backslash \{ r_{\{s, t\}}r \}) \subseteq r_{\{s, t\}} \alpha_r$.
\end{lemma}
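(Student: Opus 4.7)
The plan is to exploit convexity of root intersections in the Coxeter complex. Set $z := r_{\{s,t\}}r$; since $r_{\{s,t\}} = stst = tsts$ in the dihedral subgroup $\langle s,t\rangle$ of order $8$, we have $z = ststr = tstsr$. Using $-r_{\{s,t\}}\alpha_r = r_{\{s,t\}}r\cdot\alpha_r = z\alpha_r$, the desired inclusion rewrites as $X \subseteq \{z\}$, where
\[
X := tstr\alpha_s \cap stsr\alpha_t \cap z\alpha_r.
\]
Every root is convex in $\Sigma(W,S)$, so $X$ is convex as an intersection of convex sets. Hence $X \subseteq \{z\}$ will follow once I verify that $z \in X$ and that no chamber adjacent to $z$ lies in $X$: for if $X$ contained any further chamber, a minimal gallery from $z$ to it would remain in $X$ by convexity, placing a neighbor of $z$ in $X$.

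To see $z \in X$, I would reduce each of the three membership conditions to a length inequality in a rank-$2$ dihedral subgroup. In $\langle s,t\rangle$ the braid identity $stst = tsts$ yields $tststst = s$, so $(tstr)^{-1}z = rtst\cdot ststr = rsr$; since $(rs)^4 = 1$, the word $srsr$ is reduced, giving $rsr \in \alpha_s$ and thus $z \in tstr\alpha_s$. Swapping the roles of $s$ and $t$ yields $z \in stsr\alpha_t$, while $z \in z\alpha_r$ is immediate from $1_W \in \alpha_r$.

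For the three neighbors of $z$ it suffices to exhibit one failing root-membership for each. The chamber $zr = stst$ obviously lies in $stst\alpha_r = r_{\{s,t\}}\alpha_r$, the root opposite to $z\alpha_r$, so $zr \notin z\alpha_r$. For $zs = ststrs$ one computes $(tstr)^{-1}(zs) = rsr\cdot s = rsrs$, and the relation $srsrs = rsr$ in $\langle r,s\rangle$ gives $\ell(s\cdot rsrs) < \ell(rsrs)$, so $rsrs \notin \alpha_s$ and $zs \notin tstr\alpha_s$. Symmetrically, $(stsr)^{-1}(zt) = rtrt$ satisfies $trtrt = rtr$ in $\langle r,t\rangle$, forcing $zt \notin stsr\alpha_t$. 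The only real obstacle is bookkeeping these dihedral simplifications cleanly; once the identities $tststst = s$, $srsrs = rsr$, and $trtrt = rtr$ are noted, all computations collapse and the lemma follows.
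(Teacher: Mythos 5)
Your proof is correct. The paper itself does not prove this lemma—it cites it from another reference (\cite[Lemma~2.7]{BiRGDandTreeproducts})—so there is no internal proof to compare against; but your argument is a clean, self-contained verification of the statement.

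The reformulation is right: writing $z := r_{\{s,t\}}r$, the claim is equivalent to $X := tstr\alpha_s \cap stsr\alpha_t \cap z\alpha_r \subseteq \{z\}$, since $-r_{\{s,t\}}\alpha_r = r_{\{s,t\}}(r\alpha_r) = z\alpha_r$. Roots are convex, so $X$ is convex, and since you verify $z \in X$, convexity reduces the problem to checking that none of the three neighbors $zr$, $zs$, $zt$ lies in $X$ (any further chamber in $X$ would put a neighbor of $z$ in $X$ via a minimal gallery). The dihedral computations all check out: $tststst = s$ in $\langle s,t\rangle$ gives $(tstr)^{-1}z = rsr$, which lies in $\alpha_s$ since $srsr$ has length $4$; $(tstr)^{-1}(zs) = rsrs$, which does not lie in $\alpha_s$ since $srsrs = rsr$ has length $3 < 4$; $zr = r_{\{s,t\}} \in r_{\{s,t\}}\alpha_r = -z\alpha_r$; and the $s \leftrightarrow t$ symmetric statements handle the remaining memberships. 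This is exactly the kind of local verification one would expect for a rank-$3$ combinatorial lemma of this type, and it is complete.
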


\begin{lemma}[{\cite[Lemma $2.18$]{BiConstruction}}]\label{mingallinrep}
	Suppose $(W, S)$ is of type $(4, 4, 4)$ and $S = \{r, s, t\}$. Let $H$ be a minimal gallery of type $(r, s, t, r)$ and let $(\beta_1, \beta_2, \beta_3, \beta_4)$ be the sequence of roots crossed by $H$. Then $\beta_1 \subsetneq \beta_3$ and $\beta_1 \subsetneq \beta_4$.
\end{lemma}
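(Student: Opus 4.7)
The plan is to reduce to the standard minimal gallery from the identity, identify the crossed roots explicitly, and then deduce each inclusion by combining (i) non-crossing of the walls of $\beta_1$ and $\beta_j$ with (ii) the order in which the gallery crosses them.

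Using the transitive left action of $W$ on $\Sigma(W,S)$, I would first assume $c_0 = 1_W$, so $H = (1_W, r, rs, rst, rstr)$. The minimality of $H$ is clear since the word $rstr$ has no pair of adjacent equal letters and no $4$-letter subword matching a braid pattern $rsrs, srsr, stst, tsts, rtrt, trtr$. Adopting the convention that $\beta_i$ denotes the positive root satisfying $c_{i-1}\in\beta_i$ and $c_i\notin\beta_i$, a routine verification yields
\[
\beta_1 = \alpha_r, \qquad \beta_3 = (rs)\alpha_t, \qquad \beta_4 = (rst)\alpha_r.
\]

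For each $j\in\{3,4\}$ I would show that the walls of $\beta_1$ and $\beta_j$ do not cross in $\Sigma(W,S)$, equivalently that the product of the two reflections has infinite order in $W$. Since $r_{\beta_3} = rstsr$ and $r_{\beta_4} = rstrtsr$, the relevant products compute to $r\cdot r_{\beta_3} = stsr$ and $r\cdot r_{\beta_4} = strtsr$. I verify the infinite-order claim by a cyclic-subword inspection: every $4$-letter subword of the concatenation $(stsr)^n$ lies in $\{stsr, tsrs, srst, rsts\}$ and every $4$-letter subword of $(strtsr)^n$ lies in $\{strt, trts, rtsr, tsrs, srst, rstr\}$; neither list meets the six braid patterns above, so no braid rewrite applies, and combined with the absence of adjacent equal letters the powers are reduced words of lengths $4n$ and $6n$, proving infinite order. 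Once the walls are known to be parallel, the pair $\{\beta_1,\beta_j\}$ is nested, and the chambers $c_{j-1}\in(-\beta_1)\cap\beta_j$ and $c_j\in(-\beta_1)\cap(-\beta_j)$ (direct check) rule out both the wrong-direction nesting and the opposing strip configurations, forcing $\beta_1\subseteq\beta_j$. Strictness is automatic because $\beta_1\ne\beta_j$ as distinct roots crossed by a minimal gallery.

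The principal obstacle is the infinite-order assertion for $stsr$ and $strtsr$. Geometrically it is immediate from the compact hyperbolic realization of $(W,S)$: both elements are products of two rotations by $\pi/2$ about distinct vertices of the $(4,4,4)$-triangle, and in a cocompact hyperbolic triangle group any such composition is a hyperbolic translation. The cyclic-subword inspection above is the self-contained algebraic substitute; equivalently, one could argue that neither element is conjugate into any of the finite dihedral parabolic subgroups $\langle r,s\rangle$, $\langle r,t\rangle$, $\langle s,t\rangle$, which together exhaust the finite-order elements of $W$.
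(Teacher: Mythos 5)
Your proof is correct. Reducing to $c_0 = 1_W$ one finds $\beta_1 = \alpha_r$, $\beta_3 = rs\alpha_t$, $\beta_4 = rst\alpha_r$ and hence $r_{\beta_1}r_{\beta_3} = stsr$, $r_{\beta_1}r_{\beta_4} = strtsr$, exactly as you compute. The cyclic-subword inspection is a correct application of Tits' solution to the word problem: since all $m_{xy} = 4$, the only possible elementary moves on a word with no repeated adjacent letters are $4$-letter braid moves, and since no $4$-letter window of $(stsr)^n$ or $(strtsr)^n$ is of the form $xyxy$, nothing can be rewritten, so these powers are reduced and both products have infinite order. Because $\{\beta_1,\beta_j\}$ is prenilpotent (witnessed by $c_0\in\beta_1\cap\beta_j$ and $c_j\in(-\beta_1)\cap(-\beta_j)$) and $o(r_{\beta_1}r_{\beta_j})=\infty$, exactly one of the four intersections $(\pm\beta_1)\cap(\pm\beta_j)$ is empty; the chambers $c_0,c_{j-1},c_j$ eliminate all possibilities except $\beta_1\cap(-\beta_j)=\emptyset$, giving $\beta_1\subsetneq\beta_j$. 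The paper only cites this lemma from \cite{BiConstruction}, so I cannot compare directly, but your route through the order of $r_{\beta_1}r_{\beta_j}$ is a clean, self-contained alternative; given the combinatorial, length-based flavor of the surrounding preliminary lemmas, I would expect the cited proof to proceed differently (e.g.\ by a direct length or projection argument), but both should work.

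Two presentational points. First, the step ``prenilpotent pair with infinite-order reflection product is nested'' deserves an explicit reference or a one-line justification (it follows, for instance, from the fact that $\langle r_{\beta_1},r_{\beta_j}\rangle$ is an infinite dihedral reflection subgroup, in which any two roots occur nested or co-nested; geometrically, two disjoint geodesics cut the hyperbolic plane into three, not four, regions). Second, the geometric aside is inaccurate: $strtsr$ has length six and so is a product of three, not two, $\pi/2$-rotations, and the bare claim ``any such composition is a hyperbolic translation'' is false even for genuine products of two vertex rotations, since $(st)(tr)=sr$ is elliptic. You already flag the cyclic-subword argument as the actual proof, so neither slip affects correctness, but the geometric remark as written is essentially circular.
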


\subsection*{Coxeter buildings}

In this subsection we consider the Coxeter building $\Sigma(W, S)$. At first we note that roots are convex (cf.\ \cite[Lemma $3.44$]{AB08}). For $\alpha \in \Phi$ we denote by $\partial \alpha$ (resp.\ $\partial^2 \alpha$) the set of all panels (resp.\ spherical residues of rank $2$) stabilized by $r_{\alpha}$. Furthermore, we define $\mathcal{C}(\partial \alpha) := \bigcup_{P \in \partial \alpha} P$ and $\mathcal{C}(\partial^2 \alpha) := \bigcup_{R \in \partial^2 \alpha} R$. The set $\partial \alpha$ is called the \textit{wall} associated with $\alpha$. Let $G = (c_0, \ldots, c_k)$ be a gallery. We say that $G$ \textit{crosses the wall $\partial \alpha$} if there exists $1 \leq i \leq k$ such that $\{ c_{i-1}, c_i \} \in \partial \alpha$. It is a basic fact that a minimal gallery crosses a wall at most once (cf.\ \cite[Lemma $3.69$]{AB08}). Let $(c_0, \ldots, c_k)$ and $(d_0 = c_0, \ldots, d_k = c_k)$ be two minimal galleries from $c_0$ to $c_k$ and let $\alpha \in \Phi$. Then $\partial \alpha$ is crossed by the minimal gallery $(c_0, \ldots, c_k)$ if and only if it is crossed by the minimal gallery $(d_0, \ldots, d_k)$. For a minimal gallery $G = (c_0, \ldots, c_k)$, $k \geq 1,$ we denote the unique root containing $c_{k-1}$ but not $c_k$ by $\alpha_G$. For $\alpha_1, \ldots, \alpha_k \in \Phi$ we say that a minimal gallery $G = (c_0, \ldots, c_k)$ \textit{crosses the sequence of roots} $(\alpha_1, \ldots, \alpha_k)$, if $c_{i-1} \in \alpha_i$ and $c_i \notin \alpha_i$ all $1\leq i \leq k$.

We denote the set of all minimal galleries $(c_0 = 1_W, \ldots, c_k)$ starting at $1_W$ by $\mathrm{Min}$. For $w\in W$ we denote the set of all $G \in \mathrm{Min}$ of type $(s_1, \ldots, s_k)$ with $w = s_1 \cdots s_k$ by $\mathrm{Min}(w)$. For $w\in W$ and $s\in S$ with $\ell(sw) = \ell(w) -1$ we let $\mathrm{Min}_s(w)$ be the set of all $G \in \mathrm{Min}(w)$ of type $(s, s_2, \ldots, s_k)$. We extend this notion to the case $\ell(sw) = \ell(w) +1$ by defining $\mathrm{Min}_s(w) := \mathrm{Min}(w)$. Let $w\in W$, $s\in S$ and $G = (c_0, \ldots, c_k) \in \mathrm{Min}_s(w)$. If $\ell(sw) = \ell(w) -1$, then $c_1 = s$ and we define $sG := (sc_1 = 1_W, \ldots, sc_k) \in \mathrm{Min}(sw)$. If $\ell(sw) = \ell(w) +1$, we define $sG := (1_W, sc_0 = s, \ldots, sc_k) \in \mathrm{Min}(sw)$.

Let $G = (c_0, \ldots, c_k) \in \mathrm{Min}$ and let $(\alpha_1, \ldots, \alpha_k)$ be the sequence of roots crossed by $G$. We define $\Phi(G) := \{ \alpha_i \mid 1 \leq i \leq k \}$. Using the indices we obtain an order $\leq_G$ on $\Phi(G)$ and, in particular, on $[\alpha, \beta] = [\beta, \alpha] \subseteq \Phi(G)$ for all $\alpha, \beta \in \Phi(G)$. Note that $\Phi(G) = \Phi(w)$ holds for every $G \in \mathrm{Min}(w)$.

For a positive root $\alpha \in \Phi_+$ we define $k_{\alpha} := \min \{ k\in \NN \mid \exists G = (c_0, \ldots, c_k) \in \mathrm{Min}: \alpha_G = \alpha \}$. We remark that $k_{\alpha} = 1$ if and only if $\alpha$ is a simple root. Furthermore, we define $\Phi(k) := \{ \alpha \in \Phi_+ \mid k_{\alpha} \leq k \}$ for $k \in \NN$. Let $R$ be a residue and let $\alpha \in \Phi_+$. Then we call $\alpha$ a \textit{simple root of $R$} if there exists $P \in \partial \alpha$ such that $P \subseteq R$ and $\proj_R 1_W = \proj_P 1_W$. In this case $R$ is also stabilized by $r_{\alpha}$ and hence $R \in \partial^2 \alpha$.

\begin{remark}\label{Remark: Existence residue}
	Let $\alpha \in \Phi_+$ be a positive root such that $k_{\alpha} >1$. Let $G = (c_0, \ldots, c_{k_{\alpha}}) \in \mathrm{Min}$ be a minimal gallery with $\{ c_{k_{\alpha} -1}, c_{k_{\alpha}} \} \in \partial \alpha$. Then $\alpha$ is not a simple root of the rank $2$ residue containing $c_{k_{\alpha} -2}, c_{k_{\alpha}-1}, c_{k_{\alpha}}$. In particular, there exists $R \in \partial^2 \alpha$ such that $\alpha$ is not a simple root of $R$.
\end{remark}

\subsection*{Roots in Coxeter systems of type $\mathbf{(4, 4, 4)}$}

Suppose that $(W, S)$ is of type $(4, 4, 4)$ and that $S = \{r, s, t\}$. Let $\alpha \in \Phi_+$ be a root such that $k_{\alpha} >1$, i.e.\ $\alpha$ is not a simple root. Let $R \in \partial^2 \alpha$ be a residue such that $\alpha$ is not a simple root of $R$ (for the existence of such a residue see Remark~\ref{Remark: Existence residue}). Let $P \neq P' \in \partial \alpha$ be contained in $R$. Then $\ell(1_W, \proj_P 1_W) \neq \ell(1_W, \proj_{P'} 1_W)$ and we can assume that $\ell(1_W, \proj_P 1_W) < \ell(1_W, \proj_{P'} 1_W)$. Let $G = (c_0, \ldots, c_k) \in \mathrm{Min}$ be of type $(s_1, \ldots, s_k)$ such that $c_{k-2} = \proj_R 1_W, c_{k-1} = \proj_P 1_W$ and $c_k \in P \backslash \{c_{k-1}\}$. For $P \neq Q := \{ x, y \} \in \partial \alpha$ with $x\in \alpha$ and $y \notin \alpha$ we let $P_0 = P, \ldots, P_n = Q$ and $R_1, \ldots, R_n$ be as in \cite[Proposition $2.7$]{CM06}. We assume that $r \notin \{ s_{k-1}, s_k \}$.

\begin{lemma}[{\cite[Lemma~$2.22$]{BiConstruction}}]\label{Lemma: Lemma 2.22 BiConstruction}
	We have $k = k_{\alpha}$ and the panel $P_{\alpha} := P$ is the unique panel in $\partial \alpha$ with $\ell( 1_W, \proj_{P_{\alpha}} 1_W ) = k_{\alpha} -1$.
\end{lemma}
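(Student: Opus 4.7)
My plan is to show that every panel $Q \in \partial\alpha$ distinct from $P$ satisfies $\ell(1_W, \proj_Q 1_W) \geq k$; this yields $k = k_\alpha$ and the uniqueness of $P$ in a single stroke. Inside $R$, the argument is immediate: the dihedral octagon of $R$ with $c_{k-2} = \proj_R 1_W$ at the base shows (via the non-simple-root hypothesis) that $P$ sits at octagon-distance $1$ from $c_{k-2}$ while $P'$ sits at octagon-distance $2$ on the $\alpha$-side, so $\ell(1_W, \proj_{P'} 1_W) = k$. For a panel $Q \not\subseteq R$ on the wall, I would invoke \cite[Proposition~2.7]{CM06} to join $P$ to $Q$ by a chain $P = P_0, R_1, P_1, \ldots, R_n, P_n = Q$ with each $R_i$ a rank-$2$ residue in $\partial^2\alpha$ containing $P_{i-1}, P_i$. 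In type $(4,4,4)$ distinct simple reflections are not $W$-conjugate (the relevant diagram edges carry an even label), so all $P_i$ share the type $s := s_k$ of $P$, and I would induct on $i \geq 1$ to prove $\ell(1_W, \proj_{P_i} 1_W) \geq k$.

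The base case $i = 1$ is the heart of the matter. Either $R_1 = R$ (handled above) or $R_1$ has type $\{s, r\}$. In the latter case I would apply Lemma~\ref{Lemma: not both down} to $w = c_{k-2}$, which satisfies $\ell(w s_{k-1}) = \ell(w s_k) = \ell(w) + 1$. This forces $\ell(c_{k-1} r) \in \{k, k - 2\}$. If $\ell(c_{k-1} r) = k$, then $c_{k-1}$ is a local minimum of the length function on $R_1$, so $c_{k-1} = \proj_{R_1} 1_W$, $\alpha$ becomes a simple root of $R_1$, and $P_1$ (opposite $P$) contributes $\ell(1_W, \proj_{P_1} 1_W) = k + 2$. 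If $\ell(c_{k-1} r) = k - 2$, the ``moreover'' part of Lemma~\ref{Lemma: not both down} --- applied with the symmetric roles of $s_{k-1}$ and $s_k$ --- gives $\ell(c_{k-1} r s_k) = k - 1$, placing $\proj_{R_1} 1_W = c_{k-1} r$ and yielding $\ell(1_W, \proj_{P_1} 1_W) = k$. Configurations in which $c_{k-1}$ would sit at octagon-distance $\geq 2$ from $\proj_{R_1} 1_W$ are to be ruled out by the same tool: the dihedral prediction for $\ell(c_{k-1} r s_k)$ comes out to $k - 3$, in direct conflict with the ``moreover'' prediction of $k - 1$.

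The inductive step would run the same local analysis inside each $R_{i+1}$, with $\proj_{P_i} 1_W$ playing the role of $c_{k-1}$. The main obstacle is uniformity of this exclusion: the projection distance between adjacent panels of the chain can in principle change by $\pm 1$ or $\pm 3$, according as $\alpha$ is not or is a simple root of the traversed residue, and one must ensure that the ``moreover'' part of Lemma~\ref{Lemma: not both down} --- a genuinely type-$(4, 4, 4)$ constraint --- rules out every potential decrease below $k$. Putting the cases together then forces $\ell(1_W, \proj_Q 1_W) \geq k$ for all $Q \in \partial\alpha \setminus \{P\}$, so $P$ realises the minimum uniquely and $k = k_\alpha$.
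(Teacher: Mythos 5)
Your overall strategy -- showing $\ell(1_W, \proj_Q 1_W) \geq k$ for every $Q \in \partial\alpha$ other than $P$, and running this along the chain of CM06 -- is the right one, and your preliminary observations are correct: since all edge labels in type $(4,4,4)$ are even, no two simple reflections are $W$-conjugate, so all panels on $\partial\alpha$ have the same type $s_k$; the octagon analysis inside $R$ gives $\ell(1_W, \proj_{P'} 1_W) = k$; and the two cases $\ell(c_{k-1}r) \in \{k, k-2\}$ in the base step (when $R_1 \neq R$) are both handled correctly using Lemma~\ref{Lemma: not both down} and its ``moreover'' part. Note also that the paper merely cites this lemma from \cite{BiConstruction}, so there is no in-paper proof to compare against.

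The genuine gap is the inductive step, and it is not a matter of writing it out --- the inductive hypothesis as you have framed it is too weak to close. Write $m_i := \ell(1_W, \proj_{P_i} 1_W)$ and $n_i := \ell(1_W, \proj_{R_i} 1_W)$. The dihedral octagon $R_{i+1}$ gives $m_i + m_{i+1} = 2 n_{i+1} + 3$ with $|m_i - m_{i+1}| \in \{1, 3\}$, so knowing only $m_i \geq k$ allows $m_{i+1}$ as small as $k-3$ (namely when $\proj_{R_{i+1}} 1_W$ lands at octagon-distance $3$ from $\proj_{P_i} 1_W$, making $\alpha$ a simple root of $R_{i+1}$ on the $P_{i+1}$ side), or $k-1$ when the octagon-distance is $2$. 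To exclude these you invoke ``the same local analysis,'' but the base case was powered by the chamber $c_{k-2} = \proj_R 1_W$, which has both $s_{k-1}$ and $s_k$ going up --- exactly the hypothesis needed for Lemma~\ref{Lemma: not both down}. At a generic $P_i$ there is no canonical chamber with this property already in hand, and the obvious candidate $\proj_{R_{i+1}} 1_W$ is precisely the thing whose length you are trying to control. The ``moreover'' part does not obviously propagate either, since it constrains $\ell(wsrt)$ only after you know $\ell(wsr) = \ell(w)$ for a chamber $w$ satisfying the two-directions-up hypothesis, and after one step of the chain you no longer have such a $w$ at the right place. In short, the scalar invariant $m_i \geq k$ must be strengthened --- for instance by tracking where $\proj_{R_i} 1_W$ sits relative to $P_{i-1}$ and $P_i$, or by establishing a structural statement in the spirit of Lemma~\ref{projgal} that produces a minimal gallery from $1_W$ through all the $\proj_{R_j} 1_W$. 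Until that stronger invariant is formulated and proved, the argument does not establish $\ell(1_W, \proj_{P_i} 1_W) \geq k$ for $i \geq 2$.
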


\begin{lemma}[{\cite[Lemma $2.23$]{BiConstruction}}]\label{projgal}
	We define $R_{\alpha, Q}$ to be the residue $R_1$ if $R \neq R_1$ and $\ell(s_1 \cdots s_{k-1} r) = k-2$. In all other cases, we define $R_{\alpha, Q} := R$. Then there exists a minimal gallery $H = (d_0 = c_0, \ldots, d_m = \proj_Q c_0, y)$ with the following properties:
	\begin{itemize}
		\item There exists $0 \leq i \leq m$ such that $d_i = \proj_{R_{\alpha, Q}} 1_W$.
		
		\item For each $i+1 \leq j \leq m$ there exists $L_j \in \partial^2 \alpha$ with $\{ d_{j-1}, d_j \} \subseteq L_j$. In particular, we have $d_j \in \mathcal{C}(\partial^2\alpha)$.
	\end{itemize}
\end{lemma}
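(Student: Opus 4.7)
The plan is to construct $H$ by concatenating three pieces: an initial minimal gallery from $c_0 = 1_W$ to $d_i := \proj_{R_{\alpha,Q}} c_0$; a middle minimal gallery inside $\alpha$ that walks along the wall from $d_i$ to $\proj_Q c_0$ through the residues $R_{\alpha,Q}, \ldots, R_n$ of the wall sequence; and a final step across the panel $Q$ from $\proj_Q c_0$ to $y \notin \alpha$. The index $i$ will then be $\ell(\proj_{R_{\alpha,Q}} c_0)$, and the second bullet follows because every chamber after $d_i$ lies, together with its predecessor, in some $R_m \in \partial^2 \alpha$.

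First I would handle the case distinction for $R_{\alpha,Q}$. If $R_1 = R$ or the length condition $\ell(s_1 \cdots s_{k-1} r) \neq k-2$ holds, I take the initial segment to be $(c_0, c_1, \ldots, c_{k-2})$, which is an initial segment of the given minimal gallery $G$ ending precisely at $c_{k-2} = \proj_R c_0 = \proj_{R_{\alpha,Q}} c_0$. Otherwise $R_1 \neq R$ is a rank-$2$ residue containing $P$ whose type must then be $\{s_{k-1}, r\}$, and the length hypothesis $\ell(s_1 \cdots s_{k-1} r) = k-2$ shows that the $r$-neighbor of $c_{k-1}$ has length $k-2$; from this one deduces that $\proj_{R_1} c_0$ lies on the same $r$-panel as $c_{k-1}$, and a minimal gallery to $\proj_{R_1} c_0 = \proj_{R_{\alpha,Q}} c_0$ is produced by a straightforward modification of the initial segment of $G$.

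For the middle segment I proceed inductively along the wall sequence. The key observation is that for consecutive residues $R_j, R_{j+1}$ meeting in the panel $P_j \in \partial\alpha$, the chamber $\proj_{R_{j+1}} c_0$ lies on the $\alpha$-side of $R_{j+1}$ and can be reached from $\proj_{R_j} c_0$ by traversing $R_j \cap \alpha$ to $\proj_{P_j} c_0$ and then stepping across $P_j$ into $R_{j+1}$. Since every $R_j \in \partial^2\alpha$ is spherical of type with parameter $4$, its intersection with $\alpha$ is convex and one can explicitly produce a minimal subgallery inside $R_j \cap \alpha$ from $\proj_{R_j} c_0$ to $\proj_{P_j} c_0$; all consecutive pairs of this subgallery lie in $R_j$. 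Concatenating these local segments as $j$ ranges over the wall sequence produces a gallery ending at $\proj_Q c_0 \in Q \cap \alpha$, and appending the step to $y \in Q \setminus \alpha$ gives $H$.

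The main obstacle is verifying that the global gallery is indeed minimal; this reduces to checking that no wall is crossed twice. The middle segment stays strictly in $\alpha$, so $\partial\alpha$ is never crossed by it, while the final step crosses $\partial\alpha$ exactly once at $Q$; the remaining walls are crossed at most once because the wall-walk passes transversally through each $R_j$, and Lemma~\ref{Lemma: Lemma 2.22 BiConstruction} pins down the uniqueness of the entry panel $P_\alpha$ so that the concatenation cannot accidentally revisit a wall crossed earlier. In the subcase $R_{\alpha,Q} = R_1 \neq R$, there is an additional subtlety: one must check that the modified initial gallery enters $R_1$ on the side from which the wall-walk can continue; this is the place where Lemma~\ref{Lemma: not both down} is needed, as it controls which of the two descent directions out of $c_{k-1}$ is available in a rank-$3$ Coxeter system of type $(4,4,4)$.
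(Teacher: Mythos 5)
Note first that the paper does not prove this lemma; it is quoted from \cite[Lemma~2.23]{BiConstruction}, so there is no internal proof to compare with. Judged on its own terms, your decomposition into ``initial segment to $\proj_{R_{\alpha,Q}}1_W$, then a walk along the wall, then one step across $Q$'' is the right idea. However, there is a concrete error in the $R_1 \neq R$ subcase: since $R_1 \in \partial^2\alpha$ contains the panel $P = \{c_{k-1}, c_k\}$, which has type $s_k$, the type of $R_1$ must contain $s_k$, so $R_1$ has type $\{s_k, r\}$ and not $\{s_{k-1}, r\}$. (With the type you claim, both $R_1$-neighbours of $c_{k-1}$, namely $c_{k-2}$ and $c_{k-1}r$, would have length $k-2$, so $c_{k-1}$ would be the \emph{longest} chamber of $R_1$ and $\proj_{R_1}1_W$ would have length $k-5$ --- contradicting your own subsequent claim that the gate sits on the $r$-panel of $c_{k-1}$.) The correct identification $\proj_{R_1}1_W = c_{k-1}r$ does hold, but via the second sentence of Lemma~\ref{Lemma: not both down}: $\ell(c_{k-2}s_{k-1}r) = \ell(c_{k-2})$ forces $\ell(c_{k-2}s_{k-1}rs_k) = k-1$, so $c_{k-1}r$ is the local minimum of $R_1$.

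The more serious gap is the minimality argument, which as written is circular. You reduce minimality to ``no wall is crossed twice'' and then justify this, for walls other than $\partial\alpha$, by asserting that ``the wall-walk passes transversally through each $R_j$'' --- but that transversality is exactly what needs proof. The real content of the lemma is that, starting at $\proj_{R_{\alpha,Q}}1_W$, the gallery is \emph{length-increasing at every step}; equivalently, for each residue $R_{j+1}$ after the first, the gate $\proj_{R_{j+1}}1_W$ equals $\proj_{P_j}1_W$, so that the three-step traverse of $R_{j+1}\cap\alpha$ is monotone. This is precisely the phenomenon the case distinction defining $R_{\alpha,Q}$ is designed to repair at the very first residue (where it can fail), and it requires an actual argument for the later ones --- using Lemma~\ref{Lemma: Lemma 2.22 BiConstruction} (uniqueness of the closest panel $P_\alpha$) together with the combinatorial structure of the wall-walk from \cite{CM06}. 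Your proposal states the monotonicity rather than establishing it.
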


\begin{lemma}[{\cite[Lemma $2.25$]{BiConstruction}}]\label{prenilpotentrootsintersectinoneresidue}
	Let $\beta \in \Phi(k) \backslash \{ \alpha_s \mid s\in S \}$ be a root such that $o(r_{\alpha} r_{\beta}) < \infty$ and $R \notin \partial^2 \beta$. Moreover, we assume that $\ell(s_1 \cdots s_{k-1}r) = k$. Then one of the following hold:
	\begin{enumerate}[label=(\alph*)]
		\item $\beta = \alpha_F$, where $F \in \mathrm{Min}$ is the minimal gallery of type $(s_1, \ldots, s_{k-1}, r)$;
		
		\item $\beta = \alpha_F$, where $F \in \mathrm{Min}$ is the minimal gallery of type $(s_1, \ldots, s_{k-2}, s_k, s_{k-1}, r)$, and we have $\ell(s_1 \cdots s_{k-2} s_k r) = k-2$.
	\end{enumerate}
\end{lemma}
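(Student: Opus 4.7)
The overall plan is to use the hypothesis $o(r_\alpha r_\beta) < \infty$ to extract a rank-$2$ spherical residue $R^* \in \partial^2 \alpha \cap \partial^2 \beta$, and then use the geometry of Lemma~\ref{projgal} together with the bound $k_\beta \leq k$ to force $R^*$ into a very small set of possible configurations, one for each of the two listed alternatives.

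First, I would observe that finiteness of $\langle r_\alpha, r_\beta \rangle$ implies the existence of a rank-$2$ spherical residue simultaneously stabilized by both reflections, i.e.\ $R^* \in \partial^2 \alpha \cap \partial^2 \beta$; the hypothesis $R \notin \partial^2 \beta$ gives $R^* \neq R$. I would then pick a panel $Q \in \partial \alpha$ lying inside $R^*$ with $Q \neq P$, and feed $Q$ into Lemma~\ref{projgal}. Our assumption $\ell(s_1 \cdots s_{k-1} r) = k$ falls in the \emph{otherwise} branch of the definition of $R_{\alpha, Q}$, so $R_{\alpha, Q} = R$ and the minimal gallery $H$ produced by Lemma~\ref{projgal} passes through $c_{k-2} = \proj_R 1_W$ and has its tail lying inside $\mathcal{C}(\partial^2 \alpha)$, with each consecutive pair of chambers belonging to a residue of $\partial^2 \alpha$.

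Next, I would apply Lemma~\ref{Lemma: Lemma 2.22 BiConstruction} to $\beta$: there is a unique panel $P_\beta \in \partial \beta$ with $\ell(1_W, \proj_{P_\beta} 1_W) = k_\beta - 1 \leq k - 1$. The existence of $H$ running through residues of $\partial^2 \alpha$ and also meeting $R^*$ forces $R^*$ to sit adjacent to $R$ across $P$; in the type-$(4,4,4)$ setting, Lemma~\ref{Lemma: not both down} and Lemma~\ref{mingallinrep} rule out $R^*$ appearing later in the chain $R_1, \ldots, R_n$, because each additional step would push $k_\beta$ beyond $k$. Since $R$ has type $\{s_{k-1}, s_k\} \subseteq \{s, t\}$ and $R^* \neq R$ shares a panel with $R$ across the wall $\partial \alpha$, the type of $R^*$ necessarily involves $r$. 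Inside the dihedral group of $R^*$, which has order $8$, the wall $\partial \beta$ must be one of the remaining reflections distinct from $r_\alpha$, and the bound $k_\beta \leq k$ together with the hypothesis $\ell(s_1 \cdots s_{k-1} r) = k$ selects exactly two admissible $r$-walls: the $r$-panel at $c_{k-1}$ (yielding case (a)) and the $r$-panel at $c_{k-2} s_k s_{k-1}$ (yielding case (b), where the length condition $\ell(s_1 \cdots s_{k-2} s_k r) = k - 2$ becomes exactly the minimality requirement for the gallery of type $(s_1, \ldots, s_{k-2}, s_k, s_{k-1}, r)$).

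The main obstacle will be the careful case analysis in the previous step: verifying that $R^*$ cannot live beyond the immediate neighbour of $R$ across $P$ requires combining the projection property of $c_{k-2}$, the length hypothesis $\ell(s_1 \cdots s_{k-1} r) = k$, and the structural lemmas for type $(4,4,4)$ to eliminate spurious configurations. In particular, one must rigorously check that the two $r$-walls inside $R^*$ really produce the two roots $\alpha_F$ named in (a) and (b), and that any $s$- or $t$-wall in $R^*$ would either coincide with $\alpha$ or violate $k_\beta \leq k$.
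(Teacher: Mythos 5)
This lemma is not proved in the present paper: it is imported verbatim from \cite[Lemma~$2.25$]{BiConstruction}, so there is no internal proof to compare against, and your argument has to be judged on its own. Your high-level plan — use $o(r_\alpha r_\beta)<\infty$ to produce a rank-$2$ residue $R^*\in\partial^2\alpha\cap\partial^2\beta$, then constrain $R^*$ by the bound $k_\beta\le k$ and Lemma~\ref{projgal} — is a sensible one and matches the geometric spirit of Section~2 of \cite{BiConstruction}. However, there are two places where the argument as written does not go through.

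First, you treat cases (a) and (b) as arising from two $r$-walls inside a single $R^*$, but they live in \emph{different} residues. Since $r\notin\{s_{k-1},s_k\}$, both panels $P,P'\subseteq R$ of $\partial\alpha$ have type $\{s_k\}$, and the two residues of $\partial^2\alpha$ adjacent to $R$ are $R_{\{s_k,r\}}(c_{k-1})$ (sharing $P$) and $R_{\{s_k,r\}}(c_{k-2}s_ks_{k-1})$ (sharing $P'$). The $r$-panel at $c_{k-1}$ belongs to the first, the $r$-panel at $c_{k-2}s_ks_{k-1}$ to the second, and these residues are distinct because $(c_{k-1})^{-1}(c_{k-2}s_ks_{k-1})=s_{k-1}s_ks_{k-1}\notin W_{\{s_k,r\}}$. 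So the dichotomy (a)/(b) reflects two choices of $R^*$, one on each side of $R$; the statement ``forces $R^*$ to sit adjacent to $R$ across $P$'' and the subsequent ``two admissible $r$-walls inside $R^*$'' are not compatible with each other. The real work — and you flag it, but do not carry it out — is ruling out $R^*$ that lies two or more steps from $R$ along $\partial\alpha$, which is where Lemma~\ref{projgal} and the hypothesis $\ell(s_1\cdots s_{k-1}r)=k$ have to be exploited to show $\ell(\proj_{R^*}1_W)$ would be too large.

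Second, the interpretation of the extra condition in (b) is wrong. You say $\ell(s_1\cdots s_{k-2}s_kr)=k-2$ ``becomes exactly the minimality requirement for the gallery of type $(s_1,\ldots,s_{k-2},s_k,s_{k-1},r)$''; but minimality of that gallery is the condition $\ell(s_1\cdots s_{k-2}s_ks_{k-1}r)=k+1$, which is a different statement. The condition $\ell(s_1\cdots s_{k-2}s_kr)=k-2$ is an additional conclusion that must be \emph{proved} in case (b): it is what guarantees $k_\beta\le k$ (it forces $\beta$ to be a simple root of a residue whose projection has length $\le k-2$, pushing $k_\beta$ down from the naive bound $k+1$). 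Without establishing it you have no control on $k_\beta$ for the root separating $c_{k-2}s_ks_{k-1}$ from $c_{k-2}s_ks_{k-1}r$, so case (b) is not actually closed.
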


\subsection*{Twin buildings}

Let $\Delta_+ = (\mc{C}_+, \delta_+)$ and $\Delta_- = (\mc{C}_-, \delta_-)$ be two buildings of the same type $(W, S)$. A \textit{codistance} (or a \textit{twinning}) between $\Delta_+$ and $\Delta_-$ is a mapping $\delta_* : \left( \mc{C}_+ \times \mc{C}_- \right) \cup \left( \mc{C}_- \times \mc{C}_+ \right) \to W$ satisfying the following axioms, where $\epsilon \in \{+,-\}$, $x\in \mc{C}_{\epsilon}$, $y\in \mc{C}_{-\epsilon}$ and $w=\delta_*(x, y)$:
\begin{enumerate}[label=(Tw\arabic*)]
	\item $\delta_*(y, x) = w^{-1}$;
	
	\item if $z\in \mc{C}_{-\epsilon}$ is such that $s := \delta_{-\epsilon}(y, z) \in S$ and $\ell(ws) = \ell(w) -1$, then $\delta_*(x, z) = ws$;
	
	\item if $s\in S$, there exists $z\in \mc{C}_{-\epsilon}$ such that $\delta_{-\epsilon}(y, z) = s$ and $\delta_*(x, z) = ws$.
\end{enumerate}
A \textit{twin building of type $(W, S)$} is a triple $\Delta = (\Delta_+, \Delta_-, \delta_*)$ where $\Delta_+ = (\mc{C}_+, \delta_+)$, $\Delta_- = (\mc{C}_-, \delta_-)$ are buildings of type $(W, S)$ and where $\delta_*$ is a twinning between $\Delta_+$ and $\Delta_-$.

Let $\epsilon \in \{+,-\}$. For $x\in \mc{C}_{\epsilon}$ we put $x^{\mathrm{op}} := \{ y\in \mc{C}_{-\epsilon} \mid \delta_*(x, y) = 1_W \}$. It is a direct consequence of (Tw$1$) that $y\in x^{\mathrm{op}}$ if and only if $x\in y^{\mathrm{op}}$ for any pair $(x, y) \in \mc{C}_{\epsilon} \times \mc{C}_{-\epsilon}$. If $y\in x^{\mathrm{op}}$ then we say that $y$ is \textit{opposite} to $x$ or that \textit{$(x, y)$ is a pair of opposite chambers}.

A \textit{residue} (resp.\ \textit{panel}) of $\Delta$ is a residue (resp.\ panel) of $\Delta_+$ or $\Delta_-$; given a residue $R$ of $\Delta$ then we define its type and rank as before. The twin building $\Delta$ is called \textit{thick} if $\Delta_+$ and $\Delta_-$ are thick.

Let $\Sigma_+ \subseteq \mc{C}_+$ and $\Sigma_- \subseteq \mc{C}_-$ be apartments of $\Delta_+$ and $\Delta_-$, respectively. Then the set $\Sigma := \Sigma_+ \cup \Sigma_-$ is called \textit{twin apartment} if $\vert x^{\mathrm{op}} \cap \Sigma \vert = 1$ holds for each $x\in \Sigma$. If $(x, y)$ is a pair of opposite chambers, then there exists a unique twin apartment containing $x$ and $y$. We will denote it by $\Sigma(x, y)$.

An \textit{automorphism} of $\Delta$ is a bijection $\phi: \mc{C}_+ \cup \mc{C}_- \to \mc{C}_+ \cup \mc{C}_-$ such that $\phi$ preserves the sign, the distance functions $\delta_{\epsilon}$ and the codistance $\delta_*$.

\subsection*{Root group data}\label{Section: Root group data}

An \textit{RGD-system of type $(W, S)$} is a pair $\mathcal{D} = \left( G, \left( U_{\alpha} \right)_{\alpha \in \Phi}\right)$ consisting of a group $G$ together with a family of subgroups $U_{\alpha}$ (called \textit{root groups}) indexed by the set of roots $\Phi$, which satisfies the following axioms, where $H := \bigcap_{\alpha \in \Phi} N_G(U_{\alpha})$ and $U_{\epsilon} := \langle U_{\alpha} \mid \alpha \in \Phi_{\epsilon} \rangle$ for $\epsilon \in \{+, -\}$:
\begin{enumerate}[label=(RGD\arabic*)] \setcounter{enumi}{-1}
	\item For each $\alpha \in \Phi$, we have $U_{\alpha} \neq \{1\}$.
	
	\item For each prenilpotent pair $\{ \alpha, \beta \} \subseteq \Phi$ with $\alpha \neq \beta$, the commutator group $[U_{\alpha}, U_{\beta}]$ is contained in the group $U_{(\alpha, \beta)} := \langle U_{\gamma} \mid \gamma \in (\alpha, \beta) \rangle$.
	
	\item For every $s\in S$ and each $u\in U_{\alpha_s} \backslash \{1\}$, there exist $u', u'' \in U_{-\alpha_s}$ such that the product $m(u) := u' u u''$ conjugates $U_{\beta}$ onto $U_{s\beta}$ for each $\beta \in \Phi$.
	
	\item For each $s\in S$, the group $U_{-\alpha_s}$ is not contained in $U_+$.
	
	\item $G = H \langle U_{\alpha} \mid \alpha \in \Phi \rangle$.
\end{enumerate}
For $w\in W$ we define $U_w := \langle U_{\alpha} \mid w \notin \alpha \in \Phi_+ \rangle$. Let $G \in \mathrm{Min}(w)$ and let $(\alpha_1, \ldots, \alpha_k)$ be the sequence of roots crossed by $G$. Then we have $U_w = U_{\alpha_1} \cdots U_{\alpha_k}$ (cf.\ \cite[Corollary~$8.34(1)$]{AB08}). An RGD-system $\mathcal{D} = (G, (U_{\alpha})_{\alpha \in \Phi})$ is said to be \textit{over $\FF_2$} if every root group has cardinality $2$. In this case we denote for $\alpha \in \Phi$ the non-trivial element in $U_{\alpha}$ by $u_{\alpha}$.

Let $\mathcal{D} = (G, (U_{\alpha})_{\alpha \in \Phi})$ be an RGD-system of type $(W, S)$ and let $H = \bigcap_{\alpha \in \Phi} N_G(U_{\alpha})$, $B_{\epsilon} = H \langle U_{\alpha} \mid \alpha \in \Phi_{\epsilon} \rangle$ for $\epsilon \in \{+, -\}$. It follows from \cite[Theorem $8.80$]{AB08} that there exists an \textit{associated} twin building $\Delta(\mathcal{D}) = (\Delta(\mathcal{D})_+, \Delta(\mathcal{D})_-, \delta_*)$ of type $(W, S)$ such that $\Delta(\mathcal{D})_{\epsilon} = ( G/B_{\epsilon}, \delta_{\epsilon} )$ for $\epsilon \in \{ +, - \}$ and $G$ acts on $\Delta(\mathcal{D})$ by multiplication from the left. There is a distinguished pair of opposite chambers in $\Delta(\mathcal{D})$ corresponding to the subgroups $B_{\epsilon}$ for $\epsilon \in \{+, -\}$. We will denote this pair by $(c_+, c_-)$.

\subsection*{Graphs of groups}

This subsection is based on \cite[Section $2$]{KWM05} and \cite{Se79}.

Following Serre, a \textit{graph} $\Gamma$ consists of a vertex set $V\Gamma$, an edge set $E\Gamma$, the inverse function $^{-1}:E\Gamma \to E\Gamma$ and two edge endpoint functions $o: E\Gamma \to V\Gamma$, $t: E\Gamma \to V\Gamma$ satisfying the following axioms:
\begin{enumerate}[label=(\roman*)]
	\item The function $^{-1}$ is a fixed-point free involution on $E\Gamma$;
	
	\item For each $e\in E\Gamma$ we have $o(e) = t(e^{-1})$.
\end{enumerate}
A \textit{tree of groups} is a triple $\mathbb{G} = (T, (G_v)_{v\in V\Gamma}, (G_e)_{e\in E\Gamma})$ consisting of a finite tree $T$ (i.e.\ $VT$ and $ET$ are finite), a family of \textit{vertex groups} $(G_v)_{v\in VT}$ and a family of \textit{edge groups} $(G_e)_{e\in ET}$. Every edge $e \in ET$ comes equipped with two \textit{boundary monomorphisms} $\alpha_e: G_e \to G_{o(e)}$ and $\omega_e: G_e \to G_{t(e)}$. We assume that for each $e\in ET$ we have $G_{e^{-1}} = G_e$, $\alpha_{e^{-1}} = \omega_e$ and $\omega_{e^{-1}} = \alpha_e$. We let $G_T := \lim \mathbb{G}$ be the direct limit of the inductive system formed by the vertex groups, edge groups and boundary monomorphisms and call $G_T$ a \textit{tree product}. A \textit{sequence of groups} is a tree of groups where the underlying graph is a sequence. If the tree $T$ is a \emph{segment}, i.e.\ $VT = \{v, w\}$ and $ET = \{ e, e^{-1} \}$, then the tree product $G_T$ is an amalgamated product. We will use the notation from amalgamated products and we will write $G_T = G_v \star_{G_e} G_w$. We extend this notation to arbitrary \emph{sequences} $T$: if $VT = \{ v_0, \ldots, v_n \}$, $ET = \{ e_i, e_i^{-1} \mid 1 \leq i \leq n \}$ and $o(e_i) = v_{i-1}, t(e_i) = v_i$, then we will write $G_T = G_{v_0} \star_{G_{e_1}} G_{v_1} \star_{G_{e_2}} \cdots \star_{G_{e_n}} G_{v_n}$. If $T$ is a \emph{star}, i.e.\ $VT = \{ v_0, \ldots, v_n \}$, $ET = \{ e_i, e_i^{-1} \mid 1 \leq n \}$ and $o(e_i) = v_0$, $t(e_i) = v_i$, then we will write $G_T = \star_{G_0} G_i$.

\begin{proposition}[{\cite[Theorem $1$]{KS70}}]\label{treeproducts}
	Let $\mathbb{G} = (T, (G_v)_{v\in VT}, (G_e)_{e\in ET})$ be a tree of groups. If $T$ is partitioned into subtrees whose tree products are $G_1, \ldots, G_n$ and the subtrees are contracted to vertices, then $G_T$ is isomorphic to the tree product of the tree of groups whose vertex groups are the $G_i$ and the edge groups are the $G_e$, where $e$ is the unique edge which joins two subtrees. Moreover, $G_i \to G_T$ is injective.
\end{proposition}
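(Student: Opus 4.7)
The plan is to establish the isomorphism via the universal property of direct limits, and then deduce the injectivity by induction. Write $T_1, \ldots, T_n$ for the subtrees of the partition, $G_1, \ldots, G_n$ for their tree products, and $\overline{T}$ for the contracted tree whose vertex groups are the $G_i$ and whose edge groups are the $G_e$ indexed by the bridging edges of $T$ (those $e \in ET$ whose endpoints lie in different subtrees). Let $G_{\overline{T}}$ denote the corresponding tree product.

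For the isomorphism, I would construct mutually inverse homomorphisms $\phi \colon G_{\overline{T}} \to G_T$ and $\psi \colon G_T \to G_{\overline{T}}$. The canonical maps $G_v \to G_T$ for $v \in VT_i$ are coherent along the edges internal to $T_i$, so they factor through $G_i$ by the universal property of $G_i = \lim(T_i, (G_v), (G_e))$; the resulting maps $G_i \to G_T$ are in turn coherent along the bridging edges, since for such an $e$ both compositions $G_e \to G_{o(e)} \to G_T$ and $G_e \to G_{t(e)} \to G_T$ coincide in $G_T$. Hence the universal property of $G_{\overline{T}}$ provides $\phi$. Conversely, the composites $G_v \to G_{i(v)} \to G_{\overline{T}}$, where $i(v)$ indexes the subtree containing $v$, are coherent along every edge of $T$ --- internal edges by the definition of $G_{i(v)}$, bridging edges by the definition of $G_{\overline{T}}$ --- so the universal property of $G_T$ provides $\psi$. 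Both $\phi \circ \psi$ and $\psi \circ \phi$ restrict to the identity on each $G_v$ (respectively each $G_i$), and uniqueness in the universal properties forces them to be the identity.

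For the injectivity of $G_i \to G_T$, I would induct on $n$. The case $n = 1$ is trivial. For $n \geq 2$, choose $j$ so that $G_j$ corresponds to a leaf of $\overline{T}$; then the isomorphism just proved, applied to the coarser partition that fuses the other subtrees into one, identifies $G_T$ with the amalgamated product $G_{T \setminus T_j} \star_{G_e} G_j$, where $e$ is the unique bridging edge incident to $T_j$. The classical normal form theorem for amalgamated products yields injectivity of both factors, which handles the case $i = j$; for $i \neq j$, compose with the injection $G_i \hookrightarrow G_{T \setminus T_j}$ supplied by the inductive hypothesis. The only genuinely nontrivial input is the injectivity of vertex groups into amalgamated products, which I would invoke as a black box; the rest is routine bookkeeping whose only real pitfall is to keep the internal edges of each subtree $T_i$ cleanly separated from the bridging edges when verifying coherence conditions.
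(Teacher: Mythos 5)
The paper does not prove this statement; it is cited verbatim from Karrass--Solitar \cite{KS70}, so there is no internal argument to compare against. Your isomorphism argument via the two universal-property constructions is correct: the maps $\phi$ and $\psi$ are mutually inverse, and your coherence checks along internal and bridging edges are exactly what is needed.

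The injectivity argument, however, has a genuine gap. You induct on the number of parts $n$, peel off a leaf $T_j$ of $\overline{T}$, and write $G_T \cong G_{T\setminus T_j} \star_{G_e} G_j$, then invoke the normal form theorem. But the normal form theorem for $A \star_C B$ --- and in particular the conclusion that $A$ and $B$ embed --- requires the structure maps $C \to A$ and $C \to B$ to be injective, and this is not automatic here. The map $G_e \to G_j$ factors as $G_e \to G_{t(e)} \to G_j$ with $t(e) \in VT_j$: the first arrow is a boundary monomorphism, but the second --- a vertex group embedding into the tree product of $T_j$ --- is itself an instance of the statement you are proving, applied to $T_j$ with the singleton partition, hence with $\lvert VT_j\rvert$ parts. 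Your induction variable is the number of parts of the partition of $T$, which bears no relation to $\lvert VT_j\rvert$, so the inductive hypothesis does not cover it. (Notice also that the proposition's conclusion that the contracted object is a ``tree of groups'' in the paper's sense already presupposes that $G_e \to G_i$ is a monomorphism, which is the same missing fact.) The clean repair is a preliminary induction on $\lvert VT\rvert$ to show that each vertex group $G_v$ embeds in $G_T$: peel off a leaf $v_0$ with incident edge $e$, write $G_T \cong G_{T'} \star_{G_e} G_{v_0}$ with $T' = T\setminus\{v_0\}$, observe that $G_e \to G_{v_0}$ is a boundary monomorphism and $G_e \to G_{t(e)} \to G_{T'}$ is injective by the hypothesis applied to the strictly smaller tree $T'$, and conclude by the normal form theorem. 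Once vertex groups are known to embed, the edge maps of $\overline{T}$ are injective, $\overline{T}$ is a bona fide tree of groups, and the subtree products $G_i$ embed in $G_{\overline{T}} \cong G_T$ by the same lemma applied to $\overline{T}$.
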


\begin{proposition}[{\cite[Proposition $4.3$]{KWM05}} and {\cite[Proposition $20$]{Se79}}]\label{treeofgroupsinjective}
	Let $T$ be a tree and let $T'$ be a subtree of $T$. Moreover, we let $\mathbb{G} = (T, (G_v)_{v\in VT}, (G_e)_{e\in ET})$ and $\mathbb{H} = (T', (H_v)_{v\in VT'}, (H_e)_{e\in ET'})$ be two trees of groups and suppose the following:
	\begin{enumerate}[label=(\roman*)]
		\item For each $v\in VT'$ we have $H_v \leq G_v$.
		
		\item\label{Case: ii} For each $e\in ET'$ we have $\alpha_e^{-1}(H_{o(e)}) = \omega_e^{-1}(H_{t(e)})$.
		
		\item For each $e\in ET'$ the group $H_e$ coincides with the group in \ref{Case: ii}.
	\end{enumerate}
	Then the canonical homomorphism $\nu: H_{T'} \to G_T$ between the tree product $H_{T'}$ and the tree product $G_T$ is injective. In particular, we have $\nu(H_{T'}) \cap G_v = H_v$ for each $v\in VT'$.
\end{proposition}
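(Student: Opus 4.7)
The plan is to prove this via Bass-Serre theory. To the tree of groups $\mathbb{G} = (T, (G_v), (G_e))$ one associates a \emph{Bass-Serre tree} $X$ on which the tree product $G_T$ acts simplicially with quotient graph $X/G_T \cong T$; the vertex (resp.\ edge) stabilizers of $X$ are conjugates of the images of the $G_v$ (resp.\ $G_e$) under the canonical maps into $G_T$. Moreover, $X$ admits a canonical lift $\iota: T \hookrightarrow X$ satisfying $\mathrm{Stab}_{G_T}(\iota(v)) = G_v$ for each $v \in VT$ and $\mathrm{Stab}_{G_T}(\iota(e)) = \alpha_e(G_e)$ for each $e \in ET$, where we identify each vertex or edge group with its image in $G_T$.

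The strategy is to exhibit an $H_{T'}$-invariant subtree of $X$ whose stabilizer structure realizes $\mathbb{H}$, and then to apply the converse direction of Bass-Serre theory. Concretely, let $H_{T'}$ act on $X$ via the composition of $\nu: H_{T'} \to G_T$ with the $G_T$-action, and set $Y := H_{T'} \cdot \iota(T')$. The key verifications are two. First, $Y$ is a connected subtree of $X$: this reduces to showing that each edge $\iota(e)$ with $e \in ET'$ remains joined to $\iota(o(e))$ inside $Y$, which follows from $\alpha_e(H_e) \subseteq H_{o(e)}$ granted by condition (iii). Second, the quotient $Y/H_{T'}$ is canonically $T'$ with vertex (resp.\ edge) stabilizers exactly $H_v$ (resp.\ $H_e$). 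The computation $\mathrm{Stab}_{H_{T'}}(\iota(e)) = \alpha_e(G_e) \cap \nu(H_{T'}) = \alpha_e(H_e)$ is forced by conditions (ii) and (iii): the equality $H_e = \alpha_e^{-1}(H_{o(e)}) = \omega_e^{-1}(H_{t(e)})$ prevents any inflation of stabilizers beyond what $\mathbb{H}$ prescribes.

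Once $Y$ is in hand, I would invoke Bass-Serre's converse theorem: a group acting on a tree with a given quotient graph and stabilizer data is canonically isomorphic to the associated graph of groups; since $T'$ is itself a tree, no HNN stable letters appear, and the graph of groups is precisely the tree product of $\mathbb{H}$. Applied to the $H_{T'}$-action on $Y$, this yields an isomorphism from the tree product $H_{T'}$ onto $\nu(H_{T'})$ that factors through $\nu$, hence $\nu$ is injective. The equality $\nu(H_{T'}) \cap G_v = H_v$ is then the vertex-stabilizer computation of the previous paragraph, translated from $H_{T'}$ to its image in $G_T$.

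The main obstacle is the verification that $Y$ has quotient $T'$ with exactly the stabilizer data $(H_v, H_e)$. Connectedness of $Y$ is routine, but ensuring that stabilizers do not secretly enlarge requires using both equalities in condition (ii) together with condition (iii) in an essential way: any element of $G_{o(e)} \setminus H_{o(e)}$ that happens to lie in $\nu(H_{T'})$ would, via detours along edges of $Y$, yield an element of $\alpha_e^{-1}(H_{o(e)})$ not in $H_e$, contradicting (iii). Once this combinatorial point is pinned down, the rest of the argument is essentially formal.
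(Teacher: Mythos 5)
The paper does not spell out a proof here — it simply cites \cite[Proposition~4.3]{KWM05} and \cite[Proposition~20]{Se79} — so there is no paper argument to compare against; but your proposal has a genuine gap exactly at the point you flag as ``the main obstacle.'' The claim you need, namely $\nu(H_{T'}) \cap G_v = H_v$ (equivalently, that vertex stabilizers of $\nu(H_{T'})$ on $Y$ do not inflate beyond $H_v$), is logically equivalent to the injectivity you want to prove — indeed the proposition records it as an ``in particular'' deduced \emph{from} injectivity. Your sketch for it (``any element of $G_{o(e)}\setminus H_{o(e)}$ \dots would, via detours along edges of $Y$, yield an element of $\alpha_e^{-1}(H_{o(e)})$ not in $H_e$'') is not an argument I can reconstruct; I do not see how a ``detour'' produces such an element, and nothing in (ii)--(iii) converts an arbitrary product of elements of the $H_w$ landing in $G_v$ into a witness of that form without already invoking a normal-form theorem. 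A second, more technical problem: the sentence ``this yields an isomorphism from the tree product $H_{T'}$ onto $\nu(H_{T'})$ that factors through $\nu$'' misstates what Bass--Serre gives. Applying Bass--Serre to the $H_{T'}$-action on $Y$ yields $H_{T'} \cong$ tree product of the stabilizers $\nu^{-1}(G_v)$ — an isomorphism of $H_{T'}$ with itself, silent about $\nu$. The usable variant applies Bass--Serre to the subgroup $K := \nu(H_{T'}) \le G_T$ acting on $Y$, giving $K \cong$ tree product of $(K\cap G_v)$; only after you know $K\cap G_v = H_v$ and $K\cap\alpha_e(G_e)=\alpha_e(H_e)$ can you identify this with $H_{T'}$ compatibly with $\nu$. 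So the stabilizer equality is still the crux.

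The clean way to close the gap — and, as far as I recall, what Serre does — is to argue directly with the normal form theorem for tree products, bypassing the Bass--Serre tree entirely. Conditions (ii) and (iii) say $\alpha_e(H_e) = \alpha_e(G_e) \cap H_{o(e)}$, so an element $h \in H_{o(e)}$ lies outside $\alpha_e(H_e)$ if and only if it lies outside $\alpha_e(G_e)$. Consequently any word that is reduced with respect to $\mathbb{H}$ (no interior letter lies in the relevant edge-group image) is automatically reduced with respect to $\mathbb{G}$, and the normal form theorem for the larger tree product $G_T$ forces $\nu(h)\neq 1$ for every nontrivial $h \in H_{T'}$. The ``in particular'' then follows: an element of $\nu(H_{T'}) \cap G_v$ corresponds, under injectivity, to an element of $H_{T'}$ whose reduced form has length one and sits over $v$, hence lies in $H_v$. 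If you want to keep the Bass--Serre tree framing, you should at least prove $K\cap G_v = H_v$ by this normal-form argument; as written the proposal assumes it.
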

\begin{proof}
	This follows from \cite[Proposition~$4.3$]{KWM05} and \cite[Proposition~$20$]{Se79}.
\end{proof}

\begin{corollary}[{\cite[Corollary~4.3]{BiRGDandTreeproducts}}]\label{intersectionwithasubtree}
	Let $\mathbb{G} = (T, (G_v)_{v\in VT}, (G_e)_{e\in ET})$ be a tree of groups and let $H_v \leq G_v$ for each $v\in VT$. Assume that $H_e := \alpha_e^{-1}(H_{o(e)}) = \omega_e^{-1}(H_{t(e)})$ for all $e \in ET$ and let $\mathbb{H} = (T, (H_v)_{v\in VT}, (H_e)_{e\in ET})$ be the associated tree of groups. Let $T'$ be a subtree of $T$ and let $\mathbb{L} = (T', (G_v)_{v\in VT'}$, $(G_e)_{e\in ET'})$, $\mathbb{K} = (T', (H_v)_{v\in VT'}, (H_e)_{e\in ET'})$. Then $H_T \cap L_{T'} = K_{T'}$ in $G_T$.
\end{corollary}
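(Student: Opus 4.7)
The plan is to reduce the corollary to a direct application of Proposition~\ref{treeofgroupsinjective} by first contracting the subtree $T'$ to a single vertex in both trees of groups, thereby turning the ``subtree versus full tree'' situation into a ``same tree, smaller subgroups'' situation.

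More precisely, I would let $T^{\mathrm{ctr}}$ denote the tree obtained from $T$ by contracting $T'$ to a new vertex $v_0$. Define a tree of groups $\mathbb{G}^{\mathrm{ctr}}$ on $T^{\mathrm{ctr}}$ whose vertex group at $v_0$ is $L_{T'}$, whose remaining vertex groups are the $G_v$ for $v\in VT\setminus VT'$, and whose edge groups $G_e$ and boundary monomorphisms are unchanged for edges not touching $T'$; for an edge $e$ with $o(e)\in VT'$ and $t(e)\notin VT'$, use $\alpha_e^{\mathrm{ctr}} := \iota\circ\alpha_e$, where $\iota: G_{o(e)} \hookrightarrow L_{T'}$ is the canonical inclusion. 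Do the analogous construction for $\mathbb{H}$, obtaining $\mathbb{H}^{\mathrm{ctr}}$ with vertex group $K_{T'}$ at $v_0$. By Proposition~\ref{treeproducts} the canonical inclusion induces isomorphisms of tree products $G_T\cong G_{T^{\mathrm{ctr}}}^{\mathrm{ctr}}$ and $H_T\cong H_{T^{\mathrm{ctr}}}^{\mathrm{ctr}}$, and under these identifications the vertex group $L_{T'}$ sits inside $G_T$ as claimed in the corollary.

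Next, I would verify the hypotheses of Proposition~\ref{treeofgroupsinjective} applied to the pair $(\mathbb{H}^{\mathrm{ctr}},\mathbb{G}^{\mathrm{ctr}})$ with underlying tree $T^{\mathrm{ctr}}=T^{\mathrm{ctr}}$ (i.e.\ taking the subtree equal to the whole tree). Condition~(i) is clear: $H_v\leq G_v$ at old vertices and $K_{T'}\leq L_{T'}$ at $v_0$ (the latter because $\mathbb{K}$ is obtained from $\mathbb{L}$ by restricting each vertex group and each edge group, so Proposition~\ref{treeofgroupsinjective} itself applied to the subtree $T'$ of $T'$ yields an injection $K_{T'}\hookrightarrow L_{T'}$). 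For conditions (ii)--(iii) on an edge $e$ entirely outside $T'$, nothing changes and the hypothesis $H_e=\alpha_e^{-1}(H_{o(e)})=\omega_e^{-1}(H_{t(e)})$ is given. For an edge $e$ from $T'$ to $T\setminus T'$, using $K_{T'}\cap G_{o(e)}=H_{o(e)}$ (again from Proposition~\ref{treeofgroupsinjective} applied to $\mathbb{K}\subseteq\mathbb{L}$) one computes
\[
(\alpha_e^{\mathrm{ctr}})^{-1}(K_{T'})=\alpha_e^{-1}\bigl(K_{T'}\cap G_{o(e)}\bigr)=\alpha_e^{-1}(H_{o(e)})=H_e=\omega_e^{-1}(H_{t(e)}),
\]
so the edge group of $\mathbb{H}^{\mathrm{ctr}}$ at $e$ is exactly the common preimage, as required.

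Having verified the hypotheses, Proposition~\ref{treeofgroupsinjective} yields that the induced map $H_{T^{\mathrm{ctr}}}^{\mathrm{ctr}}\to G_{T^{\mathrm{ctr}}}^{\mathrm{ctr}}$ is injective and that its image intersected with the vertex group at $v_0$ equals $K_{T'}$; transporting back via the isomorphisms $H_T\cong H_{T^{\mathrm{ctr}}}^{\mathrm{ctr}}$ and $G_T\cong G_{T^{\mathrm{ctr}}}^{\mathrm{ctr}}$ from Proposition~\ref{treeproducts} gives exactly $H_T\cap L_{T'}=K_{T'}$ inside $G_T$. The only real subtlety — the main thing to be careful about — is the bookkeeping of the boundary monomorphisms of $\mathbb{G}^{\mathrm{ctr}}$ on the boundary edges, and the verification that contracting commutes with taking the sub-tree-of-groups in the sense needed for condition~(ii); everything else is formal.
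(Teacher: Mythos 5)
Your argument is correct, and it is a clean way to get the result. The paper itself does not supply a proof of this corollary---it is simply cited from \cite{BiRGDandTreeproducts}---so there is no in-text proof to compare against, but your derivation stands on its own. The essential idea is exactly the right one: contracting $T'$ to a single vertex $v_0$ in \emph{both} $\mathbb{G}$ and $\mathbb{H}$ turns the ``subtree of $T$ versus all of $T$'' configuration into a ``subgroup of each vertex group'' configuration on a single tree $T^{\mathrm{ctr}}$, which is precisely the setting of Proposition~\ref{treeofgroupsinjective}. Your verification of hypotheses (i)--(iii) for the contracted pair is correct: the inclusion $K_{T'} \leq L_{T'}$ and the key identity $K_{T'} \cap G_{o(e)} = H_{o(e)}$ (for a boundary edge $e$) both come from applying Proposition~\ref{treeofgroupsinjective} to the pair $(\mathbb{K},\mathbb{L})$ on $T'$ with subtree equal to $T'$ itself, and then
\[
(\alpha_e^{\mathrm{ctr}})^{-1}(K_{T'}) = \alpha_e^{-1}\bigl(K_{T'}\cap G_{o(e)}\bigr) = \alpha_e^{-1}(H_{o(e)}) = H_e = \omega_e^{-1}(H_{t(e)})
\]
is exactly what is needed at each edge crossing the boundary of $T'$. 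One point worth making explicit (you gesture at it as ``formal'' at the end) is that the isomorphisms $G_T\cong G^{\mathrm{ctr}}_{T^{\mathrm{ctr}}}$ and $H_T\cong H^{\mathrm{ctr}}_{T^{\mathrm{ctr}}}$ provided by Proposition~\ref{treeproducts} intertwine the two inclusions $H_T\hookrightarrow G_T$ and $H^{\mathrm{ctr}}_{T^{\mathrm{ctr}}}\hookrightarrow G^{\mathrm{ctr}}_{T^{\mathrm{ctr}}}$; this holds because all four groups are direct limits of the same diagram of vertex and edge groups reassociated in two ways, and both inclusions are induced on generators by $H_v\hookrightarrow G_v$ and $H_e\hookrightarrow G_e$, so the square commutes by the universal property. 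With that observed, the conclusion $H_T\cap L_{T'} = K_{T'}$ in $G_T$ follows exactly as you say.
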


\begin{corollary}[{\cite[Corollary~4.4]{BiRGDandTreeproducts}}]\label{AcapBisC}
	Let $A, B, C$ be groups and let $C \to A$, $C \to B$ be two monomorphisms. Then $A \cap B = C$ in $A\star_C B$.
\end{corollary}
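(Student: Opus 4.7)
The plan is to realize $A\star_C B$ as the tree product associated with the segment $T$ having vertices $v,w$, edge pair $\{e,e^{-1}\}$, vertex groups $G_v=A$, $G_w=B$, edge group $G_e=C$, and boundary monomorphisms $\alpha_e\colon C\to A$, $\omega_e\colon C\to B$ equal to the two given injections; then $G_T=A\star_C B$. The inclusion $C\subseteq A\cap B$ in $A\star_C B$ is immediate, so the content is the reverse inclusion.

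To prove $A\cap B\subseteq C$, I would apply Proposition~\ref{treeofgroupsinjective} with $T'=T$ and the \emph{asymmetric} choice of subgroups $H_v:=A\leq G_v$ and $H_w:=C\leq G_w$ (identifying $C$ with its image under $\omega_e$ in $B$). Condition~(i) is clear. For condition~(ii), injectivity of $\alpha_e$ gives $\alpha_e^{-1}(H_v)=\alpha_e^{-1}(A)=C$, and injectivity of $\omega_e$ gives $\omega_e^{-1}(H_w)=\omega_e^{-1}(\omega_e(C))=C$; the two preimages coincide, so condition~(ii) holds with common value $C$, and one sets $H_e:=C$ for condition~(iii). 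The induced tree of groups $\mathbb{H}$ has boundary maps $C\hookrightarrow A$ on one side and $\mathrm{id}\colon C\to C$ on the other, so the tree product is $H_{T'}=A\star_C C\cong A$.

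Proposition~\ref{treeofgroupsinjective} then supplies an injective canonical morphism $\nu\colon H_{T'}\to G_T$ together with the intersection formula $\nu(H_{T'})\cap G_v=H_v$ for each $v\in VT'$. Applied at the vertex $w$, this reads $A\cap B=H_w=C$ inside $A\star_C B$, which is the desired conclusion.

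No step requires serious work beyond this setup; the only point that deserves care is the verification of condition~(ii), which rests on the injectivity of the two given monomorphisms. In particular, choosing $H_v=A$ (rather than, say, $C$) is essential, since the symmetric choice $H_v=H_w=C$ would only reproduce the trivial identity $A\cap C=C$.
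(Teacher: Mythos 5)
Your proof is correct. The setup is right: you realize $A\star_C B$ as the tree product over a segment, then apply Proposition~\ref{treeofgroupsinjective} with the asymmetric subgroup choice $H_v=A$, $H_w=\omega_e(C)$; conditions (i)--(iii) hold (indeed, $\alpha_e^{-1}(A)=C$ holds simply because $A=G_{o(e)}$, no injectivity of $\alpha_e$ needed there), the resulting subtree product $H_{T'}\cong A\star_C C\cong A$ maps to the canonical copy of $A$ in $G_T$, and the intersection formula at the vertex $w$ reads off $A\cap B=C$. The paper itself does not reprove this corollary but cites it from the companion paper; your derivation is the natural one from Proposition~\ref{treeofgroupsinjective} (equivalently one could pass through Corollary~\ref{intersectionwithasubtree} with the same data), so there is nothing to flag.
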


\begin{remark}\label{Remark: isomorphism preserves amalgamated product}
	Let $A', A, B, C$ be groups, let $\alpha: C \to A$, $\beta:C \to B$ and $\alpha': C \to A'$ be monomorphisms and let $\phi: A \to A'$ be an isomorphism. If $\alpha' = \phi \circ \alpha$, then the amalgamated products $A \star_C B$ and $A' \star_C B$ are isomorphic. One can prove this by constructing two unique homomorphisms $A \star_C B \to A' \star_C B$ and $A' \star_C B \to A \star_C B$ such that the concatenation is the identity on $A$ (resp.\ $A'$) and on $B$.
\end{remark}

\begin{lemma}[{\cite[Lemma~4.6]{BiRGDandTreeproducts}}]\label{folding}
	Let $\mathbb{G} = (T, (G_v)_{v\in VT}, (G_e)_{e\in ET})$ be a tree of groups. Let $e \in ET$ and $G_e \leq H_{o(e)} \leq G_{o(e)}$. Let $VT' = VT \cup \{x\}$, $ET' = \left( ET \backslash \{ e, e^{-1} \} \right) \cup \{ f, f^{-1}, h, h^{-1} \}$ with $o(f) = o(e)$, $t(f) = x = o(h)$, $t(h) = t(e)$, $G_x := H_{o(e)} =: G_f$, $G_h := G_e$. Then the two tree products of the trees of groups are isomorphic.
\end{lemma}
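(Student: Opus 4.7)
I would apply Proposition~\ref{treeproducts} after contracting a single edge of $T'$. First one fixes the boundary monomorphisms of the tree of groups on $T'$; the only sensible choices compatible with the hypothesis $G_e \leq H_{o(e)} \leq G_{o(e)}$ are $\alpha_f : H_{o(e)} \hookrightarrow G_{o(e)}$ the inclusion, $\omega_f : H_{o(e)} \to H_{o(e)}$ the identity, $\alpha_h := \alpha_e$ corestricted to $H_{o(e)}$, and $\omega_h := \omega_e$. The boundary maps of every other edge of $T'$ coincide with those of $\mathbb{G}$.

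Consider the subtree $S$ of $T'$ with vertex set $\{o(e), x\}$ and edge pair $\{f, f^{-1}\}$. Its tree product is the amalgamated product $G_S = G_{o(e)} \star_{H_{o(e)}} H_{o(e)}$, in which the amalgamation on the $x$-side is along the identity. A direct check of the universal property shows that the inclusion $G_{o(e)} \hookrightarrow G_S$ is an isomorphism: a compatible pair $(\phi_1, \phi_2)$ from $(G_{o(e)}, H_{o(e)})$ into a test group $Q$ must satisfy $\phi_2 = \phi_1|_{H_{o(e)}}$, so $\phi_1$ alone determines the pair, and $G_{o(e)}$ itself satisfies the same universal property as $G_S$.

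Next I partition $T'$ into the subtree $S$ together with the singleton subtrees $\{v\}$ for every $v \in VT \setminus \{o(e)\}$. The edges of $T'$ joining two distinct blocks of this partition are precisely $h$ (between $S$ and $\{t(e)\}$) and every $e' \in ET \setminus \{e, e^{-1}\}$; under the identifications $S \leftrightarrow o(e)$, $\{v\} \leftrightarrow v$, $h \leftrightarrow e$, the contracted graph coincides with $T$. Transporting the boundary monomorphisms of $T'$ through the canonical isomorphism $G_S \cong G_{o(e)}$, the $o(e)$-side of $h$ becomes the composition $G_e \xrightarrow{\alpha_e} H_{o(e)} \hookrightarrow G_{o(e)}$, which is $\alpha_e$; the $t(e)$-side is $\omega_h = \omega_e$ by construction; and the boundary maps of each edge $e' \neq e$ are unchanged. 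Hence the contracted tree of groups is literally $\mathbb{G}$ on $T$, and Proposition~\ref{treeproducts} yields $G_{T'} \cong G_T$.

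The only step carrying any real content is the universal-property verification that $G_{o(e)} \star_{H_{o(e)}} H_{o(e)} \cong G_{o(e)}$ when the $x$-side amalgamation is the identity; everything else is combinatorial bookkeeping about how the partition collapses $T'$ back onto $T$.
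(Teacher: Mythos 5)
The paper does not reprove this lemma; it cites \cite[Lemma~4.6]{BiRGDandTreeproducts}, so there is no in-text proof to compare against. On its own terms your argument is correct and is the natural one given the toolkit the paper sets up: you contract the two-vertex subtree $S$ spanned by $\{o(e), x\}$ and $\{f, f^{-1}\}$, observe via the universal property that $G_{o(e)} \star_{H_{o(e)}} H_{o(e)} \cong G_{o(e)}$ when one leg of the amalgamation is the identity, check that under this identification the boundary map of $h$ on the $S$-side becomes $G_e \xrightarrow{\alpha_e} H_{o(e)} \hookrightarrow G_{o(e)} = \alpha_e$ while $\omega_h = \omega_e$ is untouched, and invoke Proposition~\ref{treeproducts} to conclude $G_{T'} \cong G_T$. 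One small point worth making explicit: after Proposition~\ref{treeproducts} you obtain a tree of groups on (a copy of) $T$ whose $o(e)$-vertex group is $G_S$ rather than $G_{o(e)}$; to replace $G_S$ by $G_{o(e)}$ and conclude the tree products are isomorphic you are implicitly using the tree-of-groups analogue of Remark~\ref{Remark: isomorphism preserves amalgamated product}, which is fine but should be named. Your choice of boundary monomorphisms ($\alpha_f$ the inclusion, $\omega_f$ the identity, $\alpha_h$ the corestriction of $\alpha_e$, $\omega_h = \omega_e$) is indeed the only one that makes the statement meaningful, matching the implicit conventions used throughout the paper.
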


\section{Commutator blueprints of type $(4, 4, 4)$}\label{Section: commutator blueprint of type ((W, S), D)}

In \cite{BiRGD} we have introduced \emph{commutator blueprints} of type $(W, S)$. In this paper we are only interested in the case where $(W, S)$ is of type $(4, 4, 4)$. For more information about general commutator blueprints we refer to \cite[Section $3$]{BiRGD}.

\begin{convention}
	In this section we let $(W, S)$ be of type $(4, 4, 4)$.
\end{convention}

We abbreviate $\mathcal{I} := \{ (G, \alpha, \beta) \in \mathrm{Min} \times \Phi_+ \times \Phi_+ \mid \alpha, \beta \in \Phi(G), \alpha \leq_G \beta \}$. Let $\left(M_{\alpha, \beta}^G \right)_{(G, \alpha, \beta) \in \mathcal{I}}$ be a family consisting of subsets $M_{\alpha, \beta}^G \subseteq (\alpha, \beta)$ ordered via $\leq_G$. For $w\in W$ we define the group $U_w$ via the following presentation:
\[ U_w := \left\langle \{ u_{\alpha} \mid \alpha \in \Phi(w) \} \;\middle|\; \begin{cases*}
	\forall \alpha \in \Phi(w): u_{\alpha}^2 = 1, \\
	\forall (G, \alpha, \beta) \in \mathcal{I}, G \in \mathrm{Min}(w): [u_{\alpha}, u_{\beta}] = \prod\nolimits_{\gamma \in M_{\alpha, \beta}^G} u_{\gamma}
\end{cases*} \right\rangle
\]
Here the product is understood to be ordered via the order $\leq_G$, i.e.\ if $(G, \alpha, \beta) \in \mathcal{I}$ with $G \in \mathrm{Min}(w)$ and $M_{\alpha, \beta}^G = \{ \gamma_1 \leq_G \ldots \leq_G \gamma_k \} \subseteq (\alpha, \beta) \subseteq \Phi(G)$, then $\prod\nolimits_{\gamma \in M_{\alpha, \beta}^G} u_{\gamma} = u_{\gamma_1} \cdots u_{\gamma_k}$. Note that there could be $G, H \in \mathrm{Min}(w)$, $\alpha, \beta \in \Phi(w)$ with $\alpha \leq_G \beta$ and $\beta \leq_H \alpha$. In this case we have two commutation relations, namely 
\begin{align*}
	&[u_{\alpha}, u_{\beta}] = \prod_{\gamma \in M_{\alpha, \beta}^G} u_{\gamma} &&\text{and} &&[u_{\beta}, u_{\alpha}] = \prod_{\gamma \in M_{\beta, \alpha}^H} u_{\gamma}.
\end{align*}
From now on we will implicitly assume that each product $\prod_{\gamma \in M_{\alpha, \beta}^G} u_{\gamma}$ is ordered via the order $\leq_G$.

\begin{definition}
	A \emph{commutator blueprint of type $(4, 4, 4)$} is a family $\mathcal{M} = \left(M_{\alpha, \beta}^G \right)_{(G, \alpha, \beta) \in \mathcal{I}}$ of subsets $M_{\alpha, \beta}^G \subseteq (\alpha, \beta)$ ordered via $\leq_G$ satisfying the following axioms:
	\begin{enumerate}[label=(CB\arabic*)]
		\item Let $G = (c_0, \ldots, c_k) \in \mathrm{Min}$ and let $H = (c_0, \ldots, c_m)$ for some $1 \leq m \leq k$. Then $M_{\alpha, \beta}^H = M_{\alpha, \beta}^G$ holds for all $\alpha, \beta \in \Phi(H)$ with $\alpha \leq_H \beta$.
		
		\item Let $s\neq t \in S$, let $G \in \mathrm{Min}(r_{\{s, t\}})$, let $(\alpha_1, \ldots, \alpha_4)$ be the sequence of roots crossed by $G$ and let $1 \leq i < j \leq 4$. Then we have
		\[ M_{\alpha_i, \alpha_j}^G = \begin{cases}
			(\alpha_i, \alpha_j) & \{ \alpha_i, \alpha_j \} = \{ \alpha_s, \alpha_t \} \\
			\emptyset & \{ \alpha_i, \alpha_j \} \neq \{ \alpha_s, \alpha_t \}
		\end{cases} = \begin{cases}
		\{ \alpha_2, \alpha_3 \} & (i, j) = (1, 4), \\
		\emptyset & \text{else}.
		\end{cases} \]
		
		\item For each $w\in W$ we have $\vert U_w \vert = 2^{\ell(w)}$, where $U_w$ is defined as above.
	\end{enumerate}
\end{definition}

\begin{remark}\label{Remark: Product mapping is a bijection}
	Let $G = (c_0, \ldots, c_k) \in \mathrm{Min}(w)$ and let $(\alpha_1, \ldots, \alpha_k)$ be the sequence of roots crossed by $G$. Note that it is a direct consequence of (CB$3$) that the product map $U_{\alpha_1} \times \cdots \times U_{\alpha_k} \to U_w, (u_1, \ldots, u_k) \mapsto u_1 \cdots u_k$ is a bijection, where $\mathbb{Z}_2 \cong U_{\alpha_i} = \langle u_{\alpha_i} \rangle \leq U_w$.
\end{remark}

\begin{example}\label{expintrgrable}
	Let $\mathcal{D} = (G, (U_{\alpha})_{\alpha \in \Phi})$ be an RGD-system of type $(4,4,4)$ over $\FF_2$, let $H = (c_0, \ldots, c_k) \in \mathrm{Min}$ and let $(\alpha_1, \ldots, \alpha_k)$ be the sequence of roots crossed by $H$. Then we have $\Phi(H) = \{ \alpha_1 \leq_H \cdots \leq_H \alpha_k \}$. By \cite[Corollary $8.34(1)$]{AB08} there exists for all $1 \leq m < i < n \leq k$ a unique $\epsilon_{i, m, n} \in \{ 0, 1 \}$ such that $[u_{\alpha_m}, u_{\alpha_n}] = \prod_{i=m+1}^{n-1} u_{\alpha_i}^{\epsilon_{i, m, n}}$, and $\epsilon_{i, m, n} = 1$ implies $\alpha_i \in (\alpha_m, \alpha_n)$. We define $ M(\mathcal{D})_{\alpha_m, \alpha_n}^H := \{ \alpha_i \in \Phi(H) \mid \epsilon_{i, m, n} = 1 \} \subseteq (\alpha_m, \alpha_n)$ and $\mathcal{M}_{\mathcal{D}} := \left( M(\mathcal{D})_{\alpha, \beta}^H \right)_{(H, \alpha, \beta) \in \mathcal{I}}$. Then $\mathcal{M}_{\mathcal{D}}$ is a commutator blueprint of type $(4, 4, 4)$ (cf.\ \cite[Example $3.4$]{BiRGD}).
\end{example}

\begin{definition}
	Let $\mathcal{M} = \left(M_{\alpha, \beta}^G \right)_{(G, \alpha, \beta) \in \mathcal{I}}$ be a commutator blueprint of type $(4, 4, 4)$. Using \cite[Lemma $3.6$]{BiRGD} and the axiom (CB$1$), the canonical mapping $u_{\alpha} \mapsto u_{\alpha}$ induces a monomorphism from $U_w$ to $U_{ws}$ for all $w\in W$, $s\in S$ with $\ell(ws) = \ell(w) +1$. We let $U_+$ be the direct limit of the groups $(U_w)_{w\in W}$ with natural inclusions $U_w \to U_{ws}$ if $\ell(ws) = \ell(w) +1$.
\end{definition}

\begin{definition}\label{Definition: Properties of Commutator blueprints}
	Let $\mathcal{M} = \left(M_{\alpha, \beta}^G \right)_{(G, \alpha, \beta) \in \mathcal{I}}$ be a commutator blueprint of type $(4, 4, 4)$.
	\begin{enumerate}[label=(\alph*)]
		\item $\mathcal{M}$ is called \textit{faithful}, if the canonical homomorphisms $U_w \to U_+$ are injective.
		
		\item $\mathcal{M}$ is called \emph{Weyl-invariant} if for all $w\in W$, $s\in S$, $G \in \mathrm{Min}_s(w)$ and $\alpha, \beta \in \Phi(G) \backslash \{ \alpha_s \}$ with $\alpha \leq_G \beta$ we have $M_{s\alpha, s\beta}^{sG} = sM_{\alpha, \beta}^G := \{ s\gamma \mid \gamma \in M_{\alpha, \beta}^G \}$.
		
		\item $\mathcal{M}$ is called \textit{locally Weyl-invariant} if for all $w\in W$, $s\in S$, $G \in \mathrm{Min}_s(w)$ and $\alpha, \beta \in \Phi(G) \backslash \{ \alpha_s \}$ with $\alpha \leq_G \beta$ and $o(r_{\alpha} r_{\beta}) < \infty$ we have $M_{s\alpha, s\beta}^{sG} = sM_{\alpha, \beta}^G := \{ s\gamma \mid \gamma \in M_{\alpha, \beta}^G \}$.
		
		\item $\mathcal{M}$ is called \textit{integrable} if there exists an RGD-system $\mathcal{D}$ of type $(4, 4, 4)$ over $\FF_2$ such that the two families $\mathcal{M}$ and $\mathcal{M}_{\mathcal{D}}$ coincide pointwise.
	\end{enumerate}
\end{definition}

\section{Locally Weyl-invariant Commutator blueprints of type $(4, 4, 4)$} \label{Section: Locally Weyl-invariant Commutator blueprints}

In this section we let $(W, S)$ be of type $(4, 4, 4)$ and $\mathcal{M} = \left( M_{\alpha, \beta}^G \right)_{(G, \alpha, \beta) \in \mathcal{I}}$ be a locally Weyl-invariant commutator blueprint of type $(4, 4, 4)$. Moreover, we let $S = \{r, s, t\}$. The goal of this paper is to show that $\mathcal{M}$ is faithful. For this purpose we introduce several tree products.

\begin{remark}
	We refer the reader to the appendix for many useful pictures.
\end{remark}

For a residue $R$ of $\Sigma(W, S)$ we put $w_R := \proj_R 1_W$. Let $R$ be a residue of type $\{ s, t \}$. Then we have $\ell(w_R s) = \ell(w_R) +1 = \ell(w_R t)$. We define the group $V_{w_R r_{\{ s, t \}}} := \langle U_{w_R s} \cup U_{w_R t} \rangle \leq U_{w_R r_{\{s, t\}}}$. Using (CB$3$) and fact that $\mathcal{M}$ is locally Weyl-invariant, the group $V_{w_R r_{\{s, t\}}}$ is an index $2$ subgroup of $U_{w_R r_{\{ s, t \}}}$ (cf.\ Remark \ref{Remark: Product mapping is a bijection}). For each $i \in \NN$ we let $\mathcal{R}_i$ be the set of all rank $2$ residues $R$ with $\ell(w_R) = i$ (e.g.\ $\mathcal{R}_0 = \{ R_{\{s, t\}}(1_W) \mid s\neq t \in S \}$). We let $\mathcal{T}_{i, 1}$ be the set of all residues $R \in \mathcal{R}_i$ with $\ell(w_R sr) = \ell(w_R) +2 = \ell(w_R tr)$, where $\{ s, t \}$ is the type of $R$. Let $R \in \mathcal{R}_i \backslash \mathcal{T}_{i, 1}$ be of type $\{s, t\}$. Then we have $\ell(w_R) \in \{ \ell(w_R sr), \ell(w_R tr) \}$. By Lemma \ref{Lemma: not both down} we have $\{ \ell(w_R), \ell(w_R) +2 \} = \{ \ell(w_R sr), \ell(w_R tr) \}$. Let $u \neq v \in \{s, t\}$ be such that $\ell(w_R ur) = \ell(w_R)$. Then $T_R := R_{\{v, r\}}(w_R u) \neq R$ and $T_R \in \mathcal{R}_i$ by Lemma \ref{Lemma: not both down}. In particular, $T_R \in \mathcal{R}_i \backslash \mathcal{T}_{i, 1}$ and we have $T_{(T_R)} = R$. We define $\mathcal{T}_{i, 2} := \{ \{ R, T_R \} \mid R \in \mathcal{R}_i \backslash \mathcal{T}_{i, 1} \}$. Moreover, we let $\mathcal{T}_i := \mathcal{T}_{i, 1} \cup \mathcal{T}_{i, 2}$.

We have already mentioned that we will introduce several trees of groups, more precisely, sequences of groups. The groups in the sequences of groups will always be generated by elements $u_{\alpha}$ for suitable $\alpha \in \Phi_+$. Let $A$ and $B$ vertex groups such that the corresponding vertices are joint by an edge, and let $C$ be the edge group. Let $\Phi_A, \Phi_B \subseteq \Phi_+$ be such that $A = \langle u_{\alpha} \mid \alpha \in \Phi_A \rangle$ and $B = \langle u_{\alpha} \mid \alpha \in \Phi_B \rangle$. If we do not specify $C$, then we will implicitly assume that $C = \langle u_{\alpha} \mid \alpha \in \Phi_A \cap \Phi_B \rangle$. If $C$ is as in this case, then it will always be clear that we have canonical homomorphisms $C \to A$ and $C \to B$ which are injective, and we define $A \hat{\star} B := A \star_C B$.

The following lemma will be crucial and mainly used in the proofs of the rest of this section.

\begin{lemma}\label{Lemma: Key lemma}
	Suppose $w\in W$ with $\ell(ws) = \ell(w) +1 = \ell(wt)$.
	\begin{enumerate}[label=(\alph*)]
		\item $V_{w r_{\{s, t\}}} \cap U_{w st} = U_{ws}$ and $U_{ws} \cap U_{wt} = U_w$ hold in $U_{w r_{\{s, t\}}}$.
		
		\item $U_{w r_{\{s, t\}}} \cap U_{w tstrs} = U_{w tst}$ holds in $U_{w tst r_{\{r, s\}}}$.
		
		\item $U_{w r_{\{s, t\}}} \cap U_{w str} = U_{w st}$ and $V_{w r_{\{s, t\}}} \cap U_{w str} = U_{ws}$ hold in $U_{w r_{\{s, t\}}} \hat{\star} V_{wst r_{\{r, s\}}}$.
		
		\item $V_{w sts r_{\{r, t\}}} \cap U_{w tstrs} = U_{w tst}$ holds in $U_{w sts r_{\{r, t\}}} \hat{\star} V_{w r_{\{s, t\}} r r_{\{s, t\}}} \hat{\star} U_{w tst r_{\{r, s\}}}$.
	\end{enumerate}
\end{lemma}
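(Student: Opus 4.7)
The overarching tool is Remark~\ref{Remark: Product mapping is a bijection}: a minimal gallery $G \in \mathrm{Min}(v)$ gives a basis of $U_v$ in which every element is a unique ordered product, and for any chamber $u$ on a minimal gallery extending to $v$, $U_u$ embeds into $U_v$ as the ``prefix'' subgroup generated by the basis elements indexed by $\Phi(u) \subseteq \Phi(v)$. The second key input is that, by (CB$2$) together with local Weyl-invariance, the only nontrivial commutator inside an octagonal rank-$2$ residue based at a chamber $w_R$ is the one between the two simple-root-translate generators, with value the product of the two ``middle'' generators along a residue minimal gallery. A direct consequence is that the $8$ crossings of such a residue represent only $4$ distinct roots, via palindromic identifications of the form $w\alpha_s = wtst\alpha_s$ and $ws\alpha_t = wts\alpha_t$.

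For \emph{(a)} I would fix a minimal gallery through $w, ws, wst, wsts, wstst$ and write each element of $U_{wr_{\{s,t\}}}$ in the corresponding normal form as $y \cdot u_{w\alpha_s}^{a} u_{ws\alpha_t}^{b} u_{wst\alpha_s}^{c} u_{wsts\alpha_t}^{d}$ with $y \in U_w$ and $a,b,c,d \in \{0,1\}$. Using only the single nontrivial relation $[u_{w\alpha_s}, u_{w\alpha_t}] = u_{ws\alpha_t} u_{wst\alpha_s}$ I would list the eight normal forms of the elements of $\langle u_{w\alpha_s}, u_{w\alpha_t}\rangle$ and recognise $V_{wr_{\{s,t\}}} = U_w \cdot \langle u_{w\alpha_s}, u_{w\alpha_t}\rangle$ as an explicit parity-defined index-$2$ subgroup. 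Both equalities in (a) then follow by intersecting this description with the prefix subgroups $U_{wst}$ (condition $c=d=0$), $U_{ws}$ (condition $b=c=d=0$), $U_{wt}$ and $U_w$. For \emph{(b)} I would take a minimal gallery through $wtst, wtstr, wtstrs, wtstrsr, wtstrsrs$; the residue at $wtst$ of type $\{r,s\}$ yields the analogous palindromic identification $wtst\alpha_s = wtstrsr\alpha_s$. Consequently $U_{wr_{\{s,t\}}}$ is generated, in this basis, by $U_{wtst}$ together with the \emph{last} basis element $u_{wtstrsr\alpha_s}$, whereas $U_{wtstrs}$ is exactly the prefix on the first $\ell(w)+5$ basis elements; their intersection is forced to the common prefix $U_{wtst}$.

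\emph{Parts (c) and (d)} reduce to (a) and (b) through the standard machinery for tree products. For (c) a short root computation shows that the edge group of $U_{wr_{\{s,t\}}} \hat{\star} V_{wst r_{\{r,s\}}}$ (whose index set is $\Phi(wr_{\{s,t\}}) \cap (\Phi(wstr) \cup \Phi(wsts)) = \Phi(wsts)$) is $U_{wsts}$. Corollary~\ref{AcapBisC} then gives $U_{wr_{\{s,t\}}} \cap U_{wstr} = U_{wsts} \cap U_{wstr}$, and part (a) applied with $(w, \{s,t\})$ replaced by $(wst, \{s,r\})$ identifies the right-hand side with $U_{wst}$. The second equality in (c) follows by intersecting further with $V_{wr_{\{s,t\}}}$ and invoking (a) once more. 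For (d) I would collapse the three-vertex tree using Proposition~\ref{treeproducts}, identify the two edge groups through the two overlapping octagonal residues at $w$ (type $\{s,t\}$) and at $wtst$ (type $\{r,s\}$), and then apply Corollary~\ref{AcapBisC} together with Corollary~\ref{intersectionwithasubtree} to push the intersection $V_{w sts r_{\{r,t\}}} \cap U_{w tstrs}$ into a single vertex group, where it matches the configuration of part (b). The principal obstacle is precisely the bookkeeping in (d): several octagonal residues interact, one must consistently keep track of which formal root symbols denote the same root after palindromic identification, and one must verify that all the length assumptions required for the natural inclusions $U_u \hookrightarrow U_v$ remain satisfied throughout the tree product. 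The diagrams in the appendix, as the author emphasises, are essential for this.
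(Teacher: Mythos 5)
Your proposal matches the paper's proof in structure and tools: parts (a) and (b) via the normal-form bijection of Remark~\ref{Remark: Product mapping is a bijection} and the index-$2$ fact for $V_{wr_{\{s,t\}}}$ (your explicit palindromic identifications and the parity description of the eight elements of $\langle u_{w\alpha_s},u_{w\alpha_t}\rangle$ make precise what the paper leaves as ``follows essentially from''), and part (c) via Corollary~\ref{AcapBisC} after computing the edge group to be $U_{wsts}$ and then invoking (a) at $(wst,\{r,s\})$ — exactly the paper's chain $U_{wr_{\{s,t\}}}\cap U_{wstr}=U_{wsts}\cap U_{wstr}=U_{wst}$.

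For (d) your plan is in the right direction but slightly off-route: the paper does not use Corollary~\ref{intersectionwithasubtree} here. Instead it applies Corollary~\ref{AcapBisC} with Proposition~\ref{treeproducts} to obtain $V_{wsts r_{\{r,t\}}}\cap U_{wtstrs}\subseteq U_{wr_{\{s,t\}}rs}$ (this $U_{wr_{\{s,t\}}rs}$ being the edge group between $B=V_{wr_{\{s,t\}}rr_{\{s,t\}}}$ and $D=U_{wtstr_{\{r,s\}}}$), then applies \emph{part (c)} at $w'=wsts$, $\{s',t'\}=\{t,r\}$ — which is precisely the two-vertex subtree $U_{wsts r_{\{r,t\}}}\hat\star V_{wr_{\{s,t\}}rr_{\{s,t\}}}$ — to rewrite the intersection as $U_{wtstrs}\cap U_{wr_{\{s,t\}}}$, and finally closes with part (b). Your formulation ``push the intersection into a single vertex group where it matches (b)'' is correct, but the decisive intermediate step is an invocation of (c) rather than of Corollary~\ref{intersectionwithasubtree}; without spelling that out the reduction to (b) is not yet complete. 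This is a presentation issue rather than a genuine gap, since (c) is available and the required substitution is forced by the structure of the three-vertex tree.
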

\begin{proof}
	Part $(a)$ and $(b)$ follow essentially from Remark \ref{Remark: Product mapping is a bijection} and the fact that $V_{w r_{\{s, t\}}}$ has index two in $U_{wr_{\{s, t\}}}$. For part $(c)$ we use Corollary \ref{AcapBisC}. We deduce $U_{w r_{\{s, t\}}} \cap U_{w str} \subseteq U_{w sts}$ and hence 
	\[ U_{w r_{\{s, t\}}} \cap U_{w str} = U_{w r_{\{s, t\}}} \cap U_{w str} \cap U_{w sts} = U_{w r_{\{s, t\}}} \cap U_{w st} = U_{w st}. \]
	Using the same arguments and part $(a)$, we infer $V_{w r_{\{s, t\}}} \cap U_{w str} = V_{w r_{\{s, t\}}} \cap U_{w st} = U_{ws}$. For part $(d)$ we first observe that by Corollary \ref{AcapBisC} and Proposition~\ref{treeproducts} we have $V_{w sts r_{\{r, t\}}} \cap U_{w tstrs} \subseteq U_{w r_{\{s, t\}} rs}$ and by part $(c)$ we have
	\[ V_{w sts r_{\{r, t\}}} \cap U_{w tstrs} = V_{w sts r_{\{r, t\}}} \cap U_{w tstrs} \cap U_{w r_{\{s, t\}} rs} = U_{w tstrs} \cap U_{w r_{\{s, t\}}}. \]
	Now the claim follows from part $(b)$.
\end{proof}

\subsection*{The groups $\mathbf{V_R}$ and $\mathbf{O_R}$}

For a residue $R \in \mathcal{T}_{i, 1}$ of type $\{s, t\}$ we define the group $V_R$ to be the tree product of the sequence of groups with vertex groups
\allowdisplaybreaks
\begin{align*}
	U_{w_R sr}, V_{w_R r_{\{s, t\}}}, U_{w_R tr}
\end{align*}
Furthermore, we define the group $O_R$ to be the tree product of the sequence of groups with vertex groups
\allowdisplaybreaks
\begin{align*}
	V_{w_R sr_{\{r, t\}}}, U_{w_R r_{\{s, t\}}}, V_{w_R tr_{\{r, s\}}}
\end{align*}

\begin{remark}\label{Remark: tree product generated by root group elements}
	For $V_R$ we consider $\alpha := w_R s\alpha_r$. Using Lemma \ref{mingallinrep} we see that $-w_R \alpha_t \subseteq \alpha$. As $w_R t \in (-w_R \alpha_t)$, we deduce $w_R tr, w_R r_{\{s, t\}} \in \alpha$ and hence $u_{\alpha}$ is neither a generator of $V_{w_R r_{\{s, t\}}}$ nor of $U_{w_R tr}$. Now we consider $w_R \alpha_s$. As $-w_R t\alpha_r \subseteq w_R \alpha_s$ by Lemma \ref{mingallinrep} we deduce that $u_{w_R \alpha_s}$ is not a generator of $U_{w_R tr}$. Using similar methods we infer that $V_R$ is generated by $\{ u_{\alpha} \mid \exists v\in \{ w_R sr, w_R tr  \}: v\notin \alpha \}$. A similar result holds for $O_R$.
\end{remark}

\begin{lemma}[{\cite[Lemma~4.13]{BiRGDandTreeproducts}}]\label{VRtoORinjective}
	Let $R \in \mathcal{T}_{i, 1}$. Then the canonical homomorphism $V_R \to O_R$ is injective.
\end{lemma}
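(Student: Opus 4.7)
The plan is to apply Proposition~\ref{treeofgroupsinjective} with $T = T'$ the three-vertex path underlying both sequences, $\mathbb{G}$ the tree of groups defining $O_R$, and $\mathbb{H}$ the tree of groups defining $V_R$. Condition~(i) of the proposition is immediate from the definitions: the middle inclusion $V_{w_R r_{\{s,t\}}} \leq U_{w_R r_{\{s,t\}}}$ is tautological, while $U_{w_R sr} \leq V_{w_R s r_{\{r,t\}}} = \langle U_{w_R sr} \cup U_{w_R st}\rangle$ and its right-hand counterpart $U_{w_R tr} \leq V_{w_R t r_{\{r,s\}}}$ are built into the definition of the $V$-groups.

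The preparatory step is to identify all four edge groups in terms of the standard groups $U_w$, using the convention $A \hat\star B = A \star_C B$ with $C = \langle u_\alpha \mid \alpha \in \Phi_A \cap \Phi_B\rangle$. Extending a minimal gallery to $w_R$ one sees
\[
\Phi(w_R sr) = \Phi(w_R s) \cup \{w_R s\alpha_r\}, \qquad \Phi(w_R st) = \Phi(w_R s) \cup \{w_R s\alpha_t\},
\]
and a short check using that $rtst$ is reduced in type $(4,4,4)$ shows $w_R s\alpha_r \notin \Phi(w_R r_{\{s,t\}})$ while $w_R s\alpha_t \in \Phi(w_R r_{\{s,t\}})$. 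Hence the left edge group of $O_R$ is generated by $\{u_\alpha \mid \alpha \in \Phi(w_R st)\}$, and by (CB3) this is canonically identified with the embedded copy of $U_{w_R st}$. Symmetrically the right edge group of $O_R$ is $U_{w_R ts}$, and the analogous bookkeeping gives the edge groups of $V_R$ as $U_{w_R s}$ and $U_{w_R t}$, both of which are visibly contained in the corresponding $O_R$-edge groups.

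Conditions~(ii) and~(iii) of Proposition~\ref{treeofgroupsinjective} for the left edge then reduce to the two equalities
\[
U_{w_R st} \cap U_{w_R sr} = U_{w_R s} \text{ in } V_{w_R s r_{\{r,t\}}} \quad \text{and} \quad U_{w_R st} \cap V_{w_R r_{\{s,t\}}} = U_{w_R s} \text{ in } U_{w_R r_{\{s,t\}}}.
\]
The second equality is Lemma~\ref{Lemma: Key lemma}(a) applied with $w = w_R$. For the first I would apply Lemma~\ref{Lemma: Key lemma}(a) inside $U_{w_R s r_{\{r,t\}}}$ with $(w,s,t)$ replaced by $(w_R s, t, r)$, after observing that all three subgroups under consideration already live in the smaller group $V_{w_R s r_{\{r,t\}}}$ so the intersection is unchanged by restricting the ambient group. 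Both preimages thus equal $U_{w_R s}$, which is exactly the left edge group of $V_R$. The right edge is handled by the same argument after interchanging $s$ and $t$.

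The main obstacle I anticipate is the clean identification of the $O_R$-edge groups with $U_{w_R st}$ and $U_{w_R ts}$: the combinatorial bookkeeping of common roots must be done carefully, and (CB3) is needed to promote the equality of generating sets to an honest identification at the level of groups. Once this identification is secured, the entire verification of Proposition~\ref{treeofgroupsinjective} reduces to two symmetric invocations of Lemma~\ref{Lemma: Key lemma}(a), and injectivity of $V_R \to O_R$ follows.
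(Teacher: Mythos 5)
Your proposal is correct and follows the same route the paper indicates: the paper cites this lemma to an external reference, but the proof of Lemma~\ref{HRtoGRinjective} explicitly notes that the argument for Lemma~\ref{VRtoORinjective} runs via Proposition~\ref{treeofgroupsinjective} and Lemma~\ref{Lemma: Key lemma}, which is exactly what you do. Your identification of the $O_R$-edge groups with $U_{w_R st}$ and $U_{w_R ts}$, of the $V_R$-edge groups with $U_{w_R s}$ and $U_{w_R t}$, and the two invocations of Lemma~\ref{Lemma: Key lemma}(a) (once with $w=w_R$ in $U_{w_R r_{\{s,t\}}}$, once with $w = w_R s$ and $\{s,t\}$ replaced by $\{t,r\}$ in $V_{w_R s r_{\{r,t\}}}$) are all correct and complete.
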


\subsection*{The groups $\mathbf{H_R}, \mathbf{G_R}$ and $\mathbf{J_{R, t}}$}

Let $R \in \mathcal{T}_{i, 1}$ be of type $\{s, t\}$. We define the group $H_R$ to be the tree product of the sequence of groups with vertex groups
\allowdisplaybreaks
\begin{align*}
	U_{w_Rsr_{\{r, t\}}}, V_{w_R str_{\{r, s\}}}, U_{w_R r_{\{s, t\}}}, V_{w_R ts r_{\{r, t\}}}, U_{w_R t r_{\{r, s\}}}
\end{align*}
We define the group $J_{R, t}$ to be the tree product of the sequence of groups with vertex groups
\allowdisplaybreaks
\begin{align*}
	U_{w_R s r_{\{r, t\}}}, V_{w_R st r_{\{r, s\}}}, V_{w_R tst r_{\{r, s\}}}, U_{w_R ts r_{\{r, t\}}}, V_{w_R tsr r_{\{s, t\}}}, U_{w_R t r_{\{r, s\}}}
\end{align*}
Furthermore, we define the group $G_R$ to be the tree product of the sequence of groups with vertex groups
\allowdisplaybreaks
\begin{align*}
	U_{w_Rsr_{\{r, t\}}}, V_{w_R strr_{\{s, t\}}}, U_{w_Rst r_{\{r, s\}}}, V_{w_Rstsr r_{\{s, t\}}}, \\
	U_{w_Rstsr_{\{r, t\}}}, V_{w_R r_{\{s, t\}}rr_{\{s, t\}}}, U_{w_Rtstr_{\{r, s\}}}, \\
	V_{w_Rtstrr_{\{s, t\}}}, U_{w_R ts r_{\{r, t\}}}, V_{w_R tsr r_{\{s, t\}}}, U_{w_R t r_{\{r, s\}}}
\end{align*}
It follows similarly as in Remark \ref{Remark: tree product generated by root group elements} that $H_R, J_{R, t}$ and $G_R$ are generated by suitable $u_{\alpha}$.

\begin{lemma}\label{HRtoGRinjective}
	Let $R \in \mathcal{T}_{i, 1}$ be of type $\{s, t\}$. Then the canonical homomorphisms $H_R \to J_{R, t}$ and $J_{R, t} \to G_R$ are injective. In particular, the canonical homomorphism $H_R \to G_R$ is injective.
\end{lemma}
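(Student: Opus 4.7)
The plan is to prove both injectivity statements by the same scheme. For a pair of sequence-shaped tree products $\mathcal{A}$ and $\mathcal{B}$, I would refine $\mathcal{B}$ using the folding lemma (Lemma~\ref{folding}) until its underlying sequence contains that of $\mathcal{A}$ as a sub-sequence with matching vertex and edge groups, and then invoke Proposition~\ref{treeofgroupsinjective} to conclude that $\mathcal{A} \to \mathcal{B}$ is injective. The intersection identities demanded by Proposition~\ref{treeofgroupsinjective} will all be instances of Lemma~\ref{Lemma: Key lemma}.

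For $J_{R,t} \to G_R$, I would partition the sequence of $11$ vertices of $G_R$ into $6$ consecutive blocks, one for each vertex group of $J_{R,t}$. Four blocks should be singletons (the four vertex groups of $J_{R,t}$ that already appear as vertex groups of $G_R$, namely $U_{w_R sr_{\{r,t\}}}$, $U_{w_R tsr_{\{r,t\}}}$, $V_{w_R tsrr_{\{s,t\}}}$ and $U_{w_R tr_{\{r,s\}}}$), and the two remaining vertex groups $V_{w_R str_{\{r,s\}}}$ and $V_{w_R tstr_{\{r,s\}}}$ of $J_{R,t}$ should be recovered as tree products over the two intermediate segments of $G_R$ -- the short segment $V_{w_R strr_{\{s,t\}}} \hat{\star} U_{w_R str_{\{r,s\}}}$, and the long symmetric middle segment spanning the remaining five vertices. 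By Proposition~\ref{treeproducts}, contracting these segments preserves the isomorphism type of the tree product, so the injection will follow once I verify (i) that each contracted segment equals the prescribed vertex group of $J_{R,t}$ and (ii) that the induced edge inclusions agree with those built into $J_{R,t}$. Both verifications should reduce to Lemma~\ref{Lemma: Key lemma}(c) and (d) with $w$ taken as an appropriate shift of $w_R$ (such as $w_R s$, $w_R st$, or $w_R sts$).

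For $H_R \to J_{R,t}$, the argument should be entirely analogous. Three of the five vertex groups of $H_R$ -- namely $U_{w_R sr_{\{r,t\}}}$, $V_{w_R str_{\{r,s\}}}$, and $U_{w_R tr_{\{r,s\}}}$ -- already occur as vertex groups of $J_{R,t}$, while the remaining two, $U_{w_R r_{\{s,t\}}}$ and $V_{w_R tsr_{\{r,t\}}}$, should be recovered as tree products over consecutive intermediate blocks of $J_{R,t}$ (after applying Lemma~\ref{folding} to insert boundary subgroups where needed); the relevant intersection identities are Lemma~\ref{Lemma: Key lemma}(a) and (b). The final ``in particular'' statement then follows by composing the two injections.

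The main obstacle I expect is the bookkeeping involved in checking that two differently-presented amalgamated products really define the same group with the same edge inclusions. Each vertex group is generated by an explicit collection of root-group elements $u_\alpha$, and for each proposed segmentation I have to enumerate which roots $\alpha$ appear in each block and confirm that the commutator relations coming from $\mathcal{M}$ match up -- this is where the diagrams in the appendix should become essential. Within this, the single most delicate verification will be the recovery of $V_{w_R tstr_{\{r,s\}}}$ from the five-vertex symmetric middle segment of $G_R$, which straddles the center of the sequence and requires the strongest of the intersection identities, Lemma~\ref{Lemma: Key lemma}(d).
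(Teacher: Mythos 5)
Your high-level strategy is essentially the paper's: contract the longer sequence of groups into consecutive blocks via Proposition~\ref{treeproducts}, view the shorter sequence's vertex groups as subgroups of the contracted blocks, verify the boundary-compatibility condition with Lemma~\ref{Lemma: Key lemma}, and invoke Proposition~\ref{treeofgroupsinjective}. That is the right toolkit, and you have correctly identified it.

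However, the execution has a genuine gap, and also a smaller conceptual slip. The slip first: verification goal (i), ``that each contracted segment equals the prescribed vertex group of $J_{R,t}$'', cannot be correct --- if it were, $J_{R,t}$ and $G_R$ would be isomorphic. The hypothesis of Proposition~\ref{treeofgroupsinjective} is only $H_v \leq G_v$, and most of those inclusions are proper. The real gap is that your partition $(1,2,5,1,1,1)$ of $G_R$'s eleven vertices does not satisfy condition (ii) of Proposition~\ref{treeofgroupsinjective}, already at the first edge. The edge $e$ between your blocks $\{1\}$ and $\{2,3\}$ is the $G_R$-edge between the first two vertices, whose edge group is $G_e = \langle u_\alpha \mid \alpha \in \Phi(w_R s r_{\{r,t\}}) \cap \Phi_{V_{w_R strr_{\{s,t\}}}} \rangle$. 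Take $\beta := w_R str\,\alpha_t$. Since $(str)^{-1}(s r_{\{r,t\}}) = (rt)^3 = tr \notin \alpha_t$, we get $\beta \in \Phi(w_R s r_{\{r,t\}})$; since $t \notin \alpha_t$, we get $\beta \in \Phi(w_R strt) \subseteq \Phi_{V_{w_R strr_{\{s,t\}}}}$; hence $u_\beta \in G_e$. But $(str)^{-1}(st r_{\{r,s\}}) = r\, r_{\{r,s\}} = srs \in \alpha_t$, so $\beta \notin \Phi(w_R st r_{\{r,s\}})$, and by Corollary~\ref{AcapBisC} the element $u_\beta \in V_{w_R strr_{\{s,t\}}}$ does not lie in $U_{w_R str_{\{r,s\}}}$ inside your block $\{2,3\}$, let alone in $V_{w_R str_{\{r,s\}}}$. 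Therefore $\alpha_e^{-1}(U_{w_R sr_{\{r,t\}}}) = G_e \neq \omega_e^{-1}(V_{w_R str_{\{r,s\}}})$, and Proposition~\ref{treeofgroupsinjective} cannot be applied. What the proof actually does is contract $G_R$ with block sizes $(2,2,3,2,1,1)$, i.e.\ $\{1,2\},\{3,4\},\{5,6,7\},\{8,9\},\{10\},\{11\}$, which shifts the block boundaries so that roots like $\beta$ land in a block's interior instead of on an edge. The same phenomenon derails your plan for $H_R \to J_{R,t}$: the paper contracts $J_{R,t}$ by combining vertices $5$ and $6$ into one block and keeping $1$ through $4$ as singletons, whereas you keep vertex $6$ a singleton, and under that constraint neither remaining split of the middle vertices satisfies Proposition~\ref{treeofgroupsinjective}. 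Finally, note that the folding lemma (Lemma~\ref{folding}) is not used in this particular proof --- only Propositions~\ref{treeproducts} and~\ref{treeofgroupsinjective} are; folding is needed in later lemmas where a vertex must be inserted rather than contracted.
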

\begin{proof}
	We first show that $H_R \to J_{R, t}$ is injective. Using Proposition \ref{treeproducts} the group $J_{R, t}$ is isomorphic to the tree product of the sequence of groups with vertex groups
	\allowdisplaybreaks
	\begin{align*}
		U_{w_R s r_{\{r, t\}}}, V_{w_R st r_{\{r, s\}}}, V_{w_R tst r_{\{r, s\}}}, U_{w_R ts r_{\{r, t\}}}, V_{w_R tsr r_{\{s, t\}}} \hat{\star} U_{w_R t r_{\{r, s\}}}
	\end{align*}
	We will apply Proposition \ref{treeofgroupsinjective}. Therefore we first see that each vertex group of $H_R$ is contained in the corresponding vertex group of the previous tree product, e.g.\ $U_{w_R tr_{\{r, s\}}} \leq V_{w_R tsrr_{\{s, t\}}} \hat{\star} U_{w_R tr_{\{r, s\}}}$. Next we have to show that the preimages of the boundary monomorphisms are equal and coincide with the edge groups of $H_R$. But this follows from Lemma \ref{Lemma: Key lemma} (similar as in the proof of Lemma \ref{VRtoORinjective}). Now Proposition \ref{treeofgroupsinjective} yields that $H_R \to J_{R, t}$ is injective.
	
	Now we will show that $J_{R, t} \to G_R$ is injective. Using Proposition \ref{treeproducts} the group $G_R$ is isomorphic to the tree product of the following sequence of groups with vertex groups
	\allowdisplaybreaks
	\begin{align*}
		U_{w_R s r_{\{r, t\}}} \hat{\star} V_{w_R str r_{\{s, t\}}}, U_{w_R st r_{\{r, s\}}} \hat{\star} V_{w_R stsr r_{\{s, t\}}}, U_{w_R sts r_{\{r, t\}}} \hat{\star} V_{w_R r_{\{s, t\}} r r_{\{s, t\}}} \hat{\star} U_{w_R tst r_{\{r, s\}}}, \\
		V_{w_R tstr r_{\{s, t\}}} \hat{\star} U_{w_R ts r_{\{r, t\}}}, V_{w_R tsr r_{\{s, t\}}}, U_{w_R t r_{\{r, s\}}}
	\end{align*}
	One easily sees that each vertex group of $J_{R, t}$ is contained in the corresponding vertex group of the previous tree product. Again we deduce from Lemma \ref{Lemma: Key lemma} and Proposition \ref{treeofgroupsinjective} that $J_{R, t} \to G_R$ is injective.
\end{proof}

\begin{lemma}\label{Lemma: J_R,t cong H_R star V_T O_T}
	Let $R \in \mathcal{T}_{i, 1}$ be a residue of type $\{s, t\}$ and let $T = R_{\{r, t\}}(w_R ts)$. Then $T \in \mathcal{T}_{i+2, 1}$, the canonical homomorphism $V_T \to H_R$ is injective and we have $J_{R, t} \cong H_R \star_{V_T} O_T$.
\end{lemma}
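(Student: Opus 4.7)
The plan is to establish the three claims of the lemma in the order they appear, combining Coxeter-theoretic length computations with systematic use of the tree product machinery from Section~\ref{Section: Preliminaries}. The first two claims are preparatory, and the real work sits in the amalgamation isomorphism.

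For $T \in \mathcal{T}_{i+2,1}$: since $R \in \mathcal{T}_{i,1}$ of type $\{s,t\}$ gives $\ell(w_R sr) = \ell(w_R)+2 = \ell(w_R tr)$, length arguments based on Lemma~\ref{Lemma: not both down} (applied to $w_R t$ and then to $w_R ts$, using that two letters lengthen at each step) yield $\ell(w_R tsr) = \ell(w_R)+3$ and $\ell(w_R tsrs) = \ell(w_R)+4$. The first equality forces both $r$- and $t$-neighbors of $w_R ts$ inside $T$ to have length strictly greater than $\ell(w_R ts)$, so $w_T = w_R ts$ and $\ell(w_T) = i+2$; together with $\ell(w_R tsts) = \ell(w_R)+4$ (since $tsts = r_{\{s,t\}}$), the second equality confirms the defining length conditions for $T \in \mathcal{T}_{i+2,1}$.

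For the injectivity of the canonical map $V_T \to H_R$: I would exhibit $V_T$ as a sub-tree of groups of $H_R$ supported on the sub-path consisting of vertices $3, 4, 5$ of $H_R$. Using $tsts = r_{\{s,t\}}$, the third vertex $U_{w_T ts}$ of $V_T$ equals the third vertex $U_{w_R r_{\{s,t\}}}$ of $H_R$; the middle vertex $V_{w_T r_{\{r,t\}}} = V_{w_R ts r_{\{r,t\}}}$ is the fourth vertex of $H_R$; and the first vertex $U_{w_T rs} = U_{w_R tsrs}$ embeds into the fifth vertex $U_{w_R t r_{\{r,s\}}}$ of $H_R$ because $w_R tsrs$ is a prefix of a reduced expression for $w_R t r_{\{r,s\}} = w_R tsrsr$. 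Injectivity then follows from Proposition~\ref{treeofgroupsinjective}, with the required edge-group equalities supplied by Lemma~\ref{Lemma: Key lemma}, in the spirit of the proofs of Lemmas~\ref{VRtoORinjective} and~\ref{HRtoGRinjective}.

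For the amalgamation isomorphism, I would apply Proposition~\ref{treeproducts} to collapse vertices $3, 4, 5$ of $J_{R,t}$ into a single vertex, recognizing that the three groups $V_{w_R tst r_{\{r,s\}}}$, $U_{w_R ts r_{\{r,t\}}}$, $V_{w_R tsr r_{\{s,t\}}}$ are, in reverse order, the vertex groups of $O_T$. This yields
\[
J_{R,t} \cong U_{w_R s r_{\{r,t\}}} \hat{\star} V_{w_R st r_{\{r,s\}}} \hat{\star} O_T \hat{\star} U_{w_R t r_{\{r,s\}}}.
\]
Similarly, collapsing the last three vertices of $H_R$ produces $H_R \cong U_{w_R s r_{\{r,t\}}} \hat{\star} V_{w_R st r_{\{r,s\}}} \hat{\star} V_{345}$, where $V_{345} := U_{w_R r_{\{s,t\}}} \hat{\star} V_{w_R ts r_{\{r,t\}}} \hat{\star} U_{w_R t r_{\{r,s\}}}$. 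Since $V_T$ embeds in both $V_{345}$ (by the previous step) and $O_T$ (by Lemma~\ref{VRtoORinjective}), the amalgamated product $H_R \star_{V_T} O_T$ rewrites as $U_{w_R s r_{\{r,t\}}} \hat{\star} V_{w_R st r_{\{r,s\}}} \hat{\star} (V_{345} \star_{V_T} O_T)$. A folding argument via Lemma~\ref{folding} then identifies $V_{345} \star_{V_T} O_T$ with $O_T \hat{\star} U_{w_R t r_{\{r,s\}}}$, closing the chain of isomorphisms.

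The main obstacle will be the folding step in the last paragraph: controlling the edge groups through the fold requires verifying several intersection identities of the form $A \cap B = C$ inside the ambient tree products. All such identities should reduce, through Corollary~\ref{AcapBisC} and Corollary~\ref{intersectionwithasubtree}, to one of the four intersection formulas packaged in Lemma~\ref{Lemma: Key lemma}, but the bookkeeping is delicate and parallels (while being slightly more intricate than) the corresponding analysis in the proof of Lemma~\ref{HRtoGRinjective}.
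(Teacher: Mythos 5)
Your proposal is correct and follows essentially the same route as the paper: collapse vertices $3$--$5$ of $J_{R,t}$ to $O_T$, insert $V_T$ between $O_T$ and $U_{w_R t r_{\{r,s\}}}$ via Lemma~\ref{folding} using $U_{w_R tsrs} \leq V_T \leq O_T$ (Lemma~\ref{VRtoORinjective}), and reassemble via Proposition~\ref{treeproducts} and Remark~\ref{Remark: isomorphism preserves amalgamated product} into $H_R \star_{V_T} O_T$. The only differences are cosmetic — the paper cites an external lemma for the injectivity of $V_T \to H_R$ where you unfold the subtree argument via Proposition~\ref{treeofgroupsinjective} (and here the edge-group checks are actually simpler than Lemma~\ref{Lemma: Key lemma}, reducing to the containments $U_{w_R tsr} \leq U_{w_R tsrs} \leq U_{w_R t r_{\{r,s\}}}$), and you phrase the final fold as the statement $V_{345} \star_{V_T} O_T \cong O_T \star_{U_{w_R tsrs}} U_{w_R t r_{\{r,s\}}}$ rather than folding directly inside $O_T \star_{U_{w_R tsrs}} U_{w_R t r_{\{r,s\}}}$, which is the same isomorphism read in the other direction.
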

\begin{proof}
	Note that $T \in \mathcal{T}_{i+2, 1}$. By \cite[Lemma~$4.16$]{BiRGDandTreeproducts} the mapping $V_T \to H_R$ is injective. Using Proposition \ref{treeproducts}, Proposition \ref{treeofgroupsinjective}, Remark \ref{Remark: isomorphism preserves amalgamated product}, Lemma \ref{folding} and Lemma \ref{VRtoORinjective} we obtain the following isomorphisms:
	\allowdisplaybreaks
	\begin{align*}
		J_{R, t} &\cong U_{w_R s r_{\{r, t\}}} \hat{\star} V_{w_R st r_{\{r, s\}}} \star_{U_{w_R sts}} \left( O_T \star_{U_{w_R tsrs}} U_{w_R t r_{\{r, s\}}} \right) \\
		&\cong U_{w_R s r_{\{r, t\}}} \hat{\star} V_{w_R st r_{\{r, s\}}} \star_{U_{w_R sts}} \left( \left( U_{w_R t r_{\{r, s\}}} \star_{U_{w_R tsrs}} V_T \right) \star_{V_T} O_T \right) \\
		&\cong U_{w_R s r_{\{r, t\}}} \hat{\star} V_{w_R st r_{\{r, s\}}} \star_{U_{w_R sts}} \left( V_T \star_{U_{w_R tsrs}} U_{w_R t r_{\{r, s\}}} \right) \star_{V_T} O_T \\
		&\cong H_R \star_{V_T} O_T \qedhere
	\end{align*}
\end{proof}

\subsection*{The groups $\mathbf{E_{R, s}}$ and $\mathbf{U_{R, s}}$}

Let $R \in \mathcal{T}_{i, 1}$ be of type $\{s, t\}$ such that $\ell(w_R rs) = \ell(w_R) -2$. We put $R' = R_{\{r, s\}}(w_R)$ and $w' = w_{R'}$. We define the group $E_{R, s}$ to be the tree product of the sequence of groups with vertex groups
\allowdisplaybreaks
\begin{align*}
	U_{w'rs r_{\{r, t\}}}, V_{w'rsrt r_{\{r, s\}}}, U_{w'rsrr_{\{s, t\}}}, V_{w_R srtr_{\{r, s\}}}, U_{w_Rsr_{\{r, t\}}}, \\
	V_{w_R str_{\{r, s\}}}, U_{w_R r_{\{s, t\}}}, V_{w_R ts r_{\{r, t\}}}, U_{w_R t r_{\{r, s\}}}
\end{align*}
Furthermore, we define the group $U_{R, s}$ to be the tree product of the sequence of groups with vertex groups
\allowdisplaybreaks
\begin{align*}
	U_{w'rs r_{\{r, t\}}}, V_{w'rsrt r_{\{r, s\}}}, U_{w'rsrr_{\{s, t\}}}, V_{w_R srtr_{\{r, s\}}}, U_{w_Rsr_{\{r, t\}}}, V_{w_R strr_{\{s, t\}}}, \\
	U_{w_Rst r_{\{r, s\}}}, V_{w_Rstsr r_{\{s, t\}}}, U_{w_Rstsr_{\{r, t\}}}, V_{w_R r_{\{s, t\}}rr_{\{s, t\}}}, U_{w_Rtstr_{\{r, s\}}},	\\
	V_{w_Rtstrr_{\{s, t\}}}, U_{w_R ts r_{\{r, t\}}}, V_{w_R tsr r_{\{s, t\}}}, U_{w_R t r_{\{r, s\}}}
\end{align*}
It follows similarly as in Remark \ref{Remark: tree product generated by root group elements} that $E_{R, s}$ and $U_{R, s}$ are generated by suitable $u_{\alpha}$.

\begin{lemma}\label{ERstoURsinjective}
	Let $R \in \mathcal{T}_{i, 1}$ be of type $\{s, t\}$ such that $\ell(w_R rs) = \ell(w_R) -2$. Then the canonical homomorphisms $H_R \to E_{R, s}$ and $E_{R, s} \to U_{R, s}$ are injective and we have $E_{R, s} \star_{H_R} G_R \cong U_{R, s}$.
\end{lemma}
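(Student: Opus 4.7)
The plan is to prove the three assertions in turn, following the template of Lemmas~\ref{VRtoORinjective} and~\ref{HRtoGRinjective}: rearrange the defining sequences using Proposition~\ref{treeproducts}, invoke Proposition~\ref{treeofgroupsinjective} for the injectivities, and appeal to Lemma~\ref{Lemma: Key lemma} when identifying edge groups. The key structural observation is that $E_{R,s}$ and $U_{R,s}$ share a common prefix $P$ consisting of the four vertex groups
\[
U_{w'rs r_{\{r, t\}}},\ V_{w'rsrt r_{\{r, s\}}},\ U_{w'rsrr_{\{s, t\}}},\ V_{w_R srtr_{\{r, s\}}},
\]
after which $E_{R,s}$ continues with the five vertex groups defining $H_R$, while $U_{R,s}$ continues with the eleven vertex groups defining $G_R$; crucially, in both cases the first vertex group attached after $P$ is $U_{w_R sr_{\{r,t\}}}$.

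Injectivity of $H_R\to E_{R,s}$ is immediate: the sequence defining $H_R$ is literally the last five vertex groups of the sequence defining $E_{R,s}$, together with the same connecting edge groups. Thus $H_R$ is the tree product of a sub-sequence of the tree underlying $E_{R,s}$, and Proposition~\ref{treeofgroupsinjective} applies in the trivial form $H_v=G_v$ at every vertex of the subtree. For the isomorphism, I would then contract the common prefix to a single vertex $P$ in each tree product. Applying Proposition~\ref{treeproducts} to $E_{R,s}$ (contracting the first four vertices to $P$ and the last five to $H_R$) and to $U_{R,s}$ (contracting the first four to $P$ and the last eleven to $G_R$) yields
\[
E_{R,s} \;\cong\; P \star_C H_R
\qquad\text{and}\qquad
U_{R,s} \;\cong\; P \star_C G_R,
\]
where $C$ denotes the edge group between $V_{w_R srtr_{\{r, s\}}}$ and $U_{w_R sr_{\{r,t\}}}$. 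Since $H_R$ and $G_R$ begin with the same vertex group, $C$ is literally the same subgroup in both contractions. Using the injectivity $H_R\to G_R$ (Lemma~\ref{HRtoGRinjective}) together with the standard associativity of amalgamated products, I would conclude
\[
E_{R,s}\star_{H_R} G_R \;\cong\; (P\star_C H_R)\star_{H_R} G_R \;\cong\; P\star_C G_R \;\cong\; U_{R,s}.
\]

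Injectivity of $E_{R,s}\to U_{R,s}$ is then a direct consequence: because $H_R\to E_{R,s}$ (first step) and $H_R\to G_R$ (Lemma~\ref{HRtoGRinjective}) are injective, each factor embeds into the amalgam $E_{R,s}\star_{H_R}G_R$; composing the inclusion of $E_{R,s}$ with the isomorphism just constructed produces the required injection into $U_{R,s}$. I expect the only genuine obstacle to be bookkeeping: one must verify that the contractions of the prefix in $E_{R,s}$ and in $U_{R,s}$ really produce the same edge group $C$. This reduces to the $\hat\star$ convention giving $C=\langle u_\alpha\mid\alpha\in\Phi_A\cap\Phi_B\rangle$ (with $A=V_{w_Rsrtr_{\{r,s\}}}$ and $B=U_{w_R sr_{\{r,t\}}}$), together with Lemma~\ref{Lemma: Key lemma}(a) applied at the translate by $w_R sr$, which guarantees the intersection has the expected form independent of the tail attached. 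Granted this identification, all remaining manipulations are routine applications of Propositions~\ref{treeproducts} and~\ref{treeofgroupsinjective}.
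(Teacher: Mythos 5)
Your proof follows essentially the same route as the paper: denoting the tree product of the four shared prefix vertex groups by $F_4$ (your $P$), the paper likewise deduces $E_{R,s}\cong F_4\star_{U_{w_Rsrtr}}H_R$ and $U_{R,s}\cong F_4\star_{U_{w_Rsrtr}}G_R$ from Proposition~\ref{treeproducts}, then obtains the isomorphism $U_{R,s}\cong E_{R,s}\star_{H_R}G_R$ via Lemma~\ref{HRtoGRinjective}, Remark~\ref{Remark: isomorphism preserves amalgamated product}, and Lemma~\ref{folding} (the paper's rigorous stand-in for your ``standard associativity'' step), and reads off both injectivities from Proposition~\ref{treeproducts}. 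Your closing concern about whether the two contractions produce the same edge group $C$ is a non-issue, as you in fact already observed: the relevant edge joins the identical fourth and fifth vertex groups in both sequences, so the edge group agrees by construction without any appeal to Lemma~\ref{Lemma: Key lemma}.
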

\begin{proof}
	The first four vertex groups of the underlying sequences of groups of $E_{R, s}$ and $U_{R, s}$ coincide. Thus we denote the tree product of these first four vertex groups by $F_4$. Using Proposition \ref{treeproducts} we deduce $E_{R, s} \cong F_4 \star_{U_{w_R srtr}} H_R$ and $U_{R, s} \cong F_4 \star_{U_{w_R srtr}} G_R$. In particular, $H_R \to E_{R, s}$ is injective. Using Lemma \ref{HRtoGRinjective}, Proposition \ref{treeproducts}, Remark \ref{Remark: isomorphism preserves amalgamated product} and Lemma \ref{folding} we infer
	\[ U_{R, s} \cong F_4 \star_{U_{w_R srtr}} G_R \cong F_4 \star_{U_{w_R srtr}} H_R \star_{H_R} G_R \cong E_{R, s} \star_{H_R} G_R \]
	Proposition \ref{treeproducts} yields that $E_{R, s} \to U_{R, s}$ is injective and the claim follows.
\end{proof}

\subsection*{The group $X_R$}

Let $R \in \mathcal{T}_{i, 1}$ be a residue of type $\{s, t\}$ such that $\ell(w_R rs) = \ell(w_R) -2$ and $\ell(w_R rt) = \ell(w_R)$. Let $R' = R_{\{r, s\}}(w_R)$ and let $w' = w_{R'}$. We define the group $X_R$ to be the tree product of the sequence of groups with vertex groups
\allowdisplaybreaks
\begin{align*}
	U_{w'rs r_{\{r, t\}}}, V_{w'rsrt r_{\{r, s\}}}, U_{w'rsrr_{\{s, t\}}}, V_{w_R srtr_{\{r, s\}}}, U_{w_Rsr_{\{r, t\}}}, \\
	V_{w_R str_{\{r, s\}}}, U_{w_R r_{\{s, t\}}}, V_{w_R t r_{\{r, s\}}}, U_{w' s r_{\{r, t\}}}
\end{align*}
It follows similarly as in Remark \ref{Remark: tree product generated by root group elements} that $X_R$ is generated by suitable $u_{\alpha}$.

\begin{remark}
	Let $R \in \mathcal{T}_{i, 1}$ be a residue of type $\{s, t\}$ such that $\ell(w_R rs) = \ell(w_R) -2$ and $\ell(w_R rt) = \ell(w_R)$ and let $T := R_{\{r, s\}}(w_Rt)$. Note that $T \in \mathcal{T}_{i+1, 1}$. In the next lemma we consider $X_R \star_{V_T} O_T$. Similar as in Remark \ref{Remark: tree product generated by root group elements} we have to show that if $x_{\alpha}$ is a generator of $X_R$ and $y_{\alpha}$ is a generator of $O_T$, then $x_{\alpha} = y_{\alpha}$ holds in $X_R \star_{V_T} O_T$. It suffices to consider $w_Rtr \alpha_s$ and $w_R ts \alpha_r$. As $-w_R \alpha_s \subseteq w_Rtr \alpha_s, w_R ts \alpha_r$ by Lemma~\ref{mingallinrep}, we deduce that $x_{\alpha}$ is not a generator of $X_R$ for $\alpha \in \{ w_Rtr \alpha_s, w_R ts \alpha_r \}$.
\end{remark}

\begin{lemma}\label{Lemma: X_R a}
	Let $R \in \mathcal{T}_{i, 1}$ be a residue of type $\{s, t\}$ such that $\ell(w_R rs) = \ell(w_R) -2$ and $\ell(w_R rt) = \ell(w_R)$ and let $T := R_{\{r, s\}}(w_Rt)$. Then the canonical homomorphisms $V_T \to X_R$ and $E_{R, s} \to X_R \star_{V_T} O_T$ are injective.
\end{lemma}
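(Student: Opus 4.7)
The plan is to proceed by analogy with the proofs of Lemmas \ref{HRtoGRinjective} and \ref{Lemma: J_R,t cong H_R star V_T O_T}. As a preliminary step, I would verify that $T \in \mathcal{T}_{i+1,1}$ under the given hypotheses: one computes $w_T = w_R t$, and Lemma \ref{Lemma: not both down} applied at $w_T$ (together with $\ell(w_R rt) = \ell(w_R)$ and $\ell(w_R rs) = \ell(w_R) - 2$) yields $\ell(w_T rt) = \ell(w_T) + 2 = \ell(w_T st)$, so that $V_T$ and $O_T$ are defined and $V_T$ embeds into $O_T$ by Lemma \ref{VRtoORinjective}.

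For the first injectivity $V_T \to X_R$, I would apply Proposition \ref{treeproducts} to fuse the nine vertex groups of $X_R$ into fewer, larger amalgamated products so that three consecutive fused vertex groups contain the three vertex groups of $V_T$ — concretely, the containments to establish are $U_{w_R tst} \leq U_{w_R r_{\{s,t\}}}$, $V_{w_R t r_{\{r,s\}}}$ matching itself, and $U_{w_R trt} \leq U_{w' s r_{\{r,t\}}}$, together with the required preimage equalities for the boundary monomorphisms. Invoking Proposition \ref{treeofgroupsinjective} on this sub-tree, those preimage equalities reduce to appropriate instances of Lemma \ref{Lemma: Key lemma}.

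For the second injectivity $E_{R,s} \to X_R \star_{V_T} O_T$, I would imitate the computation in the proof of Lemma \ref{Lemma: J_R,t cong H_R star V_T O_T}: applying successively Proposition \ref{treeproducts} to regroup vertex groups, Remark \ref{Remark: isomorphism preserves amalgamated product} to substitute isomorphic factors, and Lemma \ref{folding} to interpose intermediate vertices. The aim is an isomorphism displaying $X_R \star_{V_T} O_T$ as a tree product whose vertex groups include $E_{R,s}$, with the tail $V_{w_R ts r_{\{r,t\}}}$–$U_{w_R t r_{\{r,s\}}}$ of $E_{R,s}$ reconstructed from vertex groups of $O_T$ amalgamated onto the initial seven-vertex segment shared by $X_R$ and $E_{R,s}$. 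Proposition \ref{treeproducts} then yields the injectivity automatically, since the canonical map of any vertex group into a tree product is injective.

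The main obstacle is that $X_R$'s final vertex $U_{w' s r_{\{r,t\}}}$ is anchored at $w'$, whereas $E_{R,s}$'s tail is anchored at $w_R$; bridging the two regions via the amalgamation over $V_T$ requires verifying that the containments $U_{w_R trt} \leq U_{w' s r_{\{r,t\}}}$ on the $X_R$-side and $U_{w_R trt} \leq V_{w_R tr r_{\{r,t\}}} \leq O_T$ on the $O_T$-side (and the dual inclusions for $U_{w_R tst}$) are compatible, so that after the folding moves one recovers precisely the $E_{R,s}$-tail as an amalgamated subgroup while leaving $U_{w' s r_{\{r,t\}}}$ as the outer amalgamand. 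The geometric picture — that the walls across $T$ constitute the ``other half'' of the wall-structure connecting $w'$ to $w_R$ through the descent $\ell(w_R rs) = \ell(w_R) - 2$ and $\ell(w_R rt) = \ell(w_R)$ — is what makes the matching work and should be read off the appendix diagrams.
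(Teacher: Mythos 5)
Your proposal is correct and follows essentially the same route as the paper: fuse vertex groups of $X_R$ so that Proposition~\ref{treeofgroupsinjective} applies to the three--vertex subtree giving $V_T$, then regroup and fold (via Proposition~\ref{treeproducts}, Remark~\ref{Remark: isomorphism preserves amalgamated product}, Lemma~\ref{folding}, Lemma~\ref{VRtoORinjective}) to display $X_R \star_{V_T} O_T$ as an amalgamated product having $E_{R,s}$ as a factor; the paper's final form is $E_{R,s}\star_{U_{w_R trs}} V_{w_R tr r_{\{s,t\}}}$, so the outer amalgamand is actually $V_{w_R tr r_{\{s,t\}}}$ rather than $U_{w's r_{\{r,t\}}}$. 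One point you should make explicit: since $\ell(w_R sr)=\ell(w_R)+2$ and $\ell(w_R rs)=\ell(w_R)-2$, the length function is strictly monotone along the four--step path from $w_R sr$ to $w_R rs$ in the octagon $R_{\{r,s\}}(w_R)$, which forces $w'=w_R rs$ and hence $w's r_{\{r,t\}}=w_R r\cdot r_{\{r,t\}}=w_R trt$, so your third ``containment'' $U_{w_R trt}\leq U_{w's r_{\{r,t\}}}$ is in fact an equality of groups --- this is exactly what lets the folded tail $U_{w_R tst}\hat{\star}V_{w_R t r_{\{r,s\}}}\hat{\star}U_{w's r_{\{r,t\}}}$ be identified with $V_T$.
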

\begin{proof}
	The first part follows from Proposition \ref{treeproducts} and Proposition \ref{treeofgroupsinjective}. Let $F_6$ be the tree product of the first six vertex groups of the underlying sequence of groups of $X_R$. Using Proposition \ref{treeproducts}, Remark \ref{Remark: isomorphism preserves amalgamated product}, Lemma \ref{folding} and Lemma \ref{VRtoORinjective} we obtain the following isomorphisms (where $R' = R_{\{r, s\}}(w_R)$ and $w' = w_{R'}$):
	\allowdisplaybreaks
	\begin{align*}
		X_R \star_{V_T} O_T &\cong \left( F_6 \star_{U_{w_R sts}} U_{w_R r_{\{s, t\}}} \hat{\star} V_{w_R t r_{\{r, s\}}} \hat{\star} U_{w' s r_{\{r, t\}}} \right) \star_{V_T} O_T \\
		&\cong \left( F_6 \star_{U_{w_R sts}} U_{w_R r_{\{s, t\}}} \star_{U_{w_R tst}} U_{w_R tst} \hat{\star} V_{w_R t r_{\{r, s\}}} \hat{\star} U_{w' s r_{\{r, t\}}} \right) \star_{V_T} O_T \\
		&\cong \left( F_6 \star_{U_{w_R sts}} U_{w_R r_{\{s, t\}}} \star_{U_{w_R tst}} V_T \right) \star_{V_T} O_T \\
		&\cong F_6 \star_{U_{w_R sts}} U_{w_R r_{\{s, t\}}} \star_{U_{w_R tst}} V_T \star_{V_T} O_T \\
		&\cong F_6 \star_{U_{w_R sts}} U_{w_R r_{\{s, t\}}} \star_{U_{w_R tst}} O_T \\
		&\cong E_{R, s} \star_{U_{w_R trs}} V_{w_Rtrr_{\{s, t\}} } \qedhere
	\end{align*}
\end{proof}

\begin{lemma}\label{Lemma: X_R 1}
	Let $R \in \mathcal{T}_{i, 1}$ be a residue of type $\{s, t\}$ such that $\ell(w_R rs) = \ell(w_R) -2$ and $\ell(w_R rt) = \ell(w_R)$. Let $Z := R_{\{r, s\}}(w_R)$ be and suppose that $Z \in \mathcal{T}_{i-2, 1}$. Then $X_R \to G_Z$ is injective.
\end{lemma}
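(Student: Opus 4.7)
My plan follows the recurring strategy of Lemmas \ref{HRtoGRinjective}, \ref{ERstoURsinjective}, and \ref{Lemma: X_R a}: rewrite $G_Z$ as a tree product whose underlying graph matches that of $X_R$ after amalgamating consecutive vertex groups, and then invoke Proposition \ref{treeofgroupsinjective}. The assumption $Z \in \mathcal{T}_{i-2, 1}$ guarantees that $\ell(w_Z rt) = \ell(w_Z)+2 = \ell(w_Z st)$, so $G_Z$ is defined (as a tree product of eleven vertex groups along a $t$-expansion of $Z$). Writing $w' = w_Z$ and $w_R = w' s r$, I first re-express every chamber appearing in the definition of $X_R$ in terms of $w'$, so that both $X_R$ and $G_Z$ live naturally in the same ambient geometry.

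Next, I will apply Proposition \ref{treeproducts} to $G_Z$ in order to collapse appropriate consecutive vertex groups into amalgamated products, producing a tree product whose underlying graph is a path with nine vertices matching the shape of $X_R$. The key point is that each of the nine vertex groups of $X_R$ is generated by root group elements $u_\alpha$ whose chambers lie in one of the enlarged vertex groups coming from $G_Z$; this will be checked in the same spirit as Remark \ref{Remark: tree product generated by root group elements}, using Lemma \ref{mingallinrep} to pin down the relevant simple roots.

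Having matched the vertex groups, I then apply Proposition \ref{treeofgroupsinjective} with subtree being the nine-vertex path of $X_R$. The hypothesis to verify is that, for each edge of $X_R$, the preimages of the two adjacent $X_R$ vertex groups in the corresponding (enlarged) $G_Z$ vertex group agree and equal the $X_R$ edge group. Each of the eight intersection identities should reduce to a direct instance of Lemma \ref{Lemma: Key lemma}, exactly as in Lemma \ref{HRtoGRinjective}.

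The main obstacle will be the bookkeeping: identifying the correct partition of the eleven vertex groups of $G_Z$ into nine consecutive blocks, and verifying each edge condition one by one. The geometric pictures in the appendix should make both the grouping and the appropriate applications of Lemma \ref{Lemma: Key lemma} transparent, using the hypotheses $\ell(w_R rs) = \ell(w_R) - 2$ and $\ell(w_R rt) = \ell(w_R)$ to rule out degenerate configurations; once the grouping is in place, the remaining calculations are routine.
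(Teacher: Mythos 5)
Your plan would ultimately work, but it is a detour that misses the key observation the paper's proof rests on. Writing $w' = w_Z$ and noting that $Z \in \mathcal{T}_{i-2,1}$ forces $w_R = w' sr$, one checks directly that the nine vertex groups of $X_R$ are \emph{literally} the last nine vertex groups in the eleven-vertex sequence defining $G_Z$ (under the substitution that makes $Z$'s type $\{r,s\}$, with $t$ playing the role of the third generator). For instance, the fourth vertex group $V_{w_R srt r_{\{r,s\}}}$ of $X_R$ equals $V_{w'\, srsrt\, r_{\{r,s\}}} = V_{w'\, r_{\{r,s\}} t\, r_{\{r,s\}}}$, which is exactly the sixth vertex group in the $G_Z$ sequence, and similarly for the others. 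Thus $X_R$ is the tree product over a subtree of the tree of groups defining $G_Z$, and Proposition \ref{treeproducts} alone gives injectivity — no merging of vertex groups, no appeal to Proposition \ref{treeofgroupsinjective}, and no applications of Lemma \ref{Lemma: Key lemma} are needed.

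Your proposal instead tries to partition $G_Z$'s eleven vertex groups into nine blocks, enlarge the $X_R$ vertex groups to subgroups of those blocks, and invoke Proposition \ref{treeofgroupsinjective} with eight edge conditions to check. This is the right machinery for lemmas like \ref{HRtoGRinjective} and \ref{ERstoURsinjective}, where the vertex groups genuinely differ, but here the vertex groups coincide on the nose, so there is nothing to amalgamate or to verify at the edges. Furthermore, you deferred precisely that verification ("the main obstacle will be the bookkeeping… the remaining calculations are routine"), so as written the argument is an outline rather than a proof; had you carried out the bookkeeping you would have discovered that the blocks degenerate to the single vertex groups 3 through 11 of $G_Z$, at which point the heavy machinery is unnecessary.
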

\begin{proof}
	As the last nine vertex groups of the underlying sequence of groups of $G_Z$ coincide with the vertex groups of the underlying sequence of groups of $X_R$, the claim follows from Proposition \ref{treeproducts}.
\end{proof}

\subsection*{The groups $\mathbf{H_{\{R, R'\}}}, \mathbf{G_{\{R, R'\}}}$ and $\mathbf{J_{(R, R')}}$}

Let $\{R, R'\} \in \mathcal{T}_{i, 2}$. Let $w = w_R, w' = w_{R'}$ and let $\{r, s\}$ (resp.\ $\{r, t\}$) be the type of $R$ (resp.\ $R'$). Let $T = R_{\{r, t\}}(w)$ and $T' = R_{\{r, s\}}(w')$. We define the group $H_{\{R, R'\}}$ to be the tree product of the sequence of groups with vertex groups
\allowdisplaybreaks
\begin{align*}
	U_{w_T rtr r_{\{s, t\}}}, V_{w_T r_{\{r, t\}}sr_{\{r, t\}}}, U_{w_T trt r_{\{r, s\}}}, V_{w_T trtsr_{\{r, t\}}}, U_{w_T trr_{\{s, t\}}}, \\
	V_{wrsr_{\{r, t\}}}, U_{wr_{\{r, s\}}}, V_{wsrr_{\{s, t\}}}, U_{w'r_{\{r, t\}}}, V_{w'rtr_{\{r, s\}}}, \\
	U_{w_{T'}srr_{\{s, t\}}}, V_{w_{T'} srstr_{\{r, s\}}}, U_{w_{T'} srsr_{\{r, t\}}}, V_{w_{T'} r_{\{r, s\}}tr_{\{r, s\}}}, U_{w_{T'} rsrr_{\{s, t\}}}
\end{align*}
We define the group $J_{(R, R')}$ to be the tree product of the sequence of groups with vertex groups
\allowdisplaybreaks
\begin{align*}
	U_{w_T rtr r_{\{s, t\}}}, V_{w_T r_{\{r, t\}}sr_{\{r, t\}}}, U_{w_T trt r_{\{r, s\}}}, V_{w_T trtsr_{\{r, t\}}}, \\
	U_{w_T trr_{\{s, t\}}}, V_{w rst r_{\{r, s\}}}, U_{w rs r_{\{r, t\}}}, V_{w rsr r_{\{s, t\}}}, V_{w srr_{\{s, t\}}}, U_{w' r_{\{r, t\}}}, V_{w' rtr_{\{r, s\}}}, \\
	U_{w_{T'}srr_{\{s, t\}}}, V_{w_{T'} srstr_{\{r, s\}}}, U_{w_{T'} srsr_{\{r, t\}}}, V_{w_{T'} r_{\{r, s\}}tr_{\{r, s\}}}, U_{w_{T'} rsrr_{\{s, t\}}}
\end{align*}
Furthermore, we define the group $G_{\{R, R'\}}$ to be the tree product of the sequence of groups with vertex groups
\allowdisplaybreaks
\begin{align*}
	U_{w_T rtr r_{\{s, t\}}}, V_{w_T r_{\{r, t\}}sr_{\{r, t\}}}, U_{w_T trt r_{\{r, s\}}}, V_{w_T trtsr_{\{r, t\}}}, \\
	U_{w_T trr_{\{s, t\}}}, V_{wrstr_{\{r, s\}}}, U_{wrsr_{\{r, t\}}}, V_{wrsrtr_{\{r, s\}}}, U_{wrsrr_{\{s, t\}}}, V_{wr_{\{r, s\}}t r_{\{r, s\}}}, U_{wsrsr_{\{r, t\}}}, \\
	V_{wsrstr_{\{r, s\}}}, U_{wsrr_{\{s, t\}}}, V_{w'trtsr_{\{r, t\}}}, \\
	U_{w'trtr_{\{r, s\}}}, V_{w' r_{\{r, t\}}sr_{\{r, t\}}}, U_{w'rtrr_{\{s, t\}}}, V_{w'rtrsr_{\{r, t\}}}, U_{w'rtr_{\{r, s\}}}, V_{w'rtsr_{\{r, t\}}}, U_{w_{T'}srr_{\{s, t\}}}, \\
	V_{w_{T'} srstr_{\{r, s\}}}, U_{w_{T'} srsr_{\{r, t\}}}, V_{w_{T'} r_{\{r, s\}}tr_{\{r, s\}}}, U_{w_{T'} rsrr_{\{s, t\}}}
\end{align*}
It follows similarly as in Remark \ref{Remark: tree product generated by root group elements} that $H_{\{R, R'\}}, G_{\{R, R'\}}$ and $J_{(R, R')}$ are generated by suitable $u_{\alpha}$.

\begin{lemma}\label{HRR'toGRR'injective}
	Let $\{R, R'\} \in \mathcal{T}_{i, 2}$, let $\{r, s\}$ be the type of $R$ and let $\{r, t\}$ be the type of $R'$. Then the canonical homomorphisms $H_{\{R, R'\}} \to J_{(R, R')}$ and $J_{(R, R')} \to G_{\{R, R'\}}$ are injective. In particular, the canonical homomorphism $H_{\{R, R'\}} \to G_{\{R, R'\}}$ is injective.
\end{lemma}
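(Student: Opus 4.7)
The approach is to mirror the proof of Lemma~\ref{HRtoGRinjective} step by step. Both claims will be reduced to an application of Proposition~\ref{treeofgroupsinjective} after first regrouping the vertex groups of the larger tree via Proposition~\ref{treeproducts}; the intersection hypotheses of Proposition~\ref{treeofgroupsinjective} will follow from Lemma~\ref{Lemma: Key lemma}, translated to the appropriate prefixes of $w := w_R$ and $w' := w_{R'}$ (and of $w_T$, $w_{T'}$). Conceptually the situation here is symmetric around the two chambers $w, w'$: the configuration near $w$ in the sequences of $H_{\{R, R'\}}, J_{(R, R')}, G_{\{R, R'\}}$ mirrors the configuration near $w'$, so one really only has to carry out the argument on one side.

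For the first injection $H_{\{R, R'\}} \to J_{(R, R')}$, observe that the first five and the last five vertex groups in the defining sequences of $H_{\{R, R'\}}$ and $J_{(R, R')}$ agree; the difference lies in the middle block, where $J_{(R, R')}$ has six vertex groups against five for $H_{\{R, R'\}}$. The plan is to combine consecutive vertex groups of $J_{(R, R')}$ with $\hat\star$ so as to obtain a sequence of groups with the same underlying graph as $H_{\{R, R'\}}$; the natural candidates are blocks like $V_{wrst r_{\{r, s\}}} \hat\star U_{wrs r_{\{r, t\}}} \hat\star V_{wrsr r_{\{s, t\}}}$ and its symmetric analogue near $w'$. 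By Proposition~\ref{treeproducts} these regroupings compute $J_{(R, R')}$ as a tree product with the desired shape, and one checks, vertex by vertex, that each vertex group of $H_{\{R, R'\}}$ is contained in the corresponding combined vertex group. The required matching of the edge-group preimages is then precisely the content of parts~(a)--(d) of Lemma~\ref{Lemma: Key lemma} applied to the appropriate translates.

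For the second injection $J_{(R, R')} \to G_{\{R, R'\}}$ the plan is identical; $G_{\{R, R'\}}$ has $25$ vertex groups versus $16$ for $J_{(R, R')}$, so the regrouping is more elaborate, but once again each vertex group of $J_{(R, R')}$ gets identified with a block of consecutive vertex groups of $G_{\{R, R'\}}$ joined by $\hat\star$. After applying Proposition~\ref{treeproducts} to realize $G_{\{R, R'\}}$ with this new shape, the two conditions of Proposition~\ref{treeofgroupsinjective} (containment of vertex groups and agreement of edge-group preimages) are verified again through Lemma~\ref{Lemma: Key lemma}. The last statement then follows by composing the two injective maps.

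The real obstacle is bookkeeping rather than any new idea. One must carefully identify, for every adjacent pair of vertices in the sequence underlying $H_{\{R, R'\}}$ (respectively $J_{(R, R')}$), which block of vertex groups of $J_{(R, R')}$ (respectively $G_{\{R, R'\}}$) is combined at each endpoint, and then match each resulting intersection identity against the correct case of Lemma~\ref{Lemma: Key lemma}. The symmetry between the sides indexed by $w, T$ and $w', T'$ halves the work, but the full verification still involves going through a moderately long list of residues and checking the corresponding instances of Lemma~\ref{Lemma: Key lemma} one at a time.
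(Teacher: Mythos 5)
Your high-level strategy matches the paper's: regroup the larger tree product via Proposition~\ref{treeproducts}, then apply Proposition~\ref{treeofgroupsinjective} with the edge intersections supplied by Lemma~\ref{Lemma: Key lemma}, exactly as in Lemma~\ref{HRtoGRinjective}. However, your concrete regrouping for the first injection is wrong, and it rests on a misconception about the shape of $J_{(R,R')}$.

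First, the counting does not work out. The defining sequence for $H_{\{R,R'\}}$ has $15$ vertex groups and the one for $J_{(R,R')}$ has $16$, so exactly \emph{one} pair of consecutive vertex groups of $J_{(R,R')}$ should be collapsed. Your proposed block $V_{wrst r_{\{r, s\}}} \hat\star U_{wrs r_{\{r, t\}}} \hat\star V_{wrsr r_{\{s, t\}}}$ collapses three vertex groups into one, which already reduces the length from $16$ to $14$ before any ``symmetric analogue'' is applied; the resulting sequence cannot have the same underlying graph as $H_{\{R,R'\}}$, so Proposition~\ref{treeofgroupsinjective} cannot be invoked. The paper instead collapses only the adjacent pair $U_{w_T trr_{\{s, t\}}}$ and $V_{w rst r_{\{r, s\}}}$ (positions $5$ and $6$ in $J_{(R,R')}$), giving a $15$-vertex sequence, and then matches $H_{\{R,R'\}}$'s vertex groups $V_{wrsr_{\{r,t\}}}\leq U_{wrs r_{\{r,t\}}}$ and $U_{wr_{\{r,s\}}}\leq V_{wrsrr_{\{s,t\}}}$ to the remaining middle vertices.

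Second, the claimed symmetry of $J_{(R,R')}$ around $w$ and $w'$ is false: $J_{(R,R')}$ is deliberately asymmetric (note the ordered-pair notation, as opposed to the unordered set in $H_{\{R,R'\}}$ and $G_{\{R,R'\}}$). The expansion relative to $H_{\{R,R'\}}$ happens only on the $w = w_R$ side; the tail near $w'$ coincides group-for-group with $H_{\{R,R'\}}$. So there is no ``symmetric analogue near $w'$'' to form. For the second injection $J_{(R,R')} \to G_{\{R,R'\}}$ your general plan (collapse blocks of consecutive vertex groups of $G_{\{R,R'\}}$ to obtain a $16$-vertex sequence and then apply Proposition~\ref{treeofgroupsinjective}) is consistent with what the paper does, but you should likewise expect the blocking to be asymmetric, matching the asymmetry of $J_{(R,R')}$.
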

\begin{proof}
	We first show that the homomorphism $H_{\{R, R'\}} \to J_{(R, R')}$ is injective. Using Proposition \ref{treeproducts} the group $J_{(R, R')}$ is isomorphic to the tree product of the following sequence of groups with vertex groups
	\allowdisplaybreaks
	\begin{align*}
		U_{w_T rtr r_{\{s, t\}}}, V_{w_T r_{\{r, t\}}sr_{\{r, t\}}}, U_{w_T trt r_{\{r, s\}}}, V_{w_T trtsr_{\{r, t\}}}, \\
		U_{w_T trr_{\{s, t\}}} \hat{\star}	V_{w rst r_{\{r, s\}}}, U_{w rs r_{\{r, t\}}}, V_{w rsr r_{\{s, t\}}}, V_{w srr_{\{s, t\}}}, U_{w' r_{\{r, t\}}}, V_{w' rtr_{\{r, s\}}}, \\
		U_{w_{T'}srr_{\{s, t\}}}, V_{w_{T'} srstr_{\{r, s\}}}, U_{w_{T'} srsr_{\{r, t\}}}, V_{w_{T'} r_{\{r, s\}}tr_{\{r, s\}}}, U_{w_{T'} rsrr_{\{s, t\}}}
	\end{align*}
	One easily sees that each vertex group of $H_{\{R, R'\}}$ is contained in the corresponding vertex group of the previous tree product. Again we deduce from Lemma \ref{Lemma: Key lemma} and Proposition \ref{treeofgroupsinjective} that $H_{\{R, R'\}} \to J_{(R, R')}$ is injective.
	
	Now we show that $J_{(R, R')} \to G_{\{R, R'\}}$ is injective. Using Proposition \ref{treeproducts} the group $G_{\{R, R'\}}$ is isomorphic to the tree product of the following sequence of groups with vertex groups
	\allowdisplaybreaks
	\begin{align*}
		U_{w_T rtr r_{\{s, t\}}}, V_{w_T r_{\{r, t\}}sr_{\{r, t\}}}, U_{w_T trt r_{\{r, s\}}}, V_{w_T trtsr_{\{r, t\}}}, U_{w_T trr_{\{s, t\}}}, V_{w rstr_{\{r, s\}}}, \\
		U_{w rsr_{\{r, t\}}} \hat{\star} V_{w rsrtr_{\{r, s\}}}, U_{w rsrr_{\{s, t\}}} \hat{\star} V_{w r_{\{r, s\}}t r_{\{r, s\}}} \hat{\star} U_{w srsr_{\{r, t\}}}, \\
		V_{w srstr_{\{r, s\}}} \hat{\star} U_{w srr_{\{s, t\}}} \hat{\star} V_{w' trtsr_{\{r, t\}}}, \\
		U_{w' trtr_{\{r, s\}}} \hat{\star} V_{w' r_{\{r, t\}}sr_{\{r, t\}}} \hat{\star} U_{w' rtrr_{\{s, t\}}}, V_{w' rtrsr_{\{r, t\}}} \hat{\star} U_{w' rtr_{\{r, s\}}}, V_{w' rtsr_{\{r, t\}}} \hat{\star} U_{w_{T'}srr_{\{s, t\}}}, \\
		V_{w_{T'} srstr_{\{r, s\}}}, U_{w_{T'} srsr_{\{r, t\}}}, V_{w_{T'} r_{\{r, s\}}tr_{\{r, s\}}}, U_{w_{T'} rsrr_{\{s, t\}}}
	\end{align*}
	One easily sees that each vertex group of $J_{(R, R')}$ is contained in the corresponding vertex group of the previous tree product. Again we deduce from Lemma \ref{Lemma: Key lemma} and Proposition \ref{treeofgroupsinjective} that $J_{(R, R')} \to G_{\{R, R'\}}$ is injective.
\end{proof}

\begin{lemma}\label{Lemma: ERs to GTT' injective}
	Let $R \in \mathcal{T}_{i, 1}$ be of type $\{s, t\}$ such that $\ell(w_Rrs) = \ell(w_R) -2 = \ell(w_Rrt)$. Let $T = R_{\{r, s\}}(w_R)$ and $T' = R_{\{r, t\}}(w_R)$. Then $\{ T, T' \} \in \mathcal{T}_{i-2, 2}$ and the canonical homomorphism $E_{R, s} \to G_{\{T, T'\}}$ is injective.
\end{lemma}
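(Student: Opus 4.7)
My plan is to first establish $\{T, T'\} \in \mathcal{T}_{i-2, 2}$ and then deduce the injectivity by matching $E_{R, s}$ with the middle segment of the defining sequence of $G_{\{T, T'\}}$ and invoking the standard machinery of Propositions~\ref{treeproducts} and~\ref{treeofgroupsinjective}.

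For the first step, the hypotheses $\ell(w_R r) = i - 1$ and $\ell(w_R rs) = i - 2 = \ell(w_R rt)$, combined with the $\mathcal{T}_{i,1}$-condition, force $w_R rs$ to be the minimum-length representative of the coset $w_R W_{\{r, s\}}$ (its $\{r, s\}$-descent set is trivial), so $T \in \mathcal{R}_{i-2}$ with $w_T = w_R rs$; symmetrically, $T' \in \mathcal{R}_{i-2}$ with $w_{T'} = w_R rt$. Since $w_T st = w_R rt$ has the same length as $w_T$, the defining condition of $\mathcal{T}_{i-2, 1}$ fails at $T$, and choosing $u = s$ in the definition of $T_R$ yields $T_T = R_{\{r, t\}}(w_T s) = R_{\{r, t\}}(w_R r) = R_{\{r, t\}}(w_R) = T'$. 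The symmetric argument gives $T_{T'} = T$, so $\{T, T'\} \in \mathcal{T}_{i-2, 2}$.

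For the injectivity, I will substitute $w = w_T = w_R rs$ and $w' = w_{T'} = w_R rt$ into the definition of $G_{\{T, T'\}}$ and simplify via the identities $rs \cdot sr = 1$, $(rs)^3 = sr$, and their analogues for $rt$. This will show that vertices $7, 8, \ldots, 15$ of the defining sequence of $G_{\{T, T'\}}$ coincide \emph{exactly} with the nine vertex groups of $E_{R, s}$, in order. For example, vertex $13$ of $G_{\{T, T'\}}$ becomes $U_{w sr\, r_{\{s, t\}}} = U_{w_R\, r_{\{s, t\}}}$, which is vertex $7$ of $E_{R, s}$; vertex $10$ becomes $V_{w\, r_{\{r, s\}}\, t\, r_{\{r, s\}}} = V_{w_R\, srt\, r_{\{r, s\}}}$ (using $w r_{\{r, s\}} = w_R (rs)^3 = w_R sr$), which is vertex $4$ of $E_{R, s}$; and the remaining seven matchings are entirely analogous.

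Once the matching is established, I will use Proposition~\ref{treeproducts} to contract the initial subtree on vertices $1$--$7$ and the final subtree on vertices $15$--$25$ of $G_{\{T, T'\}}$ into two amalgamated outer vertices $A_1$ and $A_2$, yielding an isomorphic tree product on nine vertices whose middle seven coincide literally with vertices $2$--$8$ of $E_{R, s}$, and whose outer vertices contain vertex $1$ and vertex $9$ of $E_{R, s}$ as subgroups via the canonical injections $G_7 \hookrightarrow A_1$ and $G_{15} \hookrightarrow A_2$ guaranteed by the same proposition. The preimage condition of Proposition~\ref{treeofgroupsinjective} is tautological at the five inner edges, where the $\mathbb{H}$- and $\mathbb{G}$-vertex groups are equal, and at the two outer edges it reduces to the observation that the edge group already lies inside the relevant end-vertex of the contracted subtree; the edge groups on both sides coincide because the underlying root-group data matches. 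Proposition~\ref{treeofgroupsinjective} then delivers the injectivity $E_{R, s} \to G_{\{T, T'\}}$. The main obstacle in this plan will be the vertex-matching step: while each individual identification reduces to a short calculation, one must consistently track the parameters $w_T, w_{T'}$ after substitution into the definition of $G_{\{T, T'\}}$ and systematically simplify expressions such as $w sr$, $w r_{\{r,s\}}$, and $w' trt$ using the braid relations of type $(4, 4, 4)$; once this bookkeeping is settled, the remainder of the argument is essentially mechanical and does not require a new appeal to Lemma~\ref{Lemma: Key lemma}.
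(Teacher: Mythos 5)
Your proof is correct and rests on the same key observation as the paper's: after substituting $w_T = w_R rs$ and $w_{T'} = w_R rt$ and simplifying using $(rs)^4 = 1 = (rt)^4$, the nine vertex groups of $E_{R,s}$ coincide exactly with vertex groups $7$--$15$ of $G_{\{T, T'\}}$. The one place you deviate is the final step: the paper concludes immediately from Proposition~\ref{treeproducts} alone, since $E_{R,s}$ is literally the tree product of the subtree on vertices $7$--$15$ and any such subtree product injects into the full tree product; your contraction of the side subtrees followed by an appeal to Proposition~\ref{treeofgroupsinjective} reaches the same conclusion but is an unnecessary detour once the vertex-matching is in hand. (Also a small count: a path on nine vertices whose two endpoints carry proper subgroups has six inner edges, not five.)
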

\begin{proof}
	Since $R \in \mathcal{T}_{i, 1}$, we have $\{T, T'\} \in \mathcal{T}_{i-2, 2}$. The second assertion follows directly from Proposition \ref{treeproducts}, as the vertex groups of $E_{R, s}$ and the vertex groups $7-15$ of $G_{\{T, T'\}}$ coincide.
\end{proof}

\begin{lemma}\label{Lemma: J_R,R' cong H_R,R' star V_Z O_Z}
	Let $\{R, R'\} \in \mathcal{T}_{i, 2}$, let $\{r, s\}$ be the type of $R$, let $\{r, t\}$ be the type of $R'$, and let $Z = R_{\{r, t\}}(w_R rs)$. Then $Z \in \mathcal{T}_{i+2, 1}$, the canonical homomorphism $V_Z \to H_{\{R, R'\}}$ is injective and we have $J_{(R, R')} \cong H_{\{R, R'\}} \star_{V_Z} O_Z$.
\end{lemma}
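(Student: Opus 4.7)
The plan is to imitate the proof of Lemma~\ref{Lemma: J_R,t cong H_R star V_T O_T}, adapted to the ``double-residue'' configuration $\{R, R'\}$. First I would verify the combinatorial claim $Z \in \mathcal{T}_{i+2, 1}$ with $w_Z = w_R r s$. Since $\{R, R'\} \in \mathcal{T}_{i, 2}$ and $R$ has type $\{r,s\}$, $R'$ type $\{r,t\}$, the definition of $T_R$ forces $\ell(w_R s t) = i$; a short application of Lemma~\ref{Lemma: not both down} at the chamber $w_R r s$ then yields $\ell(w_R r s) = i + 2$ together with $\ell(w_Z r s) = i + 4 = \ell(w_Z t s)$, so $Z \in \mathcal{T}_{i+2, 1}$ and all vertex groups of $V_Z$ and $O_Z$ are anchored at $w_R r s$. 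Reading the definitions, the three vertex groups of $O_Z$ coincide (in the reverse order of the underlying sequence) with vertices $6, 7, 8$ of $J_{(R, R')}$, while the middle vertex of $V_Z$ equals vertex $6$ of $H_{\{R, R'\}}$ and the two outer vertices of $V_Z$ embed naturally into vertex groups of $H_{\{R, R'\}}$ flanking vertex $6$ (using the braid identity $r_{\{r, s\}} = rsrs$ for one side).

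With these identifications in hand, I would first apply Proposition~\ref{treeproducts} to consolidate vertices $6, 7, 8$ of $J_{(R, R')}$ into a single occurrence of $O_Z$, giving
\[
J_{(R, R')} \;\cong\; A_\ell \star_{E_\ell} O_Z \star_{E_r} A_r,
\]
where $A_\ell$ is the tree product of vertices $1$--$5$ and $A_r$ the tree product of vertices $9$--$16$ of $J_{(R, R')}$, and $E_\ell, E_r$ are the two interface root groups. Next, following the template of Lemma~\ref{Lemma: J_R,t cong H_R star V_T O_T}, I would apply Lemma~\ref{folding} to insert a copy of $V_Z$ at the junction(s) with $O_Z$: Lemma~\ref{VRtoORinjective} supplies the injection $V_Z \hookrightarrow O_Z$, and Lemma~\ref{Lemma: Key lemma} certifies that the boundary root groups $E_\ell, E_r$ actually lie inside $V_Z$, which is precisely the hypothesis Lemma~\ref{folding} requires. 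Remark~\ref{Remark: isomorphism preserves amalgamated product} then permits rearranging the resulting amalgam so that the inserted $V_Z$ is absorbed into the $A_\ell$- (resp.\ $A_r$-)side of the amalgamation, leaving a single amalgamation of the form $(\,\cdot\,) \star_{V_Z} O_Z$.

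Finally, a further application of Proposition~\ref{treeproducts} consolidates the absorbed $V_Z$ with the adjacent vertex groups of $A_\ell$ and $A_r$: the middle vertex $V_{w_Z r_{\{r, t\}}}$ of $V_Z$ becomes vertex $6$ of $H_{\{R, R'\}}$, while the outer vertices $U_{w_Z rs}, U_{w_Z ts}$ of $V_Z$ provide precisely the root-group splices connecting vertex $6$ of $H_{\{R, R'\}}$ to its neighbours; the remaining vertex groups of $A_\ell$ and $A_r$ are, vertex by vertex, those of $H_{\{R, R'\}}$. Injectivity of $V_Z \to H_{\{R, R'\}}$ is then immediate from Proposition~\ref{treeproducts}, since this step exhibits $V_Z$ as a sub-tree-product of $H_{\{R, R'\}}$. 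The main obstacle is the bookkeeping: with sixteen vertex groups in $J_{(R, R')}$ and fifteen in $H_{\{R, R'\}}$ one must match every edge group and verify each identification, but each such verification reduces to parts (a)--(b) of Lemma~\ref{Lemma: Key lemma} combined with Proposition~\ref{treeofgroupsinjective}, exactly as in Lemma~\ref{Lemma: J_R,t cong H_R star V_T O_T}, and the $R \leftrightarrow R'$ symmetry of the configuration allows both sides of $O_Z$ to be treated in parallel.
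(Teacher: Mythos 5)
Your overall strategy mirrors the paper's, and the vertex identifications you make are correct: $w_Z = w_R rs$, the three vertex groups of $O_Z$ coincide with vertices $6$--$8$ of $J_{(R,R')}$, the middle vertex of $V_Z$ is vertex $6$ of $H_{\{R,R'\}}$, and $U_{w_Z rs} = U_{w_R r_{\{r,s\}}}$ is vertex $7$. However, the crucial folding step is not carried off. Because $O_Z$ sits in the \emph{interior} of the sequence $F_5 - O_Z - L_8$ (unlike $O_T$ in Lemma~\ref{Lemma: J_R,t cong H_R star V_T O_T}, which sits at one end), folding at both junctions with intermediate $V_Z$ and ``absorbing'' yields the tree $(F_5 \star_{E_\ell} V_Z) - O_Z - (V_Z \star_{E_r} L_8)$, in which $O_Z$ is still an interior vertex attached to two separate copies of $V_Z$; Proposition~\ref{treeproducts} gives no contraction of this tree into $H_{\{R,R'\}} \star_{V_Z} O_Z$. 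The paper's actual move is to fold only at the $E_r$ junction with the \emph{larger} intermediate group $M := F_5 \star_{E_\ell} V_Z$ (which contains $E_r$ and sits inside $F_5 \star_{E_\ell} O_Z$), obtaining $(F_5 \star_{E_\ell} O_Z) \star_M (M \star_{E_r} L_8)$; then $M \star_{E_r} L_8 \cong H_{\{R,R'\}}$ and $F_5 \star_{E_\ell} O_Z \cong M \star_{V_Z} O_Z$, so the tree $O_Z - M - H_{\{R,R'\}}$ contracts to $O_Z \star_{V_Z} H_{\{R,R'\}}$. Identifying $M$ (rather than $V_Z$) as the intermediate is the key idea of the proof and is absent from your sketch.

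Second, the injectivity of $V_Z \to H_{\{R,R'\}}$ is \emph{not} ``immediate from Proposition~\ref{treeproducts}.'' You observe yourself that the outer vertex $U_{w_Z ts} = U_{w_R rsts}$ of $V_Z$ only embeds into (as a proper subgroup of) the vertex group $U_{w_T tr r_{\{s,t\}}} = U_{w_R r r_{\{s,t\}}}$ of $H_{\{R,R'\}}$; hence $V_Z$ is not a sub-tree-product of $H_{\{R,R'\}}$ and Proposition~\ref{treeproducts} is not applicable. One must first use Proposition~\ref{treeofgroupsinjective} to embed $V_Z$ into the tree product of vertices $5$--$7$ of $H_{\{R,R'\}}$ and then Proposition~\ref{treeproducts} to embed the latter into $H_{\{R,R'\}}$, which is exactly the paper's two-step argument; moreover this injectivity has to be established \emph{before} the amalgam $H_{\{R,R'\}} \star_{V_Z} O_Z$ can even be formed, not ``then'' as an afterthought.
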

\begin{proof}
	Note that $Z \in \mathcal{T}_{i+2, 1}$. By Proposition \ref{treeproducts}, $U_{w_R rr_{\{s, t\}}} \hat{\star} V_{w_R rs r_{\{r, t\}}} \hat{\star} U_{w_R rsrs} \to H_{\{R, R'\}}$ is injective. Using Proposition \ref{treeofgroupsinjective}, we deduce that
	\allowdisplaybreaks
	\begin{align*}
		V_Z = U_{w_R rsts} \hat{\star} V_{w_R rs r_{\{r, t\}}} \hat{\star} U_{w_R rsrs} \to U_{w_R rr_{\{s, t\}}} \hat{\star} V_{w_R rs r_{\{r, t\}}} \hat{\star} U_{w_R rsrs}
	\end{align*}
	is injective and hence also the concatenation $V_T \to H_{\{R, R'\}}$. Let $F_i$ be the tree product of the first $i$ vertex groups and let $L_j$ be the tree product of the last $j$ vertex groups of the underlying sequence of groups of $J_{(R, R')}$. Note that by Proposition \ref{treeofgroupsinjective} and Lemma \ref{VRtoORinjective} the homomorphism $F_5 \star_{U_{w_R rsts}} V_Z \to F_5 \star_{U_{w_R rsts}} O_Z$ is injective. We deduce from Proposition \ref{treeproducts} and Lemma \ref{folding} that $F_5 \star_{U_{w_R rsts}} V_Z \star_{U_{w_R srs}} L_8 \cong H_{\{R, R'\}}$. Note also, that $U_{w_R srs} \to V_Z$ is injective. Using Proposition \ref{treeproducts}, Remark \ref{Remark: isomorphism preserves amalgamated product}, Lemma \ref{folding} and Lemma \ref{VRtoORinjective} we obtain the following isomorphisms:
	\allowdisplaybreaks
	\begin{align*}
		J_{(R, R')} &\cong F_5 \star_{U_{w_R rsts}} V_{w_Rrst r_{\{r, s\}}} \hat{\star} U_{w_R rs r_{\{r, t\}}} \hat{\star} V_{w_R rsr r_{\{s, t\}}} \star_{U_R srs} L_8 \\
		&\cong F_5 \star_{U_{w_R rsts}} O_Z \star_{U_{w_R srs}} L_8 \\
		&\cong \left( F_5 \star_{U_{w_R rsts}} O_Z \right) \star_{\left( F_5 \star_{U_{w_R rsts}} V_Z \right)} \left( F_5 \star_{U_{w_R rsts}} V_Z \right) \star_{U_{w_R srs}} L_8 \\
		&\cong \left( F_5 \star_{U_{w_R rsts}} V_Z \star_{V_Z} O_Z \right) \star_{\left( F_5 \star_{U_{w_R rsts}} V_Z \right)} \left( F_5 \star_{U_{w_R rsts}} V_Z \star_{U_{w_R srs}} L_8 \right) \\
		&\cong \left( O_Z \star_{V_Z} ( F_5 \star_{U_{w_R rsts}} V_Z) \right) \star_{\left( F_5 \star_{U_{w_R rsts}} V_Z \right)} H_{\{R, R'\}} \\
		&\cong O_Z \star_{V_Z} H_{\{R, R'\}} \qedhere
	\end{align*}
\end{proof}

\begin{lemma}\label{Lemma: X_R 2}
	Let $R \in \mathcal{T}_{i, 1}$ be a residue of type $\{s, t\}$ such that $\ell(w_R rs) = \ell(w_R) -2$ and $\ell(w_R rt) = \ell(w_R)$. Let $Z := R_{\{r, s\}}(w_R)$ and suppose that $Z \notin \mathcal{T}_{i-2, 1}$. Let $P_Z \in \mathcal{T}_{i-2, 2}$ be the unique element with $Z \in P_Z$. Then $X_R \to G_{P_Z}$ is injective.
\end{lemma}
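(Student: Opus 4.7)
The plan is to imitate the proof of Lemma~\ref{Lemma: X_R 1} (and compare with Lemma~\ref{Lemma: ERs to GTT' injective}): identify the sequence of nine vertex groups of $X_R$ as a contiguous subsequence (up to reversal) of the twenty-five vertex groups of $G_{P_Z}$, and conclude via Proposition~\ref{treeproducts}.

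First I would make the pair $P_Z = \{Z, Z'\}$ explicit. The hypothesis $\ell(w_R rs) = \ell(w_R) - 2$ forces $w_Z = w_R rs$, so that $\ell(w_Z) = i - 2$ and $\ell(w_Z r) = \ell(w_Z) + 1 = \ell(w_Z s)$. Applying Lemma~\ref{Lemma: not both down} at $w_Z$ together with the identity $w_Z st = w_R rt$, which has length $\ell(w_R) = \ell(w_Z) + 2$ by the second hypothesis, yields $\ell(w_Z st) = \ell(w_Z) + 2$ and hence $\ell(w_Z rt) = \ell(w_Z)$. Therefore $Z' = R_{\{s,t\}}(w_Z r)$ with $w_{Z'} = w_Z rt$, and the types of $Z$ and $Z'$ share the generator $s$.

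Second, I would set up $G_{P_Z} = G_{\{Z, Z'\}}$ using the ordering in which $Z$ plays the role of ``$R$'' (type $\{r,s\}$) and $Z'$ plays the role of ``$R'$'' (type $\{s,t\}$) in the definition of $G_{\{R, R'\}}$, so that the abstract letter $r$ in the formula corresponds to our $s$, the abstract $s$ to our $r$, the abstract $t$ to our $t$, and $T$, $T'$ in the formula become $T_Z := R_{\{s,t\}}(w_Z)$, $T_{Z'} := R_{\{r,s\}}(w_{Z'})$ respectively. After performing this substitution in the list of twenty-five vertex groups, the block at positions $5$ through $13$, read in reverse order and simplified using $w_R = w_Z sr$ and the Coxeter identity $(sr)^2 = r_{\{r,s\}}$, is seen to coincide term-by-term with the nine vertex groups of $X_R$. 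The only delicate point is the outermost term: the identification $U_{w_{T_Z} ts r_{\{r, t\}}} = U_{w_Z s r_{\{r, t\}}}$ amounts to $w_{T_Z} = w_Z t$, which requires $\ell(w_Z t) = \ell(w_Z) - 1$; I expect this to follow from the hypotheses by a short length argument based on Lemma~\ref{Lemma: not both down} applied symmetrically.

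Once the matching of the block of nine vertex groups is verified, Proposition~\ref{treeproducts} applies directly: contracting the first four and the last twelve vertex groups of the underlying sequence of groups of $G_{P_Z}$ into single vertex groups exhibits $X_R$ as a vertex group of the resulting tree of groups, so the canonical homomorphism $X_R \to G_{P_Z}$ is injective. The main obstacle I anticipate is purely bookkeeping: carrying out the label substitution cleanly across all twenty-five vertex groups and confirming the endpoint identification $w_{T_Z} = w_Z t$; there is no new structural input beyond what was already used to handle the cases $Z \in \mathcal{T}_{i-2,1}$ in Lemma~\ref{Lemma: X_R 1} and $\ell(w_R rs) = \ell(w_R) - 2 = \ell(w_R rt)$ in Lemma~\ref{Lemma: ERs to GTT' injective}.
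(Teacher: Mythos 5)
Your approach is the paper's: exhibit the nine vertex groups of $X_R$ as a contiguous block of the sequence of groups underlying $G_{P_Z}$ and invoke Proposition~\ref{treeproducts}. The paper states the matching as ``vertex groups $13$--$21$''; you arrive at ``positions $5$ through $13$, read in reverse''. These are the same block --- the discrepancy is only which member of $P_Z = \{Z, Z'\}$ one slots into the $R$-position of the formula for $G_{\{R,R'\}}$, and swapping that choice reverses the vertex sequence while exchanging the abstract letters $s$ and $t$. Your dictionary ($r\mapsto s$, $s\mapsto r$, $t\mapsto t$, with $Z$ in the $R$-slot) is internally consistent, the identification $w_{Z'} = w_Z rt$ is correct, and the block-by-block simplification via $w_R = w_Z sr$ and $srsr = r_{\{r,s\}}$ checks out.

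There is, however, a genuine gap at the ``delicate point''. You write that $w_{T_Z} = w_Z t$ ``requires $\ell(w_Z t) = \ell(w_Z) - 1$'' and expect this to come out of Lemma~\ref{Lemma: not both down}. That length drop does hold --- the gate property of $Z$ forces $\ell(w_Z r) = \ell(w_Z s) = \ell(w_Z)+1$, and $w_Z \neq 1_W$ since $\mathcal{T}_{i-2,2} \neq \emptyset$ forces $i \geq 4$ --- but by itself it only makes $w_Z t$ a \emph{candidate} for $\proj_{T_Z} 1_W$. You still must rule out a strictly shorter chamber in $T_Z$, i.e.\ show $\ell(w_Z ts) > \ell(w_Z t)$, and this is not a direct application of Lemma~\ref{Lemma: not both down} ``applied symmetrically''. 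The clean fix is to observe that $T_Z = R_{\{s,t\}}(w_Z)$ is precisely the residue $T$ occurring in the setup of $G_{\{Z,Z'\}}$, so Lemma~\ref{CleftrightcapHTHT'} applied to $\{Z, Z'\} \in \mathcal{T}_{i-2,2}$ gives $T_Z \in \mathcal{T}_{i-3,1}$, whence $\ell(w_{T_Z}) = i-3 = \ell(w_Z t)$; since $w_Z t$ lies in $T_Z$ and the gate is the unique chamber of minimal length, $w_{T_Z} = w_Z t$ follows. With that substitution your proof is complete and agrees with the paper's.
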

\begin{proof}
	As the vertex groups $13 - 21$ of the underlying sequence of groups of $G_{P_Z}$ coincide with the vertex groups of the underlying sequence of groups of $X_R$, the claim follows from Proposition \ref{treeproducts}.
\end{proof}

\subsection*{The groups $\mathbf{C}$ and $\mathbf{C_{(R, R')}}$}

Let $\{ R, R' \} \in \mathcal{T}_{i, 2}$. Let $R$ be of type $\{r, s\}$ and let $R'$ be of type $\{r, t\}$. We let $T = R_{\{r, t\}}(w_R)$ and $T' = R_{\{r, s\}}(w_{R'})$. We define the group $C$ to be the tree product of the sequence of groups with vertex groups
\allowdisplaybreaks
\begin{align*}
	U_{w_T r_{\{r, t\}}}, V_{w_T tr r_{\{s, t\}}}, U_{w_R r_{\{r, s\}}}, V_{w_R srr_{\{s, t\}}}, U_{w_{R'} r_{\{r, t\}}}, V_{w_{T'} sr r_{\{s, t\}}}, U_{w_{T'} r_{\{r, s\}}}
\end{align*}
Furthermore, we define the group $C_{(R, R')}$ to be the tree product of the sequence of groups with vertex groups
\allowdisplaybreaks
\begin{align*}
	U_{w_Trtrr_{\{s, t\}}}, V_{w_Tr_{\{r, t\}}sr_{\{r, t\}}}, U_{w_Rrtr_{\{r, s\}}}, V_{w_Rrtsr_{\{r, t\}}}, U_{w_Rrr_{\{s, t\}}}, V_{w_Rrsr_{\{r, t\}}}, \\
	U_{w_R r_{\{r, s\}}}, V_{w_R srr_{\{s, t\}}}, U_{w_{R'} r_{\{r, t\}}}, V_{w_{R'}rr_{\{s, t\}}}, U_{w_{T'}r_{\{r, s\}}}
\end{align*}
For completeness, the group $C_{(R', R)}$ is the tree product of the following sequence of groups with vertex groups
\allowdisplaybreaks
\begin{align*}
	U_{w_T r_{\{r, t\}}}, V_{w_R rr_{\{s, t\}}}, U_{w_R r_{\{r, s\}}}, V_{w_R srr_{\{s, t\}}}, U_{w_{R'} r_{\{r, t\}}}, \\
	V_{w_{R'} rtr_{\{r, s\}}}, U_{w_{R'} rr_{\{s, t\}}}, V_{w_{R'} rstr_{\{r, s\}}}, U_{w_{R'}rsr_{\{r, t\}}}, V_{w_{T'}r_{\{r, s\}}tr_{\{r, s\}}}, U_{w_{T'}rsrr_{\{s, t\}}}
\end{align*}
It follows similarly as in Remark \ref{Remark: tree product generated by root group elements} that $C_R$, $C_{(R, R')}$ and $C_{(R', R)}$ are generated by suitable $u_{\alpha}$.

\begin{remark}
	Note that the vertex groups of $C_{(R', R)}$ can be obtained from $C_{(R, R')}$ by interchanging $s$ and $t$ and starting with the last vertex group of $C_{(R, R')}$. Interchanging $s$ and $t$ and the order of the vertex groups of $C$ does not change the group $C$.
\end{remark}

\begin{lemma}\label{RR'CCleftCright}
	Let $\{ R, R' \} \in \mathcal{T}_{i, 2}$. Then the canonical homomorphisms $C\to C_{(R, R')}, C_{(R', R)}$ are injective and we have $H_{\{R, R'\}} \cong C_{(R, R')} \star_C C_{(R', R)}$.
\end{lemma}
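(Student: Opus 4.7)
The plan is to deduce both the injectivity of the canonical homomorphisms $C \to C_{(R, R')}, C_{(R', R)}$ and the amalgamation isomorphism by the same type of bookkeeping that was used for Lemma~\ref{HRtoGRinjective}, Lemma~\ref{Lemma: J_R,t cong H_R star V_T O_T} and Lemma~\ref{Lemma: J_R,R' cong H_R,R' star V_Z O_Z}: reorganize the relevant chains of vertex groups via Proposition~\ref{treeproducts}, replace edge groups by larger vertex groups via Lemma~\ref{folding}, invoke Remark~\ref{Remark: isomorphism preserves amalgamated product} to transport amalgamated products along isomorphisms of factors, and verify the edge-intersection hypotheses of Proposition~\ref{treeofgroupsinjective} by the intersection identities of Lemma~\ref{Lemma: Key lemma}.

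First, I would establish that $C \to C_{(R, R')}$ is injective. The chain $C_{(R, R')}$ has $11$ vertex groups and I would cluster them into $7$ consecutive blocks so that the $j$-th cluster (in order) contains the $j$-th vertex group of $C$. Concretely, the middle three vertex groups $U_{w_R r_{\{r,s\}}}, V_{w_R s r r_{\{s,t\}}}, U_{w_{R'} r_{\{r,t\}}}$ of $C_{(R, R')}$ already coincide with the middle three vertex groups of $C$; the first five vertex groups of $C_{(R, R')}$ must be grouped into two clusters whose tree products contain $U_{w_T r_{\{r, t\}}}$ and $V_{w_T t r r_{\{s, t\}}}$ respectively, and symmetrically for the last three vertex groups of $C_{(R, R')}$ and the last two of $C$. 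After applying Proposition~\ref{treeproducts} to $C_{(R, R')}$ according to this partition, Proposition~\ref{treeofgroupsinjective} yields the injectivity of $C \to C_{(R, R')}$, with the preimage conditions being precisely instances of Lemma~\ref{Lemma: Key lemma}. The injectivity of $C \to C_{(R', R)}$ follows by the entirely symmetric argument obtained by interchanging $s$ with $t$ and the roles of $R, T$ with $R', T'$.

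For the isomorphism $H_{\{R, R'\}} \cong C_{(R, R')} \star_C C_{(R', R)}$, I would split the chain of $15$ vertex groups underlying $H_{\{R, R'\}}$ into a left part $A$, a middle part realizing $C$, and a right part $B$, so that by Proposition~\ref{treeproducts} we have $H_{\{R, R'\}} \cong A \star_{\ast} C \star_{\ast} B$ where the edge groups are the canonical ones. Then, using Lemma~\ref{folding} to introduce intermediate copies of $C$ at the splitting edges and Remark~\ref{Remark: isomorphism preserves amalgamated product} to reidentify each half with the factors defined earlier, one identifies $A \star_{\ast} C \cong C_{(R, R')}$ and $C \star_{\ast} B \cong C_{(R', R)}$, giving the desired amalgamation $H_{\{R, R'\}} \cong C_{(R, R')} \star_C C_{(R', R)}$; the resulting computation parallels the chain of isomorphisms displayed in the proof of Lemma~\ref{Lemma: J_R,R' cong H_R,R' star V_Z O_Z}.

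The main obstacle is the combinatorial bookkeeping, i.e.\ verifying that the chosen partitions of $C_{(R, R')}$, $C_{(R', R)}$ and $H_{\{R, R'\}}$ are compatible in the sense required by Proposition~\ref{treeofgroupsinjective}, so that every edge intersection reduces to an instance of Lemma~\ref{Lemma: Key lemma}; once the partitions are laid out in accordance with the diagrams in the appendix, all the intersections decouple into the four local cases catalogued in parts (a)--(d) of that lemma.
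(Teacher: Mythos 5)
The first half of your plan (injectivity of $C \to C_{(R,R')}$ and $C \to C_{(R',R)}$) matches the paper's argument: regroup the $11$-vertex chain $C_{(R,R')}$ into $7$ blocks, verify the preimage condition in Proposition~\ref{treeofgroupsinjective} via Lemma~\ref{Lemma: Key lemma}, and argue symmetrically for $C_{(R',R)}$. That part is fine.

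The second half contains a genuine structural misconception. You propose to split $H_{\{R,R'\}}$ as $A \star_{\ast} C \star_{\ast} B$ with $C$ a ``middle part'' arising from a contiguous subchain, and then to use Lemma~\ref{folding} to ``introduce intermediate copies of $C$ at the splitting edges.'' Neither move is available. The group $C$ is the tree product of a $7$-vertex chain whose outer vertex groups $U_{w_T r_{\{r,t\}}}$, $V_{w_T tr r_{\{s,t\}}}$, $V_{w_{T'} sr r_{\{s,t\}}}$, $U_{w_{T'} r_{\{r,s\}}}$ do not occur as vertex groups of the $15$-vertex chain defining $H_{\{R,R'\}}$; so no contraction of a subtree of the latter chain produces $C$. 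Moreover, Lemma~\ref{folding} can only insert a group $M$ along an edge $e$ when $G_e \leq M \leq G_{o(e)}$, and $C$ (being a $7$-vertex tree product) is not contained in any single vertex group of $H_{\{R,R'\}}$'s chain, so it cannot be folded in directly at an edge.

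What actually gets folded in are the smaller $3$-vertex tree products $U_{left} := U_{w_T r_{\{r,t\}}} \hat{\star} V_{w_R rr_{\{s,t\}}} \hat{\star} U_{w_R r_{\{r,s\}}}$ and $U_{right} := U_{w_{R'} r_{\{r,t\}}} \hat{\star} V_{w_{R'} rr_{\{s,t\}}} \hat{\star} U_{w_{T'} r_{\{r,s\}}}$; one first observes (as a by-product of the injectivity computations in part one) that $U_{left} \to F_7$ and $U_{right} \to L_7$ are injective, where $F_7$, $L_7$ are the tree products of the first and last seven vertex groups of $H_{\{R,R'\}}$. The paper folds $U_{right}$ between the middle vertex group $V_{w_R sr r_{\{s,t\}}}$ and $L_7$, contracts $F_7 \star V_{w_R sr r_{\{s,t\}}} \star U_{right}$ to $C_{(R,R')}$, and only then folds in $C$ (which now satisfies $U_{right} \leq C \leq C_{(R,R')}$); unfolding $U_{right}$ and expanding $C$ as $U_{left} \star V_{w_R sr r_{\{s,t\}}} \star U_{right}$ recovers $C_{(R',R)}$ on the other side. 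Your proposal should be repaired by replacing the phrase ``introduce intermediate copies of $C$'' with ``introduce $U_{left}$ and $U_{right}$ and regroup'' — once this is done, the rest of your outline (Proposition~\ref{treeproducts}, Remark~\ref{Remark: isomorphism preserves amalgamated product}, parallel to Lemma~\ref{Lemma: J_R,R' cong H_R,R' star V_Z O_Z}) goes through.
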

\begin{proof}
	We first show that $C \to C_{(R, R')}$ is injective. Let $\{r, s\}$ be the type of $R$ and let $\{r, t\}$ be the type of $R'$. Using Proposition \ref{treeproducts} the group $C_{(R, R')}$ is isomorphic to the tree product of the following sequence of groups with vertex groups
	\allowdisplaybreaks
	\begin{align*}
		U_{w_Trtrr_{\{s, t\}}} \hat{\star} V_{w_Tr_{\{r, t\}}sr_{\{r, t\}}} \hat{\star} U_{w_Rrtr_{\{r, s\}}}, V_{w_Rrtsr_{\{r, t\}}} \hat{\star} U_{w_Rrr_{\{s, t\}}}, \\ V_{w_Rrsr_{\{r, t\}}} \hat{\star} U_{w_R r_{\{r, s\}}}, V_{w_R srr_{\{s, t\}}}, U_{w_{R'} r_{\{r, t\}}}, V_{w_{R'}rr_{\{s, t\}}}, U_{w_{T'}r_{\{r, s\}}}
	\end{align*}
	One easily sees that each vertex group of $C$ is contained in the corresponding vertex group of the previous tree product. Again we deduce from Lemma \ref{Lemma: Key lemma} and Proposition \ref{treeofgroupsinjective} that $C \to C_{(R, R')}$ is injective. Using similar arguments, we obtain that $C \to C_{(R', R)}$ is injective. Let $F_7$ be the tree product of the first seven vertex groups of the underlying sequence of groups of $H_{\{R, R'\}}$ and let $L_7$ be the tree product of the last seven vertex groups of the underlying sequence of groups of $H_{\{R, R'\}}$. It follows from the computations above that $U_{left} := U_{w_T r_{\{r, t\}}} \hat{\star} V_{w_R rr_{\{s, t\}}} \hat{\star} U_{w_R r_{\{r, s\}}} \to F_7$ and $U_{right} := U_{w_{R'} r_{\{r, t\}}} \hat{\star} V_{w_{R'} rr_{\{s, t\}}} \hat{\star} U_{w_{T'} r_{\{r, s\}}} \to L_7$ are injective. Moreover, $U_{right} \to C$ is injective by Proposition \ref{treeproducts}. Using Proposition \ref{treeproducts}, Lemma \ref{folding} and Remark \ref{Remark: isomorphism preserves amalgamated product} we obtain the following isomorphisms:
	\allowdisplaybreaks
	\begin{align*}
		H_{\{R, R'\}} &\cong F_7 \star_{U_{w_R srs}} V_{w_R srr_{\{s, t\}}} \star_{U_{w_{R'} trt}} L_7 \\
		&\cong F_7 \star_{U_{w_R srs}} V_{w_R srr_{\{s, t\}}} \star_{U_{w_{R'} trt}} U_{right} \star_{U_{right}} L_7 \\
		&\cong C_{(R, R')} \star_{U_{right}} L_7 \\
		&\cong C_{(R, R')} \star_C C \star_{U_{right}} L_7 \\
		&\cong C_{(R, R')} \star_C \left( C \star_{U_{right}} L_7 \right) \\
		&\cong C_{(R, R')} \star_C \left( U_{left} \star_{U_{w_R srs}} V_{w_R srr_{\{s, t\}}} \star_{U_{w_{R'} trt}} U_{right} \star_{U_{right}} L_7 \right) \\
		&\cong C_{(R, R')} \star_C \left( U_{left} \star_{U_{w_R srs}} V_{w_R srr_{\{s, t\}}} \star_{U_{w_{R'} trt}} L_7 \right) \\
		&\cong C_{(R, R')} \star_C C_{(R', R)} \qedhere
	\end{align*}
\end{proof}

\begin{lemma}\label{CleftrightcapHTHT'}
	Let $\{ R, R' \} \in \mathcal{T}_{i, 2}$. Let $R$ be of type $\{r, s\}$, let $R'$ be of type $\{r, t\}$ and let $T' := R_{\{r, s\}}(w_{R'})$. Then $T' \in \mathcal{T}_{i-1, 1}$, the canonical homomorphism $C_{(R', R)} \to U_{T', s}$ is injective and we have $C_{(R', R)} \cap E_{T', s} = C$ in $U_{T', s}$. In particular, for $T := R_{\{r, t\}}(w_R)$ we have $T \in \mathcal{T}_{i-1, 1}$, the canonical homomorphism $C_{(R, R')} \to U_{T, t}$ is injective and we have $C_{(R, R')} \cap E_{T, t} = C$ in $U_{T, t}$.
\end{lemma}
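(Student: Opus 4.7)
The plan has two ingredients: a length computation to show $T'\in\mathcal{T}_{i-1,1}$, and a tree-product manipulation for the injectivity and intersection.

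First, for $T'\in\mathcal{T}_{i-1,1}$, I observe that the hypothesis $\{R,R'\}\in\mathcal{T}_{i,2}$ means $R'=T_R$, and since $R$ has type $\{r,s\}$ and $R'$ has type $\{r,t\}$, the construction of $T_R$ in the paragraph preceding the definition of $\mathcal{T}_{i,2}$ forces $\ell(w_Rst)=\ell(w_R)$ and $\ell(w_Rrt)=\ell(w_R)+2$. From this I read off $w_{R'}=w_Rst$ and $w_{T'}=w_{R'}r$, of length $i-1$. A routine application of Lemma~\ref{Lemma: not both down} to $w_{T'}$ (with the moreover-clause) then gives $\ell(w_{T'}rt)=\ell(w_{T'})+2=\ell(w_{T'}st)$, which is precisely the defining condition for $T'\in\mathcal{T}_{i-1,1}$.

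For the second ingredient, the key observation is that both $E_{T',s}$ and $C_{(R',R)}$ can be recognized as tree products over subsequences of the sequence of groups defining $U_{T',s}$. After the relabeling dictated by interpreting $T'$ in the role of ``$R$'' and $s$ in the role of ``$s$'' in the definition of $U_{R,s}$, the nine vertex groups of $E_{T',s}$ correspond to the initial block of the fifteen-term sequence for $U_{T',s}$, while the eleven vertex groups of $C_{(R',R)}$ correspond to a consecutive block positioned further along the sequence; their overlap consists of exactly the seven vertex groups defining $C$. Once this alignment is checked, Proposition~\ref{treeproducts} immediately yields that the canonical homomorphism $C_{(R',R)}\to U_{T',s}$ is injective, and Corollary~\ref{intersectionwithasubtree} applied to the two subsequences inside $U_{T',s}$ computes the intersection $C_{(R',R)}\cap E_{T',s}=C$. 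The ``in particular'' statement is the same claim with $R\leftrightarrow R'$, $s\leftrightarrow t$, $T\leftrightarrow T'$; since the setup of $\mathcal{T}_{i,2}$ and all the definitions involved are symmetric under this swap, the proof transfers verbatim.

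The main obstacle is the bookkeeping: one must carefully track which Coxeter element $w$ is paired with which rank-$2$ subset inside each $U_w$ or $V_{w r_{\{\cdot,\cdot\}}}$, and verify that after the relabeling the vertex groups of $E_{T',s}$, $C_{(R',R)}$, and $C$ genuinely sit inside the sequence for $U_{T',s}$ as claimed. This is mechanical but tedious---the diagrams in the appendix (referenced at the start of Section~\ref{Section: Locally Weyl-invariant Commutator blueprints}) should make the alignment transparent. Once the subsequences are identified, no additional analysis is needed: everything reduces to Proposition~\ref{treeproducts} and Corollary~\ref{intersectionwithasubtree}.
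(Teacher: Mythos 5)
There are concrete errors in both halves of your argument. First, the identification $w_{T'} = w_{R'} r$ of length $i-1$ is false: since $r$ belongs to the type $\{r,t\}$ of $R'$ and $w_{R'} = \proj_{R'}1_W$, the gate property forces $\ell(w_{R'}r) = \ell(w_{R'})+1 = i+1$, so $w_{R'}r$ moves \emph{away} from $1_W$. The correct element is $w_{T'} = w_{R'}s$ (equivalently $w_Z st$ for $Z = R_{\{s,t\}}(w_R)$), and even then one must verify $\ell(w_{R'}sr) = i$ to know this is the projection. The paper sidesteps your bookkeeping entirely, arguing via $Z = R_{\{s,t\}}(w_R)$ and the estimate $\ell(w_Z trs), \ell(w_Z srt)\geq\ell(w_Z)+1$ before invoking Lemma~\ref{Lemma: not both down}. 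Your invocation of that lemma is also incomplete as stated: the main clause only yields $\ell(w_{T'})+2 \in \{\ell(w_{T'}rt), \ell(w_{T'}st)\}$, and you need a separate argument (here, $w_{T'}st = w_{R'}t$ has length $i+1$ because $t$ lies in the type of $R'$) to pin down the second letter, plus a further argument excluding the ``one goes down'' case for the first.

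Second, the description of the tree alignment is wrong, and this is where the real content of the paper's proof lives. In the paper, $C_{(R',R)}$ is the tree product of the \emph{first} eleven vertex groups of the fifteen-term sequence for $U_{T',s}$ (not a block ``further along''), which is exactly why Proposition~\ref{treeproducts} alone gives injectivity. By contrast, $E_{T',s}$ does \emph{not} coincide with an initial nine-term subsequence: already at position~6 the vertex group of $E_{T',s}$ is $V_{w_{T'}srr_{\{s,t\}}}$ whereas position~6 of $U_{T',s}$ is $V_{w_{T'}srtr_{\{r,s\}}}$, and these are different. The paper must first contract the fifteen-term sequence into a nine-term one via $\hat{\star}$, verify that each vertex group of $E_{T',s}$ sits inside the corresponding contracted group, and only then apply Lemma~\ref{Lemma: Key lemma} and Proposition~\ref{treeofgroupsinjective} to obtain the embedding; Corollary~\ref{intersectionwithasubtree} is then applied to this contracted tree, with the first seven contracted groups reassembling $C_{(R',R)}$ and the corresponding seven subgroups giving $C$. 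Your proposed alignment is also internally inconsistent: an initial block of 9 and a consecutive block of 11 inside a sequence of 15 overlap in at most $9+11-15 = 5$ positions, not 7. The contraction step is not ``mechanical bookkeeping'' you can wave away; without it, Proposition~\ref{treeproducts} does not apply to $E_{T',s}$ and the intersection computation has no hypothesis to rest on.
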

\begin{proof}
	The claim $T, T' \in \mathcal{T}_{i-1, 1}$ follows from Lemma \ref{Lemma: not both down}, as for $Z := R_{\{s, t\}}(w_R)$ we have $\ell(w_Z trs), \ell(w_Z srt) \geq \ell(w_Z) +1$. We note that $\ell(w_{T'}ts) = \ell(w_{T'}) -2$. We let $w' = w_Z$. For completeness we recall that $U_{T', s}$ is the tree product of the underlying sequence of groups with vertex groups
	\allowdisplaybreaks
	\begin{align*}
		U_{w'ts r_{\{r, t\}}}, V_{w'tstr r_{\{s, t\}}}, U_{w'tstr_{\{r, s\}}}, V_{w_{T'} strr_{\{s, t\}}}, U_{w_{T'}sr_{\{r, t\}}}, V_{w_{T'} srtr_{\{r, s\}}}, \\
		U_{w_{T'}sr r_{\{s, t\}}}, V_{w_{T'}srst r_{\{r, s\}}}, U_{w_{T'}srsr_{\{r, t\}}}, V_{w_{T'} r_{\{r, s\}}tr_{\{r, s\}}}, U_{w_{T'}rsrr_{\{s, t\}}}, \\
		V_{w_{T'}rsrtr_{\{r, s\}}}, U_{w_{T'} rs r_{\{r, t\}}}, V_{w_{T'} rst r_{\{r, s\}}}, U_{w_{T'} r r_{\{s, t\}}}
	\end{align*}
	As the first eleven vertex groups of $U_{T', s}$ coincide with the vertex groups of $C_{(R', R)}$, Proposition \ref{treeproducts} implies that $C_{(R', R)} \to U_{T', s}$ is injective. Before we show the claim, we have to analyse the embedding $E_{T', s} \to U_{T', s}$ from Lemma \ref{ERstoURsinjective} in more detail. Using Proposition \ref{treeproducts} the group $U_{T', s}$ is isomorphic to the tree product of the following sequence of groups with vertex groups
	\allowdisplaybreaks
	\begin{align*}
		U_{w'ts r_{\{r, t\}}}, V_{w'tstr r_{\{s, t\}}}, U_{w'tstr_{\{r, s\}}}, V_{w_{T'} strr_{\{s, t\}}}, U_{w_{T'}sr_{\{r, t\}}} \hat{\star} V_{w_{T'} srtr_{\{r, s\}}}, \\
		U_{w_{T'}sr r_{\{s, t\}}} \hat{\star} V_{w_{T'}srst r_{\{r, s\}}}, U_{w_{T'}srsr_{\{r, t\}}} \hat{\star} V_{w_{T'} r_{\{r, s\}}tr_{\{r, s\}}} \hat{\star} U_{w_{T'}rsrr_{\{s, t\}}}, \\
		V_{w_{T'}rsrtr_{\{r, s\}}} \hat{\star} U_{w_{T'} rs r_{\{r, t\}}}, V_{w_{T'} rst r_{\{r, s\}}} \hat{\star} U_{w_{T'} r r_{\{s, t\}}}
	\end{align*}
	One easily sees that each vertex group of $E_{T', s}$ is contained in the corresponding vertex group of the previous tree product. Again we deduce from Lemma \ref{Lemma: Key lemma} and Proposition \ref{treeofgroupsinjective} that $E_{T', s} \to U_{T', s}$ is injective. We have known this already before, but this time we know how the embedding looks like and we can apply Corollary \ref{intersectionwithasubtree}. We deduce from it that in $U_{T', s}$ the intersection $C_{(R', R)} \cap E_{T', s}$ is equal to the tree product of the first seven vertex groups of the underlying sequence of groups of $E_{T', s}$, which is isomorphic to $C$.
\end{proof}

\section{Natural subgroups}\label{Section: Natural subgroups}

\begin{convention}
	In this section we let $(W, S)$ be of type $(4, 4, 4)$ and $\mathcal{M} = \left( M_{\alpha, \beta}^G \right)_{(G, \alpha, \beta) \in \mathcal{I}}$ be a locally Weyl-invariant commutator blueprint of type $(4, 4, 4)$. Moreover, we let $S = \{r, s, t\}$.
\end{convention}

For two elements $w_1, w_2 \in W$ we define $w_1 \prec w_2$ if $\ell(w_1) + \ell(w_1^{-1} w_2) = \ell(w_2)$. For any $w\in W$ we put $C(w) := \{ w' \in W \mid w' \prec w \}$. We now define for every $i \in \NN$ a subset $C_i \subseteq W$ as follows:
\allowdisplaybreaks
\begin{align*}
	C_0 := \bigcup_{S = \{r, s, t\}} \left( C(r_{\{s, t\}}) \cup C(rr_{\{s, t\}}) \right)
\end{align*}
For each $R \in \mathcal{R}_i$ of type $J = \{s, t\}$ we define
\allowdisplaybreaks
\begin{align*}
	C(R) := C( w_R str_{\{r, s\}} ) \cup C( w_R r_J rtr ) \cup C( w_R r_J rsr ) \cup C( w_R ts r_{\{r, t\}} ).
\end{align*}
For each $\{ R, R' \} \in \mathcal{T}_{i, 2}$ we define $C( \{R, R'\} ) := C(R) \cup C(R')$. We note that this union is not disjoint. For $i\geq 1$ we define
\allowdisplaybreaks
\begin{align*}
	C_i := C_{i-1} \cup \bigcup_{R\in \mathcal{R}_{i-1}} C(R) = C_{i-1} \cup \bigcup_{R \in \mathcal{T}_{i-1, 1}} C(R) \cup \bigcup_{\{ R, R' \} \in \mathcal{T}_{i-1, 2}} C(\{ R, R' \}).
\end{align*}
Moreover, we define $D_i := \{ w_R r_{\{s, t\}} \mid R \text{ is of type } \{s, t\}, w_Rs, w_Rt \in C_i \}$.

\begin{definition}\label{Definition: Gi}
	We denote by $G_i$ the direct limit of the inductive system formed by the groups $(U_w)_{w\in C_i}$ and $(V_{w'})_{w' \in D_i}$ together with the natural inclusions $U_w \to U_{ws}$ if $\ell(ws) = \ell(w) +1$ and $U_{w_Rs} \to V_{w_Rr_{\{s, t\}}}$.
\end{definition}

\begin{remark}\label{Remark: G_i is generated by x_alpha}
	Let $i \in \NN$. We will show that $G_i = \langle x_{\alpha} \mid \alpha \in \Phi_+, C_i \not\subseteq \alpha \rangle$. Note that $G_i$ is generated by elements $x_{\alpha, w}$ and $y_{\alpha, w'}$ for $w\in C_i$, $w' \in D_i$, where $x_{\alpha, w}$ is a generator of $U_w$ and $y_{\alpha, w'}$ is a generator of $V_{w'}$. We first note that for each $w' = w_R r_{\{s, t\}} \in D_i$ and all $\alpha \in \Phi_+$ with $w_R s \notin \alpha$, we have $x_{\alpha, w_R s} = y_{\alpha, w'}$ in $G_i$. Thus $G_i = \langle x_{\alpha, w} \mid \alpha \in \Phi_+, w\in C_i,  w\notin \alpha \rangle$.
	
	Suppose $s\in S$ and $w\in W$ with $w \notin \alpha_s$. Then $\ell(sw) = \ell(w) -1$. Let $k := \ell(w)$ and let $s_2, \ldots, s_k \in S$ be such that $w = s s_2 \cdots s_k$. Then, as $U_{ss_2 \dots s_m} \to U_{ss_2 \cdots s_{m+1}}$ are the canonical inclusions for any $1 \leq m \leq k-1$, we deduce $x_{\alpha_s, s} = x_{\alpha_s, w}$ in $G_i$. Let $\alpha \in \Phi_+$ be a non-simple root and let $\proj_{P_{\alpha}} 1_W \neq d \in P_{\alpha}$ (cf.\ Lemma~\ref{Lemma: Lemma 2.22 BiConstruction}). It is a consequence of Lemma \ref{projgal} that $x_{\alpha, d} = x_{\alpha, w}$ for every $w\in W$ with $w\notin \alpha$. Thus $G_i$ is generated by $\{ x_{\alpha} \mid \alpha \in \Phi_+, C_i \not\subseteq \alpha \}$.
\end{remark}

By the definition of the direct limit we have canonical homomorphisms $G_i \to G_{i+1}$ extending the identities $U_w \to U_w$ and $V_{w'} \to V_{w'}$. Let $G$ be the direct limit of the inductive system formed by the groups $\left( G_i \right)_{i\in \NN}$ with the canonical homomorphisms $G_i \to G_{i+1}$. Then the following diagram commutes for all $i \in \NN$ by definition:
\begin{center}
	\begin{tikzcd}
		G_i \arrow[r, "U_w \to U_w"] \arrow[rd] & G_{i+1} \arrow[d] \\
		& G
	\end{tikzcd}
\end{center}

Furthermore, the universal property of direct limits yields a unique homomorphism $f_i: G_i \to U_+$ extending the identities $U_w \to U_w$ and $V_{w'} \to V_{w'} \leq U_{w'}$. Thus the following diagram commutes:
\begin{center}
	\begin{tikzcd}
		G_i \arrow[rd, "f_i"] \arrow[r, "U_w \to U_w"]& G_{i+1} \arrow[d, "f_{i+1}"] \\
		& U_+
	\end{tikzcd}
\end{center}
Again, the universal property of direct limits yields a unique homomorphism $f: G \to U_+$ such that the following diagram commutes for all $i \in \NN$:
\begin{center}
	\begin{tikzcd}
		G_i \arrow[r] \arrow[rd, "f_i"] & G \arrow[d, "f"] \\
		& U_+
	\end{tikzcd}
\end{center}

\begin{remark}\label{Remark: G is generated by x_alpha}
	By Remark \ref{Remark: G_i is generated by x_alpha}, the group $G_i$ is generated by $\{ x_{\alpha} \mid \alpha \in \Phi_+, C_i \not\subseteq \alpha \}$. We let $x_{\alpha, i}$ be the elements in $G$ under the homomorphism $G_i \to G$. Then $G$ is generated by $\{ x_{\alpha, i} \mid i \in \NN, \alpha \in \Phi_+, C_i \not\subseteq \alpha \}$. By construction we have $x_{\alpha, i} = x_{\alpha, i+1}$ in $G$ for each $i \in \NN$. Thus $G$ is generated by $\{ x_{\alpha} \mid \alpha \in \Phi_+ \}$.
\end{remark}

\begin{lemma}\label{Lemma: U_+ isomorphic to G}
	The homomorphism $f: G \to U_+$ is an isomorphism.
\end{lemma}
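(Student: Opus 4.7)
The strategy is to construct an explicit inverse $g : U_+ \to G$ using the universal property of the direct limit defining $U_+$, and then verify that $f$ and $g$ are mutually inverse by comparing their action on the canonical generating sets.

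\textbf{Surjectivity of $f$.} By Remark~\ref{Remark: G is generated by x_alpha} the group $G$ is generated by $\{x_\alpha \mid \alpha \in \Phi_+\}$, and by construction $f(x_\alpha) = u_\alpha$. Since $U_+$ is generated by $\{u_\alpha \mid \alpha \in \Phi_+\}$ (each $u_\alpha$ lies in some $U_w$ which maps canonically into $U_+$), the homomorphism $f$ is surjective.

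\textbf{Exhaustion.} A preliminary combinatorial observation is that $W = \bigcup_{i \in \NN} C_i$. Indeed, starting from $C_0$, which contains all elements of length at most $5$, each passage from $C_i$ to $C_{i+1}$ adjoins the sets $C(R)$ for all rank~$2$ residues $R$ with $\ell(w_R) = i$. Since every $w \in W$ lies in some $C(R)$ for a residue $R$ whose projection $w_R$ has small enough length, $w$ eventually enters $C_i$ for some $i$. This is a routine bookkeeping using the definition of $C(R)$ and the fact that every $w \in W$ is a prefix of some element of the form $w_R \cdot v$ with $v \in \langle J \rangle$ and $J$ the type of $R$.

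\textbf{Construction of $g$.} For each $w \in W$, pick by the exhaustion step an index $i = i(w)$ with $w \in C_i$; composing the canonical map $U_w \to G_i$ (from the inductive system defining $G_i$) with $G_i \to G$ yields a homomorphism $\iota_w : U_w \to G$. This is independent of the choice of $i$: for $i < j$ with $w \in C_i \subseteq C_j$, the map $G_i \to G_j$ is the identity on $U_w$ by Definition~\ref{Definition: Gi}. Moreover, if $\ell(ws) = \ell(w) + 1$ and we pick $i$ large enough that both $w, ws \in C_i$, then the natural inclusion $U_w \hookrightarrow U_{ws}$ is already one of the maps in the inductive system defining $G_i$; hence $\iota_{ws}|_{U_w} = \iota_w$. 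The family $(\iota_w)_{w \in W}$ is therefore compatible with the system defining $U_+$, and the universal property yields a unique homomorphism $g : U_+ \to G$ with $g(u_\alpha) = x_\alpha$ for every $\alpha \in \Phi_+$.

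\textbf{Mutual inverses and main obstacle.} The composite $f \circ g$ fixes every $u_\alpha$, hence is the identity on the generating set of $U_+$, and therefore equals $\mathrm{id}_{U_+}$; similarly $g \circ f$ fixes every $x_\alpha$ and equals $\mathrm{id}_G$. Thus $f$ is an isomorphism. The only non-formal ingredient is the exhaustion $W = \bigcup_i C_i$, which is where the specific design of the sets $C_i$ (chosen precisely so that every element of $W$ and every relevant residue is eventually absorbed) is used; the rest is a clean application of the universal property of direct limits, facilitated by the fact that the auxiliary groups $V_{w'}$ are subgroups of $U_{w'}$ and therefore do not impose any extra relations beyond those already present in $U_+$.
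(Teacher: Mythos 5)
Your strategy—constructing an inverse $h:U_+\to G$ from the universal property of the direct limit defining $U_+$ and then checking that $f$ and $h$ are mutually inverse on generators—is exactly what the paper does, and the overall argument is sound. The one thing you make explicit that the paper leaves tacit is the exhaustion $W=\bigcup_i C_i$, which is indeed the ingredient needed to get a map $U_w\to G$ for every $w\in W$; the paper simply asserts ``for all $w\in W$ we have a canonical homomorphism $U_w\to G$'' without comment.

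However, your justification of the exhaustion contains a concrete false claim: $C_0$ does \emph{not} contain all elements of length at most $5$. By definition $C_0$ consists of the prefixes of $r_{\{s,t\}}$, $r_{\{r,s\}}$, $r_{\{r,t\}}$ (length $4$) and of $rr_{\{s,t\}}$, $sr_{\{r,t\}}$, $tr_{\{r,s\}}$ (length $5$), which gives only $9$ of the $21$ elements of length $4$; for instance $rstr$ is a reduced word of length $4$ but is not a prefix of any of the six generating elements of $C_0$, so $rstr\notin C_0$ (it only appears in $C_1$, as a prefix of $rsr_{\{r,t\}}=rstrtr\in C(R_{\{r,s\}}(1_W))$). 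The exhaustion $W=\bigcup_i C_i$ is still true, but establishing it requires a more careful induction tracking which elements enter at each stage, rather than the quick ``$C_0$ contains a large ball'' argument you give. Since this exhaustion is the one genuinely non-formal step in the proof, that gap should be filled before the argument is complete.
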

\begin{proof}
	By Remark \ref{Remark: G is generated by x_alpha} we have $G = \langle x_{\alpha} \mid \alpha \in \Phi_+ \rangle$. We will construct a homomorphism $U_+ \to G$ which extends $U_w \to U_w$. For all $w\in W$ we have a canonical homomorphism $U_w \to G$. Suppose $w\in W$ and $s\in S$ with $\ell(ws) = \ell(w) +1$. Then the following diagram commutes:
	\begin{center}
		\begin{tikzcd}
			U_w \arrow[r] \arrow[rd] & U_{ws} \arrow[d] \\
			& G
		\end{tikzcd}
	\end{center}
	The universal property of direct limits yields a homomorphism $h: U_+ \to G$ extending the identities on $U_w \to U_w$. As both concatenations $f \circ h$ and $h\circ f$ are the identities on each generator $x_{\alpha}$, the uniqueness of such a homomorphism implies $f\circ h = \mathrm{id}_{U_+}$ and $h\circ f = \mathrm{id}_G$. In particular, $f$ is an isomorphism.
\end{proof}

\begin{lemma}\label{Lemma: HP to Gi homomorphism}
	For each $P \in \mathcal{T}_i$ we have a canonical homomorphism $H_P \to G_i$. 
\end{lemma}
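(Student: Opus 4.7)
The plan is to invoke the universal property of tree products. Since $H_P$ is defined as a chain of amalgamated products (both for $P = R \in \mathcal{T}_{i,1}$ and for $P = \{R, R'\} \in \mathcal{T}_{i,2}$), a homomorphism $H_P \to G_i$ is determined by a choice of homomorphism from each vertex group of $H_P$ to $G_i$ such that any two of them agree on the common edge group. Recall that $G_i$ is the direct limit of the inductive system formed by the groups $(U_w)_{w \in C_i}$ and $(V_{w'})_{w'\in D_i}$, so a canonical homomorphism $U_w \to G_i$ (resp.\ $V_{w'} \to G_i$) is available as soon as $w \in C_i$ (resp.\ $w' \in D_i$). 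Compatibility on edge groups is automatic: every edge group is generated by common elements $u_\alpha$, and by Remark~\ref{Remark: G_i is generated by x_alpha} each such $u_\alpha$ is sent to the uniquely defined element $x_\alpha \in G_i$, independently of the vertex group producing it.

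The whole lemma therefore reduces to verifying that each vertex-group index appearing in the defining sequence of $H_P$ lies in $C_i$ (for $U$-vertices) or in $D_i$ (for $V$-vertices). I would treat the two cases separately. For $P = R \in \mathcal{T}_{i,1}$ of type $\{s, t\}$, the five vertex-group conditions are that $w_R s r_{\{r,t\}}, w_R r_{\{s,t\}}, w_R t r_{\{r,s\}}$ lie in $C_i$ (for the three $U$-vertices) and that $w_R st r_{\{r,s\}}, w_R ts r_{\{r,t\}}$ lie in $D_i$ (for the two $V$-vertices). The second pair unfolds by the definition of $D_i$ to the requirement that the four length-$(\ell(w_R)+3)$ elements $w_R str, w_R sts, w_R tsr, w_R tst$ lie in $C_i$. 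In the base case $i = 0$ (where $w_R = 1_W$), a direct inspection identifies each of these elements as either itself a generator of $C_0 = \bigcup_{S = \{r,s,t\}}\bigl(C(r_{\{s,t\}}) \cup C(r r_{\{s,t\}})\bigr)$, or a prefix of a reduced expression of one of those generators. For $i \geq 1$, one picks an ancestor residue $Q \in \mathcal{R}_{i-1}$ obtained by shortening a reduced gallery from $1_W$ to $w_R$ by one chamber; the four ``decoration slots'' of $C(Q)$, namely $C(w_Q st r_{\{r,s\}}), C(w_Q r_{\{s,t\}} r t r), C(w_Q r_{\{s,t\}} r s r), C(w_Q t s r_{\{r,t\}})$, are tailored to cover precisely the required elements.

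For $P = \{R, R'\} \in \mathcal{T}_{i,2}$ the same approach works with fifteen vertex groups. By symmetry between $R$ and $R'$, the conditions split into two ``wings'' (one attached to each residue), handled by separate ancestors of $R$ and $R'$, together with a shared central region whose shape is controlled by Lemma~\ref{Lemma: not both down} and the very definition of $\mathcal{T}_{i,2}$. The main obstacle is not conceptual but combinatorial: each of roughly twenty memberships must be exhibited by pointing to an explicit ancestor residue and to the specific decoration slot of $C(Q)$ witnessing it. The appendix diagrams together with Lemma~\ref{Lemma: not both down} should make this bookkeeping routine, after which the existence of the homomorphism $H_P \to G_i$ follows formally from the universal property of the tree product.
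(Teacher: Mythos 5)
Your overall strategy is the same as the paper's: to get a homomorphism $H_P \to G_i$ one only needs to verify that each vertex-group index of $H_P$ lies in $C_i$ (for $U$-vertices) or in $D_i$ (for $V$-vertices), with compatibility on edge groups being automatic once one knows that each generator $u_\alpha$ has a well-defined image $x_\alpha$ in $G_i$ (Remark~\ref{Remark: G_i is generated by x_alpha}). So the plan is sound. But there are two concrete problems in the execution.

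First, you reduce the $V$-vertex requirements to four extra membership checks $w_R str, w_R sts, w_R tsr, w_R tst \in C_i$, but these are redundant: every $C(w)$ and hence every $C_i$ is closed under $\prec$, and these four elements are prefixes of $w_R s r_{\{r,t\}}$, $w_R r_{\{s,t\}}$, $w_R r_{\{s,t\}}$, $w_R t r_{\{r,s\}}$ respectively. The paper uses exactly this observation to reduce the whole case $P \in \mathcal{T}_{i,1}$ to showing that the three elements $w_P s r_{\{r,t\}}$, $w_P r_{\{s,t\}}$, $w_P t r_{\{r,s\}}$ lie in $C_i$. Without this reduction your bookkeeping is heavier than it needs to be, although it is not an outright error.

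Second, and this is the real gap, you propose that a single ancestor residue $Q \in \mathcal{R}_{i-1}$ suffices. It does not. For $i>1$, take $Q := R_{\{r,s\}}(w_P)$. This residue covers $w_P s r_{\{r,t\}}$ (because $w_Q r s r_{\{t,r\}} = w_P s r_{\{r,t\}}$ when $w_Q = w_P r$), but it does not contain $w_P r_{\{s,t\}}$ or $w_P t r_{\{r,s\}}$ in its $C(\cdot)$: $w_P t r_{\{r,s\}}$ is covered only by $R_{\{r,t\}}(w_P)$, a residue of a different type, and $w_P r_{\{s,t\}}$ requires a residue whose projection is at level $i-2$, not $i-1$. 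Indeed for $i>1$ exactly one of $\ell(w_P rs), \ell(w_P rt)$ equals $i-2$ (both cannot equal $i$, since then $w_P r \ne 1_W$ would be a local minimum), and that is the residue the paper uses to absorb $w_P r_{\{s,t\}}$. A related issue is your display of the ``decoration slots'' of $C(Q)$: you copy the formula for a $\{s,t\}$-residue verbatim, but the ancestor $Q$ has type $\{r,s\}$ or $\{r,t\}$, so the letters in the template must be permuted accordingly. In short, the proof requires two ancestors of different types, at least one of which sits two levels down rather than one; the single $\mathcal{R}_{i-1}$-ancestor picture does not close the argument.
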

\begin{proof}
	We distinguish the following cases:
	\begin{enumerate}[label=$P \in \mathcal{T}_{i, \arabic*}$:]
		\item Let $\{s, t\}$ be the type of $P$. By Remark \ref{Remark: G_i is generated by x_alpha} it suffices to show that $C_i$ contains the elements $w_P sr_{\{r, t\}}, w_P r_{\{s, t\}}, w_P tr_{\{r, s\}}$. Note that $\ell(w_P) = i$. If $i=0$, the claim follows. Thus we can assume $i>0$ and hence $\ell(w_P r) = i-1$. But then $w_P sr_{\{r, t\}} \in C(R_{\{r, s\}}(w_P)) \subseteq C_i$ and $w_P tr_{\{r, s\}} \in C(R_{\{r, t\}}(w_P)) \subseteq C_i$. If $i=1$, we have $w_P r_{\{s, t\}} \in C_0 \subseteq C_1$ and we are done. If $i>1$, we have $i-2 \in \{ \ell(w_P rs), \ell(w_P rt) \}$. Without loss of generality we assume $\ell(w_P rs) = i-2$. Then $w_P r_{\{s, t\}} \in C(R_{\{r, s\}}(w_P)) \subseteq C_i$ and the claim follows.
		
		\item Suppose $P = \{R, R'\}$, where $R$ is of type $\{r, s\}$ and $R'$ is of type $\{r, t\}$. Moreover, we define $T := R_{\{r, t\}}(w_R)$ and $T' := R_{\{r, s\}}(w_{R'})$. Again, and using symmetry, it suffices to show that $w_T rtrr_{\{s, t\}}, w_T trtr_{\{r, s\}}, w_T trr_{\{s, t\}}, w_R r_{\{r, s\}} \in C_i$. We define $Z := R_{\{s, t\}}(w_R)$. Note that $\ell(w_Z) = i-3$ and hence $w_R r_{\{s, t\}} \in C(Z) \subseteq C_{i-2} \subseteq C_i$. Moreover, we have $\ell(w_T) = i-1$ and hence $w_T rtrr_{\{s, t\}}, w_T trtr_{\{r, s\}}, w_T trr_{\{s, t\}} \in C(T) \subseteq C_i$. This finishes the claim. \qedhere
	\end{enumerate}
\end{proof}

\begin{definition}\label{Definition: natural}
	The group $G_i$ is called \textit{natural} if the following hold:
	\begin{enumerate}[label=(N$\arabic*$)]
		\item For all $w\in C_i$ and $w' \in D_i$ the homomorphisms $U_w, V_{w'} \to G_i$ are injective.
		
		\item For each $P \in \mathcal{T}_i$ the homomorphism $H_P \to G_i$ from Lemma \ref{Lemma: HP to Gi homomorphism} is injective.
	\end{enumerate}
\end{definition}

\begin{definition}\label{Definition: BP}
	Suppose $G_i$ is natural and let $P \in \mathcal{T}_i$. Then the homomorphism $H_P \to G_i$ is injective. Note that by Lemma \ref{HRtoGRinjective} and Lemma \ref{HRR'toGRR'injective} the homomorphism $H_P \to G_P$ is injective as well. Thus we can define the tree product $B_P := G_i \star_{H_P} G_P$.
\end{definition}

\section{Faithful Commutator blueprints}\label{Section: Faithful commutator blueprints}

In this section we let $(W, S)$ be of type $(4, 4, 4)$ and $\mathcal{M} = \left( M_{\alpha, \beta}^G \right)_{(G, \alpha, \beta) \in \mathcal{I}}$ be a locally Weyl-invariant commutator blueprint of type $(4, 4, 4)$. Moreover, we let $S = \{r, s, t\}$.

\begin{definition}
	\begin{enumerate}[label=(\alph*)]
		\item For $P \in \mathcal{T}_{i, 1}$ we denote the two non-simple roots of $P$ by $\delta_P$ and $\gamma_P$.
		
		\item For $P = \{ R, R' \} \in \mathcal{T}_{i, 2}$ there exists one root which is a non-simple root of $R$ and $R'$. We denote the other non-simple root of $R$ and of $R'$ by $\delta_P$ and $\gamma_P$.
	\end{enumerate}
	Note that in both cases there exists for each $\epsilon \in \{ \delta_P, \gamma_P \}$ a unique residue $R_{\epsilon}$ of rank $2$ such that $\epsilon$ is a non-simple root of $R_{\epsilon}$. Moreover, we have $k_{\delta_P} = k_{\gamma_P} = i+2$ by Lemma~\ref{Lemma: Lemma 2.22 BiConstruction}.
\end{definition}

\begin{lemma}\label{Lemma: set of non-simple roots contains four elements}
	Let $i \in \NN$ and let $P, Q \in \mathcal{T}_i$. If $P \neq Q$, then $\vert \{ \delta_P, \gamma_P, \delta_Q, \gamma_Q \} \vert = 4$.
\end{lemma}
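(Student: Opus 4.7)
The plan is to exploit the uniqueness of the associated residue $R_\epsilon$ recorded in the definition just above the lemma: for each $\epsilon \in \{\delta_P,\gamma_P\}$, there is a \emph{unique} rank-$2$ residue $R_\epsilon$ such that $\epsilon$ is a non-simple root of $R_\epsilon$. Thus if $P\neq Q$ shared some $\alpha \in \{\delta_P,\gamma_P\}\cap\{\delta_Q,\gamma_Q\}$, the residue $R_\alpha$ would be forced to be the same when read off from $P$ and from $Q$, and I will derive a contradiction from the combinatorics of $\mathcal{T}_{i,1}$ vs.\ $\mathcal{T}_{i,2}$. The first step is to record how each $P \in \mathcal{T}_i$ determines the (possibly multi-valued) set $\{R_{\delta_P},R_{\gamma_P}\}$:
\begin{itemize}
  \item if $P\in\mathcal{T}_{i,1}$, both non-simple roots live in $P$ itself, so $R_{\delta_P}=R_{\gamma_P}=P$;
  \item if $P=\{R,R'\}\in\mathcal{T}_{i,2}$, then by construction $\delta_P$ is the non-simple root of $R$ not shared with $R'$ and $\gamma_P$ is the analogous root of $R'$, so $R_{\delta_P}=R$ and $R_{\gamma_P}=R'$.
\end{itemize}

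With this in place, I would then do a three-case analysis. First, if $P,Q\in\mathcal{T}_{i,1}$ and they shared a root $\alpha$, uniqueness of $R_\alpha$ forces $P=R_\alpha=Q$, contradicting $P\neq Q$. Second, if $P\in\mathcal{T}_{i,1}$ and $Q=\{R,R'\}\in\mathcal{T}_{i,2}$, then a common root $\alpha$ would satisfy $R_\alpha=P\in\mathcal{T}_{i,1}$ from one viewpoint and $R_\alpha\in\{R,R'\}\subseteq \mathcal{R}_i\setminus\mathcal{T}_{i,1}$ from the other; since $\mathcal{T}_{i,1}$ and $\mathcal{R}_i\setminus\mathcal{T}_{i,1}$ are disjoint by definition, this is again a contradiction.

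The third case, $P=\{R,R'\},Q=\{S,S'\}\in\mathcal{T}_{i,2}$ with $P\neq Q$, is the one requiring a short extra argument. A shared root $\alpha$ forces $R_\alpha\in\{R,R'\}\cap\{S,S'\}$, so (after relabeling) we may assume $R=S$. I would then invoke the uniqueness of the partner $T_R$ established in Section~2, namely that the rule $R\mapsto T_R$ is well-defined on $\mathcal{R}_i\setminus\mathcal{T}_{i,1}$ (because $\{\ell(w_R sr),\ell(w_R tr)\}=\{\ell(w_R),\ell(w_R)+2\}$ by Lemma~\ref{Lemma: not both down}). Therefore $R'=T_R=T_S=S'$ and so $P=\{R,R'\}=\{S,S'\}=Q$, contradicting $P\neq Q$.

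I do not expect real obstacles in the argument beyond invoking the two uniqueness statements (of $R_\epsilon$ and of $T_R$); once those are in hand the case split is mechanical, and the lemma follows with no further geometric input. The only mild subtlety is making sure the convention that $\delta_P$ is ``the one in $R$, not the one in $R'$'' in case $\mathcal{T}_{i,2}$ is used consistently when identifying $R_{\delta_P}$ with $R$ (and $R_{\gamma_P}$ with $R'$), so that the pair $\{R_{\delta_P},R_{\gamma_P\}}=\{R,R'\}$ actually recovers $P$.
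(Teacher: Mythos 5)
Your proof is correct and takes essentially the same approach as the paper: both reduce to the uniqueness of the associated residue $R_\epsilon$ and then split on membership in $\mathcal{T}_{i,1}$ versus $\mathcal{T}_{i,2}$. In the final case, where the paper observes that $R_{\delta_Q}\in P\cap Q\neq\emptyset$ already forces $P=Q$, you instead invoke the well-definedness of the partner $R\mapsto T_R$---two phrasings of the same fact.
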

\begin{proof}
	Without loss of generality we can assume $\delta_P = \delta_Q$. Then we have $R_{\delta_P} = R_{\delta_Q}$. If $P \in \mathcal{T}_{i, 1}$, then $P = R_{\delta_P} = R_{\delta_Q}$. Moreover, $Q \in \mathcal{T}_{i, 2}$ would imply $R_{\delta_Q} \in Q$, which is a contradiction to $R_{\delta_Q} \in \mathcal{T}_{i, 1}$. Thus $Q \in \mathcal{T}_{i, 1}$ and $P = R_{\delta_Q} = Q$. But this is a contradiction to our assumption. If $P \in \mathcal{T}_{i, 2}$, then $R_{\delta_Q} = R_{\delta_P} \in P$. In particular, we have $R_{\delta_Q} \notin \mathcal{T}_{i, 1}$. As $Q \in \mathcal{T}_{i, 1}$ would imply $Q = R_{\delta_Q}$, we deduce $Q \in \mathcal{T}_{i, 2}$ and $R_{\delta_Q} \in Q$. But $R_{\delta_Q} \in P \cap Q \neq \emptyset$ implies $P=Q$, which is again a contradiction.
\end{proof}

\begin{lemma}\label{Lemma: -epsilonP contained in epsilonQ}
	Let $i \in \NN$ and $P, Q \in \mathcal{T}_i$. If $i>0$ and $P \neq Q$, then we have $(-\epsilon_P) \subseteq \epsilon_Q$ for all $\epsilon_P \in \{ \delta_P, \gamma_P \}$ and $\epsilon_Q \in \{ \delta_Q, \gamma_Q \}$.
\end{lemma}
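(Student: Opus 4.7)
The plan is to prove the equivalent statement $(-\epsilon_P) \cap (-\epsilon_Q) = \emptyset$ by contradiction. Suppose a chamber $c$ lies in this intersection, chosen so that $\ell(c)$ is minimal. By Lemma~\ref{Lemma: Lemma 2.22 BiConstruction} we have $k_{\epsilon_P} = k_{\epsilon_Q} = i+2$, and the uniquely closest panels $P_{\epsilon_P}, P_{\epsilon_Q}$ in $\partial\epsilon_P, \partial\epsilon_Q$ to $1_W$ have their projections at length $i+1$ and lie in $R_{\epsilon_P}, R_{\epsilon_Q}$ respectively. By Lemma~\ref{Lemma: set of non-simple roots contains four elements} we have $\epsilon_P \neq \epsilon_Q$, so in particular $\partial\epsilon_P \neq \partial\epsilon_Q$, and any minimal gallery from $1_W$ to $c$ crosses each of the two walls exactly once at distinct steps, both of which are at least $i+2$.

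Next, using the minimality of $\ell(c)$, I would argue that the two crossings occur at the minimum possible step $i+2$, i.e.\ through $P_{\epsilon_P} \subseteq R_{\epsilon_P}$ and $P_{\epsilon_Q} \subseteq R_{\epsilon_Q}$; otherwise one could reroute the initial segment of the gallery within the convex hull to produce a shorter chamber still lying in $(-\epsilon_P) \cap (-\epsilon_Q)$. Since the two crossings must happen at distinct steps of the \emph{same} gallery, this in particular forces $\ell(c) \geq i+3$ and puts strong geometric constraints on the chambers $c_{i+1}, c_{i+2}$ and their position relative to the two rank-$2$ residues $R_{\epsilon_P}, R_{\epsilon_Q}$. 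Moreover, $R_{\epsilon_P} \neq R_{\epsilon_Q}$: this is immediate when $P, Q \in \mathcal{T}_{i,1}$, and follows from the definition of $T_R$ in the remaining cases.

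The core of the proof is a case analysis according to the position of $R_{\epsilon_P}, R_{\epsilon_Q}$ in $\mathcal{R}_i$, organised as: (a) both $P, Q \in \mathcal{T}_{i,1}$; (b) exactly one of $P, Q$ lies in $\mathcal{T}_{i,2}$; (c) both $P, Q \in \mathcal{T}_{i,2}$, in which case $R_{\epsilon_P} \in P$ and $R_{\epsilon_Q} \in Q$ come from disjoint pairs. In every sub-case one exploits Lemma~\ref{Lemma: not both down} to control the local combinatorics at $w_{R_{\epsilon_P}}$ and $w_{R_{\epsilon_Q}}$, together with Lemma~\ref{mingallinrep} to compare walls crossed by galleries of type $(r,s,t,r)$ starting near these residues; the aim is to show that the forced configuration of the minimal gallery through $c_{i+2}$ cannot simultaneously cross the second wall at depth $\geq i+3$.

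The main obstacle will be the sub-case in which $R_{\epsilon_P}$ and $R_{\epsilon_Q}$ are distinct rank-$2$ residues sharing a common panel, which can occur when they have different types $\{a,b\}$ and $\{a,c\}$ meeting in an $a$-panel at length $i+1$. The hypothesis $i \geq 1$ enters crucially here: it guarantees that $w_{R_{\epsilon_P}}$ has length $\geq 1$ and therefore admits a descent to a chamber at length $i-1$, which allows an appeal to Lemma~\ref{Lemma: Subset contained in certain root}. That inclusion rules out the only geometric configuration in which both walls could meet and yield a common chamber $c$, thereby completing the contradiction.
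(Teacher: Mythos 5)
Your proposal takes a genuinely different route from the paper, and as written it has a gap at the crucial step. The paper's proof is much shorter: assuming $(-\epsilon_P) \not\subseteq \epsilon_Q$ and using $1_W \in \epsilon_P \cap \epsilon_Q$, it concludes from \cite[Lemma~8.42(3)]{AB08} that $\{\epsilon_P, \epsilon_Q\}$ is prenilpotent; since $\epsilon_P \neq \epsilon_Q$ (Lemma~\ref{Lemma: set of non-simple roots contains four elements}) and $k_{\epsilon_P} = k_{\epsilon_Q} = i+2$, one gets $o(r_{\epsilon_P} r_{\epsilon_Q}) < \infty$, then checks $R_{\epsilon_P} \notin \partial^2 \epsilon_Q$, and finally invokes Lemma~\ref{prenilpotentrootsintersectinoneresidue} (i.e.\ \cite[Lemma~2.25]{BiConstruction}): alternative~(b) would force $\epsilon_Q \in \{\delta_P,\gamma_P\}$ and alternative~(a) would force $i=0$, both contradictions. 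In other words, all the case analysis you propose to carry out by hand is exactly what Lemma~\ref{prenilpotentrootsintersectinoneresidue} already packages, so the paper's version is essentially a reduction to that lemma.

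The concrete gap in your argument is the ``rerouting'' step. You assert that both crossings ``occur at the minimum possible step $i+2$, i.e.\ through $P_{\epsilon_P}$ and $P_{\epsilon_Q}$,'' but a single minimal gallery crosses one wall per step, so the two crossings necessarily happen at two \emph{different} steps; they cannot both be at step $i+2$. What you presumably mean is that each crossing happens at the respective nearest panel $P_{\epsilon_P}$, $P_{\epsilon_Q}$ (at \emph{some} step $\geq i+2$), but the justification offered --- that otherwise one could reroute the initial segment to get a shorter chamber in $(-\epsilon_P) \cap (-\epsilon_Q)$ --- does not work: rerouting an initial segment within the convex hull does not change the endpoint $c$ or its length, and projecting along the wall of $\epsilon_P$ via Lemma~\ref{projgal} only moves you within $(-\epsilon_P)$, with no control over membership in $(-\epsilon_Q)$. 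Without this step nailed down, the subsequent case analysis (which is itself left entirely schematic: ``puts strong geometric constraints'', ``the aim is to show\ldots'') cannot be evaluated. I would suggest either proving a precise version of the ``both crossings occur at the nearest panels'' claim, or --- more efficiently --- noticing that your hypothesis $(-\epsilon_P) \cap (-\epsilon_Q) \neq \emptyset$ is exactly prenilpotency of $\{\epsilon_P, \epsilon_Q\}$ and then letting Lemma~\ref{prenilpotentrootsintersectinoneresidue} do the combinatorial work, as the paper does; note also that the paper uses $i>0$ only at the very end, to rule out alternative~(a) of that lemma, rather than through Lemma~\ref{Lemma: Subset contained in certain root} as you suggest.
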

\begin{proof}
	Let $\epsilon_P \in \{ \delta_P, \gamma_P \}$, $\epsilon_Q \in \{ \delta_Q, \gamma_Q \}$ and assume $(-\epsilon_P) \not\subseteq \epsilon_Q$. As $1_W \in \epsilon_P \cap \epsilon_Q$, we have $\epsilon_Q \not\subseteq (-\epsilon_P)$ and $\{ -\epsilon_P, \epsilon_Q \}$ is not nested. Then \cite[Lemma~$8.42(3)$]{AB08} implies that $\{ \epsilon_P, \epsilon_Q \}$ is prenilpotent. By Lemma~\ref{Lemma: set of non-simple roots contains four elements} we have $\epsilon_P \neq \epsilon_Q$. As $k_{\epsilon_P} = i+2 = k_{\epsilon_Q}$, we have $o(r_{\epsilon_P} r_{\epsilon_Q}) < \infty$. 
	
	\emph{Claim: $R_{\epsilon_P} \notin \partial^2 \epsilon_Q$.}
	
	We assume by contrary that $R_{\epsilon_P} \in \partial^2 \epsilon_Q$. As $k_{\epsilon_P} = k_{\epsilon_Q}$, we deduce that $\epsilon_Q$ is a non-simple root of $R_{\epsilon_P}$ and, hence, $R_{\epsilon_P} = R_{\epsilon_Q}$. If $R_{\epsilon_P} \in \mathcal{T}_{i, 1}$, then $\epsilon_Q \in \{ \delta_P, \gamma_P \}$. This is a contradiction to Lemma~\ref{Lemma: set of non-simple roots contains four elements}. If $R_{\epsilon_P} \notin \mathcal{T}_{i, 1}$, then we have $\epsilon_P = \epsilon_Q$ by definition of the roots $\delta_P, \gamma_P$. This is again a contradiction and we infer $R_{\epsilon_P} \notin \partial^2 \epsilon_Q$.
	
	Note that $\epsilon_P$ and $\epsilon_Q$ are non-simple roots. Thus we can apply Lemma \ref{prenilpotentrootsintersectinoneresidue}. Assertion $(b)$ would imply $\epsilon_Q \in \{ \delta_P, \gamma_P \}$, which is a contradiction. Assertion $(a)$ would imply $i=0$ because of $k_{\epsilon_P} = k_{\epsilon_Q}$. This is also a contradiction.
\end{proof}

\begin{lemma}\label{Lemma: -epsilonP contained in epsilonQ for i and i+1}
	Let $i \in \NN$, let $P \in \mathcal{T}_{i+1}$ and let $Q \in \mathcal{T}_i$. For all $\epsilon_P \in \{ \delta_P, \gamma_P \}$ and $\epsilon_Q \in \{ \delta_Q, \gamma_Q \}$ one of the following hold:
	\begin{enumerate}[label=(\roman*)]
		\item $(-\epsilon_Q) \subseteq \epsilon_P$;
		
		\item $R_{\epsilon_P} \cap R_{\epsilon_Q}$ is a panel containing $w_{R_{\epsilon_P}}$ and $\ell( \proj_{R_{\epsilon_Q}} 1_W ) = \ell( \proj_{R_{\epsilon_P}} 1_W ) -1$.
	\end{enumerate}
\end{lemma}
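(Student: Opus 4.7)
The plan is to mirror the strategy used for Lemma~\ref{Lemma: -epsilonP contained in epsilonQ}. Assume that conclusion~(i) fails. Since $1_W\in\epsilon_P\cap\epsilon_Q$, one has $\epsilon_P\not\subseteq-\epsilon_Q$, so $\{-\epsilon_Q,\epsilon_P\}$ is not nested; by \cite[Lemma~$8.42(3)$]{AB08} the pair $\{\epsilon_P,\epsilon_Q\}$ is prenilpotent, and finiteness of $k_{\epsilon_P}=i+3$ and $k_{\epsilon_Q}=i+2$ forces $o(r_{\epsilon_P}r_{\epsilon_Q})<\infty$. Next I would split into two cases according to whether $R_{\epsilon_P}\in\partial^2\epsilon_Q$; the aim is to show that the first case produces~(ii) and that the second case leads to a contradiction.

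Assume first that $R_{\epsilon_P}\in\partial^2\epsilon_Q$. If $\epsilon_Q$ were a non-simple root of $R_{\epsilon_P}$, then by the uniqueness of $R_{\epsilon_Q}$ as the unique rank~$2$ residue in which $\epsilon_Q$ is non-simple, one would have $R_{\epsilon_P}=R_{\epsilon_Q}$, contradicting $\ell(w_{R_{\epsilon_P}})=i+1\neq i=\ell(w_{R_{\epsilon_Q}})$. Hence $\epsilon_Q$ must be a simple root of $R_{\epsilon_P}$, so there is a panel $P^{\star}\in\partial\epsilon_Q$ with $P^{\star}\subseteq R_{\epsilon_P}$ and $\proj_{P^{\star}}1_W=w_{R_{\epsilon_P}}$. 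Since $\ell(\proj_{P^{\star}}1_W)=i+1=k_{\epsilon_Q}-1$, Lemma~\ref{Lemma: Lemma 2.22 BiConstruction} identifies $P^{\star}$ with the unique minimum-distance panel of $\epsilon_Q$, which by Remark~\ref{Remark: Existence residue} is also contained in $R_{\epsilon_Q}$. A panel is contained in exactly two rank~$2$ residues; one of them is $R_{\epsilon_P}$, in which $\epsilon_Q$ is simple, so the other, in which $\epsilon_Q$ is non-simple, must be $R_{\epsilon_Q}$. Two distinct rank~$2$ residues meet in at most one panel, which yields $R_{\epsilon_P}\cap R_{\epsilon_Q}=P^{\star}\ni w_{R_{\epsilon_P}}$, namely conclusion~(ii).

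For the case $R_{\epsilon_P}\notin\partial^2\epsilon_Q$, I would apply Lemma~\ref{prenilpotentrootsintersectinoneresidue} with $\alpha=\epsilon_P$, $R=R_{\epsilon_P}$, $k=k_{\epsilon_P}=i+3$ and $\beta=\epsilon_Q$, after choosing the minimal gallery used in the construction of $R_{\epsilon_P}$ so that the side condition $\ell(s_1\cdots s_{k-1}r)=k$ holds (or arguing that failure of this condition for every admissible $G$ reduces us back to the first case). Either of the two resulting descriptions of $\epsilon_Q$ as $\alpha_F$ for an explicit minimal gallery $F$, combined with $k_{\epsilon_Q}=k-1$, should force the unique minimum-distance panel of $\epsilon_Q$ to be contained in $R_{\epsilon_P}$, so that $R_{\epsilon_P}\in\partial^2\epsilon_Q$, contradicting the case hypothesis.

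The main obstacle I anticipate is exactly this second case: squeezing the geometric conclusion $R_{\epsilon_P}\in\partial^2\epsilon_Q$ out of the combinatorial alternatives (a) and (b) of Lemma~\ref{prenilpotentrootsintersectinoneresidue} requires careful tracking of how minimal galleries crossing $\partial\epsilon_P$ interact with the wall $\partial\epsilon_Q$ inside a $(4,4,4)$ Coxeter complex, together with an ad hoc argument handling the length condition on $\ell(s_1\cdots s_{k-1}r)$. Both points are routine applications of the gallery techniques collected in Section~\ref{Section: Preliminaries}, but they are bookkeeping-heavy, and this is where I expect the actual proof to spend most of its effort.
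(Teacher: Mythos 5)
Your overall decomposition (assume (i) fails, deduce prenilpotence and $o(r_{\epsilon_P}r_{\epsilon_Q})<\infty$, then split on $R_{\epsilon_P}\in\partial^2\epsilon_Q$) matches the paper, and your treatment of the first case is essentially the paper's argument, just spelled out in more detail.

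The problem is the second case. You predict that applying Lemma~\ref{prenilpotentrootsintersectinoneresidue} must force $P_{\epsilon_Q}\subseteq R_{\epsilon_P}$, hence $R_{\epsilon_P}\in\partial^2\epsilon_Q$, yielding a contradiction and showing the case is vacuous. That is not what happens, and the paper does something genuinely different: in that case conclusion~(ii) is derived \emph{directly}, not by contradiction. To see why your contradiction cannot be reached, consider alternative~(a) of Lemma~\ref{prenilpotentrootsintersectinoneresidue}: there $\epsilon_Q=\alpha_F$ where $F$ has type $(s_1,\dots,s_{k-1},r)$, so $\epsilon_Q = c_{k-1}\alpha_r$ with $c_{k-1}=s_1\cdots s_{k-1}$. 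The panel of $\partial\epsilon_Q$ crossed by $F$ is the $r$-panel $\{c_{k-1},c_{k-1}r\}$, which does not lie in $R_{\epsilon_P}=R_{\{s_{k-1},s_k\}}(c_{k-2})$ since $r\notin\{s_{k-1},s_k\}$. In fact, writing each chamber of $R_{\epsilon_P}$ as $c_{k-1}\cdot v$ with $v\in\langle s_{k-1},s_k\rangle$, one checks that $\ell(rv)>\ell(v)$ for every such $v$ in a $(4,4,4)$ Coxeter group, so \emph{every} chamber of $R_{\epsilon_P}$ lies in $\epsilon_Q$; the wall $\partial\epsilon_Q$ never crosses $R_{\epsilon_P}$, and $R_{\epsilon_P}\notin\partial^2\epsilon_Q$ holds genuinely. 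The minimum-distance panel $P_{\epsilon_Q}$ sits instead in an $\{s_{k-1},r\}$-residue. So your claimed contradiction is false, and the case must be handled by showing directly from alternative~(a) (after ruling out (b)) that $R_{\epsilon_P}\cap R_{\epsilon_Q}$ is a panel containing $w_{R_{\epsilon_P}}$ --- which is what the paper does, albeit tersely. This is where your proposal needs to be repaired.
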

\begin{proof}
	Let $\epsilon_P \in \{ \delta_P, \gamma_P \}$ and $\epsilon_Q \in \{ \delta_Q, \gamma_Q \}$. We can assume $(-\epsilon_Q) \not\subseteq \epsilon_P$. We have to show that $R_{\epsilon_P} \cap R_{\epsilon_Q}$ is a panel containing $w_{R_{\epsilon_Q}}$ and that $\ell( \proj_{R_{\epsilon_Q}} 1_W ) = \ell( \proj_{R_{\epsilon_P}} 1_W ) -1$. Similar as in the proof of Lemma~\ref{Lemma: -epsilonP contained in epsilonQ} we deduce that $\{ \epsilon_P, \epsilon_Q \}$ is prenilpotent. As $k_{\epsilon_Q} = i+2 = k_{\epsilon_P} -1$, we deduce $o(r_{\epsilon_P} r_{\epsilon_Q}) < \infty$. 
	
	Suppose $R_{\epsilon_P} \in \partial^2 \epsilon_Q$. As $k_{\epsilon_Q} = k_{\epsilon_P} -1$, it follows that $R_{\epsilon_P} \cap R_{\epsilon_Q}$ is a panel containing $w_{R_{\epsilon_P}}$ and $\ell( \proj_{R_{\epsilon_Q}} 1_W ) = \ell( \proj_{R_{\epsilon_P}} 1_W ) -1$. Suppose $R_{\epsilon_P} \notin \partial^2 \epsilon_Q$. Then we can apply Lemma \ref{prenilpotentrootsintersectinoneresidue}. As $(b)$ does not apply, we obtain again (using $k_{\epsilon_Q} = k_{\epsilon_P} -1)$ that $R_{\epsilon_P} \cap R_{\epsilon_Q}$ is a panel containing $w_{R_{\epsilon_P}}$ and $\ell( \proj_{R_{\epsilon_Q}} 1_W ) = \ell( \proj_{R_{\epsilon_P}} 1_W ) -1$.
\end{proof}

\begin{lemma}\label{Lemma: w in -deltaP union -gammaP}
	Let $i \in \NN$, $P \in \mathcal{T}_i$ and $w\in C(P) \backslash C_i$. Then $w \in (-\delta_P) \cup (-\gamma_P)$.
\end{lemma}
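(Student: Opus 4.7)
I plan to prove the contrapositive: if $w \in C(P) \cap \delta_P \cap \gamma_P$, then $w \in C_i$. The argument splits according to whether $P \in \mathcal{T}_{i,1}$ or $P \in \mathcal{T}_{i,2}$. In the latter case $P = \{R, R'\}$ with $C(P) = C(R) \cup C(R')$, and by the definition of $\delta_P, \gamma_P$ one of them is a non-simple root of $R$ and the other of $R'$, so the problem reduces to two applications of the $\mathcal{T}_{i,1}$-analysis below.

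I focus on the main case $P = R \in \mathcal{T}_{i,1}$ of type $\{s, t\}$. Then $\delta_P, \gamma_P$ are the two non-simple positive roots of $R$, and $C(P)$ is the union of four downward closures $C(v)$ for
\[
v \in \{ w_R s t r_{\{r,s\}},\ w_R r_{\{s,t\}} r t r,\ w_R r_{\{s,t\}} r s r,\ w_R t s r_{\{r,t\}} \}.
\]
The first step is to verify that each such $v$ lies in $(-\delta_P) \cap (-\gamma_P)$: along a minimal gallery from $w_R$ to $v$ one checks that both non-simple walls of $R$ are crossed, either directly by the two panels traversed inside $R$ (for the cases beginning $w_R st$ or $w_R ts$) or as forced crossings arising from the traversal of the adjacent rank-$2$ residue via $r_{\{r,s\}}$, $rtr$, $rsr$, or $r_{\{r,t\}}$ (for the cases beginning $w_R r_{\{s,t\}}$).

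The heart of the proof is the second step: for $w \prec v$ with $w \in \delta_P \cap \gamma_P$ I would show $w \in C(w_R s) \cup C(w_R t)$. Since $\Phi(w) \subseteq \Phi(v) \setminus \{\delta_P, \gamma_P\}$, I would fix a convenient reduced expression for $v$, list the positive roots in $\Phi(v)$, and use the convexity of wall sets of prefixes together with Lemma~\ref{Lemma: not both down} and Lemma~\ref{Lemma: Subset contained in certain root} (which encode the $(4,4,4)$-specific interaction of walls near a rank-$2$ residue) to rule out that $\Phi(w)$ can contain any wall lying beyond $R$ in the gallery while still missing both $\delta_P$ and $\gamma_P$. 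Consequently $\Phi(w)$ is confined to the initial segment of $\Phi(v)$ that stops at $w_R s$ (or $w_R t$), giving the asserted containment. I expect this combinatorial case analysis, carried out separately for each of the four shapes of $v$, to be the main obstacle of the proof.

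It then remains to verify $w_R s, w_R t \in C_i$, since this yields at once $C(w_R s), C(w_R t) \subseteq C_i$. For $i = 0$ both elements have length one and lie in $C(r_{\{s,t\}}) \subseteq C_0$. For $i \geq 1$ I would pick a reduced expression $w_R = s_1 \cdots s_i$ and, using the defining condition $\ell(w_R s r) = \ell(w_R t r) = \ell(w_R) + 2$ of $\mathcal{T}_{i,1}$, produce an explicit residue $R'' \in \mathcal{R}_{i-1}$ (essentially $R_{\{s_i, s\}}(s_1 \cdots s_{i-1})$ after a possible relabeling) such that $w_R s$ appears as a prefix of a minimal gallery to one of the generating elements of $C(R'')$; the symmetric construction handles $w_R t$. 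This final step is routine once the preceding case analysis is in place.
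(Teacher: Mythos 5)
Your plan is the contrapositive of the paper's one-sentence argument, so in broad strokes it is the same approach: the paper observes that $w \notin C_i$ forces $C(w)$ to contain one of a small set of chambers lying just beyond the non-simple walls of $R$ (namely $\{w_P st, w_P ts\}$ in the $\mathcal{T}_{i,1}$ case), which in turn gives $w \in (-\delta_P) \cup (-\gamma_P)$; you run this in reverse, bounding $w$ below these chambers and then checking $w_P s, w_P t \in C_i$. Two substantive issues, though.

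First, the reduction for $P = \{R, R'\} \in \mathcal{T}_{i,2}$ does not work as you describe. Since $\{R, R'\} \in \mathcal{T}_{i,2}$, neither $R$ nor $R'$ belongs to $\mathcal{T}_{i,1}$, so there is no ``$\mathcal{T}_{i,1}$-analysis'' to apply to them verbatim. More seriously, $R$ has \emph{two} non-simple roots, $\delta_P$ and a shared root $\epsilon_P$ (the one common to $R$ and $R'$), and $\gamma_P$ is a non-simple root of $R'$, not of $R$. A chamber $w \in C(R) \cap \delta_P \cap \gamma_P$ may nevertheless lie in $-\epsilon_P$ (it may have crossed the shared wall $\partial\epsilon_P$), and a single-residue analysis of $R$ using $\delta_P$ and $\epsilon_P$ would then fail to place $w$ in $C_i$. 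The paper copes with this by using a four-element boundary set $\{w_R rs, w_R srs, w_{R'} trt, w_{R'} rt\}$ spread over both residues; your ``two applications'' claim glosses over exactly the interaction across $\partial\epsilon_P$ that makes $\mathcal{T}_{i,2}$ different.

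Second, the heart of your step 2 --- that $w \prec v$ with $\delta_P, \gamma_P \notin \Phi(w)$ forces $\Phi(w) \subseteq \Phi(w_R s)$ or $\Phi(w) \subseteq \Phi(w_R t)$ --- is stated as a plan rather than proved, and the mechanism is not the one you name. What makes it work is that inversion sets are closed downward under nesting (if $\beta \in \Phi(w)$ and $\alpha \subsetneq \beta$ with $\alpha$ positive, then $\alpha \in \Phi(w)$), together with the nesting relations from Lemma~\ref{mingallinrep}: the roots $\beta$ crossed past the panel $\{w_R s, w_R st\}$ (resp. $\{w_R t, w_R ts\}$) satisfy $\delta_P \subsetneq \beta$ (resp. $\gamma_P \subsetneq \beta$), so none of them can lie in $\Phi(w)$ once $\delta_P, \gamma_P \notin \Phi(w)$, and additionally $w_R\alpha_s$ and $w_R\alpha_t$ cannot both lie in $\Phi(w)$ because $[\,w_R\alpha_s, w_R\alpha_t\,]$ inside $R$ contains $\delta_P$. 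You cite Lemma~\ref{Lemma: not both down} and Lemma~\ref{Lemma: Subset contained in certain root} but not Lemma~\ref{mingallinrep}, which is the lemma that actually supplies the needed $\subsetneq$ relations; without it the ``rule out walls beyond $R$'' step has no support. The paper is equally terse at this point, so this is a gap in detail rather than a wrong idea, but as written the proposal leaves the decisive step unargued.
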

\begin{proof}
	We distinguish the following two cases:
	\begin{enumerate}[label=$P \in \mathcal{T}_{i, \arabic*}$:]
		\item Let $P$ be of type $\{s, t\}$. Then we have $C(P) = C(w_P str_{\{r, s\}}) \cup C(w_P r_{\{s, t\}} rtr) \cup C(w_P r_{\{s, t\}} rsr) \cup C(w_P ts r_{\{r, t\}})$. As $w \notin C_i$, we infer $C(w) \cap \{ w_P st, w_P ts \} \neq \emptyset$. But this implies $w \in (-\delta_P) \cup (-\gamma_P)$.
		
		\item Suppose $P = \{R, R'\}$, where $R$ is of type $\{r, s\}$ and $R'$ is of type $\{r, t\}$. Then we have $C(P) = C(R) \cup C(R')$. As $w \notin C_i$, we infer that $C(w) \cap \{ w_R rs, w_R srs, w_{R'} trt, w_{R'} rt \} \neq \emptyset$. But this implies $w \in (-\delta_P) \cup (-\gamma_P)$. \qedhere
	\end{enumerate}
\end{proof}

\begin{lemma}\label{Lemma: unique P}
	For all $i \in \NN$ and $w\in C_{i+1} \backslash C_i$ there exists a unique $P \in \mathcal{T}_i$ with $w\in C(P)$.
\end{lemma}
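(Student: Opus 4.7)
The plan is to prove existence directly from the definition of $C_{i+1}$, and to argue uniqueness by contradiction using Lemma~\ref{Lemma: w in -deltaP union -gammaP} and Lemma~\ref{Lemma: -epsilonP contained in epsilonQ}, handling the degenerate case $i=0$ by an explicit combinatorial check.

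For existence, the inclusion $C_{i+1} = C_i \cup \bigcup_{P \in \mathcal{T}_i} C(P)$ together with $w \in C_{i+1} \setminus C_i$ immediately forces some $P \in \mathcal{T}_i$ with $w \in C(P)$.

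For uniqueness, I would suppose that $P, Q \in \mathcal{T}_i$ are distinct with $w \in C(P) \cap C(Q)$. Applying Lemma~\ref{Lemma: w in -deltaP union -gammaP} to both residues and using $w \notin C_i$ yields $\epsilon_P \in \{\delta_P, \gamma_P\}$ and $\epsilon_Q \in \{\delta_Q, \gamma_Q\}$ with $w \in (-\epsilon_P) \cap (-\epsilon_Q)$. When $i \geq 1$, Lemma~\ref{Lemma: -epsilonP contained in epsilonQ} then gives $(-\epsilon_P) \subseteq \epsilon_Q$, so $w \in \epsilon_Q$, contradicting $w \in -\epsilon_Q$.

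The main obstacle is the case $i = 0$, since Lemma~\ref{Lemma: -epsilonP contained in epsilonQ} explicitly excludes it. Here $\mathcal{T}_0$ consists of the three rank-$2$ residues through $1_W$, and any two distinct elements share exactly one generator; up to renaming I may assume $P = R_{\{s,t\}}(1_W)$ and $Q = R_{\{s,r\}}(1_W)$, whose non-simple positive roots are $\{s\alpha_t, t\alpha_s\}$ and $\{s\alpha_r, r\alpha_s\}$ respectively. I would then run through the four possible pairs $(\epsilon_P, \epsilon_Q)$. In each case the combined descent conditions $w \in (-\epsilon_P) \cap (-\epsilon_Q)$ force every reduced expression of $w$ to factor through a specific element of length $\geq 4$: for example, $(\epsilon_P, \epsilon_Q) = (s\alpha_t, s\alpha_r)$ forces $sw$ to have both $t$ and $r$ as left descents, so $sw = r_{\{t,r\}} \cdot x$ reduced, and analysing $\ell(w) \in \{\ell(sw) \pm 1\}$ in the $(4,4,4)$-Coxeter group shows $w = s \cdot r_{\{t,r\}} \cdot x$ reduced; analogous arguments in the other three subcases give prefixes $r_{\{s,t\}}$, $r_{\{r,s\}}$, or $r_{\{t,r\}}$. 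Comparing these mandatory prefixes against the reduced expressions of the four generators $str_{\{r,s\}}, r_{\{s,t\}}rtr, r_{\{s,t\}}rsr, tsr_{\{r,t\}}$ of $C(P)$ (a short enumeration using the braid relation $stst = tsts$ and its cyclic variants) then shows that no such $w$ can be a prefix of a reduced expression of any generator of $C(P)$ once $w \notin C_0$, yielding the desired contradiction.
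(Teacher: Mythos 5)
Your existence step and your uniqueness argument for $i \geq 1$ are correct, and the latter is in fact more direct than the paper's: the paper introduces a ``prenilpotent pair'' claim and then contradicts it via \cite[Lemma~$8.42(3)$]{AB08}, whereas you simply feed the $\epsilon_P, \epsilon_Q$ furnished by Lemma~\ref{Lemma: w in -deltaP union -gammaP} straight into Lemma~\ref{Lemma: -epsilonP contained in epsilonQ} and get $w \in (-\epsilon_P) \subseteq \epsilon_Q$ against $w \in (-\epsilon_Q)$. That is a clean shortcut and is valid.

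The gap is in the $i = 0$ case. Your claim that each of the four pairs $(\epsilon_P,\epsilon_Q)$ forces a mandatory prefix of the form $r_{\{u,v\}}$ or $s\cdot r_{\{t,r\}}$ is only justified for the pair $(s\alpha_t, s\alpha_r)$: there both conditions are descent constraints on the \emph{same} element $sw$, so $sw = r_{\{t,r\}}x$ reduced and (since an element of an infinite Coxeter group cannot have all of $S$ as left descents) $w = s\,r_{\{t,r\}}x$ reduced. For the mixed pairs such as $(s\alpha_t, r\alpha_s)$ the two conditions say $t \in D_L(sw)$ and $s \in D_L(rw)$ — descents of \emph{different} translates of $w$ — and there is no analogous way to fuse them into a single rank-$2$ longest-element prefix of $w$. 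Branching on whether $s$ or $r$ lie in $D_L(w)$ produces several sub-cases, one of which gives $w = srs\cdot y$ reduced (only length $3$), so the asserted ``prefix of length $\geq 4$'' does not hold uniformly. Consequently the concluding ``short enumeration'' against the generators of $C(P)$ is not actually set up. The paper avoids this entirely: for $i=0$ it observes $w \in (-\delta_P)\cup(-\gamma_P) \subseteq \alpha_r$ via Lemma~\ref{mingallinrep}, $w \in C(P) \subseteq \delta_Q$, and $\alpha_r \cap \delta_Q \subseteq \gamma_Q$ via Lemma~\ref{Lemma: intersection of roots}, so that $w \in \delta_Q \cap \gamma_Q$, contradicting Lemma~\ref{Lemma: w in -deltaP union -gammaP}. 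You would need to replace your $i=0$ sketch with an argument of this kind (or carry out the full sub-case analysis, which is considerably longer than you suggest).
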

\begin{proof}
	The existence follows from definition of $C_{i+1}$. Thus we assume $P \neq Q \in \mathcal{T}_i$ with $w \in C(P) \backslash C_i$ and $w \in C(Q) \backslash C_i$. Note that we have $w\in (-\delta_P) \cup (-\gamma_P)$ as well as $w\in (-\delta_Q) \cup (-\gamma_Q)$ by Lemma~\ref{Lemma: w in -deltaP union -gammaP}. In particular, we have $w \notin \delta_P \cap \gamma_P$ and $w \notin \delta_Q \cap \gamma_Q$. Note that we have $\vert \{ \delta_P, \gamma_P, \delta_Q, \gamma_Q \} \vert = 4$ by Lemma~\ref{Lemma: set of non-simple roots contains four elements}.
	
	\emph{Claim: There exist $\epsilon_P \in \{ \delta_P, \gamma_P \}$, $\epsilon_Q \in \{ \delta_Q, \gamma_Q \}$ such that $\{ \epsilon_P, \epsilon_Q \}$ is prenilpotent.}
	
	Assume that non of $\{ \delta_P, \delta_Q \}$, $\{ \delta_P, \gamma_Q \}$, $\{ \gamma_P, \delta_Q \}$, $\{ \gamma_P, \gamma_Q \}$ is prenilpotent. Then \cite[Lemma $8.42(3)$]{AB08} yields that each of $\{ \delta_P, (-\delta_Q) \}$, $\{ \delta_P, (-\gamma_Q) \}$, $\{ \gamma_P, (-\delta_Q) \}$, $\{ \gamma_P, (-\gamma_Q) \}$ is nested. As $1_W \in \delta_P \cap \gamma_P \cap \delta_Q \cap \gamma_Q$, it follows that $(-\delta_Q), (-\gamma_Q) \subseteq \delta_P, \gamma_P$. But this implies $w \in (-\delta_Q) \cup (-\gamma_Q) \subseteq \delta_P \cap \gamma_P$, which is a contradiction.
	
	Suppose $i >0$. Then Lemma~\ref{Lemma: -epsilonP contained in epsilonQ} implies that $\{ (-\epsilon_P), \epsilon_Q \}$ is nested. Using \cite[Lemma~$8.42(3)$]{AB08} we infer that $\{ \epsilon_P, \epsilon_Q \}$ is not prenilpotent, which is a contradiction to the claim. Thus we have $i=0$. Let $\{s, t\}$ be the type of $P$ and let $\{r, s\}$ be the type of $Q$. Then we have $P = R_{\{s, t\}}(1_W)$ and $Q = R_{\{r, s\}}(1_W)$. Without loss of generality we let $\delta_Q = s\alpha_r, \gamma_Q = r\alpha_s$. It follows from Lemma \ref{mingallinrep} that $w\in (-\delta_P) \cup (-\gamma_P) \subseteq \alpha_r$. Note that $w \in C(P) \subseteq (-t\alpha_s) \cup \{t\} \cup C(strsr) \subseteq \delta_Q$. Lemma \ref{Lemma: intersection of roots} yields $\alpha_s \subseteq (-\alpha_r) \cup s\alpha_r$ and, as $(W, S)$ is of type $(4, 4, 4)$, we deduce $(-r\alpha_s) \subseteq (-s\alpha_r) \cup (-\alpha_r)$. This implies $\alpha_r \cap s\alpha_r \subseteq r\alpha_s$. But then $w\in \alpha_r \cap \delta_Q \subseteq \gamma_Q$, which is a contradiction to $w\notin \delta_Q \cap \gamma_Q$. This finishes the claim.
\end{proof}

\begin{definition}
	For $i \in \NN$ and $P \in \mathcal{T}_i$ we let $C'(P) \subseteq W$ be the union of all $C(w)$, where $w\in W$ and $U_w$ is a vertex group of $G_P$.
\end{definition}

\begin{lemma}\label{Lemma: C'(P) contained in C_i+1}
	For $i \in \NN$ and $P \in \mathcal{T}_i$ we have $C'(P) \subseteq C_{i+1}$.
\end{lemma}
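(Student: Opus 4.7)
The plan is to unwind the definition $C'(P) = \bigcup_w C(w)$, with $w$ ranging over the indices of the $U_w$-vertex groups of $G_P$. Since $C_{i+1} = C_i \cup \bigcup_{Q \in \mathcal{R}_i} C(Q)$ and each $C(Q)$ is a finite union of sets $C(u)$ for anchor elements $u$, and since $w \prec u$ implies $C(w) \subseteq C(u)$, the task reduces to exhibiting, for each vertex-group index $w$, either a direct membership $w \in C_i$ or an anchor $u$ of some $C(Q)$ with $Q \in \mathcal{R}_i$ satisfying $w \prec u$.

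First I would handle the case $P \in \mathcal{T}_{i,1}$ of type $\{s,t\}$. Here the six $U$-indices are
\[
w_P s r_{\{r,t\}},\ w_P st r_{\{r,s\}},\ w_P sts r_{\{r,t\}},\ w_P tst r_{\{r,s\}},\ w_P ts r_{\{r,t\}},\ w_P t r_{\{r,s\}}.
\]
Two of them ($w_P st r_{\{r,s\}}$ and $w_P ts r_{\{r,t\}}$) are themselves anchors of $C(P) \subseteq C_{i+1}$, so the corresponding inclusion is immediate. For the remaining four indices I would produce an anchor from $C(P)$ or from $C(Q)$ for a neighbouring $Q \in \mathcal{R}_i$, by a direct length-additivity check that leans on Lemma~\ref{mingallinrep}, Lemma~\ref{Lemma: not both down}, and the braid relations $stst=tsts$, $rsrs=srsr$, $rtrt=trtr$ of type $(4,4,4)$.

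Next I would handle $P = \{R, R'\} \in \mathcal{T}_{i,2}$, where the thirteen $U$-indices split via the $R \leftrightarrow R'$ symmetry into two mirror-image halves. The auxiliary residues $T = R_{\{r,t\}}(w_R)$ and $T' = R_{\{r,s\}}(w_{R'})$, which lie in $\mathcal{R}_{i-1}$ (as established in Lemma~\ref{CleftrightcapHTHT'}), supply anchors for those indices not already covered by $C(R) \cup C(R') \subseteq C_{i+1}$. The verification proceeds one index at a time by the same toolkit as in the type~1 case, with the $R \leftrightarrow R'$ symmetry cutting the work roughly in half.

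The main obstacle is the bookkeeping: about nineteen routine matchings must be performed, each consisting of a length-additivity identity in the Coxeter complex of type $(4,4,4)$. No new conceptual input is needed beyond the preliminary lemmas of Section~\ref{Section: Preliminaries}; the appendix diagrams display exactly which chamber each vertex-group index occupies and precisely which region is swept out by the anchors of $C(P)$, so the required inclusions become essentially a visual inspection.
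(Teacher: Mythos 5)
Your proposal is correct and follows essentially the same route as the paper: reduce to showing each $U$-vertex-group index $w$ of $G_P$ satisfies $w\in C_{i+1}$, and do so by matching $w$ against an anchor of some $C(Q)$ (or a direct membership in $C_i$) via braid relations and length-additivity. The paper's write-up is terser only because it observes up front that the middle $U$-indices coincide (after rewriting with $r_J = stst = tsts$, etc.) exactly with the anchors of $C(P)$, so that $C'(P) = C(P)\cup\{\text{a couple of outer terms}\}$ and just two (resp.\ three, up to the $R\leftrightarrow R'$ symmetry) nontrivial matchings remain — a coincidence your per-index checks would uncover in the course of the computation.
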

\begin{proof}
	We distinguish the following two cases:
	\begin{enumerate}[label=$P \in \mathcal{T}_{i, \arabic*}$:]
		\item Suppose that $P$ is of type $\{s, t\}$. Note that $C'(P) = C(P) \cup C(w_P sr_{\{r, t\}}) \cup C(w_P tr_{\{r, s\}})$. By definition, we have $C(P) \subseteq C_{i+1}$ and (using symmetry) it suffices to show $C(w_P sr_{\{r, t\}}) \subseteq C_{i+1}$. For $i = 0$ we have $C(w_P sr_{\{r, t\}}) \subseteq C_0 \subseteq C_1$. For $i>0$ we have $C(w_P sr_{\{r, t\}}) \subseteq C(R_{\{r, s\}}(w_P)) \subseteq C_i \subseteq C_{i+1}$.
		
		\item Suppose $P = \{R, R'\}$, where $R$ is of type $\{r, s\}$ and $R'$ is of type $\{r, t\}$. As in the previous case it suffices to show $C(w_R rtrsts) \cup C(w_R rtrsrs) \cup C(w_R rr_{\{s, t\}}) \subseteq C_{i+1}$. As $R_{\{r, t\}}(w_R) \in \mathcal{R}_{i-1}$, we obtain $C(w_R rtrsts) \cup C(w_R rtrsrs) \cup C(w_R rr_{\{s, t\}}) \subseteq C(R_{\{r, t\}}(w_R)) \subseteq C_{i+1}$. \qedhere
	\end{enumerate}
\end{proof}

\begin{definition}
	Let $i\in \NN$ and let $R \in \mathcal{R}_i$ be a residue of type $\{s, t\}$. We let $\hat{\Phi}_R$ be the set of all non-simple roots of $R_{\{r, s\}}(w_R st)$, $R_{\{r, t\}}(w_R r_{\{s, t\}})$, $R_{\{r, s\}}(w_R r_{\{s, t\}})$ and $R_{\{r, t\}}(w_R ts)$. If $P := \{ R, R' \} \in \mathcal{T}_{i, 2}$, then we define $\hat{\Phi}_P := \hat{\Phi}_R \cup \hat{\Phi}_{R'}$.
\end{definition}

\begin{lemma}\label{Lemma: auxiliary result}
	Let $i \in \NN$, let $R \in \mathcal{R}_i$ be of type $\{s, t\}$ and let $\alpha \in \hat{\Phi}_R$. If $\ell(w_R r) = \ell(w_R) -1$ and $\ell(w_R rt) = \ell(w_R)$, then $C(R_{\{r, t\}}(w_R)) \subseteq \alpha$ and $(- w_R tr \alpha_t) \subseteq \alpha$.
\end{lemma}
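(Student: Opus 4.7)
The plan is first to identify $w_{R'}$ for $R' := R_{\{r,t\}}(w_R)$. Under the hypotheses, $w_R r$ has length $\ell(w_R) - 1$ and is the unique chamber of minimum length in the $\{r,t\}$-residue $R'$, so $w_{R'} = w_R r$. Unfolding the definition of $C(R')$—with the roles of $s,t,r$ in the definition relabeled to match the type $\{r,t\}$ of $R'$—expresses $C(R')$ as a union $C(u_1) \cup \cdots \cup C(u_4)$ of cones below four explicit elements $u_i \in W$. The general fact that $C(w) \subseteq \alpha$ iff $w \in \alpha$ for any positive root $\alpha$ (since a minimal gallery from $1_W$ to $w$ may cross $\partial\alpha$ at most once, and cannot cross it at all when both endpoints lie on the same side) reduces the first inclusion to showing each $u_i \in \alpha$. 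Similarly, the inclusion $-w_R tr\alpha_t \subseteq \alpha$ is equivalent to $-\alpha \subseteq w_R tr\alpha_t$, which I would check by verifying that the wall $\partial\alpha$ lies entirely on the $1_W$-side of the wall $\partial(w_R tr\alpha_t)$.

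The set $\hat\Phi_R$ contains at most eight roots: for each of the four rank-two residues $R_{\{r,s\}}(w_R st)$, $R_{\{r,t\}}(w_R r_{\{s,t\}})$, $R_{\{r,s\}}(w_R r_{\{s,t\}})$, $R_{\{r,t\}}(w_R ts)$, there are exactly two non-simple roots. All four residues are positioned strictly above $w_R$ (their projections have length $\ell(w_R) + 2$ or $\ell(w_R) + 4$), so intuitively the non-simple roots among them have walls far from $1_W$, in particular far from the region where $C(R')$ sits and on the far side of the wall $\partial(w_R tr\alpha_t)$. I would carry out the case analysis over the eight possibilities, in each case constructing an explicit minimal gallery witnessing the containment and appealing to Lemma~\ref{mingallinrep} (applied in the appropriate rank-two sub-Coxeter subsystem of $(4,4,4)$) to verify that this gallery never crosses $\partial\alpha$. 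Lemma~\ref{Lemma: Subset contained in certain root} should handle the cases where $\alpha$ is a non-simple root of one of the two "deep" residues containing $w_R r_{\{s,t\}}$, since the geometric interaction is more delicate there and one needs to combine two containment facts coming from different rank-two residues.

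The main obstacle is the combinatorial bookkeeping: up to eight choices of $\alpha$, each requiring five separate inclusion checks (four chambers $u_i$ plus the wall containment for $-w_R tr\alpha_t$), several of which involve words of length up to $\ell(w_R) + 7$. None of the individual checks is deep—each reduces to one or two applications of Lemma~\ref{mingallinrep} or Lemma~\ref{Lemma: Subset contained in certain root}—but keeping the relabelings of $(r,s,t)$ consistent across cases is the real challenge. The diagrams in the appendix should make each case visually transparent, so that the proof is essentially a matter of reading off the picture rather than performing any genuinely new computation.
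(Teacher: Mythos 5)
Your overall reduction is the right one: identify $w_{R'} = w_R r$ (correct), unfold $C(R')$ into cones below four explicit chambers, observe via convexity of roots that $C(w) \subseteq \alpha$ iff $w \in \alpha$, and settle each containment with Lemma~\ref{mingallinrep}. The paper does exactly this. Where your plan diverges — and becomes substantially more laborious than necessary — is in proposing to run a fresh argument for each of the (up to) eight roots in $\hat\Phi_R$. The paper sidesteps almost all of this with one structural observation you don't make explicit: writing $\alpha_R, \beta_R$ for the two non-simple roots of $R$ itself, Lemma~\ref{mingallinrep} guarantees that $\alpha_R \subseteq \alpha$ or $\beta_R \subseteq \alpha$ for every $\alpha \in \hat\Phi_R$, and independently that $(-w_R tr\alpha_t) \subseteq \alpha_R \cap \beta_R$. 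Chaining these gives $(-w_R tr\alpha_t) \subseteq \alpha$ (your second claim) and simultaneously disposes of three of the four corner elements of $C(R')$, all of which lie in $(-w_R tr\alpha_t)$, in one stroke and uniformly in $\alpha$. Only the fourth corner $w_R t r_{\{r,s\}}$ needs a case split, and it is a two-fold one ($w_R s\alpha_t \subseteq \alpha$, which covers six of the eight roots, versus $\alpha \in \{w_R tsr\alpha_t, w_R tst\alpha_r\}$, the two non-simple roots of $R_{\{r,t\}}(w_R ts)$) — not an eight-fold one. Your suggested appeal to Lemma~\ref{Lemma: Subset contained in certain root} for the "deep" residues is not what the paper does and would likely require awkward translation; that lemma is not invoked in this proof at all. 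So: your strategy would probably work after substantial bookkeeping, but you are missing the uniformizing trick ($(-w_R tr\alpha_t) \subseteq \alpha_R \cap \beta_R$ plus the $\alpha_R$/$\beta_R$ dichotomy) that collapses most of the case analysis and gives the wall containment for free.
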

\begin{proof}
	We denote the two non-simple roots of $R$ by $\alpha_R$ and $\beta_R$. Note that $\alpha_R \subseteq \alpha$ or $\beta_R \subseteq \alpha$ holds by Lemma~\ref{mingallinrep}. We abbreviate $T := R_{\{r, t\}}(w_R)$.
	
	Recall that $C(T) = C( w_T tr r_{\{s, t\}} ) \cup C( w_T r_{\{r, t\}} srs ) \cup C( w_T r_{\{r, t\}} sts ) \cup C( w_R t r_{\{r, s\}} )$. Using Lemma~\ref{mingallinrep}, we obtain $\{ w_T tr r_{\{s, t\}} , w_T r_{\{r, t\}} srs, w_T r_{\{r, t\}} sts \} \subseteq (-w_R tr \alpha_t) \subseteq \alpha_R \cap \beta_R \subseteq \alpha$. Using Lemma~\ref{mingallinrep} again, we have $w_R t r_{\{r, s\}} \in (-w_R t\alpha_r) \subseteq w_R s\alpha_t$. Note that we have $w_R s\alpha_t \subseteq \alpha$ or $\alpha \in \{ w_R tsr \alpha_t, w_R tst \alpha_r \}$. In both cases we deduce $w_R t r_{\{r, s\}} \in \alpha$. As roots are convex, we obtain $C(T) \subseteq \alpha$.
\end{proof}

\begin{lemma}\label{Lemma: C_i in alpha for i = 0, 1}
	Let $i \in \{0, 1, 2\}$, $R \in \mathcal{R}_i$ and let $\alpha \in \hat{\Phi}_R$. Then we have $C_i \subseteq \alpha$.
\end{lemma}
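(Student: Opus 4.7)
The plan is to prove the statement by explicit case analysis on $i \in \{0,1,2\}$ and on $R \in \mathcal{R}_i$. Using the full $S_3$-symmetry of the $(4,4,4)$-Coxeter diagram permuting $\{r,s,t\}$, we reduce to a representative residue in each case: for $i = 0$ take $R = R_{\{s,t\}}(1_W)$; for $i = 1$ take $R = R_{\{s,t\}}(r)$; for $i = 2$ take $R = R_{\{r,t\}}(rs)$ (all six elements of length $2$ lie in a single $S_3$-orbit, so a single representative suffices). In each case, the four residues $R_{\{r,s\}}(w_R st)$, $R_{\{r,t\}}(w_R r_{\{s,t\}})$, $R_{\{r,s\}}(w_R r_{\{s,t\}})$, $R_{\{r,t\}}(w_R ts)$ defining $\hat\Phi_R$ are explicitly identified, and for each non-simple positive root $\alpha$ of these residues, we verify $C_i \subseteq \alpha$ directly.

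For the case $i = 0$, $C_0$ is the union of the three rank-$2$ residues at $1_W$ together with their left-translates by each generator, and each element of $C_0$ has length at most $5$. The walls $\partial\alpha$ of the non-simple roots under consideration pass through panels at distance at least $3$ from $1_W$, so geometrically the "negative side" $-\alpha$ sits at distance at least $3$ from $1_W$. Using Lemma~\ref{mingallinrep} repeatedly on galleries of type $(r,s,t,r)$ to obtain the root inclusions $\beta_1 \subseteq \beta_3 \cap \beta_4$, each element of $C_0$ is shown to admit a reduced expression whose associated minimal gallery does not cross $\partial\alpha$, hence lies in $\alpha$.

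For $i \in \{1,2\}$, the chosen representatives satisfy the hypothesis of Lemma~\ref{Lemma: auxiliary result}: with the "$r$" of the lemma taken as the generator such that $\ell(w_R \cdot r) = \ell(w_R) - 1$, one checks that $\ell(w_R r t) = \ell(w_R)$ as well. The lemma then immediately furnishes $C(R_{\{r,t\}}(w_R)) \subseteq \alpha$ and $(-w_R tr\alpha_t) \subseteq \alpha$ for every $\alpha \in \hat\Phi_R$. This absorbs the bulk of the newly added elements $C_i \setminus C_{i-1}$, since these "new" pieces are contained in $C(R')$ for various $R' \in \mathcal{R}_{i-1}$, and these $R'$ lie either inside $R_{\{r,t\}}(w_R)$ (handled by the first inclusion) or in the half-space $-w_R tr\alpha_t$ (handled by the second). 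The remaining elements, not covered by either inclusion, are verified individually by the same explicit gallery-based argument as in the case $i = 0$, invoking the previously established containments for $C_{i-1}$ as part of an informal induction.

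The main obstacle is the bookkeeping for $i = 2$, where $C_2$ contains elements of length up to $8$, and $\hat\Phi_R$ contains as many as eight non-simple roots. However, once the inclusions provided by Lemma~\ref{Lemma: auxiliary result} are applied, the number of elements left to handle explicitly becomes small, and each remaining inclusion reduces to a short combinatorial verification in the Coxeter complex via Lemma~\ref{mingallinrep}.
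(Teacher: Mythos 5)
Your high-level strategy overlaps with the paper's: both proofs lean on Lemma~\ref{Lemma: auxiliary result} and on the root inclusions coming from Lemma~\ref{mingallinrep}, and your observation that the $S_3$-symmetry of the $(4,4,4)$-diagram lets one work with a single representative $R$ per $i$ is correct and implicit in the paper as well. However, there is a genuine gap in the mechanism by which you try to absorb $C_i \setminus C_{i-1}$, and the key tool the paper actually uses for that step is missing from your proposal.

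You claim that for $i \in \{1,2\}$ the residues $R' \in \mathcal{R}_{i-1}$ contributing $C(R')$ to $C_i$ ``lie either inside $R_{\{r,t\}}(w_R)$ (handled by the first inclusion) or in the half-space $-w_R tr\alpha_t$ (handled by the second).'' This is false already for $i=1$ with $R = R_{\{s,t\}}(r)$: the residue $T := R_{\{s,t\}}(1_W) \in \mathcal{R}_0$ is a different rank-$2$ residue not contained in $R_{\{r,t\}}(1_W)$, and $1_W \in T$ satisfies $1_W \in rtr\alpha_t$ (indeed $\ell(t \cdot rtr) = \ell(trtr) = 4 > 3 = \ell(rtr)$), so $T \not\subseteq -rtr\alpha_t$. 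Applying the lemma with $s$ and $t$ swapped gives the analogous failure for $-rsr\alpha_s$. So the auxiliary lemma does not, on its own, absorb $C(T)$ for the residue $T$ of the same type as $R$. What the paper does instead is use Lemma~\ref{Lemma: w in -deltaP union -gammaP} to write $C(T) \subseteq C_0 \cup (-\delta_T) \cup (-\gamma_T)$, then combine this with the mingallinrep-derived inclusions $\alpha_r, -\alpha_s, -\alpha_t \subseteq \delta_R \cap \gamma_R$ and convexity of roots to conclude $(-\delta_T) \cup (-\gamma_T) \subseteq \alpha_r \subseteq \alpha$. Without Lemma~\ref{Lemma: w in -deltaP union -gammaP} the ``remaining elements, verified individually'' clause in your plan would have to carry the whole weight of that case, and those are not short verifications.

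A secondary issue is the $i=0$ case. Your distance heuristic (``walls pass through panels at distance at least $3$'') only shows that the ball of radius $3$ around $1_W$ lies in $\alpha$, whereas $C_0$ contains the elements of $C(rr_{\{s,t\}})$, which have length up to $5$; roots $\alpha \in \hat\Phi_R$ with $R \in \mathcal{R}_0$ have $k_\alpha = 4$, so $-\alpha$ begins at distance $4$ and the length-$4$ and length-$5$ elements are exactly the nontrivial ones. Your plan to exhibit, for each such element, a minimal gallery not crossing $\partial\alpha$ is sound in principle (equivalent, by convexity, to the claim), but that is the entire content of the case and is left as an assertion. In short: correct tools, but the proposed reduction via the auxiliary lemma is too strong a claim, and the omission of Lemma~\ref{Lemma: w in -deltaP union -gammaP} leaves a hole that the ``verified individually'' language does not fill.
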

\begin{proof}
	Let $R$ be of type $\{s, t\}$. For $i=0$ it is not hard to see that
	\[ 	C_0 = \bigcup_{S = \{r, s, t\}} \left( C(r_{\{s, t\}}) \cup C(rr_{\{s, t\}}) \right) \subseteq \alpha .\]
	Thus we consider the case $i=1$. Then $R = R_{\{s, t\}}(r)$. Clearly, $rr_{\{s, t\}} \in \alpha$. Using Lemma \ref{mingallinrep} we see that $\alpha_r, -\alpha_s, -\alpha_t \subseteq \delta_R, \gamma_R$ and, as $\delta_R \subseteq \alpha$ or $\gamma_R \subseteq \alpha$ (cf.\ Lemma~\ref{mingallinrep}), we deduce $\alpha_r, -\alpha_s, -\alpha_t \subseteq \alpha$. Now $C_0 \subseteq \alpha$ follows from the fact that roots are convex. For $T := R_{\{s, t\}}(1_W)$ it follows from Lemma~\ref{Lemma: w in -deltaP union -gammaP} and Lemma~\ref{mingallinrep} that $C(T) \subseteq C_0 \cup (-\delta_T) \cup (-\gamma_T) \subseteq C_0 \cup \alpha_r \subseteq \alpha$. Using symmetry it suffices to show that $C(R_{\{r, t\}}(1_W)) \subseteq \alpha$. But this follows from Lemma~\ref{Lemma: auxiliary result}.
\end{proof}

\begin{lemma}\label{Lemma: head residues non-simple roots}
	Let $i\in \NN$, $P \in \mathcal{T}_i$ and let $\alpha \in \hat{\Phi}_P$ be a root. Then we have $C_i \subseteq \alpha$.
\end{lemma}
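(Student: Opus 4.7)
I will prove the statement by induction on $i$. The base cases $i \in \{0,1,2\}$ are Lemma~\ref{Lemma: C_i in alpha for i = 0, 1}, so assume $i \geq 3$ and that the claim holds for all smaller indices. Fix $P \in \mathcal{T}_i$ and $\alpha \in \hat{\Phi}_P$. Let $R_\alpha$ denote the unique rank-$2$ residue for which $\alpha$ is a non-simple root, so $k_\alpha = \ell(w_{R_\alpha}) + 1 \geq i+2$. Using the recursive definition
\[
C_i = C_{i-1} \cup \bigcup_{Q \in \mathcal{T}_{i-1}} C(Q),
\]
the claim splits into verifying $C_{i-1} \subseteq \alpha$ and $C(Q) \subseteq \alpha$ for every $Q \in \mathcal{T}_{i-1}$.

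For the first containment $C_{i-1} \subseteq \alpha$, my plan is to produce a residue $Q_\alpha \in \mathcal{T}_{i-1}$ together with a root $\beta \in \hat{\Phi}_{Q_\alpha}$ satisfying $\beta \subseteq \alpha$ — ideally $\beta = \alpha$ itself, which is what happens as soon as $R_\alpha$ already occurs among the four residues defining $\hat{\Phi}_{Q_\alpha}$. The candidate $Q_\alpha$ is obtained by walking one rank-$2$ residue back from $R_\alpha$ towards $1_W$ along a minimal gallery provided by Lemma~\ref{Lemma: Lemma 2.22 BiConstruction} and Lemma~\ref{projgal}. The construction is carried out case by case, separating $P \in \mathcal{T}_{i,1}$ from $P \in \mathcal{T}_{i,2}$ and then treating each of the four residue types appearing in the definition of $\hat{\Phi}_P$. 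Once such a $\beta$ is identified, the induction hypothesis gives $C_{i-1} \subseteq \beta \subseteq \alpha$.

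For the second containment, fix $Q \in \mathcal{T}_{i-1}$ and $w \in C(Q)$. If $w \in C_{i-1}$ the previous paragraph applies, so we may assume $w \in (-\delta_Q) \cup (-\gamma_Q)$ by Lemma~\ref{Lemma: w in -deltaP union -gammaP}. I will then apply Lemma~\ref{Lemma: -epsilonP contained in epsilonQ for i and i+1} to compare $R_{\epsilon_Q}$ with a residue $R' \in \mathcal{T}_i$ naturally associated with $\alpha$ — namely $R' = R_\alpha$ when $R_\alpha \in \mathcal{T}_i$, and otherwise $R' = P$ together with the observation, from Lemma~\ref{mingallinrep}, that any non-simple root of $R_\alpha$ is contained in a non-simple root of $P$. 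In case~(i) of that lemma, $(-\epsilon_Q) \subseteq \alpha$ follows from convexity of roots, so $w \in \alpha$. In case~(ii), $R'$ and $R_{\epsilon_Q}$ share only a single panel passing through $w_{R'}$; here Lemma~\ref{Lemma: auxiliary result}, combined with the explicit description of $C(Q)\setminus C_{i-1}$ underlying Lemma~\ref{Lemma: w in -deltaP union -gammaP}, forces $w$ to stay on the $1_W$-side of the wall $\partial \alpha$.

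The main obstacle will be precisely the configuration of case~(ii) of Lemma~\ref{Lemma: -epsilonP contained in epsilonQ for i and i+1}: because the two residues then share only a panel, the relationship between $\alpha$ and $\epsilon_Q$ becomes purely local, and one must run a finer gallery analysis in the Coxeter group of type $(4,4,4)$ to rule out elements of $C(Q)$ leaving $\alpha$. The resulting case division — the four residue types in $\hat{\Phi}_P$ crossed with $P \in \mathcal{T}_{i,1}$ versus $P \in \mathcal{T}_{i,2}$ — will make the formal write-up lengthy, but each sub-case should reduce to a concrete length-function computation already in the spirit of Lemmas~\ref{Lemma: not both down}, \ref{Lemma: Subset contained in certain root} and~\ref{mingallinrep}.
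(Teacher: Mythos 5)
Your outline shares the correct high-level skeleton with the paper (induction on $i$, splitting $C_i = C_{i-1} \cup \bigcup_{Q \in \mathcal{T}_{i-1}} C(Q)$, and leaning on Lemmas~\ref{Lemma: w in -deltaP union -gammaP} and \ref{Lemma: -epsilonP contained in epsilonQ for i and i+1}), but the core inductive step you propose differs in a way that I do not think can be made to work, and the hardest part is explicitly deferred rather than resolved.

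\textbf{The proposed induction step is at the wrong level.} You want to find $Q_\alpha \in \mathcal{T}_{i-1}$ and $\beta \in \hat{\Phi}_{Q_\alpha}$ with $\beta \subseteq \alpha$, ``ideally $\beta = \alpha$,'' and then invoke the induction hypothesis. The paper's proof never does this. Instead it invokes the induction hypothesis with the \emph{non-simple roots of $P$ itself}, namely $\delta_P$ and $\gamma_P$, showing that they lie in $\hat{\Phi}_Q$ for a carefully chosen $Q \in \mathcal{T}_{i-2}$ (for $P \in \mathcal{T}_{i,1}$, Claims~A2/A3) or $Q \in \mathcal{T}_{i-3}$ (for $P \in \mathcal{T}_{i,2}$, Claim~B2). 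It then combines $C_{i-2} \subseteq \delta_P \cap \gamma_P$ (resp.\ $C_{i-3}$) with $\delta_P \subseteq \alpha$ or $\gamma_P \subseteq \alpha$ from Lemma~\ref{mingallinrep}, and then \emph{works its way back up} through $C_{i-1}$ and $C_i$ with several pages of explicit root- and convexity-bookkeeping. Going down by only one level, as you propose, does not appear to fit the combinatorics of $\hat{\Phi}$: for $\alpha \in \hat{\Phi}_P$ with residue $R_\alpha$, the residue $R_\alpha$ reappears in $\hat{\Phi}_T$ for $T$ at level $i+1$, not at level $i-1$, so the ``ideally $\beta = \alpha$'' route goes the wrong way; and if $\beta \neq \alpha$, you have not explained where $\beta$ comes from or why $\beta \subseteq \alpha$. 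Also note the small computational slip: from Lemma~\ref{Lemma: Lemma 2.22 BiConstruction}, $k_\alpha = \ell(w_{R_\alpha}) + 2$, not $+1$.

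\textbf{Case (ii) of Lemma~\ref{Lemma: -epsilonP contained in epsilonQ for i and i+1} is the real work, and it is not done.} You correctly identify that case~(ii) (where $R_{\epsilon_P}$ and $R_{\epsilon_Q}$ share only a panel) is where a clean containment fails, and you say one ``must run a finer gallery analysis'' — but you do not run it. In the paper this is precisely where the concrete content of the proof lives: the offending $Q$'s are identified as $P_T$, $P_{T'}$, $P_M$ and their sibling in $\mathcal{T}_{j,2}$, and for those one uses Lemma~\ref{Lemma: auxiliary result}, Lemma~\ref{Lemma: Subset contained in certain root}, Lemma~\ref{mingallinrep} and convexity of roots, case by case, to get $C(Q) \subseteq \alpha$. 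Since your containment $C_{i-1} \subseteq \alpha$ is itself unproven, and the $C(Q)$ step is only sketched in the genuinely hard subcase, the proposal is a plausible plan but not a proof. If you want to salvage the approach, I would suggest abandoning the $\mathcal{T}_{i-1}$ shortcut and instead establishing that $\delta_P, \gamma_P$ lie in $\hat{\Phi}_Q$ for an explicit $Q$ two or three levels down, as the paper does, since that is the step that actually anchors the induction.
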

\begin{proof}
	We prove the hypothesis by induction on $i$. The cases $i \in \{0, 1\}$ are proven in Lemma~\ref{Lemma: C_i in alpha for i = 0, 1}. Thus we can assume $i\geq 2$. For $j \in \NN$ and a residue $T \in \mathcal{R}_j$ we denote by $P_T \in \mathcal{T}_j$ the unique element with $P_T = T$ or $T \in P_T$.
	
	\emph{Claim $A$: If $P \in \mathcal{T}_{i, 1}$, then $C_i \subseteq  \alpha$.}
	
	Suppose $P \in \mathcal{T}_{i, 1}$ is of type $\{s, t\}$. As $i\geq 2$, we have $\ell(w_P) -2 \in \{ \ell(w_P rs), \ell(w_P rt) \}$. Without loss of generality we can assume $\ell(w_P rs) = \ell(w_P) -2$. Note that $\delta_P \subseteq \alpha$ or $\gamma_P \subseteq \alpha$ holds (cf.\ Lemma~\ref{mingallinrep}). We define $T := R_{\{r, t\}}(w_P)$ and $T' := R_{\{r, s\}}(w_P)$. Note that $T \in \mathcal{R}_{i-1} \cup \mathcal{R}_{i-2}$ by Lemma~\ref{Lemma: not both down}.
	
	\emph{Claim $A1$: We have $C_i \subseteq C_{i-1} \cup \alpha$.}
	
	Recall that $C_i = C_{i-1} \cup \bigcup_{P \in \mathcal{T}_{i-1}} C(P)$. Let $Q \in \mathcal{T}_{i-1} \backslash \{ P_T \}$. By Lemma~\ref{Lemma: w in -deltaP union -gammaP} we obtain $C(Q) \subseteq C_{i-1} \cup (-\delta_Q) \cup (-\gamma_Q)$. Using Lemma~\ref{Lemma: -epsilonP contained in epsilonQ for i and i+1}, the fact $Q \neq P_T$ implies $(-\delta_Q), (-\gamma_Q) \subseteq \delta_P, \gamma_P$ and hence $C(Q) \subseteq C_{i-1} \cup (-\delta_Q) \cup (-\gamma_Q) \subseteq C_{i-1} \cup (\delta_P \cap \gamma_P) \subseteq C_{i-1} \cup \alpha$. If $P_T \notin \mathcal{T}_{i-1}$, then we are done. Thus we suppose $P_T \in \mathcal{T}_{i-1}$. In particular, $\ell(w_P rt) = \ell(w_P)$. We deduce from Lemma~\ref{Lemma: auxiliary result} that $C(T) \subseteq \alpha$. If $P_T \in \mathcal{T}_{i-1, 1}$, we are done. Thus we can assume $P_T  \in \mathcal{T}_{i-1, 2}$, i.e.\ $P_T = \{ T, R_{\{r, s\}}(w_P rt) \}$. Note that $C( R_{\{r, s\}}(w_P rt) ) = C( w_T tsrs r_{\{r, t\}} ) \cup C( w_T trsrtst ) \cup C( w_T trs r_{\{r, t\}} ) \cup C( w_T tr r_{\{s, t\}} )$. Using Lemma~\ref{Lemma: auxiliary result} we obtain that $\{ w_T tsrs r_{\{r, t\}}, w_T trsrtst, w_T trs r_{\{r, t\}}, w_T tr r_{\{s, t\}} \} \subseteq (-w_T \alpha_t ) \subseteq \alpha$. As roots are convex, we infer $C(P_T) = C(T) \cup C( R_{\{r, s\}}(w_P rt) ) \subseteq \alpha$.
	
	In the rest of the proof of Claim~$A$ we will show $C_{i-1} \subseteq \alpha$. Together with Claim~$A1$ this finishes the proof of Claim~$A$. Recall that $C_{i-1} = C_{i-2} \cup \bigcup_{Q \in \mathcal{T}_{i-2}} C(Q)$.
	
	\emph{Claim $A2$: If $\ell(w_P rt) = \ell(w_P) -2$, then $C_{i-1} \subseteq \alpha$.}
	
	As $\ell(w_P rt) = \ell(w_P) -2$, we have $Q := \{T, T'\} \in \mathcal{T}_{i-2, 2}$. In particular, $i-2 >0$. Then $\delta_P, \gamma_P  \in \hat{\Phi}_Q$ and the induction hypothesis implies $C_{i-2} \subseteq \delta_P \cap \gamma_P \subseteq \alpha$. Let $Z \in \mathcal{T}_{i-2} \backslash \{ Q \}$. Note that by Lemma~\ref{Lemma: Subset contained in certain root} and Lemma~\ref{mingallinrep} we have $\delta_Q \cap \gamma_Q \subseteq w_P r\alpha_r \cup \{ w_P \} \subseteq \delta_P \cap \gamma_P \subseteq \alpha$. Using Lemma~\ref{Lemma: w in -deltaP union -gammaP} and Lemma \ref{Lemma: -epsilonP contained in epsilonQ} we deduce $C(Z) \subseteq C_{i-2} \cup (-\delta_Z) \cup (-\gamma_Z) \subseteq C_{i-2} \cup (\delta_Q \cap \gamma_Q) \subseteq \alpha$. Now we consider $Z=Q$. Note that $C(Q) = C(T) \cup C(T')$ and, using symmetry, it suffices to show $C(T') \subseteq \alpha$. Recall that $C(T') = C( w_{T'} rs r_{\{r, t\}} ) \cup C( w_{T'} r_{\{r, s\}} tst ) \cup C( w_{T'} r_{\{r, s\}} trt ) \cup C( w_{T'} sr r_{\{s, t\}} )$. Using Lemma~\ref{mingallinrep}, we deduce $w_{T'} rs r_{\{r, t\}} \in w_{T'} \alpha_s \subseteq \delta_P \cap \gamma_P \subseteq \alpha$, $w_{T'} r_{\{r, s\}} tst \in (-w_P srt\alpha_s) \subseteq \delta_P \cap \gamma_P \subseteq \alpha$ and $w_{T'} r_{\{r, s\}} trt \in (-w_Pstrt\alpha_r) \subseteq \alpha$. Moreover, $w_P r_{\{s, t\}} \in \alpha$. As roots are convex, we deduce $C(T') \subseteq \alpha$.
	
	\emph{Claim $A3$: If $\ell(w_P rt) = \ell(w_P)$, then $C_{i-1} \subseteq \alpha$.}
	
	As $P \in \mathcal{T}_{i, 1}$ we have $\ell(w_P rsr) = \ell(w_P) -1$. As $\delta_P, \gamma_P \in \hat{\Phi}_{T'}$, we deduce $C_{i-2} \subseteq \delta_P \cap \gamma_P \subseteq \alpha$ by induction. As in Claim~$A2$ we deduce $C(T') \subseteq \alpha$. Suppose first $i-2 = 0$. Note that $\mathcal{T}_0 = \{ R_{\{s, t\}}(1_W), R_{\{r, s\}}(1_W), R_{\{r, t\}}(1_W) \}$. For $Q \in \{ R_{\{r, s\}}(1_W), R_{\{r, t\}}(1_W) \}$ it follows from Lemma~\ref{Lemma: w in -deltaP union -gammaP}, Lemma~\ref{mingallinrep} and induction that $C(Q) \subseteq C_0 \cup (-\delta_Q) \cup (-\gamma_Q) \subseteq C_0 \cup (\delta_P \cap \gamma_P) \subseteq \alpha$. As $R_{\{r, s\}}(1_W) = T'$, we conclude $C(Q) \subseteq \alpha$ for all $Q \in \mathcal{T}_{i-2}$. Thus we assume $i-2 >0$. Let $Q \in \mathcal{T}_{i-2} \backslash \{ P_{T'} \}$. Note that $w_P r\alpha_r \in \{ \delta_{T'}, \gamma_{T'} \}$. Then Lemma~\ref{Lemma: w in -deltaP union -gammaP}, Lemma~\ref{Lemma: -epsilonP contained in epsilonQ} and Lemma~\ref{mingallinrep} imply $C(Q) \subseteq C_{i-2} \cup (-\delta_Q) \cup (-\gamma_Q) \subseteq C_{i-2} \cup w_P r\alpha_r \subseteq C_{i-2} \cup (\delta_P \cap \gamma_P) \subseteq \alpha$. As in Claim~$A2$ we deduce $C(T') \subseteq \alpha$. If $T' \in \mathcal{T}_{i-2, 1}$ we are done. Otherwise, we have $P_{T'} = \{ T', R_{\{s, t\}}(w_{T'} r) \}$. Note that $C( R_{\{s, t\}}(w_{T'} r) ) \subseteq w_{T'} \alpha_s \subseteq \delta_P \cap \gamma_P \subseteq \alpha$ holds by Lemma~\ref{mingallinrep} and the fact that roots are convex. We deduce $C(P_{T'}) = C(T') \cup C( R_{\{s, t\}}(w_{T'} r) ) \subseteq \alpha$.
	
	\emph{Claim $B$: If $P \in \mathcal{T}_{i, 2}$, then $C_i \subseteq  \alpha$.}
	
	Suppose $P = \{ R, R' \}$, where $R$ is of type $\{r, s\}$ and $R'$ is of type $\{r, t\}$. Let $\epsilon_P := w_R s\alpha_r$. Note that there exists $\beta \in \{ \delta_P, \epsilon_P, \gamma_P \}$ with $\beta \subseteq \alpha$. Suppose $\delta_P \not\subseteq \alpha$ and $\gamma_P \not\subseteq \alpha$. Then $\epsilon_P \subseteq \alpha$. By Lemma~\ref{Lemma: Subset contained in certain root} we have $\delta_P \cap \gamma_P \subseteq \epsilon_P \cup \{ w_R sr \} \subseteq \alpha$. This implies $\delta_P \cap \gamma_P \subseteq \alpha$ in all cases. Define $M := R_{\{s, t\}}(w_R)$.
	
	\emph{Claim $B1$: We have $C_i \subseteq C_{i-1} \cup \alpha$.}
	
	Recall that $C_i = C_{i-1} \cup \bigcup_{P \in \mathcal{T}_{i-1}} C(P)$. Define $T := R_{\{r, t\}}(w_R)$ and $T' := R_{\{r, s\}}(w_{R'})$. Then $T, T' \in \mathcal{T}_{i-1, 1}$ (cf.\ Lemma~\ref{CleftrightcapHTHT'}). Let $Q \in \mathcal{T}_{i-1} \backslash \{ T, T' \}$. By Lemma~\ref{Lemma: w in -deltaP union -gammaP} we obtain $C(Q) \subseteq C_{i-1} \cup (-\delta_Q) \cup (-\gamma_Q)$. Using Lemma~\ref{Lemma: -epsilonP contained in epsilonQ for i and i+1}, the fact that $Q \notin \{T, T'\}$ implies $(-\delta_Q), (-\gamma_Q) \subseteq \delta_P, \gamma_P$ and hence $C(Q) \subseteq C_{i-1} \cup (-\delta_Q) \cup (-\gamma_Q) \subseteq C_{i-1} \cup (\delta_P \cap \gamma_P) \subseteq C_{i-1} \cup \alpha$. It is left to show $C(T) \cup C(T') \subseteq \alpha$. Using symmetry, it suffices to consider $T$. If $\alpha \in \hat{\Phi}_R$, then we deduce $C(T) \subseteq \alpha$ from Lemma~\ref{Lemma: auxiliary result}. Thus we suppose $\alpha \notin \hat{\Phi}_R$. Then Lemma~\ref{mingallinrep} implies $w_{R'} r\alpha_t \subseteq \alpha$. Using Lemma~\ref{Lemma: w in -deltaP union -gammaP} and Lemma~\ref{mingallinrep} we conclude $C(T) \subseteq C_{i-1} \cup (-\delta_T) \cup (-\gamma_T) \subseteq C_{i-1} \cup w_{R'} r\alpha_t \subseteq C_{i-1} \cup \alpha$.
	
	In the rest of the proof of Claim~$B$ we will show $C_{i-1} \subseteq \alpha$. Together with Claim~$B1$ this finishes the proof of Claim~$B$. Recall that $C_{i-1} = C_{i-2} \cup \bigcup_{Q \in \mathcal{T}_{i-2}} C(Q)$ and $C_{i-2} = C_{i-3} \cup \bigcup_{Q \in \mathcal{T}_{i-3}} C(Q)$.
	
	\emph{Claim $B2$: We have $C_{i-2} \subseteq \alpha$.}
	
	As $P_M \in \mathcal{T}_{i-3}$ and $\delta_P, \gamma_P \in \hat{\Phi}_{P_M}$, the induction hypothesis implies $C_{i-3} \subseteq \delta_P \cap \gamma_P \subseteq \alpha$. We first show $C(M) \subseteq \alpha$. Note that $C(M) = C( w_M ts r_{\{r, t\}} ) \cup C( w_M r_{\{s, t\}} rsr ) \cup C( w_M r_{\{s, t\}} rtr ) \cup C( w_M st r_{\{r, s\}} )$. Note that $w_M r_{\{s, t\}} rsr, w_M r_{\{s, t\}} rtr \in \alpha$. Using Lemma~\ref{mingallinrep} we deduce $w_M ts r_{\{r, t\}} \in (-w_M ts \alpha_r) \subseteq \delta_P \cap \gamma_P \subseteq \alpha$ and $w_M st r_{\{r, s\}} \in (-w_M st\alpha_r) \subseteq \delta_P \cap \gamma_P \subseteq \alpha$. As roots are convex, we infer $C(M) \subseteq \alpha$. Note that $\{ w_M s\alpha_t, w_M t\alpha_s \} \cap \{ \delta_{P_M}, \gamma_{P_M} \} \neq \emptyset$ and by Lemma~\ref{mingallinrep} we have $w_M s\alpha_t, w_M t\alpha_s \subseteq \delta_P, \gamma_P$. We have to show $C(Q) \subseteq \alpha$ for all $Q \in \mathcal{T}_{i-3}$. Suppose $i-3 = 0$. Then $\mathcal{T}_0 = \{ R_{\{s, t\}}(1_W), R_{\{r, s\}}(1_W), R_{\{r, t\}}(1_W) \}$. Note that $R_{\{s, t\}}(1_W) = M$ and we have already shown $C(M) \subseteq \alpha$. Using symmerty, it suffices to show $C(R_{\{r, s\}}(1_W)) \subseteq \alpha$. It follows from Lemma~\ref{Lemma: w in -deltaP union -gammaP}, Lemma~\ref{mingallinrep} and the fact that roots are convex that $C(R_{\{r, s\}}(1_W)) \subseteq C_0 \cup st\alpha_s \subseteq C_0 \cup (\delta_P \cap \gamma_P) \subseteq \alpha$. Thus we can suppose $i-3 >0$. Let $Q \in \mathcal{T}_{i-3} \backslash \{ P_M \}$. Then it follows from Lemma~\ref{Lemma: w in -deltaP union -gammaP}, Lemma~\ref{Lemma: -epsilonP contained in epsilonQ} and Lemma~\ref{mingallinrep} that $C(Q) \subseteq C_{i-3} \cup (-\delta_Q) \cup (-\gamma_Q) \subseteq C_{i-3} \cup (\delta_P \cap \gamma_P) \subseteq \alpha$. Now we consider $P_M$. We have already shown $C(M) \subseteq \alpha$. If $P_M = M$, then we are done. Thus we can assume $P_M \neq M$. Without loss of generality we can assume $P_M = \{ M, M' \}$, where $M'$ is of type $\{r, t\}$. Note that $C(M') = C( w_{M'} rt r_{\{r, s\}} ) \cup C( w_{M'} r_{\{r, t\}} sts ) \cup C( w_{M'} r_{\{r, t\}} srs ) \cup C( w_{M'} tr r_{\{s, t\}} )$. Moreover, we have $C( w_{M'} rt r_{\{r, s\}} ) \subseteq C(M) \subseteq \alpha$. By Lemma~\ref{mingallinrep} we have $\{ w_{M'} r_{\{r, t\}} sts, w_{M'} r_{\{r, t\}} srs, w_{M'} tr r_{\{s, t\}} \} \subseteq (-w_{M'} \alpha_t) \subseteq w_M t\alpha_s \subseteq \delta_P \cap \gamma_P \subseteq \alpha$. As roots are convex, we obtain $C(M') \subseteq \alpha$ and, hence, $C(P_M) = C(M) \cup C(M') \subseteq \alpha$.
	
	\emph{Claim $B3$: We have $C_{i-1} \subseteq \alpha$.}
	
	By Claim~$B2$ it suffices to show $C(Q) \subseteq \alpha$ for all $Q \in \mathcal{T}_{i-2}$. We distinguish the following cases:
	\begin{enumerate}[label=(\alph*)]
		\item Suppose $M \in \mathcal{T}_{i-3, 1}$: Define $X := R_{\{r, s\}}(w_M t)$, $Y := R_{\{r, t\}}(w_M s)$ and note that $X, Y \in \mathcal{R}_{i-2}$. Let $Q \in \mathcal{T}_{i-2} \backslash \{ P_X, P_Y \}$. Then it follows from Lemma~\ref{Lemma: w in -deltaP union -gammaP}, Lemma~\ref{Lemma: -epsilonP contained in epsilonQ for i and i+1}, Lemma~\ref{mingallinrep} and Claim~$B2$ that $C(Q) \subseteq C_{i-2} \cup (-\delta_Q) \cup (-\gamma_Q) \subseteq C_{i-2} \cup (\delta_M \cap \gamma_M) \subseteq C_{i-2} \cup (\delta_P \cap \gamma_P) \subseteq \alpha$. It is left to show $C(P_X) \cup C(P_Y) \subseteq \alpha$. Using symmetry it suffices to show $C(P_X) \subseteq \alpha$. Using Lemma~\ref{Lemma: w in -deltaP union -gammaP} we have $C(P_X) \subseteq C_{i-2} \cup (-\delta_{P_X}) \cup (-\gamma_{P_X})$. If $P_X = X$, then $\{ \delta_{P_X}, \gamma_{P_X} \} = \{ w_M tr\alpha_s, w_M ts\alpha_r \}$. Using Lemma~\ref{mingallinrep} and Claim~$B2$ we infer $C(P_X) \subseteq C_{i-2} \cup (-w_M tr\alpha_s) \cup (-w_M ts\alpha_r) \subseteq C_{i-2} \cup (\delta_P \cap \gamma_P) \subseteq \alpha$. If $P_X \neq X$, then $\{ \delta_{P_X}, \gamma_{P_X} \} = \{ w_M ts\alpha_r, w_M trs\alpha_t \}$. Lemma~\ref{mingallinrep} and Claim~$B2$ yield $C(P_X) \subseteq C_{i-2} \cup (-w_M ts\alpha_r) \cup (-w_M trs\alpha_t) \subseteq \alpha \cup w_M t\alpha_s \subseteq \alpha \cup (\delta_P \cap \gamma_P) \subseteq \alpha$.
		
		\item Suppose $M \notin \mathcal{T}_{i-3, 1}$: Without loss of generality we can assume $P_M = \{ M, M' \}$, where $M'$ is of type $\{ r, t \}$. Define $X := R_{\{r, s\}}(w_M t)$, $Y := R_{\{ r, s \}}(w_{M'} t)$ and note that $X, Y \in \mathcal{T}_{i-2, 1}$ as a consequence of Lemma~\ref{Lemma: not both down}. Let $Q \in \mathcal{T}_{i-2} \backslash \{ X, Y \}$. Then it follows from Lemma~\ref{Lemma: w in -deltaP union -gammaP}, Lemma~\ref{Lemma: -epsilonP contained in epsilonQ for i and i+1}, Lemma~\ref{mingallinrep} and Claim~$B2$ that $C(Q) \subseteq C_{i-2} \cup (-\delta_Q) \cup (-\gamma_Q) \subseteq C_{i-2} \cup (\delta_{P_M} \cap \gamma_{P_M}) \subseteq C_{i-2} \cup w_M t\alpha_s \subseteq C_{i-2} \cup (\delta_P \cap \gamma_P) \subseteq \alpha$. It is left to show $C(X) \cup C(Y) \subseteq \alpha$. As in the previous case we deduce $C(X) \subseteq \alpha$. Using Lemma~\ref{Lemma: w in -deltaP union -gammaP}, Lemma~\ref{mingallinrep} and Claim~$B2$ we deduce $C(Y) \subseteq C_{i-2} \cup (-\delta_Y) \cup (-\gamma_Y) \subseteq C_{i-2} \cup (-w_{M'} \alpha_t) \subseteq C_{i-2} \cup w_M t\alpha_s \subseteq C_{i-2} \cup (\delta_P \cap \gamma_P) \subseteq \alpha$. \qedhere
	\end{enumerate}
\end{proof}

\begin{lemma}\label{Lemma: Uw to GP}
	Let $i\in \NN$, $P \in \mathcal{T}_i$ and $w\in C(P)$. Then there is a canonical homomorphism $U_w \to G_P$. In particular, this homomorphism is injective.
\end{lemma}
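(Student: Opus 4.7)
The plan is to construct the canonical homomorphism on generators of $U_w$ and then deduce injectivity from the tree product structure of $G_P$. Recall from the discussion immediately preceding Definition~\ref{Definition: Properties of Commutator blueprints} that whenever $w_1 \prec w_2$ one has a canonical monomorphism $U_{w_1} \hookrightarrow U_{w_2}$. Since $w \in C(P)$, the definition of $C(P)$ provides an element $w^*$ in its defining set with $w \prec w^*$; hence it suffices to construct a canonical injection $U_{w^*} \hookrightarrow G_P$ and then precompose with $U_w \hookrightarrow U_{w^*}$.

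First I would handle the easy subcases, namely those in which $w^*$ is itself the index of a vertex group of $G_P$. For $P = R \in \mathcal{T}_{i,1}$ this covers $w^* = w_R str_{\{r,s\}}$ and $w^* = w_R ts r_{\{r,t\}}$, which occur as indices of the vertex groups $U_{w_R st r_{\{r,s\}}}$ and $U_{w_R ts r_{\{r,t\}}}$, respectively; analogous ``corner'' indices appear in the defining set of $C(\{R,R'\})$ when $P \in \mathcal{T}_{i,2}$. In each such subcase the composition $U_w \hookrightarrow U_{w^*} \hookrightarrow G_P$, the last inclusion being a direct application of Proposition~\ref{treeproducts}, furnishes the required canonical injective homomorphism.

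The main obstacle is the remaining subcases, where $w^*$ is not directly an index of a vertex group of $G_P$. For $P = R \in \mathcal{T}_{i,1}$ these are $w^* \in \{w_R r_J r t r, w_R r_J r s r\}$, and for $P = \{R,R'\} \in \mathcal{T}_{i,2}$ there are analogous interior elements of the defining set. For these I would identify a short subtree of $G_P$ built around the relevant ``middle'' vertex group (for instance $V_{w_R r_{\{s,t\}} r r_{\{s,t\}}}$ in the case $w^* = w_R r_J r t r$) and construct a canonical homomorphism from $U_{w^*}$ into the associated sub-tree-product. The key technical inputs here are Lemma~\ref{Lemma: Key lemma}, which controls the pairwise intersections of the relevant vertex groups inside their common ambient amalgam and thereby allows the embedding to be assembled via Proposition~\ref{treeofgroupsinjective}, together with axiom (CB3) and Remark~\ref{Remark: Product mapping is a bijection}, which ensure that the map has the correct cardinality $2^{\ell(w^*)}$ and is therefore injective.

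Finally, once $U_{w^*} \hookrightarrow (\text{sub-tree-product}) \hookrightarrow G_P$ has been established in every subcase (the last injection again coming from Proposition~\ref{treeproducts}), the composition $U_w \hookrightarrow U_{w^*} \hookrightarrow G_P$ is a chain of monomorphisms, and this is the desired canonical injective homomorphism $U_w \to G_P$.
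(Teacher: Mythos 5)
Your reduction to the four maximal elements $w^*$ of the defining set of $C(P)$ is correct, and your treatment of the ``corner'' cases is exactly the paper's argument. The problem is that you have misidentified the remaining cases as an obstacle. All four elements are in fact indices of vertex groups of $G_P$. For $P = R \in \mathcal{T}_{i,1}$ of type $\{s,t\}$, note that $w_R r_{\{s,t\}}\,rtr = w_R stst\,rtr = w_R sts\,(trtr) = w_R sts\,r_{\{r,t\}}$, and $U_{w_R sts\,r_{\{r,t\}}}$ is the fifth vertex group in the defining sequence of $G_R$; symmetrically $w_R r_{\{s,t\}}\,rsr = w_R tst\,r_{\{r,s\}}$ is the index of the seventh vertex group. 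The analogous identities hold for $P \in \mathcal{T}_{i,2}$. Thus in every case $U_{w^*}$ \emph{is} a vertex group of $G_P$, and the final clause of Proposition~\ref{treeproducts} immediately gives that $U_{w^*} \to G_P$ is injective; composing with $U_w \hookrightarrow U_{w^*}$ finishes the proof. There is no need for Lemma~\ref{Lemma: Key lemma}, Proposition~\ref{treeofgroupsinjective}, axiom (CB3), or any auxiliary sub-tree-product. By labelling these cases ``the main obstacle'' and gesturing at a construction you do not carry out, you have replaced a one-line application of Proposition~\ref{treeproducts} with an unspecified and unnecessary detour; the proposal as written leaves a genuine gap precisely where the paper's proof is trivial.
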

\begin{proof}
	We distinguish the following two cases:
	\begin{enumerate}[label=$P \in \mathcal{T}_{i, \arabic*}$:]
		\item Suppose that $P$ is of type $\{s, t\}$. Then we have $C(P) = C(w_P str_{\{r, s\}}) \cup C(w_P r_{\{s, t\}}rtr) \cup C(w_P r_{\{s, t\}}rsr) \cup C(w_P tsr_{\{r, t\}})$. As $U_v \to U_{vs}$ is injective, we can assume $w \in \{ w_P str_{\{r, s\}}, w_P r_{\{s, t\}}rtr, w_P r_{\{s, t\}}rsr, w_P tsr_{\{r, t\}} \}$. By definition of $G_P$ and Proposition \ref{treeproducts} we see that $U_w \to G_P$ is injective.
		
		\item Suppose $P = \{ R, R' \}$, where $R$ is of type $\{r, s\}$ and $R'$ is of type $\{r, t\}$. As in the case $P \in \mathcal{T}_{i, 1}$ we can assume that $w\in \{ w_R rsr_{\{r, t\}}, w_R r_{\{r, s\}}tst, w_R r_{\{r, s\}}trt, w_R srr_{\{s,t\}} \} \cup \{ w_{R'}tr r_{\{s, t\}}, w_{R'} r_{\{r, t\}}srs, w_{R'} r_{\{r, t\}}sts, w_{R'} rtr_{\{r, s\}} \}$. Again, the claim follows from the definition of $G_P$ together with Proposition \ref{treeproducts}. \qedhere
	\end{enumerate}
\end{proof}

\begin{lemma}\label{Lemma: injective homomorphism V to GP}
	Let $i \in \NN$ and $w' = w_T r_{\{u, v\}} \in D_{i+1} \backslash D_i$. Then there exists a unique $P \in \mathcal{T}_i$ with $w_T u, w_T v \in C'(P)$ and the canonical homomorphism $V_{w'} \to G_P$ is injective.
\end{lemma}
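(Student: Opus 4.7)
The plan is to establish existence of $P$, uniqueness of $P$, and injectivity of $V_{w'}\to G_P$ in turn. Since $w' \in D_{i+1}\setminus D_i$, at least one of $w_T u, w_T v$ -- say $w_T u$ -- lies in $C_{i+1}\setminus C_i$. First I would invoke Lemma~\ref{Lemma: unique P} to obtain a unique $P \in \mathcal{T}_i$ with $w_T u \in C(P)\subseteq C'(P)$; existence then reduces to checking $w_T v \in C'(P)$. Because $w_T u$ and $w_T v$ are the two chambers adjacent to $w_T$ in the residue $T$, any two minimal galleries to them agree through $w_T$ and differ only in the final step, so I would case-split on which of the four elements generating $C(P)$ dominates $w_T u$ under $\prec$ and verify using Lemma~\ref{mingallinrep} together with the explicit vertex-group indices of $G_P$ listed in Section~\ref{Section: Locally Weyl-invariant Commutator blueprints} that $w_T v \prec x'$ for some such index $x'$.

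For uniqueness, suppose $Q \in \mathcal{T}_i$ with $Q\neq P$ also satisfies $w_T u, w_T v \in C'(Q)$. Writing $w_T u \in C(v^*)$ for some vertex-group index $v^*$ of $G_Q$, the assumption $w_T u \notin C_i$ combined with the analysis in the proof of Lemma~\ref{Lemma: C'(P) contained in C_i+1} forces $v^*$ to be one of the ``long'' indices of $G_Q$, and in particular $w_T u \in C(Q)$. Lemma~\ref{Lemma: unique P} then yields $Q = P$. I would supplement this with Lemma~\ref{Lemma: -epsilonP contained in epsilonQ} and Lemma~\ref{Lemma: Subset contained in certain root} to control the boundary roots of $P$ versus $Q$ and rule out the remaining configurations.

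For the injectivity of $V_{w'}\to G_P$, the strategy is to exhibit, case by case, a single vertex group of $G_P$ that already contains $V_{w'}$. Using the two intersection identities of Lemma~\ref{Lemma: Key lemma}(a) iteratively -- together with the chain $U_{w_T}\leq U_{w_T u}, U_{w_T v}\leq V_{w'}\leq U_{w'}$ and the natural inclusions $U_w\leq U_{ws}$ when $\ell(ws)=\ell(w)+1$ -- one verifies that $V_{w'}$ sits either inside a $V$-vertex group $V_y$ (when $V_{w'}$ matches the pattern $V_{w\cdot r_{\{s,t\}}}$ for $w$ one of the intermediate elements indexing a $V$-vertex group of $G_P$, up to interchanging $s\leftrightarrow t$) or inside a $U$-vertex group $U_x$ with $w'\prec x$. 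Once a containing vertex group is identified, the injectivity $V_{w'}\to G_P$ reduces to the injectivity of the chosen vertex group into $G_P$, which is Proposition~\ref{treeproducts}.

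The main obstacle I anticipate is the bookkeeping in these case analyses, especially when $P \in \mathcal{T}_{i,2}$: for each combinatorial configuration of $T$ relative to the two residues composing $P$, one must match $V_{w'}$ against the long vertex-group list defining $G_P$ and verify the intermediate subgroup chain by repeatedly applying Lemma~\ref{Lemma: Key lemma} together with Remark~\ref{Remark: Product mapping is a bijection}. The appendix diagrams promised in Section~\ref{Section: Locally Weyl-invariant Commutator blueprints} should make this matching visually transparent, but writing it out combinatorially will be the main technical burden.
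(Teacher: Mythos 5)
There is a genuine gap: your plan never addresses the central obstruction that certain positions of $T$ relative to $P$ cannot occur, and the tool needed to exclude them is absent from your outline. The paper's proof begins, as you do, with Lemma~\ref{Lemma: unique P} applied to $w_T u$; but before the case analysis it makes the key observation that the root $\beta$ separating $w_T$ from $w_T v$ cannot lie in $\hat{\Phi}_Z$ for any $Z \in \mathcal{T}_j$ with $j>i$, because Lemma~\ref{Lemma: head residues non-simple roots} would then give $C_{i+1}\subseteq C_j \subseteq\beta$ while $w_T v \in C_{i+1}\setminus\beta$. In the ensuing case split over the possible values of $w_T u$ inside $C(P)$, several boundary positions (e.g.\ $w_T u = w_P st r_{\{r,s\}}$, $w_P r_{\{s,t\}}rtr$, $w_P stsrtr$, $w_P strsr$) are eliminated precisely by exhibiting a residue $Z$ with $\beta \in \hat{\Phi}_Z$, and only for the remaining positions does $V_{w'}$ (or a $U_{w'}$ containing it) actually occur as a vertex group of $G_P$, at which point Proposition~\ref{treeproducts} gives injectivity.

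Your plan --- dominate $w_T v$ under $\prec$ by a vertex-group index of $G_P$, then land $V_{w'}$ inside a single vertex group via Lemma~\ref{Lemma: Key lemma} and Remark~\ref{Remark: Product mapping is a bijection} --- cannot close at the boundary positions: there $w_T v$ does not lie in $C'(P)$ on the face of it, and there is no containing vertex group to embed into. One must prove the whole configuration impossible, and that is what the $\hat{\Phi}_Z$ argument accomplishes. None of Lemmas~\ref{mingallinrep}, \ref{Lemma: -epsilonP contained in epsilonQ}, \ref{Lemma: Subset contained in certain root}, or \ref{Lemma: Key lemma} substitutes for Lemma~\ref{Lemma: head residues non-simple roots} here; without that exclusion step, the case analysis has no way to discharge those positions.
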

\begin{proof}
	As $w' \in D_{i+1} \backslash D_i$, we have $\{w_T u, w_T v\} \subseteq C_{i+1}$ and $\{w_T u, w_T v\} \not\subseteq C_i$. Without loss of generality we assume $w_T u \notin C_i$. Using Lemma \ref{Lemma: unique P}, we obtain a unique $P \in \mathcal{T}_i$ with $w_T u \in C(P) \backslash C_i$. Let $\beta \in \Phi_+$ be the root with $\{ w_T, w_T v \} \in \partial \beta$. Assume that there exists $i<j \in \NN$ and $Z \in \mathcal{T}_j$ with $\beta \in \hat{\Phi}_Z$. Then Lemma~\ref{Lemma: head residues non-simple roots} implies $C_{i+1} \subseteq C_j \subseteq \beta$. As $w_T v \in C_{i+1}$ and $w_T v \notin \beta$, this yields a contradiction. Thus we have $\beta \notin \hat{\Phi}_Z$ for any $Z \in \mathcal{T}_j$ with $i < j \in \NN$. We distinguish the following cases:
	\begin{enumerate}[label=$P \in \mathcal{T}_{i, \arabic*}$]
		\item Suppose that $P$ is of type $\{s, t\}$. We first consider the following cases:
		\[ w_T u \in \{ w_P strs, w_P stsrs, w_P stsr, w_P stsrt, w_P r_{\{s, t\}} rt \}  \]
		Note that we have $\{ w_T u, w_T v \} \subseteq C'(P)$ in all cases. Then it follows from Proposition \ref{treeproducts} that either $V_{w'}$ is a vertex group of $G_P$ or else $U_{w'}$ is a vertex group of $G_P$ which contains $V_{w'}$ as a subgroup. Now we consider the following remaining cases:
		\[ w_T u \in \{ w_P st rsr, w_P st r_{\{r, s\}}, w_P stsrtr, w_P r_{\{s, t\}} rtr, w_P r_{\{s, t\}}r \} \]
		The symmetric case (interchanging $s$ and $t$) follows similarly. If $w_T u = w_P str_{\{r, s\}}$, then $\beta \in \hat{\Phi}_Z$ for $Z = R_{\{r, s\}}(w_P st)$. If $w_T u = w_P r_{\{s, t\}} rtr$, then $\beta \in \hat{\Phi}_Z$ for $Z = R_{\{r, t\}}(w_P sts)$. If $w_T u = w_P stsrtr$, then $\beta \in \hat{\Phi}_Z$ for $Z = R_{\{r, s\}}(w_P st)$. If $w_T u = w_P strsr$, then $\beta \in \hat{\Phi}_Z$, where $Z = R_{\{r, t\}}(w_P s)$. Note that $w_T \neq w_P r_{\{s, t\}}$.
		
		\item Suppose $P = \{R, R'\}$, where $R$ is of type $\{r, s\}$ and $R'$ is of type $\{r, t\}$. Using exactly the same arguments, the claim follows as in the case $P \in \mathcal{T}_{i, 1}$. \qedhere
	\end{enumerate}
\end{proof}

\begin{proposition}\label{Giinjective}
	Assume that $G_i$ is natural for some $i \in \NN$. Then $G_{i+1} \cong \star_{G_i} B_P$, where $P$ runs over $\mathcal{T}_i$. In particular, the mappings $G_i \to G_{i+1}$ and $B_P \to G_{i+1}$ are injective for each $P \in \mathcal{T}_i$.
\end{proposition}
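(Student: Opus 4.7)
The plan is to construct mutually inverse homomorphisms $\Psi \colon \star_{G_i} B_P \to G_{i+1}$ and $\Phi \colon G_{i+1} \to \star_{G_i} B_P$ using the universal properties of both sides; the injectivity statements will then follow from Proposition \ref{treeproducts} applied to the star of groups.

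For $\Psi$, the universal property of $\star_{G_i} B_P$ reduces the task to producing, for each $P \in \mathcal{T}_i$, a homomorphism $B_P \to G_{i+1}$ whose restriction to $G_i$ is the canonical map $G_i \to G_{i+1}$. Since $B_P = G_i \star_{H_P} G_P$, this amounts to providing a map $G_P \to G_{i+1}$ that agrees with $G_i \to G_{i+1}$ on $H_P$. Every vertex group of $G_P$ is of the form $U_v$ with $v \in C_{i+1}$, or $V_{v'}$ with $v' \in D_{i+1}$, by Lemma \ref{Lemma: C'(P) contained in C_i+1}, so the canonical homomorphisms $U_v \to G_{i+1}$ and $V_{v'} \to G_{i+1}$ are available; the amalgamating identifications in $G_P$ are among the inclusions $U_v \to U_{vs}$ and $U_{w_R s} \to V_{w_R r_{\{s,t\}}}$ built into the definition of $G_{i+1}$, so they are respected. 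Agreement on $H_P$ is automatic because both composites act as the identity on every generator $x_\alpha$.

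For $\Phi$, the universal property of the direct limit defining $G_{i+1}$ requires compatible maps out of every $U_w$ with $w \in C_{i+1}$ and every $V_{w'}$ with $w' \in D_{i+1}$. If $w \in C_i$ (resp.\ $w' \in D_i$), I use the canonical $U_w \to G_i \to \star_{G_i} B_P$ (resp.\ $V_{w'} \to G_i \to \star_{G_i} B_P$). If $w \in C_{i+1} \setminus C_i$, Lemma \ref{Lemma: unique P} singles out a unique $P \in \mathcal{T}_i$ with $w \in C(P)$, and Lemma \ref{Lemma: Uw to GP} provides an injective map $U_w \to G_P$, which I postcompose with $G_P \to B_P \to \star_{G_i} B_P$. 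The case $w' \in D_{i+1} \setminus D_i$ is handled analogously via Lemma \ref{Lemma: injective homomorphism V to GP}.

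The main obstacle is checking that $\Phi$ respects the structural inclusions $U_w \to U_{ws}$ (for $\ell(ws) = \ell(w)+1$) and $U_{w_R s} \to V_{w_R r_{\{s,t\}}}$. The cases where both endpoints lie simultaneously in $C_i$ (resp.\ $D_i$), or where both lie in $C(P)$ for the same $P$, are immediate from the corresponding compatibility inside $G_i$ or inside $G_P$. The only delicate situation is the mixed one, in which the smaller group lies in $C_i$ while the larger one lies in $C(P) \setminus C_i$; here the two composite paths into $B_P = G_i \star_{H_P} G_P$ must coincide, which follows once one traces that the smaller root group, viewed inside $G_P$, actually sits in the image of $H_P$. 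Naturality of $G_i$ together with the injectivity of $H_P \to G_P$ from Lemmas \ref{HRtoGRinjective} and \ref{HRR'toGRR'injective} then ensures that this shared image is identified in $B_P$, so the two paths agree. Once $\Phi$ and $\Psi$ are defined, both compositions fix every generator $x_\alpha$, so by Remark \ref{Remark: G_i is generated by x_alpha} they are mutually inverse isomorphisms, and the injectivity of $G_i \to G_{i+1}$ and of each $B_P \to G_{i+1}$ follows directly from Proposition \ref{treeproducts}.
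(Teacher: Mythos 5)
Your overall plan matches the paper's proof almost exactly: construct $\Psi\colon \star_{G_i} B_P \to G_{i+1}$ and $\Phi\colon G_{i+1} \to \star_{G_i} B_P$ via the universal properties, verify both are defined using Lemma~\ref{Lemma: C'(P) contained in C_i+1}, Lemma~\ref{Lemma: unique P}, Lemma~\ref{Lemma: Uw to GP} and Lemma~\ref{Lemma: injective homomorphism V to GP}, check they fix every $x_\alpha$, and conclude injectivity from Proposition~\ref{treeproducts}. You also correctly isolate the delicate point: well-definedness of $\Phi$ on the ``mixed'' inclusions where $w\in C_i$ but $ws\in C(P)\setminus C_i$.

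However, your resolution of that delicate point hides a genuine gap. You assert that ``the smaller root group, viewed inside $G_P$, actually sits in the image of $H_P$'' and treat this as a routine tracing exercise. In fact this is precisely the content of Lemma~\ref{Lemma: head residues non-simple roots}: one needs that every root $\alpha$ with $C_i \not\subseteq \alpha$ that occurs as a generator of $G_P$ lies outside $\hat{\Phi}_P$ (equivalently, that every $\beta\in\hat{\Phi}_P$ satisfies $C_i\subseteq\beta$), so that the corresponding $u_\alpha$ is a generator of $H_P$ and hence of both sides of the amalgam $B_P = G_i \star_{H_P} G_P$. This lemma is proved in the paper by a lengthy induction on $i$ using Lemmas~\ref{Lemma: -epsilonP contained in epsilonQ}, \ref{Lemma: -epsilonP contained in epsilonQ for i and i+1} and~\ref{Lemma: auxiliary result}, and it is the single most substantial technical ingredient making $\Phi$ well-defined; it cannot be folded into a phrase like ``once one traces.'' Relatedly, your justification misattributes the reason the two paths agree: naturality of $G_i$ and injectivity of $H_P\to G_P$ are needed so that $G_i$ and $G_P$ embed in $B_P$, but the actual identification of $x_\alpha$ with $x_{\alpha,P}$ is automatic from the definition of the amalgamated product once one knows $u_\alpha$ is a generator of $H_P$ — which is exactly what the cited lemma supplies. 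Finally, the paper's proof also establishes the auxiliary claim that $\hat{\Phi}_P \cap \hat{\Phi}_Q = \emptyset$ for $P\neq Q$ in $\mathcal{T}_i$; this is needed to see that the extra generators $x_{\beta,P}$ of $\star_{G_i} B_P$ are unambiguous when checking $\Phi\circ\Psi = \mathrm{id}$, a verification your sketch does not address.
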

\begin{proof}
	Recall from Definition \ref{Definition: BP} that $B_P = G_i \star_{H_P} G_P$ for each $P \in \mathcal{T}_i$ and note that $G_i, G_P$ are subgroups of $B_P$ by Proposition \ref{treeproducts}. The second part follows from Proposition \ref{treeproducts} and the first part. We let $x_{\alpha}$ be the generators of $G_i$, where $C_i \not\subseteq \alpha \in \Phi_+$, and we let $x_{\alpha, P}$ be the generators of $G_P$, where $C''(P) := \{ w\in W \mid U_w \text{ vertex group of } G_P \} \not\subseteq \alpha \in \Phi_+$. We define $H_i := \star_{G_i} B_P$, where $P$ runs over $\mathcal{T}_i$. Since we have canonical homomorphisms $G_i, G_P \to G_{i+1}$ extending $x_{\alpha} \mapsto x_{\alpha}$ and $x_{\alpha, P} \to x_{\alpha}$ (cf.\ Lemma \ref{Lemma: C'(P) contained in C_i+1}) which agree on $H_P$ (cf.\ Remark \ref{Remark: G_i is generated by x_alpha}), we obtain a unique homomorphism $B_P \to G_{i+1}$. Moreover, we obtain a (surjective) homomorphism $H_i \to G_{i+1}$. Now we will construct a homomorphism $G_{i+1} \to H_i$. Before we do that, we consider the generators of $H_i$.
	
	Let $\alpha\in \Phi_+$ and suppose $P \in \mathcal{T}_i$ with $C''(P) \not\subseteq \alpha$ and $C_i \not\subseteq \alpha$. Then $x_{\alpha}$ is a generator of $G_i$ and $x_{\alpha, P}$ is a generator of $G_P$. Lemma \ref{Lemma: head residues non-simple roots} implies that $\alpha \notin \hat{\Phi}_P$ and by definition of $H_P$ we have $x_{\alpha} = x_{\alpha, P}$ in $B_P$. Thus $H_i$ is generated by the set $\{ x_{\alpha}, x_{\beta, P} \mid C_i \not\subseteq \alpha \in \Phi_+, P \in \mathcal{T}_i, \beta \in \hat{\Phi}_P \}$.
	
	\emph{Claim: If $P, Q \in \mathcal{T}_i$ and $\beta \in \hat{\Phi}_P \cap \hat{\Phi}_Q$, then $P = Q$.}
	
	Suppose $P \neq Q$ and $\alpha \in \hat{\Phi}_P$ and $\beta \in \hat{\Phi}_Q$. For $i = 0$ one can show $(-\beta) \subsetneq \alpha$. If $i>0$, then $(-\beta) \subseteq \alpha$ follows essentially from Lemma~\ref{Lemma: -epsilonP contained in epsilonQ}.
	
	We need to construct for each $w\in W$ a homomorphism $U_w \to H_i$. We start by defining a mapping from the generators $x_{\alpha, w}$ of $U_w$ to $H_i$. Let $\alpha \in \Phi_+$ be a root and let $w\in C_{i+1}$ with $w\notin \alpha$. If $C_i \not\subseteq \alpha$, we define $x_{\alpha, w} \mapsto x_{\alpha}$. If $C_i \subseteq \alpha$, then $w\notin C_i$ and there exists a unique $P \in \mathcal{T}_i$ with $w\in C(P)$ by Lemma \ref{Lemma: unique P}. We define $x_{\alpha, w} \mapsto x_{\alpha, P}$.
	
	If $w\in C_i$, then we have a canonical homomorphism $U_w \to G_i \to H_i$. Thus we assume $w\notin C_i$. As before, there exists a unique $P \in \mathcal{T}_i$ with $w\in C(P)$. We have already shown that for each $\alpha \in \Phi_+$ with $w\notin \alpha$ and $C_i \not\subseteq \alpha$, we have $x_{\alpha} = x_{\alpha, P}$ in $B_P$. Thus these mappings extend to homomorphisms $U_w \to G_P \to H_i$. Now suppose $w' = w_R r_{\{s, t\}} \in D_{i+1}$ for some $R$ of type $\{s, t\}$. We have to show that the homomorphisms $U_{w_Rs}, U_{w_Rt} \to H_i$ extend to a homomorphism $V_{w'} \to H_i$. If $w' \in D_i$, this holds by definition of $G_i$. If $w' \notin D_i$, then Lemma \ref{Lemma: injective homomorphism V to GP} implies that there exists a unique $P \in \mathcal{T}_i$ with $\{ w_R s, w_T t \} \subseteq C'(P)$ and $V_{w'} \to G_P$ is injective. In particular, $V_{w'} \to H_i$ is an injective homomorphism. Moreover, the following diagrams commute, where $R$ is a residue of type $\{s, t\}$:
	\begin{center}
		\begin{tikzcd}
			U_w \arrow[r] \arrow[rd] & U_{ws} \arrow[d] && U_{w_R s} \arrow[r] \arrow[rd] & V_{w_R r_{\{s, t\}}} \arrow[d]\\
			& H_i &&& H_i
		\end{tikzcd}
	\end{center}
	The universal property of direct limits yields a homomorphism $G_{i+1} \to H_i$. It is clear that the concatenations of the two homomorphisms $G_{i+1} \to H_i$ and $H_i \to G_{i+1}$ map each $x_{\alpha}$ to itself. Thus both concatenations are equal to the identities and both homomorphisms are isomorphisms.
\end{proof}

\section{Main result}\label{Section: Main result}

In this section we let $(W, S)$ be of type $(4, 4, 4)$ and $\mathcal{M} = \left( M_{\alpha, \beta}^G \right)_{(G, \alpha, \beta) \in \mathcal{I}}$ be a locally Weyl-invariant commutator blueprint of type $(4, 4, 4)$. Moreover, we let $S = \{r, s, t\}$. 

\begin{lemma}\label{Lemma: G0 natural}
	The group $G_0$ is natural.
\end{lemma}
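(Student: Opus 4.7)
The plan is to exploit the existence of the canonical homomorphism $\phi: G_0 \to \mathcal{G}$ (where $\mathcal{G}$ is the Kac-Moody group of type $(4,4,4)$ over $\FF_2$), whose existence and canonical nature is established in \cite{BiRGDandTreeproducts}. The key input from that reference is that the relations defining $G_0$ (namely those coming from $\mathcal{M}$ on minimal galleries $G \in \mathrm{Min}(w)$ with $w \in C_0$) are all consequences of the relations satisfied by the corresponding root group elements $u_\alpha^{\mathcal{G}}$ in $\mathcal{G}$; this ultimately reduces to verifying that the two locally Weyl-invariant blueprints $\mathcal{M}$ and $\mathcal{M}_{\mathcal{G}}$ are forced to agree on the (small) combinatorial data indexing $C_0$, via (CB2) and the locally Weyl-invariance axiom. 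We take this external input for granted and assume that $\phi$ is given, sending each generator $x_\alpha$ of $G_0$ to $u_\alpha^{\mathcal{G}}$.

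For (N1), let $w \in C_0$ and consider the composition $U_w \to G_0 \xrightarrow{\phi} \mathcal{G}$. The image is the subgroup of $\mathcal{G}$ generated by $\{u_\alpha^{\mathcal{G}} : \alpha \in \Phi(w)\}$, i.e., the root subgroup $U_w^{\mathcal{G}}$. Since $\mathcal{M}_{\mathcal{G}}$ is an integrable (hence a valid) commutator blueprint of type $(4,4,4)$ by Example~\ref{expintrgrable}, Remark~\ref{Remark: Product mapping is a bijection} applied to $\mathcal{M}_{\mathcal{G}}$ gives $|U_w^{\mathcal{G}}| = 2^{\ell(w)}$; on the other hand, $|U_w| = 2^{\ell(w)}$ by (CB3). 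Thus the surjection $U_w \to U_w^{\mathcal{G}}$ is bijective, which forces $U_w \to G_0$ to be injective. The case $w' = w_R r_{\{s,t\}} \in D_0$ is similar: the image of $V_{w'}$ under $\phi$ is $V_{w'}^{\mathcal{G}} = \langle U_{w_R s}^{\mathcal{G}}, U_{w_R t}^{\mathcal{G}}\rangle$, which has index $2$ in $U_{w'}^{\mathcal{G}}$ by the same local Weyl-invariance argument that shows $V_{w'}$ has index $2$ in $U_{w'}$. Since both $V_{w'}$ and $V_{w'}^{\mathcal{G}}$ have the same cardinality, the composition $V_{w'} \to G_0 \to \mathcal{G}$ is injective, and hence so is $V_{w'} \to G_0$.

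For (N2), fix $P \in \mathcal{T}_0 = \mathcal{T}_{0,1}$ and form the analogue $H_P^{\mathcal{G}}$ inside $\mathcal{G}$, obtained by replacing each vertex group $U_w$ (resp.\ $V_{w'}$) in the definition of $H_P$ by $U_w^{\mathcal{G}}$ (resp.\ $V_{w'}^{\mathcal{G}}$). Since $\mathcal{G}$ arises from an RGD-system over $\FF_2$ (in particular, from a Weyl-invariant blueprint), the tree product $H_P^{\mathcal{G}}$ embeds into $\mathcal{G}$ by the argument of Lemma~\ref{HRtoGRinjective} applied to $\mathcal{M}_{\mathcal{G}}$; note that the proof of Lemma~\ref{HRtoGRinjective} only uses local Weyl-invariance together with Lemma~\ref{Lemma: Key lemma}. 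On the other hand, (N1) together with the cardinality identities above shows that the canonical homomorphism $H_P \to H_P^{\mathcal{G}}$ (sending $x_\alpha$ to $u_\alpha^{\mathcal{G}}$) is an isomorphism on each vertex and edge group of the tree, so by the universal property of tree products, it is an isomorphism of tree products. Composing with the embedding $H_P^{\mathcal{G}} \hookrightarrow \mathcal{G}$ factoring through $G_0$ shows that $H_P \to G_0$ is injective.

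The main obstacle is the external input: checking that $\phi: G_0 \to \mathcal{G}$ is well-defined requires verifying that every relation in $G_0$ coming from the blueprint $\mathcal{M}$ also holds in $\mathcal{G}$. Because $C_0$ involves elements up to length five, this is not immediate from (CB2) alone — one must use local Weyl-invariance to propagate the forced relations from rank-$2$ residues through all the minimal galleries relevant to $C_0$. This verification is precisely the content of the construction in \cite{BiRGDandTreeproducts}, on which we rely.
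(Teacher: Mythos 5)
Your overall strategy — exploiting the existence of the Kac-Moody group $\mathcal{G}$ of type $(4,4,4)$ over $\FF_2$ and the canonical homomorphism $\phi: G_0 \to \mathcal{G}$ — is indeed the same one the paper uses (as the introductory remark explicitly flags); the paper's proof simply cites \cite[Lemma~$4.21$]{BiRGDandTreeproducts} for (N1) and \cite[Theorem~$4.27$]{BiRGDandTreeproducts} for (N2) without further detail, and notes $\mathcal{T}_0 = \mathcal{T}_{0,1}$. Your reconstruction of (N1) is sound given the existence of $\phi$: the cardinality comparison $|U_w| = 2^{\ell(w)} = |U_w^{\mathcal{G}}|$ (from (CB3) on one side and the unique product decomposition of $U_w^{\mathcal{G}}$ along a minimal gallery, \cite[Corollary~$8.34(1)$]{AB08}, on the other) forces $U_w \to G_0$ to be injective, and the index-$2$ argument for $V_{w'}$ is the right idea.

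The sketch of (N2) has a genuine gap. You claim that the tree product $H_P^{\mathcal{G}}$ embeds into $\mathcal{G}$ ``by the argument of Lemma~\ref{HRtoGRinjective} applied to $\mathcal{M}_{\mathcal{G}}$.'' But Lemma~\ref{HRtoGRinjective} is a statement comparing two \emph{abstract} tree products: it shows $H_R \to G_R$ is injective, where $G_R$ is itself an abstractly defined amalgam. It says nothing about the map from $H_R$ (or $G_R$) into $\mathcal{G}$. The embedding $H_P^{\mathcal{G}} \hookrightarrow \mathcal{G}$ is precisely the nontrivial content of the base case: even though each vertex group of $H_P^{\mathcal{G}}$ embeds into $\mathcal{G}$ (by the RGD axioms), it does not automatically follow that the amalgamated product they generate in $\mathcal{G}$ has the prescribed tree-product structure — the generators could satisfy additional relations inside $\mathcal{G}$ that are not consequences of the amalgam. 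Establishing that this does not happen (i.e., that $\mathcal{G}$ is ``big enough'' to witness the freeness of the amalgam) is the real work, and it is exactly what \cite[Theorem~$4.27$]{BiRGDandTreeproducts} must accomplish. Your proof defers to that reference at the level of assuming $\phi$ exists, but then reintroduces the hard step as if it were a formal consequence of a lemma that actually proves something weaker. You would need to either directly establish the embedding $H_P^{\mathcal{G}} \hookrightarrow \mathcal{G}$ (which requires nontrivial structural input about the RGD-system, such as a Levi-type or building-theoretic decomposition that realizes the amalgam) or explicitly defer that step, too, to \cite{BiRGDandTreeproducts}.
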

\begin{proof}
	The group $G_0$ satisfies (N$1$) by \cite[Lemma~$4.21$]{BiRGDandTreeproducts}. Note that $\mathcal{T}_0 = \mathcal{T}_{0, 1}$. Thus $G_0$ satisfies (N$2$) by \cite[Theorem~$4.27$]{BiRGDandTreeproducts}. In particular, $G_0$ is natural.
\end{proof}

\begin{lemma}\label{Lemma: G_i star V_T O_T to G_i+1 injective}
	Suppose $i \in \NN$ such that $G_i$ is natural. Let $R \in \mathcal{T}_{i+1, 1}$ be of type $\{s, t\}$. Let $T = R_{\{r, t\}}(w_R)$ and suppose that $\ell(w_R rt) = \ell(w_R)$. Let $Z = R_{\{r, s\}}(w_R t)$. Then $Z \in \mathcal{T}_{i+2, 1}$ and the canonical homomorphism $G_i \star_{V_Z} O_Z \to G_{i+1}$ is injective.
\end{lemma}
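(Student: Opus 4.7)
The plan is to identify a residue or pair $P \in \mathcal{T}_i$ for which $H_P \star_{V_Z} O_Z$ coincides (up to a permutation of $\{r, s, t\}$) with one of the tree products $J_{R, t}$ or $J_{(R, R')}$ already shown to embed in $G_P$, and then to combine this with naturality of $G_i$ and Proposition~\ref{Giinjective}.

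\emph{Verifying $Z \in \mathcal{T}_{i+2, 1}$.} Since $\ell(w_R rt) = i+1 = \ell(w_R)$, the residue $T = R_{\{r, t\}}(w_R)$ contains the two distinct elements $w_R, w_R rt$ of length $i+1$; the length distribution $(a, a+1, a+1, a+2, a+2, a+3, a+3, a+4)$ of a rank-$2$ residue in type $(4, 4, 4)$ therefore forces $w_T = w_R r$ with $\ell(w_T) = i$. Because $R \in \mathcal{T}_{i+1, 1}$ of type $\{s, t\}$ gives $\ell(w_R tr) = i+3 = \ell(w_R ts)$, the element $w_R t$ is the unique minimum of $Z$, so $w_Z = w_R t$ and $Z \in \mathcal{R}_{i+2}$. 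Finally, $\ell(w_Z rt) = \ell(w_R trt) = i+4$ (since $w_R trt$ is the longest element of $T$) and $\ell(w_Z st) = \ell(w_R tst) = i+4$ (reading off lengths inside $R$), so $Z \in \mathcal{T}_{i+2, 1}$.

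\emph{Choice of $P$.} By Lemma~\ref{Lemma: not both down} applied to $w_T$ with $\ell(w_T r) = i+1 = \ell(w_T t)$, together with $\ell(w_T rs) = \ell(w_R s) = i+2$, the remaining length $\ell(w_T ts) = \ell(w_R rts)$ equals $i+2$ or $i$. If $\ell(w_R rts) = i+2$ then $T \in \mathcal{T}_{i, 1}$, and we put $P := T$; otherwise $T$ is paired with $T'' := R_{\{r, s\}}(w_R rt)$ (where $w_{T''} = w_R rts$), giving $\{T, T''\} \in \mathcal{T}_{i, 2}$, and we put $P := \{T, T''\}$. In both cases, applying the appropriately permuted version of Lemma~\ref{Lemma: J_R,t cong H_R star V_T O_T} (respectively Lemma~\ref{Lemma: J_R,R' cong H_R,R' star V_Z O_Z}) with $T$ playing the role of the residue of type $\{s, t\}$ (respectively of $R'$), the derived residue becomes $R_{\{r, s\}}(w_T rt) = R_{\{r, s\}}(w_R t) = Z$. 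Writing $J$ for the corresponding tree product ($J_{T, r}$ or $J_{(T, T'')}$), we obtain an isomorphism $J \cong H_P \star_{V_Z} O_Z$ together with injections $V_Z \hookrightarrow H_P$ and $J \hookrightarrow G_P$ provided by Lemma~\ref{HRtoGRinjective} or Lemma~\ref{HRR'toGRR'injective}.

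\emph{Amalgamation.} Naturality of $G_i$ gives $H_P \hookrightarrow G_i$, whence $V_Z \hookrightarrow G_i$. Using Lemma~\ref{folding} and Proposition~\ref{treeproducts}, we rewrite
\[
    G_i \star_{V_Z} O_Z \;\cong\; G_i \star_{H_P} \bigl( H_P \star_{V_Z} O_Z \bigr) \;\cong\; G_i \star_{H_P} J,
\]
and Proposition~\ref{treeofgroupsinjective} applied to $J \hookrightarrow G_P$ produces an injection $G_i \star_{H_P} J \hookrightarrow G_i \star_{H_P} G_P = B_P$. Proposition~\ref{Giinjective} finally supplies $B_P \hookrightarrow G_{i+1}$. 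Composing these injections yields the claimed embedding $G_i \star_{V_Z} O_Z \hookrightarrow G_{i+1}$. The main technical point is the case split and the bookkeeping needed to see that $Z$ is really the residue derived from either the single residue $T$ or the pair $\{T, T''\}$ via the structure lemmas; once this is established, the remainder is a formal chain of tree-product manipulations.
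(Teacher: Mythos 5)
Your proof is correct and follows essentially the same route as the paper: the same case split on whether $T \in \mathcal{T}_{i,1}$ or $T$ is paired in $\mathcal{T}_{i,2}$, the same use of the structure lemmas to identify $H_P \star_{V_Z} O_Z$ with $J_{T,r}$ or $J_{(T,T'')}$, the same folding of $V_Z \leq H_P \leq G_i$ to rewrite $G_i \star_{V_Z} O_Z \cong G_i \star_{H_P} J$, and the same appeal to naturality plus Proposition~\ref{Giinjective} to land in $G_{i+1}$. The only cosmetic difference is that you embed $G_i \star_{H_P} J$ into $B_P$ directly via Proposition~\ref{treeofgroupsinjective}, whereas the paper first rewrites $B_P \cong (G_i \star_{V_Z} O_Z) \star_J G_P$ and then cites Proposition~\ref{treeproducts}; these two bookkeeping styles are equivalent.
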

\begin{proof}
	Note that $Z \in \mathcal{T}_{i+2, 1}$ and, as $i \in \NN$, we have $\ell(w_R r) = \ell(w_R) -1$. We distinguish the following two cases:
	\begin{enumerate}[label=(\roman*)]
		\item $T \in \mathcal{T}_{i, 1}$: As $G_i$ is natural, we deduce from Proposition \ref{Giinjective} that $B_T \to G_{i+1}$ is injective. Using Proposition \ref{treeproducts}, Remark \ref{Remark: isomorphism preserves amalgamated product}, Lemma \ref{folding}, Lemma \ref{HRtoGRinjective} and Lemma \ref{Lemma: J_R,t cong H_R star V_T O_T} we infer
		\allowdisplaybreaks
		\begin{align*}
			B_T &= G_i \star_{H_T} G_T \\
			&\cong G_i \star_{H_T} J_{T, r} \star_{J_{T, r}} G_T \\
			&\cong (G_i \star_{H_T} J_{T, r}) \star_{J_{T, r}} G_T \\
			&\cong \left( G_i \star_{H_T} H_T \star_{V_Z} O_Z \right) \star_{J_{T, r}} G_T \\
			&\cong (G_i \star_{V_Z} O_Z) \star_{J_{T, r}} G_T
		\end{align*}
		In particular, each of the mappings $G_i \star_{V_Z} O_Z \to B_T \to G_{i+1}$ is injective.
		
		\item $T \notin \mathcal{T}_{i, 1}$: Then there exists a unique $P_{T} \in \mathcal{T}_{i, 2}$ with $T \in P_{T}$. Suppose $P_{T} = \{ T, T'' \}$. As $G_i$ is natural, we deduce from Proposition \ref{Giinjective} that $B_{P_{T}} \to G_{i+1}$ is injective. Using Proposition \ref{treeproducts}, Remark \ref{Remark: isomorphism preserves amalgamated product}, Lemma \ref{folding}, Lemma \ref{HRR'toGRR'injective} and Lemma \ref{Lemma: J_R,R' cong H_R,R' star V_Z O_Z} we infer
		\allowdisplaybreaks
		\begin{align*}
			B_{P_T} &= G_i \star_{H_{\{T, T''\}}} G_{\{T, T''\}} \\
			&\cong G_i \star_{H_{\{T, T''\}}} J_{(T, T'')} \star_{J_{(T, T'')}} G_{\{T, T''\}} \\
			&\cong \left( G_i \star_{H_{\{T, T''\}}} J_{(T, T'')} \right) \star_{J_{(T, T'')}} G_{\{T, T''\}} \\
			&\cong \left( G_i \star_{H_{\{T, T''\}}} H_{\{T, T''\}} \star_{V_Z} O_Z \right) \star_{J_{(T, T'')}} G_{\{T, T''\}} \\
			&\cong \left( G_i \star_{V_Z} O_Z \right) \star_{J_{(T, T'')}} G_{\{T, T''\}}
		\end{align*}
		In particular, each of the mappings $G_i \star_{V_Z} O_Z \to B_{P_{T}} \to G_{i+1}$ is injective. \qedhere
	\end{enumerate}
\end{proof}

\begin{lemma}[{\cite[Remark~$4.28$ and Corollary~$4.29$]{BiRGDandTreeproducts}}]\label{Lemma: H_R to G_0 star OT injective}
	Define $R = R_{\{s, t\}}(r)$, $Z = R_{\{r, t\}}(rs)$ and $Z' = R_{\{r, s\}}(rt)$. Then $V_Z, V_{Z'} \to G_0$ are injective. Moreover, the canonical homomorphism $H_R \to \left( G_0 \star_{V_Z} O_Z \right) \star_{G_0} \left( G_0 \star_{V_{Z'}} O_{Z'} \right)$ is injective.
\end{lemma}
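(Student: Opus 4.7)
The plan divides into two parts, matching the two assertions.

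For the first part, I would show that $V_Z$ and $V_{Z'}$ already belong to the inductive system defining $G_0$. Indeed $w_Z = rs$ and $w_{Z'} = rt$, and one checks directly from the explicit description of $C_0$ that $w_Z r = rsr$, $w_Z t = rst$, $w_{Z'} r = rtr$ and $w_{Z'} s = rts$ all lie in $C_0$ (e.g.\ $rsr \in C(r_{\{r,s\}})$ after relabelling, while $rst \in C(rr_{\{s,t\}})$). Consequently $w_Z r_{\{r,t\}} \in D_0$ and $w_{Z'} r_{\{r,s\}} \in D_0$, so $V_Z$ and $V_{Z'}$ appear as vertex groups of the inductive system, and the asserted injectivity into $G_0$ is part of property~(N$1$), which holds by Lemma~\ref{Lemma: G0 natural}.

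For the second part, the plan is to produce an explicit chain of tree-product isomorphisms, in the style of Lemma~\ref{Lemma: J_R,t cong H_R star V_T O_T}, that exhibits $H_R$ as an amalgamated factor of the target. First, by Proposition~\ref{treeproducts}, the double amalgam collapses to the three-vertex chain $O_Z \star_{V_Z} G_0 \star_{V_{Z'}} O_{Z'}$. Expanding each $O_Z$ and $O_{Z'}$ into its defining three-vertex tree product and attaching the $V_Z$- and $V_{Z'}$-amalgamations at the appropriate middle vertex groups $U_{rs r_{\{r,t\}}}$ and $U_{rt r_{\{r,s\}}}$ yields a seven-vertex tree of groups whose tree product is canonically isomorphic to the target. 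Inside this expanded picture, four of the five vertex groups of $H_R$ ($U_{rs r_{\{r,t\}}}$ and $V_{rst r_{\{r,s\}}}$ from $O_Z$; $V_{rts r_{\{r,t\}}}$ and $U_{rt r_{\{r,s\}}}$ from $O_{Z'}$) appear directly, and the middle vertex group $U_{r r_{\{s,t\}}}$ of $H_R$ sits inside the central $G_0$ because $r r_{\{s,t\}} \in C_0$.

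Iterative application of Lemma~\ref{folding} (to absorb the $V_Z$- and $V_{Z'}$-amalgamations into suitable intermediate vertex groups), together with Remark~\ref{Remark: isomorphism preserves amalgamated product} (to transport amalgamations across isomorphisms) and Proposition~\ref{treeproducts} (to refactor), should then let one rewrite the expanded amalgam in a form $H_R \star_{X} Y$ with appropriate $X, Y$, from which the injectivity of $H_R$ into the target is immediate by Proposition~\ref{treeproducts}.

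The main obstacle lies in the edge-group bookkeeping along this chain of isomorphisms: at every folding or refactoring step, one must identify the intermediate edge groups with root-group-generated subgroups in order to match them to the implicit edge groups of $H_R$. Concretely, the crucial verifications are intersection identities of the form $V_Z \cap U_{r r_{\{s,t\}}} = U_{rst}$ and the symmetric identity for $V_{Z'}$, together with the analogous intersections inside the sub-tree products of $O_Z$ and $O_{Z'}$; all of them reduce, via Corollaries~\ref{AcapBisC} and~\ref{intersectionwithasubtree}, to direct applications of Lemma~\ref{Lemma: Key lemma}. Once these identifications are in place, the chain of isomorphisms closes and the injectivity of the canonical homomorphism $H_R \to (G_0 \star_{V_Z} O_Z) \star_{G_0} (G_0 \star_{V_{Z'}} O_{Z'})$ follows at once.
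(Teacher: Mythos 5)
There is a genuine gap, stemming from a confusion between two similar-looking but different groups. In the paper, for an element $w$ of the form $w_R r_{\{s,t\}}$ the group $V_{w_R r_{\{s,t\}}} = \langle U_{w_Rs}\cup U_{w_Rt}\rangle$ is a subgroup of $U_{w_R r_{\{s,t\}}}$, whereas for a \emph{residue} $R\in\mathcal{T}_{i,1}$ of type $\{s,t\}$ the group $V_R$ is the three-vertex tree product $U_{w_R sr}\,\hat\star\,V_{w_R r_{\{s,t\}}}\,\hat\star\,U_{w_R tr}$. Since $Z=R_{\{r,t\}}(rs)\in\mathcal{T}_{2,1}$, the $V_Z$ appearing in the lemma is the tree product $U_{rsrs}\,\hat\star\,V_{rsr_{\{r,t\}}}\,\hat\star\,U_{rsts}$, which strictly contains $V_{rsr_{\{r,t\}}}$. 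Your argument for the first part only shows that $V_{rsr_{\{r,t\}}}$ and $V_{rtr_{\{r,s\}}}$ occur as vertex groups of $G_0$'s inductive system, so (N$1$) gives their injectivity into $G_0$, but this says nothing about the larger tree products $V_Z,V_{Z'}$, whose embedding into $G_0$ requires an additional argument (compare the explicit use of Proposition~\ref{treeofgroupsinjective} for the analogous claim in the proof of Lemma~\ref{Lemma: J_R,R' cong H_R,R' star V_Z O_Z}).

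The same confusion breaks the second part. You propose to expand $O_Z$ into its three-vertex chain and ``attach the $V_Z$-amalgamation at the middle vertex group $U_{rsr_{\{r,t\}}}$.'' But $V_Z$ is \emph{not} a subgroup of $U_{rsr_{\{r,t\}}}$: its vertex group $U_{rsrs}$ does not embed into $U_{rsr_{\{r,t\}}}=U_{rsrtrt}$, since $rsrs\not\prec rsrtrt$. Rather, $V_Z$ embeds into $O_Z$ with its three vertex groups landing diagonally in the three vertex groups $V_{rsrr_{\{s,t\}}},U_{rsr_{\{r,t\}}},V_{rstr_{\{r,s\}}}$ of $O_Z$, so the edge to $G_0$ cannot be routed to a single vertex of the expanded $O_Z$-chain and the seven-vertex tree you describe does not exist. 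Even ignoring this and taking $V_Z=V_{rsr_{\{r,t\}}}$, the five-vertex chain of $H_R$, namely $U_{rsr_{\{r,t\}}}-V_{rstr_{\{r,s\}}}-U_{rr_{\{s,t\}}}-V_{rtsr_{\{r,t\}}}-U_{rtr_{\{r,s\}}}$, is not a path in your expanded tree: there $V_{rstr_{\{r,s\}}}$ is a leaf hanging off $U_{rsr_{\{r,t\}}}$, while $G_0$ (which must host $U_{rr_{\{s,t\}}}$) is attached to $U_{rsr_{\{r,t\}}}$ and not to $V_{rstr_{\{r,s\}}}$, so the adjacency required by Proposition~\ref{treeofgroupsinjective} fails. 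The appeal to Lemma~\ref{folding} cannot repair this, since folding only subdivides existing edges and cannot turn a star around $U_{rsr_{\{r,t\}}}$ into a chain through $V_{rstr_{\{r,s\}}}$. A correct proof needs a different decomposition of the target that places $U_{rr_{\{s,t\}}}$ (or a group containing it) adjacent to $V_{rstr_{\{r,s\}}}$ and $V_{rtsr_{\{r,t\}}}$; this is exactly what the cited Remark~4.28 and Corollary~4.29 of \cite{BiRGDandTreeproducts} supply.
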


\begin{theorem}\label{Theorem: G1 satisfies N2}
	The group $G_1$ satisfies (N$2$).
\end{theorem}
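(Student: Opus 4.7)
The plan is to deduce (N$2$) for $G_1$ from Lemma~\ref{Lemma: H_R to G_0 star OT injective} together with the star decomposition of $G_1$ furnished by Proposition~\ref{Giinjective}. First I would observe that $\mathcal{T}_1 = \mathcal{T}_{1,1}$: any $R \in \mathcal{R}_1$ has $w_R \in S$ with type $S \setminus \{w_R\}$, and since every off-diagonal Coxeter label equals $4$, the condition $\ell(w_R ab) = \ell(w_R) + 2$ for the two generators of the type is automatic. Thus it suffices to show $H_R \hookrightarrow G_1$ for each $R \in \mathcal{T}_{1,1}$; the three possible residues are handled by the identical argument after relabeling $r, s, t$, so I fix $R = R_{\{s,t\}}(r)$.

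Set $T_1 := R_{\{r,t\}}(1_W)$ and $T_2 := R_{\{r,s\}}(1_W)$, both lying in $\mathcal{T}_{0,1} = \mathcal{T}_0$. For $R$, the hypothesis $\ell(w_R rt) = \ell(w_R)$ of Lemma~\ref{Lemma: G_i star V_T O_T to G_i+1 injective} holds trivially (as $w_R = r$), and so does $\ell(w_R rs) = \ell(w_R)$. Applying that lemma with $i = 0$ once directly and once after interchanging $s$ and $t$ would yield injective homomorphisms
\begin{align*}
G_0 \star_{V_{Z'}} O_{Z'} &\hookrightarrow G_1 \quad \text{with} \quad Z' := R_{\{r,s\}}(rt), \\
G_0 \star_{V_{Z}} O_{Z} &\hookrightarrow G_1 \quad \text{with} \quad Z := R_{\{r,t\}}(rs).
\end{align*}
Reading off the isomorphism chain in the proof of that lemma, the first injection factors through the leaf $B_{T_1}$ of the star decomposition of $G_1$ as the canonical left factor of an amalgamated product decomposition of $B_{T_1}$; similarly the second factors through $B_{T_2}$.

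Since $G_0$ is natural by Lemma~\ref{Lemma: G0 natural}, Proposition~\ref{Giinjective} identifies $G_1 \cong \star_{G_0} B_P$ with $P$ running over $\mathcal{T}_0$. I would then invoke Proposition~\ref{treeofgroupsinjective} for the subtree of this star consisting of the center and the two distinct leaves $T_1, T_2$, equipped with vertex subgroups $G_0$, $G_0 \star_{V_{Z'}} O_{Z'}$, $G_0 \star_{V_Z} O_Z$ and edge groups $G_0$. Condition (ii) of that proposition reduces to the fact that $G_0$ sits inside each $G_0 \star_{V_{(\cdot)}} O_{(\cdot)}$ under the canonical inclusion supplied by Proposition~\ref{treeproducts}. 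This yields the injection
\[ (G_0 \star_{V_Z} O_Z) \star_{G_0} (G_0 \star_{V_{Z'}} O_{Z'}) \hookrightarrow G_1, \]
and composing with the embedding of $H_R$ from Lemma~\ref{Lemma: H_R to G_0 star OT injective} finishes the proof.

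The main technical step is justifying that the two applications of Lemma~\ref{Lemma: G_i star V_T O_T to G_i+1 injective} really factor through \emph{distinct} leaves $B_{T_1}, B_{T_2}$ of the star for $G_1$, since this is visible in the proof of that lemma but not stated in its conclusion. Once this is in hand, the remainder is a routine application of tree-product injectivity combined with the already-established $H_R$-embedding over $G_0$.
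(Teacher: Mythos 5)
Your proposal is correct and follows the paper's argument essentially verbatim: the same reduction $\mathcal{T}_1=\mathcal{T}_{1,1}$, the same two applications of Lemma~\ref{Lemma: G_i star V_T O_T to G_i+1 injective} (once directly and once after interchanging $s$ and $t$), the same use of Lemma~\ref{Lemma: H_R to G_0 star OT injective}, and the same tree-product injectivity reasoning (the paper factors through $B_T\star_{G_0}B_{T'}$ first, whereas you apply Proposition~\ref{treeofgroupsinjective} in one step to the star for $G_1$, but these are equivalent). The caveat you raise at the end---that the factoring through the leaves $B_{T_i}$ must be read off the \emph{proof} of Lemma~\ref{Lemma: G_i star V_T O_T to G_i+1 injective} rather than its statement---is exactly what the paper acknowledges by writing ``it follows from the proof of Lemma~\ref{Lemma: G_i star V_T O_T to G_i+1 injective}''.
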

\begin{proof}
	Note that $\mathcal{T}_{1, 2} = \emptyset$ and hence $\mathcal{T}_1 = \mathcal{T}_{1, 1}$. Thus we have to show that $H_R \to G_1$ is injective for each $R \in \mathcal{T}_{1, 1}$. Let $R \in \mathcal{T}_{1, 1}$ be of type $\{s, t\}$, i.e.\ $R = R_{\{s, t\}}(r)$. We abbreviate $Z = R_{\{r, t\}}(rs)$ and $T = R_{\{r, s\}}(1_W)$. Since $G_0$ is natural by Lemma~\ref{Lemma: G0 natural}, it follows from the proof of Lemma~\ref{Lemma: G_i star V_T O_T to G_i+1 injective} that the canonical homomorphism $G_0 \star_{V_Z} O_Z \to B_T$ is injective. Let $Z' = R_{\{r, s\}}(rt)$ and $T' = R_{\{r, t\}}(1_W)$. Again, Lemma~\ref{Lemma: G_i star V_T O_T to G_i+1 injective} implies that the homomorphism $G_0 \star_{V_{Z'}} O_{Z'} \to B_{T'}$ is injective. Now Proposition \ref{treeofgroupsinjective} together with Lemma \ref{Lemma: H_R to G_0 star OT injective} yields that 
	\[ H_R \to \left( G_0 \star_{V_Z} O_Z \right) \star_{G_0} \left( G_0 \star_{V_{Z'}} O_{Z'} \right) \to B_T \star_{G_0} B_{T'} \]
	is injective. As $G_0$ is natural by Lemma~\ref{Lemma: G0 natural}, it follows from Proposition \ref{treeproducts} and Proposition \ref{Giinjective} that $B_T \star_{G_0} B_{T'} \to G_1$ is injective. This finishes the proof.
\end{proof}

\begin{lemma}\label{Lemma: E_Rs to G_i injective}
	Suppose $2 \leq i \in \NN$ such that $G_{i-2}$ and $G_{i-1}$ are natural. Then for each $R\in \mathcal{T}_{i, 1}$ of type $\{s, t\}$ with $\ell(w_R rs) = \ell(w_R) -2$ the canonical homomorphism $E_{R, s} \to G_i$ is injective.
\end{lemma}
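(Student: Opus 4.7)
The plan is to distinguish two cases according to whether $\ell(w_R rt) = \ell(w_R) - 2$ or $\ell(w_R rt) = \ell(w_R)$; since the hypothesis $\ell(w_R rs) = \ell(w_R)-2$ forces $\ell(w_R r) = \ell(w_R)-1$, one of these must hold.

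In the first case, I would set $T := R_{\{r,s\}}(w_R)$ and $T' := R_{\{r,t\}}(w_R)$ and apply Lemma~\ref{Lemma: ERs to GTT' injective}: we obtain $\{T, T'\} \in \mathcal{T}_{i-2, 2}$ together with an injective map $E_{R, s} \to G_{\{T,T'\}}$. Since $G_{i-2}$ is natural, $B_{\{T,T'\}} = G_{i-2} \star_{H_{\{T,T'\}}} G_{\{T,T'\}}$ is well defined (Definition~\ref{Definition: BP}), $G_{\{T,T'\}}$ embeds into $B_{\{T,T'\}}$ by Proposition~\ref{treeproducts}, and Proposition~\ref{Giinjective} gives $B_{\{T,T'\}} \hookrightarrow G_{i-1}$. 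Finally, since $G_{i-1}$ is natural, a second application of Proposition~\ref{Giinjective} yields the injection $G_{i-1} \hookrightarrow G_i$. Composing these yields the claim.

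In the second case, put $T := R_{\{r,s\}}(w_R t) \in \mathcal{T}_{i+1,1}$. By Lemma~\ref{Lemma: X_R a} the canonical homomorphism $E_{R,s} \to X_R \star_{V_T} O_T$ is injective and $V_T \hookrightarrow X_R$, so it suffices to embed $X_R \star_{V_T} O_T$ into $G_i$. For this I would first embed $X_R$ into $G_{i-1}$: setting $Z := R_{\{r,s\}}(w_R)$, we have either $Z \in \mathcal{T}_{i-2,1}$ or $Z \in P_Z$ for some $P_Z \in \mathcal{T}_{i-2,2}$, and Lemmas~\ref{Lemma: X_R 1} and~\ref{Lemma: X_R 2} respectively give $X_R \hookrightarrow G_Q$ for some $Q \in \mathcal{T}_{i-2}$; the naturality of $G_{i-2}$ and Proposition~\ref{Giinjective} then provide $G_Q \hookrightarrow B_Q \hookrightarrow G_{i-1}$. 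Next, I would apply Lemma~\ref{Lemma: G_i star V_T O_T to G_i+1 injective} with shifted index $i-1$, using that $G_{i-1}$ is natural and that our $R \in \mathcal{T}_{i,1}$ satisfies the hypothesis $\ell(w_R rt) = \ell(w_R)$: this yields $G_{i-1} \star_{V_T} O_T \hookrightarrow G_i$. Finally, the inclusion $X_R \hookrightarrow G_{i-1}$ induces via Proposition~\ref{treeofgroupsinjective} an injection $X_R \star_{V_T} O_T \hookrightarrow G_{i-1} \star_{V_T} O_T$, and composing with the previous map gives the desired injection into $G_i$.

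The delicate point is the last step of the second case: one must verify that the two a priori distinct homomorphisms $V_T \to G_{i-1}$, namely the one coming from the composition $V_T \hookrightarrow X_R \hookrightarrow G_{i-1}$ and the one appearing implicitly in Lemma~\ref{Lemma: G_i star V_T O_T to G_i+1 injective}, coincide. This should follow from the fact that both are defined by sending each generator $u_\alpha$ of $V_T$ to the corresponding element $x_\alpha$ in $G_{i-1}$ (cf.\ Remark~\ref{Remark: G_i is generated by x_alpha}), so the identification is canonical. Once this compatibility is verified, Proposition~\ref{treeofgroupsinjective} applies without issue and the chain of injections closes up.
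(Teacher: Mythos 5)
Your proof is correct and follows the paper's own proof essentially step by step: the same two-case split on $\ell(w_R rt)$, the same chain of lemmas in each case, and the same use of Propositions~\ref{treeproducts}, \ref{Giinjective} and \ref{treeofgroupsinjective} to assemble the injections. The only substantive addition is your final paragraph explicitly verifying that the two candidate maps $V_T \to G_{i-1}$ agree, a point the paper treats as implicit; your resolution via Remark~\ref{Remark: G_i is generated by x_alpha} is the right one.
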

\begin{proof}
	Let $R \in \mathcal{T}_{i, 1}$ be of type $\{s, t\}$ with $\ell(w_R rs) = \ell(w_R) -2$, let $T = R_{\{r, t\}}(w_R)$ and $T' = R_{\{r, s\}}(w_R)$. Suppose $\ell(w_R rt) = \ell(w_R) -2$. Using Lemma \ref{Lemma: ERs to GTT' injective}, we have $\{T, T'\} \in \mathcal{T}_{i-2, 2}$ and $E_{R, s} \to G_{\{T, T'\}}$ is injective. As $G_{i-2}$ is natural by assumption, the homomorphism $G_{\{T, T'\}} \to G_{i-2} \star_{H_{\{T, T'\}}} G_{\{T, T'\}} = B_{\{T, T'\}}$ is injective by Proposition \ref{treeproducts}. Moreover, as $G_{i-2}$ and $G_{i-1}$ are natural, the homomorphisms $B_{\{T, T'\}} \to G_{i-1}$ and $G_{i-1} \to G_i$ are injective by Proposition \ref{Giinjective}. This finishes the claim. 
	
	Thus we can assume $\ell(w_R rt) = \ell(w_R)$. We abbreviate $Z := R_{\{r, s\}}(w_R t)$. By Lemma~\ref{Lemma: G_i star V_T O_T to G_i+1 injective} the canonical mapping $G_{i-1} \star_{V_Z} O_Z \to G_i$ is injective. We will show now that $X_R \to G_{i-1}$ is injective. We distinguish the following two cases:
	\begin{enumerate}[label=(\roman*)]
		\item $T' \in \mathcal{T}_{i-2, 1}$: As $G_{i-2}$ is natural by assumption, the mapping $G_{T'} \to B_{T'} \to G_{i-1}$ is injective by Proposition \ref{Giinjective}. Now Lemma \ref{Lemma: X_R 1} implies that the homomorphism $X_R \to G_{T'}$ is injective.
		
		\item $T' \notin \mathcal{T}_{i-2, 1}$: Then there exists a unique $P_{T'} \in \mathcal{T}_{i-2, 2}$ with $T' \in P_{T'}$. As $G_{i-2}$ is natural by assumption, the mapping $G_{P_{T'}} \to B_{P_{T'}} \to G_{i-1}$ is injective by Proposition \ref{Giinjective}. Now Lemma \ref{Lemma: X_R 2} implies that the homomorphism $X_R \to G_{P_{T'}}$ is injective.
	\end{enumerate}
	We conclude that $X_R \to G_{i-1}$ is injective. Moreover, $V_Z \to X_R$ is injective by Lemma \ref{Lemma: X_R a} and hence $X_R \star_{V_Z} O_Z \to G_{i-1} \star_{V_Z} O_Z \to G_i$ is injective by Proposition \ref{treeofgroupsinjective}. Using Lemma \ref{Lemma: X_R a} again, we infer that $E_{R, s} \to X_R \star_{V_Z} O_Z$ and, in particular, $E_{R, s} \to G_i$ is injective.
\end{proof}

\begin{theorem}\label{Ginatural}
	For each $i \geq 0$ the group $G_i$ is natural.
\end{theorem}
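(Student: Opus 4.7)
I proceed by strong induction on $i$. The base case $i = 0$ is Lemma~\ref{Lemma: G0 natural}, and the case $i = 1$ combines Theorem~\ref{Theorem: G1 satisfies N2} (for (N2)) with the argument for (N1) described in the next paragraph. For the inductive step, assume that $G_j$ is natural for every $0 \le j < i$ with $i \ge 2$, and verify (N1) and (N2) for $G_i$.

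For (N1), I decompose $C_i = C_{i-1} \cup \bigcup_{P \in \mathcal{T}_{i-1}} C(P)$ (and similarly for $D_i$). For $w \in C_{i-1}$, the composition $U_w \to G_{i-1} \hookrightarrow G_i$ is injective by the inductive hypothesis together with Proposition~\ref{Giinjective}. For $w \in C_i \setminus C_{i-1}$, Lemma~\ref{Lemma: unique P} furnishes a unique $P \in \mathcal{T}_{i-1}$ with $w \in C(P)$; Lemma~\ref{Lemma: Uw to GP} embeds $U_w$ into $G_P$, which in turn embeds into $B_P$ by Proposition~\ref{treeproducts} and into $G_i$ by Proposition~\ref{Giinjective}. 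The case of $V_{w'}$ with $w' \in D_i \setminus D_{i-1}$ is parallel, using Lemma~\ref{Lemma: injective homomorphism V to GP} in place of Lemma~\ref{Lemma: Uw to GP}.

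For (N2), fix $P \in \mathcal{T}_i$; I must show $H_P \hookrightarrow G_i$. If $P = R \in \mathcal{T}_{i, 1}$ and $\ell(w_R r s) = \ell(w_R) - 2$ (or, by symmetry, $\ell(w_R r t) = \ell(w_R) - 2$), Lemma~\ref{ERstoURsinjective} gives $H_R \hookrightarrow E_{R, s}$, while Lemma~\ref{Lemma: E_Rs to G_i injective} (applicable since $G_{i-2}$ and $G_{i-1}$ are natural) yields $E_{R, s} \hookrightarrow G_i$. Otherwise, with $Z = R_{\{r, t\}}(w_R s)$ and $Z' = R_{\{r, s\}}(w_R t)$, I would generalise Lemma~\ref{Lemma: H_R to G_0 star OT injective} to embed $H_R$ into $(G_{i-1} \star_{V_Z} O_Z) \star_{G_{i-1}} (G_{i-1} \star_{V_{Z'}} O_{Z'})$, apply Lemma~\ref{Lemma: G_i star V_T O_T to G_i+1 injective} to inject each outer amalgam into the appropriate $B_{P_T}, B_{P_{T'}}$, and conclude via Proposition~\ref{Giinjective}. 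If $P = \{R, R'\} \in \mathcal{T}_{i, 2}$, Lemma~\ref{RR'CCleftCright} supplies $H_P \cong C_{(R, R')} \star_C C_{(R', R)}$; with $T = R_{\{r, t\}}(w_R)$ and $T' = R_{\{r, s\}}(w_{R'})$ (both in $\mathcal{T}_{i-1, 1}$ by Lemma~\ref{CleftrightcapHTHT'}), Lemma~\ref{CleftrightcapHTHT'} embeds $C_{(R, R')}$ into $U_{T, t}$ with $C_{(R, R')} \cap E_{T, t} = C$, and symmetrically for $C_{(R', R)}$. Combining Lemma~\ref{Lemma: E_Rs to G_i injective} (giving $E_{T, t} \hookrightarrow G_i$) with $G_T \hookrightarrow B_T \hookrightarrow G_i$ from Proposition~\ref{Giinjective}, I would assemble embeddings $C_{(R, R')}, C_{(R', R)} \hookrightarrow G_i$ whose intersection realises $C$, and invoke Proposition~\ref{treeofgroupsinjective} to conclude $H_P \hookrightarrow G_i$.

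The principal obstacle is the second subcase for $P \in \mathcal{T}_{i, 1}$, where no length-descent of $w_R$ by two is available and Lemma~\ref{Lemma: E_Rs to G_i injective} does not apply. One has to generalise Lemma~\ref{Lemma: H_R to G_0 star OT injective} to arbitrary $R \in \mathcal{T}_{i, 1}$ (rather than only $R = R_{\{s, t\}}(r)$) and verify the required compatibility of preimages under the boundary monomorphisms; this in turn relies on iterated use of Lemma~\ref{Lemma: Key lemma} and the local Weyl-invariance of $\mathcal{M}$.
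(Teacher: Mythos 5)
Your overall strategy (strong induction, using Proposition~\ref{Giinjective} to pass from $G_{i-1}$ to $G_i$, and the two-way split between $\mathcal{T}_{i,1}$ and $\mathcal{T}_{i,2}$) matches the paper. Your treatment of (N1) and of $P\in\mathcal{T}_{i,2}$ is essentially the paper's argument, although the $\mathcal{T}_{i,2}$ part is left somewhat schematic: the paper makes the intersection claim precise by exhibiting $B_T \cong G_{i-1}\star_{E_{T,t}}U_{T,t}$ and $B_{T'}\cong G_{i-1}\star_{E_{T',s}}U_{T',s}$, deducing $C_{(R,R')}\cap G_{i-1}=C$ in $B_T$ (and symmetrically) via Corollary~\ref{AcapBisC} and Lemma~\ref{CleftrightcapHTHT'}, and only then applying Proposition~\ref{treeofgroupsinjective} to the segment $C_{(R,R')}\star_C C_{(R',R)}\to B_T\star_{G_{i-1}}B_{T'}$.

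The real issue is the ``principal obstacle'' you flag in the second subcase for $P\in\mathcal{T}_{i,1}$: that subcase is vacuous for $i\ge 2$, so nothing needs to be generalised. Indeed, if $P\in\mathcal{T}_{i,1}$ has type $\{s,t\}$, then $w_P=\proj_P 1_W$ satisfies $\ell(w_P s)=\ell(w_P t)=\ell(w_P)+1$, hence $\ell(w_P r)=\ell(w_P)-1=i-1$. Put $v:=w_P r$. Since $i\ge 2$, $v\neq 1_W$, so $v$ has a descent $u\in S$; and $u\neq r$ because $\ell(vr)=\ell(w_P)>\ell(v)$. Thus $u\in\{s,t\}$, and after relabelling $\{s,t\}$ one has $\ell(w_P rs)=\ell(w_P)-2$. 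The only situation in which neither $\ell(w_P rs)$ nor $\ell(w_P rt)$ drops by two is $i=1$ (where $v=1_W$), which is exactly what the separate Theorem~\ref{Theorem: G1 satisfies N2} handles. So the generalisation of Lemma~\ref{Lemma: H_R to G_0 star OT injective} you contemplate is not needed; once you notice this, your argument closes up and coincides with the paper's proof.
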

\begin{proof}
	We show the claim by induction on $i\geq 0$. If $i = 0$, claim follows from Lemma~\ref{Lemma: G0 natural}. Thus we can assume that $i\geq 1$ and that $G_k$ is natural for all $0 \leq k <i$. We have to show that $G_i$ satisfies (N$1$) and (N$2$).
	\begin{enumerate}[label=(N$\arabic*$)]
		\item Let $w\in C_i$. If $w\in C_{i-1}$, then each of the homomorphisms $U_w \to G_{i-1} \to G_i$ is injective by induction and Proposition \ref{Giinjective}. If $w\notin C_{i-1}$, then there exists $P \in \mathcal{T}_{i-1}$ with $w\in C(P)$ by definition of $C_i$. Using Lemma \ref{Lemma: Uw to GP} and Proposition \ref{Giinjective}, each of the homomorphisms $U_w \to G_P \to G_i$ is injective. Now we consider $w' \in D_i$. If $w' \in D_{i-1}$, induction and Proposition \ref{Giinjective} imply that each of the homomorphisms $V_{w'} \to G_{i-1} \to G_i$ is injective. Thus we can assume that $w' \notin D_{i-1}$. Let $w' = w_R r_{\{s, t\}}$ for some residue $R$ of type $\{s, t\}$ with $w_R s, w_R t \in C_i$. By Lemma~\ref{Lemma: injective homomorphism V to GP} there exists a unique $P \in \mathcal{T}_{i-1}$ such that $w_R s, w_R t \in C'(P)$ and each of the homomorphisms $V_{w'} \to G_P \to G_i$ is injective by induction. Thus (N$1$) is satisfied. In particular, $G_1$ is natural by Theorem~\ref{Theorem: G1 satisfies N2} and we can assume $i\geq 2$.
		
		\item We have to show that $H_P \to G_i$ is injective for each $P \in \mathcal{T}_i$. Suppose $P \in \mathcal{T}_{i, 1}$ is of type $\{s, t\}$. As $i \geq 2$, we can assume that $\ell(w_P rs) = \ell(w_P) -2$. Since $H_P \to E_{P, s}$ is injective by Lemma \ref{ERstoURsinjective} and $E_{P, s} \to G_i$ is injective by Lemma \ref{Lemma: E_Rs to G_i injective}, the claim follows. Now suppose that $P \in \mathcal{T}_{i, 2}$. Let $P = \{R, R'\}$, where $R$ is of type $\{r, s\}$ and $R'$ is of type $\{r, t\}$. Let $T = R_{\{r, t\}}(w_R)$ and let $T' = R_{\{r, s\}}(w_{R'})$. Note that in this case we have $i\geq 3$. By Lemma \ref{CleftrightcapHTHT'} we have $T, T' \in \mathcal{T}_{i-1, 1}$. As $G_{i-1}$ is natural, Proposition \ref{Giinjective} and Proposition \ref{treeproducts} imply that the mapping $B_T \star_{G_{i-1}} B_{T'} \to G_i$ is injective. By Lemma \ref{RR'CCleftCright} we have $H_{\{R, R'\}} \cong C_{(R, R')} \star_C C_{(R', R)}$. Thus it suffices to show that $C_{(R, R')} \star_C C_{(R', R)} \to B_T \star_{G_{i-1}} B_{T'}$ is injective and we will prove it by using Proposition \ref{treeofgroupsinjective}.
		
		Using Lemma \ref{Lemma: E_Rs to G_i injective}, the mappings $E_{T, t}, E_{T', s} \to G_{i-1}$ are injective. Then Lemma \ref{ERstoURsinjective}, Proposition \ref{treeproducts}, Remark \ref{Remark: isomorphism preserves amalgamated product} and Lemma \ref{folding} yield
		\allowdisplaybreaks
		\begin{align*}
			&B_T = G_{i-1} \star_{H_T} G_T \cong G_{i-1} \star_{E_{T, t}} E_{T, t} \star_{H_T} G_T \cong G_{i-1} \star_{E_{T, t}} U_{T, t} \\
			&B_{T'} = G_{i-1} \star_{H_{T'}} G_{T'} \cong G_{i-1} \star_{E_{T', s}} E_{T', s} \star_{H_{T'}} G_{T'} \cong G_{i-1} \star_{E_{T', s}} U_{T', s}
		\end{align*}
		Lemma \ref{CleftrightcapHTHT'} shows that $C_{(R, R')} \to U_{T, t}$, $C_{(R', R)} \to U_{T', s}$ are injective and, in particular, $C_{(R, R')} \to B_T, C_{(R', R)} \to B_{T'}$ are injective. Moreover, Lemma \ref{CleftrightcapHTHT'} implies that $C_{(R, R')} \cap E_{T, t} = C$ holds in $U_{T, t}$ and $C_{(R', R)} \cap E_{T', s} = C$ holds in $U_{T', s}$. Corollary \ref{AcapBisC} now yields:
		\allowdisplaybreaks
		\begin{align*}
			C_{(R, R')} \cap G_{i-1} &= C_{(R, R')} \cap G_{i-1} \cap E_{T, t} = C_{(R, R')} \cap E_{T, t} = C &&\text{in } B_T \\
			C_{(R', R)} \cap G_{i-1} &= C_{(R', R)} \cap G_{i-1} \cap E_{T', s} = C_{(R', R)} \cap E_{T', s} = C &&\text{in } B_{T'}
		\end{align*}
		Proposition \ref{treeofgroupsinjective} implies that the canonical homomorphism $C_{(R, R')} \star_C C_{(R', R)} \to B_T \star_{G_{i-1}} B_{T'}$ is injective. This finishes the proof. \qedhere
	\end{enumerate}
\end{proof}

\begin{corollary}\label{Corollary: non-trivial}
	$\mathcal{M}$ is a faithful commutator blueprint of type $(4, 4, 4)$.
\end{corollary}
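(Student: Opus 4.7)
The plan is to assemble the corollary from the results already established in Section~\ref{Section: Main result}, together with the identification of $U_+$ with the direct limit $G$. Concretely, faithfulness asserts that the canonical map $U_w \to U_+$ is injective for every $w \in W$, and I would obtain this injection as a composition of three maps that are injective for separate reasons.

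First I would invoke Theorem~\ref{Ginatural} to conclude that $G_i$ is natural for every $i \in \NN$. By Proposition~\ref{Giinjective}, this naturality implies that each transition homomorphism $G_i \to G_{i+1}$ is injective. Since $G$ is the direct limit of $(G_i)_{i \in \NN}$ along these maps, a standard direct-limit argument then yields that the canonical map $G_i \to G$ is injective for every $i$.

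Next, I would fix $w \in W$ and observe that $w$ lies in $C_i$ for some sufficiently large $i$: indeed, $w$ belongs to some rank-$2$ residue $R$, and a short check from the definition shows that every element of $R$ is contained in $C(R) \subseteq C_{\ell(w_R)+1}$, so $W = \bigcup_{i \in \NN} C_i$. For such $i$, property (N$1$) of Definition~\ref{Definition: natural} provides the injection $U_w \hookrightarrow G_i$. Composing with $G_i \hookrightarrow G$ and with the isomorphism $f \colon G \xrightarrow{\sim} U_+$ from Lemma~\ref{Lemma: U_+ isomorphic to G} recovers the canonical homomorphism $U_w \to U_+$ (by the commutativity of the two triangles recorded just before that lemma), which is therefore injective. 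By definition this is exactly the faithfulness of $\mathcal{M}$.

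The substantive work has already been carried out in Theorem~\ref{Ginatural}, whose proof proceeded by induction on $i$ using the tree-product constructions of Sections~\ref{Section: Locally Weyl-invariant Commutator blueprints}--\ref{Section: Faithful commutator blueprints}. By contrast, the corollary itself is essentially bookkeeping, so no new obstacle arises beyond the routine exhaustion $W = \bigcup_i C_i$ noted above.
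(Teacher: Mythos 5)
Your proposal follows exactly the same route as the paper's proof: identify $U_+$ with $G$ via Lemma~\ref{Lemma: U_+ isomorphic to G}, deduce injectivity of $G_i\to G_{i+1}$ from Theorem~\ref{Ginatural} and Proposition~\ref{Giinjective}, pass to injectivity of $G_i\to G$, use the exhaustion $W=\bigcup_i C_i$, and conclude with property (N$1$). The only cosmetic difference is that you offer a short heuristic for the exhaustion claim while the paper simply asserts it; your sketch (that $R\subseteq C(R)$) is not quite literal from the definitions but the underlying fact is elementary and the overall argument is the paper's.
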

\begin{proof}
	By Lemma \ref{Lemma: U_+ isomorphic to G} we have $G \cong U_+$. We have to show that for each $w\in W$ the canonical homomorphism $U_w \to G \cong U_+$ is injective. Note that the following diagram commutes for each $i\in \NN$ with $w\in C_i$ (cf.\ Remark \ref{Remark: G_i is generated by x_alpha} and Remark \ref{Remark: G is generated by x_alpha}):
	\begin{center}
		\begin{tikzcd}
			U_w \arrow[r] \arrow[rd] & G_i \arrow[d] \\
			&G
		\end{tikzcd}
	\end{center}
	By Theorem \ref{Ginatural} the group $G_i$ is natural for each $i \geq 0$. Proposition \ref{Giinjective} implies that the canonical homomorphisms $G_i \to G_{i+1}$ are injective for all $i \in \NN$. It follows from \cite[$1.4.9$(iii)]{Rob96} that the canonical homomorphisms $G_i \to G$ are injective. Note that for each $w\in W$ there exists $i\in \NN$ with $w\in C_i$. As $G_i$ is natural, we have $U_w \to G_i$ is injective and, hence, $U_w \to G_i \to G$ is injective as well.
\end{proof}

\section{Consequences of Theorem~\ref{Main result: Theorem integrable Weyl-invariant}}\label{Section: Applications}

\subsection*{Examples of RGD-systems}

In this subsection we use the notation from \cite{BiConstruction}. Let $K \subseteq \NN_{\geq 3}$ be non-empty, let $\mathcal{J} = \left(J_k \right)_{k\in K}$ be a family of non-empty subsets $J_k \subseteq S$ and let $\mathcal{L} = \left(L_k^j\right)_{k \in K, j \in J_k}$ be a family of subsets $L_k^j \subseteq \{ 2, \ldots, k-1 \}$. By \cite[Lemma~4.24]{BiConstruction} the commutator blueprint $\mathcal{M}(K, \mathcal{J}, \mathcal{L})$ of type $(4, 4, 4)$ is Weyl-invariant. For the precise definition see \cite[Definition~4.16 and 4.19]{BiConstruction}.

\begin{theorem}\label{Maintheorem}
	The Weyl-invariant commutator blueprint $\mathcal{M}(K, \mathcal{J}, \mathcal{L})$ is integrable.
\end{theorem}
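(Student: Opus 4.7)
The plan is to combine the Weyl-invariance of $\mathcal{M}(K,\mathcal{J},\mathcal{L})$, already established in the prequel \cite{BiConstruction}, with the main technical result of this paper. Since all the real work lies in Corollary~\ref{Corollary: non-trivial} and in \cite[Theorem~A]{BiRGD}, the proof should essentially be a three-line assembly of these facts.

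First, I would record that $\mathcal{M}(K,\mathcal{J},\mathcal{L})$ is a Weyl-invariant commutator blueprint of type $(4,4,4)$ by \cite[Lemma~4.24]{BiConstruction}. This is built into the very definition of the family $\mathcal{M}(K,\mathcal{J},\mathcal{L})$: the subsets $L_k^j \subseteq \{2,\ldots,k-1\}$ parametrising the commutation data were chosen precisely so that the Weyl-invariance axiom from Definition~\ref{Definition: Properties of Commutator blueprints}(b) holds automatically.

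Second, since every Weyl-invariant commutator blueprint is a fortiori locally Weyl-invariant, the entire machinery developed in Sections~\ref{Section: Locally Weyl-invariant Commutator blueprints}--\ref{Section: Main result} applies to $\mathcal{M}(K,\mathcal{J},\mathcal{L})$. In particular, Corollary~\ref{Corollary: non-trivial} immediately yields that $\mathcal{M}(K,\mathcal{J},\mathcal{L})$ is faithful, i.e.\ each canonical homomorphism $U_w \to U_+$ is injective.

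Third, I would appeal to \cite[Theorem~A]{BiRGD}, which asserts that a faithful Weyl-invariant commutator blueprint of type $(4,4,4)$ is integrable. Applied to $\mathcal{M}(K,\mathcal{J},\mathcal{L})$, this produces an RGD-system $\mathcal{D}$ of type $(4,4,4)$ over $\FF_2$ such that $\mathcal{M}_{\mathcal{D}}$ coincides pointwise with $\mathcal{M}(K,\mathcal{J},\mathcal{L})$, which is the desired integrability statement. Equivalently, the three steps together are precisely the content of Theorem~\ref{Main result: Theorem integrable Weyl-invariant} specialised to this concrete family. There is no genuine obstacle here; the purpose of stating \ref{Maintheorem} separately is to serve as the bridge between the abstract faithfulness theorem of this paper and the concrete existence corollaries (such as Corollary~\ref{Main result: uncountably many RGD-systems}) that are derived from the explicit construction in \cite[Theorem~D]{BiConstruction}.
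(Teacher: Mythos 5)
Your proposal is correct and matches the paper's argument: the paper records Weyl-invariance of $\mathcal{M}(K,\mathcal{J},\mathcal{L})$ via \cite[Lemma~4.24]{BiConstruction} and then cites Theorem~\ref{Main result: Theorem integrable Weyl-invariant} directly, whereas you simply unfold that theorem's own proof (Corollary~\ref{Corollary: non-trivial} plus \cite[Theorem~A]{BiRGD}) before reassembling it. There is no substantive difference in the route taken.
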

\begin{proof}
	This is a consequence of Theorem~\ref{Main result: Theorem integrable Weyl-invariant}.
\end{proof}

\begin{corollary}\label{Corollary: D_n}
	For each $n\in \NN$ there exists an RGD-system $\mathcal{D}_n = \left(G_n, \left(U_{\alpha}^{(n)}\right)_{\alpha \in \Phi}\right)$ of type $(4, 4, 4)$ over $\FF_2$ with the following properties:
	\begin{enumerate}[label=(\roman*)]
		\item Let $w\in W$ with $\ell(w) \leq n$ and let $\alpha, \beta \in \Phi_+$ with $w\in (-\alpha) \cap (-\beta)$. If $\alpha \subseteq \beta$, then $\left[ U_{\alpha}^{(n)}, U_{\beta}^{(n)} \right] = 1$.
		
		\item There exist $\alpha, \beta \in \Phi_+$ with $\alpha \subsetneq \beta$ and $\left[ U_{\alpha}^{(n)}, U_{\beta}^{(n)} \right] \neq 1$.
	\end{enumerate}
\end{corollary}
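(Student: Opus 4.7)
The plan is to invoke Theorem~\ref{Maintheorem} on a carefully chosen member of the family $\mathcal{M}(K, \mathcal{J}, \mathcal{L})$ constructed in \cite{BiConstruction}. The parameters $(K, \mathcal{J}, \mathcal{L})$ encode at which ``depth'' nontrivial commutation relations between root groups are introduced, and the strategy is to push all such relations to depth strictly greater than $n$ while keeping at least one of them nontrivial, so that (i) is forced by the length hypothesis and (ii) comes for free from the construction.

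Concretely, I would fix an integer $N$ depending on $n$ that exceeds $n$ by an appropriate safety margin dictated by the size of the residues appearing in \cite[Definition~4.16 and 4.19]{BiConstruction} (a choice such as $N \geq n+3$ should suffice), set $K := \{N\}$, pick any nonempty $J_N \subseteq S$, and choose subsets $L_N^j \subseteq \{2,\dots,N-1\}$ with at least one of them nonempty so that the resulting blueprint $\mathcal{M}(K,\mathcal{J},\mathcal{L})$ has at least one relation of the form $[u_\alpha,u_\beta] = \prod_{\gamma \in M_{\alpha,\beta}^G} u_\gamma \neq 1$ with $\alpha \subsetneq \beta$. By \cite[Lemma~4.24]{BiConstruction} this blueprint is Weyl-invariant, so Theorem~\ref{Maintheorem} yields an RGD-system $\mathcal{D}_n = (G_n, (U_\alpha^{(n)})_{\alpha \in \Phi})$ of type $(4,4,4)$ over $\FF_2$ whose commutation relations are exactly those of $\mathcal{M}(K,\mathcal{J},\mathcal{L})$. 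Property (ii) is then immediate from the choice of $\mathcal{L}$.

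The real content of the proof is verifying (i). By construction, $M_{\alpha,\beta}^G$ can only be nonempty if the prenilpotent pair $\{\alpha,\beta\}$ lies in the combinatorial pattern set up in \cite[Definition~4.16 and 4.19]{BiConstruction}, which involves a rank-$2$ residue whose projection onto $1_W$ has length roughly $N$. If $\alpha \subseteq \beta$ are nested positive roots and some $w \in (-\alpha) \cap (-\beta)$ has $\ell(w) \leq n$, then $(-\alpha) \cap (-\beta)$ intersects the $n$-ball around $1_W$; since $\alpha \subseteq \beta$ forces any residue witnessing the pair to lie on the ``far'' side of the walls $\partial\alpha$ and $\partial\beta$, the pair is simply too close to $1_W$ to match the depth-$N$ pattern. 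Consequently $M_{\alpha,\beta}^G = \emptyset = M_{\beta,\alpha}^G$ for every minimal gallery $G$ crossing both walls, and the defining presentation of the groups $U_w$ gives $[u_\alpha,u_\beta]=1$; transporting this identity to $G_n$ along the embeddings $U_w \hookrightarrow U_+$ (which are injective by Corollary~\ref{Corollary: non-trivial}) yields $[U_\alpha^{(n)}, U_\beta^{(n)}] = 1$.

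The main obstacle is the last step of the previous paragraph: one has to convert the hypothesis ``there exists $w \in (-\alpha)\cap(-\beta)$ with $\ell(w) \leq n$'' into the combinatorial exclusion that no minimal gallery $G$ with $\alpha, \beta \in \Phi(G)$ can meet the depth-$N$ template of \cite[Definition~4.16]{BiConstruction}. This is a bookkeeping argument using the geometry of walls in Coxeter systems of type $(4,4,4)$ (e.g.\ Lemma~\ref{mingallinrep} and Lemma~\ref{Lemma: Subset contained in certain root}) together with the explicit bound relating $\ell(w)$ to the distance from $1_W$ of the residue realising a nontrivial entry of $\mathcal{M}(K,\mathcal{J},\mathcal{L})$; the safety margin between $n$ and $N$ is chosen precisely to make this exclusion quantitative.
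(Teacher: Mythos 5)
Your strategy is the same as the paper's: use Theorem~\ref{Maintheorem} on a blueprint $\mathcal{M}(K,\mathcal{J},\mathcal{L})$ with $K$ a singleton, get (ii) for free from a nonempty $L_k^j$, and establish (i) by showing that the length hypothesis excludes the depth pattern of the blueprint. Your high-level reasoning is correct, and you correctly identify that the substance lies in the last step. But that last step is exactly where the proposal stops short: you describe the exclusion argument as ``bookkeeping'' without carrying it out, and that calculation is the bulk of the paper's proof.

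Two concrete points where the gap matters. First, the safety margin you guess is off: you propose $K=\{N\}$ with $N \geq n+3$, whereas the paper simply takes $K=\{n\}$ (for $n\geq 3$). The reason no extra margin is needed is that the construction of \cite{BiConstruction} already places the nontrivial entry at the far end of a minimal gallery of length $k \geq 5n+1$; the multiplicative blowup is built in, and the additive $+3$ does not reflect the actual geometry. Second, and more importantly, the inference from ``the pair is far from $1_W$'' to ``$\ell(w) > n$'' is not automatic: the chamber $w\in(-\alpha)\cap(-\beta)$ need not lie near any residue of $\partial^2\beta$ a priori. The paper makes this precise by picking a minimal gallery in $\mathrm{Min}(w)$, locating the panel of $\partial\beta$ it crosses, and then invoking Lemma~\ref{projgal} to produce a minimal gallery from $1_W$ to that panel passing through the projection of $1_W$ onto the relevant residue $R$. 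Since the residue containing the nontrivial commutation has $\ell(\proj_R 1_W)$ close to $5n$, one obtains $\ell(w) \geq 5n-8 > n$. Without this concrete use of Lemma~\ref{projgal} and the length estimate from \cite{BiConstruction}, the exclusion remains a heuristic. A minor remark: the appeal to Corollary~\ref{Corollary: non-trivial} to ``transport'' the trivial commutator is unnecessary; if $M_{\alpha,\beta}^G=\emptyset$ in the blueprint, the integrability of $\mathcal{M}(K,\mathcal{J},\mathcal{L})$ already says the RGD-system's commutation relations coincide with the blueprint's, so $[U_\alpha^{(n)},U_\beta^{(n)}]=1$ holds by definition of $\mathcal{M}_{\mathcal{D}}$; injectivity of $U_w\to U_+$ would only be needed for the converse direction.
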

\begin{proof}
	Note that it suffices to show the claim for $n \in \NN_{\geq 3}$. We fix $n \in \NN_{\geq 3}$. Define $K := \{n\}$, $J_n := \{r\}$ for some $r\in S$ and assume $L_n^r \neq \emptyset$. Then $\mathcal{M}(K, \mathcal{J}, \mathcal{L})$ is an integrable commutator blueprint by Theorem \ref{Maintheorem}. Let $\mathcal{D} = \left(G, \left(U_{\alpha}\right)_{\alpha \in \Phi}\right)$ be its associated RGD-system. We claim that $\mathcal{D}$ is as required. As $L_n^r \neq \emptyset$, it suffices to show that $(i)$ holds. Let $w\in W$ and let $\alpha, \beta \in \Phi_+$ be such that $w \in (-\alpha) \cap (-\beta)$, $\alpha \subseteq \beta$ and $\left[U_{\alpha}, U_{\beta} \right] \neq 1$. We will show $\ell(w) >n$. By definition of $\mathcal{M}(K, \mathcal{J}, \mathcal{L})$ there exists a minimal gallery $H = (c_0, \ldots, c_k)$ of type $(n, r)$ between $\alpha$ and $\beta$. Using \cite[Lemma~$4.17(a)$]{BiConstruction} we can extend $(c_6, \ldots, c_k)$ to a gallery $(c_0', \ldots, c_{k'}') \in \mathrm{Min}$. In particular, as $k \geq 5n +1$ by definition, we have $k' \geq k-6 \geq 5n-5$.
	
	Let $(e_0, \ldots, e_m) \in \mathrm{Min}(w)$ be a minimal gallery. As $e_0 = 1_W \in \beta$ and $e_m = w \in (-\beta)$, there exists $0 \leq j \leq m-1$ with $\{ e_j, e_{j+1} \} \in \partial \beta$. Define $R := R_{\beta, \{ e_j, e_{j+1} \}}$. As $\alpha \subsetneq \beta$, $\beta$ is a non-simple root and Lemma \ref{projgal} yields the existence of a minimal gallery $(d_0 = e_0, \ldots, d_q = e_{j+1})$ with $d_i = \proj_R 1_W$ for some $0 \leq i \leq q-1$. As $\{ c_{k-1}, c_k \} \subseteq R$, we deduce that $\ell(w) \geq i \geq k' -3 \geq 5n-8 >n$.
\end{proof}

\begin{theorem}\label{Theorem: not FPRS}
	Suppose $K = \NN_{\geq 3}$ and $L_n^j = \{2\}$ for all $n \in K$ and $j \in J_n$. Then the RGD-system $\mathcal{D} = (G, (U_{\alpha})_{\alpha \in \Phi})$ associated with the commutator blueprint $\mathcal{M}( K, \mathcal{J}, \mathcal{L})$ (cf.\ Theorem~\ref{Maintheorem}) does not satisfy condition (FPRS).
\end{theorem}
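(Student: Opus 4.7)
The strategy is to exhibit a simple positive root $\alpha$ admitting, for every $n \in \NN$, a positive root $\beta \supsetneq \alpha$ with $k_\beta \geq n$ and $[u_\alpha, u_\beta] \neq 1$ in $G$. This will contradict (FPRS) for $\alpha$ via the commutator criterion underlying the right-angled counter-examples of Abramenko-M\"uhlherr (cf.\ the remark preceding \cite[Lemma~$5$]{CR09}): if the fixator of $U_\alpha$ in $-\alpha$ contained chambers arbitrarily far from $\partial \alpha$, then $u_\alpha$ would centralise $u_\beta$ for every $\beta \supsetneq \alpha$ whose half-space $-\beta$ contained such a chamber.

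Since $S$ is finite and $K = \NN_{\geq 3}$ is infinite, the pigeonhole principle yields $s \in S$ with $s \in J_n$ for infinitely many $n$. Fix such an $s$ and put $\alpha := \alpha_s$. For each such $n$, the assumption $L_n^s = \{2\}$ together with the construction of $\mathcal{M}(K,\mathcal{J},\mathcal{L})$ from \cite[Definitions~$4.16$ and~$4.19$]{BiConstruction} provides a minimal gallery $H_n \in \mathrm{Min}$ with sequence of crossed roots $(\alpha^{(n)}_1,\ldots,\alpha^{(n)}_{n+1})$, such that $\alpha^{(n)}_1 = \alpha$ and $M^{H_n}_{\alpha^{(n)}_1,\alpha^{(n)}_{n+1}} = \{\alpha^{(n)}_2\}$. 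Setting $\beta_n := \alpha^{(n)}_{n+1}$, iterated use of Lemma~\ref{mingallinrep} forces $\alpha \subsetneq \beta_n$ and $k_{\beta_n} \geq n$. Since $\mathcal{M}(K,\mathcal{J},\mathcal{L})$ is integrable by Theorem~\ref{Maintheorem}, the prescribed commutation relation holds in $\mathcal{D}$, so $[u_\alpha, u_{\beta_n}] = u_{\alpha^{(n)}_2} \neq 1$ in $G$ for every $n$ under consideration.

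Feeding this into the criterion from the first paragraph then produces the failure of (FPRS) for $\alpha$, and hence for $\mathcal{D}$. The main obstacle is the verification that the depth bound $k_{\beta_n} \geq n$ genuinely translates into the non-existence of $U_\alpha$-fixed chambers at distance $\geq n$ from $\partial \alpha$ via the non-vanishing of $[u_\alpha, u_{\beta_n}]$; this is the $(4,4,4)$-analogue of the argument implicit in \cite[Remark before Lemma~$5$]{CR09}, and the combinatorial input is precisely the structure of minimal galleries of type $(n,s)$ underpinning the blueprint $\mathcal{M}(K,\mathcal{J},\mathcal{L})$.
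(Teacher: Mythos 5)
The core idea of your proposal (produce roots $\beta_n$ far from $c_+$ together with a fixed simple root $\alpha$ so that the commutator $[u_\alpha, u_{\beta_n}]$ is always a fixed nontrivial element supported near $c_+$) is the same as the paper's. However, the execution has a genuine gap, namely in the step you describe as "feeding this into the criterion." The criterion you state -- that (FPRS) concerns "the fixator of $U_\alpha$ in $-\alpha$ containing chambers arbitrarily far from $\partial\alpha$," and that this would force $u_\alpha$ to centralise $u_\beta$ -- is not what (FPRS) says, and the implication from a nontrivial commutator to a failure of (FPRS) is never actually carried out. Property (FPRS) is a statement about the radius $r(U_\beta)$ of the ball around $c_+$ fixed pointwise by $U_\beta$, as $\ell(1_W,-\beta)\to\infty$; it places no direct constraint on a simple root group like $U_{\alpha_s}$. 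The correct mechanism, which the paper makes explicit, is this: if (FPRS) held, then for $n$ large enough $U_{\beta_n}$ (with $\ell(1_W,-\beta_n)\to\infty$) would fix the ball $B(c_+,10)$ pointwise; since $u_\alpha$ is a positive root group element it fixes $c_+$ and hence maps $B(c_+,10)$ to itself, so the conjugate $u_\alpha^{-1}u_{\beta_n}u_\alpha$ also fixes $B(c_+,10)$ pointwise and therefore so does the commutator $[u_\alpha,u_{\beta_n}]$; but the prescribed relation forces this commutator to equal a fixed nontrivial product of $u_\gamma$'s with $\gamma$ crossed at position $2$ in the gallery (from $L_n^j=\{2\}$), and such an element visibly does not fix $B(c_+,10)$. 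Without this chain the contradiction is not established, and as written your argument stops short of it.

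Two smaller inaccuracies are worth flagging. First, the galleries in the construction of $\mathcal{M}(K,\mathcal{J},\mathcal{L})$ have length on the order of $5n+1$ (type alternating between $r$ and the long word $r_{\{s,t\}}=stst$), not $n+1$ as you write, so the sequence of crossed roots is not $(\alpha_1^{(n)},\ldots,\alpha_{n+1}^{(n)})$; this does not invalidate the strategy since the relevant fact ($\ell(1_W,-\beta_n)\to\infty$) still holds, but it suggests a misreading of the blueprint. Second, Lemma~\ref{mingallinrep} is stated for minimal galleries of type $(r,s,t,r)$, whereas these galleries have type $(r,s,t,s,t,r,\ldots)$; iterating the lemma to conclude $\alpha\subsetneq\beta_n$ needs a little care (the paper avoids the issue, since for (FPRS) what matters is $\ell(1_W,-\beta_n)\to\infty$, not the nesting $\alpha\subsetneq\beta_n$).

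Finally, the pigeonhole step is unnecessary: $J_n$ is nonempty for every $n\in K$, so one can work directly with the galleries of type $(n,r)$ built into the blueprint, as the paper does with the galleries $G_n$.
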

\begin{proof}
	In this proof we use the notation from \cite[Section~$2.1$]{CR09}. Let $G_n \in \mathrm{Min}$ be a minimal gallery of type $(r, r_{\{s, t\}}, r, \ldots, r_{\{s, t\}}, r)$, where $r_{\{s, t\}}$ appears $n$ times in the type. Let $\alpha_n := \alpha_{G_n}$, i.e.\ $\alpha_n$ is the last root which is crossed by $G_n$. We note that for $n \in K = \NN_{\geq 3}$ the root $\alpha_n$ is a non-simple root of the $\{ r, s \}$-residue $R$ containing $(rr_{\{s, t\}})^nr$. Using Lemma~\ref{Lemma: Lemma 2.22 BiConstruction} we have $\ell(1_W, -\alpha_n) = 5n+1$. In particular, $\lim_{n\to \infty} \ell(1_W, -\alpha_n) = \infty$.
	
	 Assume that $\mathcal{D}$ has Property (FPRS). Then there would exist $n_0\in \NN$ such that for every $n \geq n_0$ we have $r(U_{\alpha_n}) \geq 10$. In particular, $U_{\alpha_n}$ fixes $B(c_+, 10)$ pointwise. We deduce that $u_{\alpha_0}^{-1} u_{\alpha_n} u_{\alpha_0}$ and also $[u_{\alpha_0}, u_{\alpha_n}]$ fix $B(c_+, 10)$ pointwise. But $[u_{\alpha_0}, u_{\alpha_n}] = u_{\omega_2} u_{\omega_2'}$, which does not fix $B(c_+, 10)$. Thus $\mathcal{D}$ does not have Property (FPRS).
\end{proof}

\subsection*{Extension theorem for twin buildings}

\begin{theorem}\label{Theorem: extension theorem}
	The extension theorem does not hold for thick $2$-spherical twin buildings.
\end{theorem}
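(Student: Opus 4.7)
The plan is to derive Theorem~\ref{Theorem: extension theorem} by contradiction from Theorem~\ref{Main theorem: KM group finitely presented} (proved elsewhere in this section), exploiting the link between the extension theorem for twin buildings and Curtis-Tits presentations of $2$-spherical Kac-Moody groups identified by Abramenko and M\"uhlherr in \cite{AM97}.

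First I would assume, for contradiction, that the local-to-global principle holds for every thick $2$-spherical twin building. The paper of M\"uhlherr--Ronan \cite{MR95} establishes the extension theorem for $2$-spherical twin buildings under condition (co), and \cite{AM97} uses this to show that $2$-spherical Kac-Moody groups over fields of cardinality at least $4$ admit a Curtis-Tits presentation, exhibiting them as the colimit of their rank~$\leq 2$ parabolic subgroups over the rank~$\leq 1$ ones. The cardinality restriction on the ground field in \cite{AM97} is imposed only in order to guarantee condition (co), which itself only serves to permit the application of the extension theorem of \cite{MR95}. Granting the extension theorem in general removes the cardinality restriction, and the argument of \cite{AM97} then yields the Curtis-Tits presentation of the Kac-Moody group $\mathcal{G}$ of type $(4,4,4)$ over $\FF_2$ as well.

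The rank~$\leq 2$ parabolic subgroups of $\mathcal{G}$ are of spherical type over the finite field $\FF_2$ and are therefore finite, in particular finitely presented. The Curtis-Tits presentation thus expresses $\mathcal{G}$ as an amalgam of finitely many finite groups over finitely many finite groups; in particular, $\mathcal{G}$ would be finitely presented. This contradicts Theorem~\ref{Main theorem: KM group finitely presented}. Hence the extension theorem must fail, which is the content of Theorem~\ref{Theorem: extension theorem}.

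The main obstacle is auditing \cite{AM97} to confirm that condition (co) enters nowhere in the derivation of the Curtis-Tits presentation except through its use as a hypothesis of the extension theorem of \cite{MR95}; this is a careful but essentially mechanical reading. An alternative, more geometric route would be to use Corollary~\ref{Main result: Existence} to produce, among the uncountably many groups of $(4,4,4)$-Kac-Moody type over $\FF_2$, two whose associated twin buildings are non-isomorphic but have isomorphic $E_2$-foundations (each foundation being obtained by gluing copies of the unique thick generalized quadrangle of order~$2$ along the Coxeter pattern of $(4,4,4)$); the extension theorem would then force an isomorphism of twin buildings, a contradiction. This route, however, requires the extra step of promoting non-isomorphism of groups of Kac-Moody type to non-isomorphism of their associated twin buildings, and so appears less direct than the argument via non-finite-presentation sketched above.
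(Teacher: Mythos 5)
Your main route differs genuinely from the paper's, but it has a gap that you acknowledge and, I think, underestimate. You derive Theorem~\ref{Theorem: extension theorem} from Theorem~\ref{Main theorem: KM group finitely presented} by asserting that, granting the extension theorem, the argument of \cite{AM97} delivers the Curtis--Tits presentation of the Kac--Moody group of type $(4,4,4)$ over $\FF_2$, because condition (co) enters \cite{AM97} only as the hypothesis under which the extension theorem of \cite{MR95} is available. This is not a mechanical check: \cite{AM97} deduces the amalgam presentation through a covering-theory argument for chamber systems and uses further combinatorial input, and whether (co) is needed there independently of the extension step is exactly what must be settled before the reduction is valid. The paper itself signals that the author does not regard this implication as established: the introduction presents the local-to-global principle and the Curtis--Tits presentation as two separate open questions over small fields, both answered here, and the proof of Theorem~\ref{Theorem: extension theorem} does not reference Theorem~\ref{Main theorem: KM group finitely presented} at all. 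If the implication ``extension theorem $\Rightarrow$ finitely presented'' were already in the literature, the paper's careful twin-building argument would be a needless detour.

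The paper's actual proof is close to your ``alternative, more geometric route'' but sidesteps the difficulty you flag. It does \emph{not} first establish that two twin buildings are non-isomorphic and then derive a contradiction; instead it takes two RGD-systems $\mathcal{D},\mathcal{D}'$ associated to distinct integrable commutator blueprints from Theorem~\ref{Maintheorem}, builds by hand an isometry $\phi$ on $E_2(c_+)\cup\{d\}$, invokes the hypothetical extension theorem to extend $\phi$ to an isometry $\Psi\colon\Delta\to\Delta'$, normalizes $\Psi$ so that it respects the distinguished twin apartments and base chambers, and then reads off an isomorphism $\Psi_0\colon\Aut(\Delta)\to\Aut(\Delta')$ carrying $U_\alpha$ to $U_\alpha'$. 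The contradiction comes directly from $M(\mathcal{D})_{\alpha,\beta}^H\neq M(\mathcal{D}')_{\alpha,\beta}^H$ together with the uniqueness of commutator expressions (\cite[Cor.~8.34(1)]{AB08}). This removes exactly the ``extra step of promoting non-isomorphism of groups to non-isomorphism of twin buildings'' that you identify as a weakness — and note that the paper's own remark (citing \cite{BCM21}) records that the \emph{positive} buildings here are in fact all isomorphic, so one must be careful about what non-isomorphism statement one is trying to prove. Your alternative route, carried out along the paper's lines, is the sound one; your main route should be regarded as conditional on the claimed audit of \cite{AM97}, which is a real gap rather than a formality.
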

\begin{proof}
	Let $\mathcal{M}, \mathcal{M}'$ be two different integrable commutator blueprints as constructed in Theorem \ref{Maintheorem} and let $\mathcal{D} = (G, (U_{\alpha})_{\alpha \in \Phi})$, $\mathcal{D}' = (G', (U_{\alpha}')_{\alpha \in \Phi})$ be their associated RGD-systems. We let $\Delta = \Delta(\mathcal{D})$ and $\Delta' = \Delta(\mathcal{D}')$ be the corresponding twin buildings and let $(c_+, c_-)$ (resp.\ $(c_+', c_-')$) be the distinguished pair of opposite chambers of $\Delta$ (resp.\ $\Delta'$). Note that every residue $R$ of $\Delta$ or of $\Delta'$ of rank $2$ is isomorphic to the generalized quadrangle of order $(2, 2)$, i.e.\ to the building which is associated with the group $C_2(2)$. For each $s\in S$ we fix an order on $R_{\{s\}}(c_+) = \{ c_0 := c_+, c_1, c_2 \}$ and on $R_{\{s\}}(c_+') = \{ c_0' := c_+', c_1', c_2' \}$. Note that the mapping $\phi_s: R_{\{s\}}(c_+) \to R_{\{s\}}(c_+'), c_i \mapsto c_i'$ is a bijection and hence an isometry.
	
	\emph{Claim: Let $s\neq t \in S$ and $J := \{s, t\}$. There exists an isometry $\phi_J: R_J(c_+) \to R_J(c_+')$ with $\phi_J \vert_{R_{\{s\}}(c_+)} = \phi_s$ and $\phi_J \vert_{R_{\{t\}}(c_+)} = \phi_t$.}
	
	Using the fact that the automorphism group of the generalized quadrangle of order $(2, 2)$ acts transitive on the chambers, we obtain an isometry $R_J(c_+) \to R_J(c_+')$ mapping $c_+$ onto $c_+'$. Using the \emph{root automorphisms} (if necessary), we obtain an isometry $\phi_J: R_J(c_+) \to R_J(c_+')$ with $\phi_J \vert_{R_{\{s\}}(c_+)} = \phi_s$ and $\phi_J \vert_{R_{\{s\}}(c_+)} = \phi_s$. Note that the root automorphisms which act non-trivially on $R_{\{s\}}(c_+')$ fix $R_{\{t\}}(c_+')$ pointwise.
	
	Denote by $E_2(c)$ the union of all rank $2$ residues containing $c$. Using the claim we obtain a bijection $\phi_2: E_2(c_+) \to E_2(c_+')$ such that for all $s\neq t \in S$ we have $\phi_2 \vert_{R_{\{s, t\}}(c_+)} = \phi_{\{s, t\}}$. It is easy to see that $\phi_2$ is an isometry (e.g.\ \cite[Proposition~$4.2.4$]{WeDiss21}). It is well-known that one can fine $d \in c_+^{\mathrm{op}}, d' \in (c_+')^{\mathrm{op}}$ such that $\phi_2$ extends to an isometry $\phi: E_2(c_+) \cup \{d\} \to E_2(c_+') \cup \{d'\}$ (for a proof see \cite[Proposition~$7.1.6$]{WeDiss21}).
	
	Assume that the extension theorem holds for thick $2$-spherical twin buildings. Then we can extend $\phi$ to an isometry $\Psi: \Delta \to \Delta'$. Let $\Sigma = \Sigma(c_+, c_-)$ (resp.\ $\Sigma' = \Sigma(c_+', c_-')$) be the twin apartment in $\Delta$ (resp.\ $\Delta'$). Let $g\in G$ be such that $g(\Sigma) = \Sigma(c_+, d)$ and $g(c_+) = c_+$ and let $g' \in G'$ be such that $g'(\Sigma') = \Sigma(c_+', d')$ and $g(c_+') = c_+'$. Then $\Psi' := (g')^{-1} \circ \Phi \circ g$ is an isometry from $\Delta$ to $\Delta'$ as well. Note that $\Psi'(\Sigma) = \Sigma'$ and $\Psi'(c_+) = c_+'$. Moreover, $\Psi_0: \Aut(\Delta) \to \Aut(\Delta'), f \mapsto \Psi' \circ f \circ (\Psi')^{-1}$ is an isomorphism which maps $U_{\alpha}$ onto $U_{\alpha}'$ for every $\alpha \in \Phi$. Let $(H, \alpha, \beta) \in \mathcal{I}$ (cf.\ Section~\ref{Section: commutator blueprint of type ((W, S), D)}) with $M(\mathcal{D})_{\alpha, \beta}^H \neq M(\mathcal{D}')_{\alpha, \beta}^H$. Then we have the following:
	\allowdisplaybreaks
	\begin{align*}
		\prod_{\gamma \in M(\mathcal{D})_{\alpha, \beta}^H} u_{\gamma}' = \Psi_0\left( \prod_{\gamma \in M(\mathcal{D})_{\alpha, \beta}^H} u_{\gamma} \right) = \Psi_0([u_{\alpha}, u_{\beta}]) = [u_{\alpha}', u_{\beta}'] = \prod_{\gamma \in M(\mathcal{D}')_{\alpha, \beta}^H} u_{\gamma}'
	\end{align*}
	But this is a contradiction to \cite[Corollary $8.34(1)$]{AB08}. Thus  the extension theorem does not hold for these two twin buildings.
\end{proof}

\subsection*{Finiteness properties}

Let $\mathcal{D} = (G, (U_{\alpha})_{\alpha \in \Phi})$ be an RGD-system of irreducible $2$-spherical type $(W, S)$ and of rank at least $2$. The \textit{Steinberg group} associated with $\mathcal{D}$ is the group $\widehat{G}$ which is the direct limit of the inductive system formed by the groups $U_{\alpha}$ and $U_{[\alpha, \beta]} := \langle U_{\gamma} \mid \gamma \in [\alpha, \beta] \rangle$ for all prenilpotent pairs $\{ \alpha, \beta \} \subseteq \Phi$. For each $\alpha \in \Phi$ we denote the canonical image of $U_{\alpha}$ in $\widehat{G}$ by $\widehat{U}_{\alpha}$. It follows from \cite[Theorem $3.10$]{Ca06} that $\widehat{D} = (\widehat{G}, (\widehat{U}_{\alpha})_{\alpha \in \Phi})$ is an RGD-system of type $(W, S)$ and the kernel of the canonical homomorphism $\widehat{G} \to G$ is contained in the center of $\widehat{G}$.

Suppose that $\mathcal{D}$ is over $\FF_2$ and that $G$ is generated by its root groups. Then $\widehat{D}$ is over $\FF_2$ as well and $\widehat{G}$ is generated by its root groups. Now it follows from \cite[Corollary $8.79$ and remark thereafter]{AB08} that $\bigcap_{\alpha \in \Phi} N_{\widehat{G}}(\widehat{U}_{\alpha}) = \langle m(u)^{-1} m(v) \mid u, v \in \widehat{U}_{\alpha_s} \backslash \{1\}, s\in S \rangle$. As $\widehat{\mathcal{D}}$ is over $\FF_2$, we have $\widehat{U}_{\alpha_s} \backslash \{1\} = \{ u_s \}$. This implies $\bigcap_{\alpha \in \Phi} N_{\widehat{G}}(\widehat{U}_{\alpha}) = 1$. As $Z(\widehat{G}) \leq \bigcap_{\alpha \in \Phi} N_{\widehat{G}}(\widehat{U}_{\alpha})$, the kernel of $\widehat{G} \to G$ is trivial and, hence, $\widehat{G} \to G$ is an isomorphism. In particular, we obtain a presentation of $G$.

\begin{lemma}\label{Lemma: finite presentation with finite generating set}
	Let $G = \langle X \mid R \rangle$ be a finitely presented group with $\vert X \vert <\infty$. Then there exists a finite subset $F \subseteq R$ with $G = \langle X \mid F \rangle$.
\end{lemma}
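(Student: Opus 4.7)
The plan is a standard Tietze-transformation argument. Since $G$ is finitely presented, I would first fix some finite presentation $\langle Y \mid S \rangle$ of $G$, with both $Y$ and $S$ finite, and denote the canonical surjections $\pi_R \colon F(X) \twoheadrightarrow G$ and $\pi_Y \colon F(Y) \twoheadrightarrow G$, with kernels $N(R)$ and $N(S)$ respectively (where $N(\cdot)$ denotes normal closure in the corresponding free group). Because $F(X)$ and $F(Y)$ are free, I can lift to homomorphisms $\phi \colon F(Y) \to F(X)$ and $\psi \colon F(X) \to F(Y)$ satisfying $\pi_R \circ \phi = \pi_Y$ and $\pi_Y \circ \psi = \pi_R$, simply by choosing preimages on free generators.

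Next, for each $s \in S$ the element $\phi(s)$ lies in $N(R)$, so it can be written as a finite product of conjugates of elements of $R^{\pm 1}$; I collect the relators appearing in these chosen expressions into a finite set $F_1 \subseteq R$, which is finite because $S$ is. Similarly, for each $x \in X$ the element $\phi(\psi(x))\, x^{-1}$ lies in $N(R)$, and I collect the finitely many relators from $R$ needed to express these into a finite set $F_2 \subseteq R$, which is finite because $X$ is. Set $F := F_1 \cup F_2$. By design, $\phi(s) \in N(F)$ for every $s \in S$, and $\phi(\psi(x)) \equiv x \pmod{N(F)}$ for every $x \in X$.

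It remains to show that the canonical quotient $\rho \colon H := F(X)/N(F) \twoheadrightarrow G$ (which exists since $N(F) \subseteq N(R)$) is an isomorphism. Surjectivity is immediate. For injectivity, since $\phi(S) \subseteq N(F)$, the composition $\bar\pi_F \circ \phi$ annihilates $S$ and hence descends to a homomorphism $\bar\phi \colon G = F(Y)/N(S) \to H$. A direct check on generators yields $\rho \circ \bar\phi = \mathrm{id}_G$ (using $\pi_R \circ \phi = \pi_Y$), and $\bar\phi \circ \rho = \mathrm{id}_H$ on the class of each $x \in X$, since that class maps to $\bar\pi_F(\phi(\psi(x))) = \bar\pi_F(x)$ by choice of $F_2$. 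There is no real obstacle in this argument beyond the bookkeeping of lifts $\phi, \psi$ and verifying the two triangle identities on the respective generating sets; the finiteness of $X$ and $S$ is what makes the construction of $F$ possible.
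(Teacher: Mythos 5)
Your proof is correct and handles all the bookkeeping carefully, but it takes a different route from the paper. The paper's proof is shorter: it cites B.~H.~Neumann's theorem (Corollary~12 of \cite{Ne37}), which directly produces a finite presentation $\langle X \mid E \rangle$ of $G$ \emph{on the same generating set} $X$, with $E$ finite. Then, since each $e \in E$ lies in the normal closure $\langle\langle R \rangle\rangle$, one can pick a finite $F_e \subseteq R$ with $e \in \langle\langle F_e \rangle\rangle$, set $F := \bigcup_{e\in E} F_e$, and observe that the chain of epimorphisms $\langle X \mid R \rangle \xrightarrow{\cong} \langle X \mid E \rangle \twoheadrightarrow \langle X \mid F \rangle \twoheadrightarrow \langle X \mid R \rangle$ restricts to the identity on $X$, forcing all arrows to be isomorphisms. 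Your argument, by contrast, does not assume the auxiliary finite presentation $\langle Y \mid S\rangle$ uses the same generating set; instead you carry out the full Tietze-style transfer with lifts $\phi, \psi$ between the free groups, collect the finitely many relators needed to kill $\phi(S)$ and to witness $\phi(\psi(x))x^{-1} \in N(R)$, and verify the two triangle identities. In effect you re-prove Neumann's result from scratch as part of the argument, which makes your proof more self-contained and slightly longer, whereas the paper outsources that step to the cited reference and then only needs the simple epimorphism-composition observation.
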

\begin{proof}
	By \cite[Corollary $12$]{Ne37} there exists a finite set $E$ of relations with $G = \langle X \mid E \rangle$. Now for each $e\in E$ there exists a finite subset $F_e \subseteq R$ with $e \in \langle\langle F_e \rangle\rangle$. For $F := \bigcup_{e\in E} F_e \subseteq R$ we have $E \subseteq \langle\langle F \rangle\rangle$. We obtain the following epimorphisms:
	\[ \langle X \mid R \rangle \overset{\cong}{\to} \langle X \mid E \rangle \twoheadrightarrow \langle X \mid F \rangle \twoheadrightarrow \langle X \mid R \rangle \]
	Since the concatenation maps each $x\in X$ to itself, all epimorphisms must be isomorphisms and the claim follows.
\end{proof}

\begin{theorem}\label{Theorem: Kac-Moody group of type (4,4,4) over F_2 not finitely presented}
	Kac-Moody groups of type $(4, 4, 4)$ over $\FF_2$ are not finitely presented.
\end{theorem}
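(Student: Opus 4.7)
The plan is to assume $\mathcal{G}$ is finitely presented and to derive a contradiction from the bijectivity of the product map (Remark~\ref{Remark: Product mapping is a bijection}) by comparing the commutator blueprint $\mathcal{M}$ of $\mathcal{G}$ with a modified Weyl-invariant blueprint $\mathcal{M}'$ furnished by Theorem~\ref{Main result: Theorem integrable Weyl-invariant}. By the discussion preceding Lemma~\ref{Lemma: finite presentation with finite generating set}, $\mathcal{G}$ is isomorphic to its Steinberg group, so it admits the canonical presentation with generators $\{u_\alpha : \alpha\in\Phi\}$ subject to $u_\alpha^2=1$ together with the commutator relations $[u_\alpha,u_\beta] = \prod_{\gamma\in M_{\alpha,\beta}^{H}} u_\gamma$ for all $(H,\alpha,\beta)\in\mathcal{I}$ dictated by $\mathcal{M}$. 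Exploiting the $m(u)$-conjugations from axiom (RGD2), one can write every non-simple $u_\alpha$ as a fixed word $w_\alpha$ in the six simple root elements $\{u_{\pm s} : s\in S\}$; this word depends only on the Coxeter system and not on the blueprint, because over $\FF_2$ each $m(u_{\alpha_s})$ is forced to be a specific product of $u_{\pm\alpha_s}$. After this substitution $\mathcal{G}$ is presented on the finite set $X := \{u_{\pm s} : s\in S\}$ by a family of relations $R$, and Lemma~\ref{Lemma: finite presentation with finite generating set} supplies a finite subset $F\subseteq R$ with $\mathcal{G}=\langle X\mid F\rangle$.

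Next I would use Theorem~\ref{Main result: Theorem integrable Weyl-invariant} together with the construction in \cite[Theorem~D]{BiConstruction} to choose a Weyl-invariant, hence integrable, blueprint $\mathcal{M}' = \bigl((M')_{\alpha,\beta}^H\bigr)$ of type $(4,4,4)$ that agrees with $\mathcal{M}$ on every triple whose commutator relation appears in $F$ but differs from $\mathcal{M}$ at some triple $(H_0,\alpha_0,\beta_0)\in\mathcal{I}$. The abundance of such $\mathcal{M}'$ comes from the uncountable family of blueprints in \cite[Theorem~D]{BiConstruction}. Let $\mathcal{D}'=(G',(U'_\gamma)_{\gamma\in\Phi})$ be an RGD-system realising $\mathcal{M}'$. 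Every relation in $F$ still holds in $G'$, because only commutator data on which $\mathcal{M}$ and $\mathcal{M}'$ agree is involved; so the assignment $u_{\pm s}\mapsto u'_{\pm s}$ extends to a surjective homomorphism $\phi\colon\mathcal{G}\twoheadrightarrow G'$, and because the words $w_\alpha$ are blueprint-independent, $\phi(u_\alpha)=u'_\alpha$ for every $\alpha\in\Phi$.

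The contradiction arises by evaluating $\phi$ at $(H_0,\alpha_0,\beta_0)$. Set $A=\prod_{\gamma\in M_{\alpha_0,\beta_0}^{H_0}} u_\gamma$ and $B=\prod_{\gamma\in (M')_{\alpha_0,\beta_0}^{H_0}} u_\gamma$, and let $A',B'$ denote the analogous ordered products in the primed generators. Inside $\mathcal{G}$ one has $[u_{\alpha_0},u_{\beta_0}]=A$; since $F$ presents $\mathcal{G}$, this identity is a consequence of $F$ in the free group on $X$, so applying $\phi$ (equivalently, pushing the same consequence of $F$ down to $G'$) yields $[u'_{\alpha_0},u'_{\beta_0}]=A'$ in $G'$. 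But the defining commutator relation of $\mathcal{M}'$ gives $[u'_{\alpha_0},u'_{\beta_0}]=B'$, whence $A'=B'$ inside the subgroup $U'_w$ of $G'$ associated with the minimal gallery $H_0$. Remark~\ref{Remark: Product mapping is a bijection} applied to $\mathcal{D}'$ says that the product map $U'_{\alpha_1}\times\cdots\times U'_{\alpha_k}\to U'_w$ along $H_0$ is a bijection, which forces $M_{\alpha_0,\beta_0}^{H_0}=(M')_{\alpha_0,\beta_0}^{H_0}$, contradicting the choice of $\mathcal{M}'$.

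The main obstacle will be making the first step rigorous: verifying that the words $w_\alpha$ really are independent of the commutator blueprint (so that $\phi$ is unambiguously defined on every $u_\alpha$) and carefully bookkeeping the rewriting so that each relation of $R$ can be attributed to a single triple $(H,\alpha,\beta)\in\mathcal{I}$, which is what allows one to speak of ``the triples occurring in $F$'' and to choose $\mathcal{M}'$ accordingly. Once this is in place, the passage through $\phi$ and the bijectivity statement of Remark~\ref{Remark: Product mapping is a bijection} close the argument.
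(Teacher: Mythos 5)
Your overall strategy — assume a finite presentation, isolate a finite relation set $F$ via Lemma~\ref{Lemma: finite presentation with finite generating set}, build a second RGD-system over $\FF_2$ whose blueprint agrees with that of $\mathcal{G}$ on everything touched by $F$ but differs somewhere, map $\mathcal{G}$ to it, and read off a contradiction — is exactly the strategy the paper uses. You also correctly identify that over $\FF_2$ the elements $m(u_{\alpha_s})$ are forced, so the rewriting of each $u_\alpha$ as a word in $\{u_{\pm\alpha_s}\}$ (equivalently, in $\{u_{\alpha_s},\tau_s\}$) does not depend on the blueprint, which is what makes the comparison homomorphism $\phi$ well defined on all root elements.

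The genuine gap is the existence of $\mathcal{M}'$. You want a Weyl-invariant blueprint agreeing with $\mathcal{M}_\mathcal{G}$ ``on every triple whose commutator relation appears in $F$'' and differing at some $(H_0,\alpha_0,\beta_0)$, and you appeal vaguely to the uncountable family of \cite[Theorem~D]{BiConstruction}. Two things are missing here. First, after the Tietze rewriting the elements of $F$ are just words in $\{u_{\alpha_s},\tau_s\}$, and the notion of ``the triples occurring in $F$'' is not well defined; you flag this as bookkeeping, but it is really a matter of choosing the right invariant. The paper replaces the list of triples by the numerical bound $k:=\max\{k_\alpha: u_\alpha\text{ appears in some }f\in F\}$, which is unambiguous and is precisely the form of control the construction can deliver. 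Second, and more importantly, you never verify that a Weyl-invariant blueprint with the required local agreement and global disagreement exists: Weyl-invariance is a rigid constraint, and you cannot simply perturb $\mathcal{M}_\mathcal{G}$ at one triple. The paper's Corollary~\ref{Corollary: D_n} (derived from Theorem~\ref{Maintheorem} and a distance estimate via Lemma~\ref{projgal}) is exactly the device that produces, for each $n$, an RGD-system $\mathcal{D}_n$ over $\FF_2$ in which nested pairs commute whenever witnessed at depth $\le n$ but some nested pair fails to commute; this is the nontrivial content your proposal is missing. With that in hand, the two contradictions are interchangeable: the paper evaluates $1=\phi([U_\delta,U_\gamma])=[V_\delta,V_\gamma]\neq 1$ directly on the nested pair, while you would instead invoke the bijectivity of Remark~\ref{Remark: Product mapping is a bijection} to force $M^{H_0}_{\alpha_0,\beta_0}=(M')^{H_0}_{\alpha_0,\beta_0}$ — both are fine once $\mathcal{D}'$ exists. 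So: right idea, but the heart of the proof is the construction you deferred, and it is Corollary~\ref{Corollary: D_n} that you need to supply.
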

\begin{proof}
	Let $\mathcal{D} = (\mathcal{G}, (U_{\alpha})_{\alpha \in \Phi})$ be the RGD-system associated with a split Kac-Moody group of type $(4, 4, 4)$ over $\FF_2$. By \cite[Example~2.8]{BiRGDandTreeproducts} we have $[U_{\alpha}, U_{\beta}] = 1$ for all $\alpha, \beta \in \Phi$ with $\alpha \subseteq \beta$. As the Steinberg group associated with $\mathcal{D}$ yields a presentation of $G$, we deduce $\mathcal{G} = \langle X \mid R \rangle$, where $X = \{ u_{\alpha} \mid \alpha \in \Phi \}$ and $R = \{ u_{\alpha}^2 \mid \alpha \in \Phi \} \cup \{ [u_{\alpha}, u_{\beta}] v^{-1} \mid \{ \alpha, \beta \} \text{ prenilpotent pair}, v\in U_{(\alpha, \beta)} \}$. We apply Tietze-transformations to modify this presentation. We add $\tau_s$ to the set of generators and $\tau_s = u_{-\alpha_s} u_{\alpha_s} u_{-\alpha_s}$ to the set of relations. Since $\tau_s^2 = 1$ in $\mathcal{G}$, we add this relation to the set of relations. For $\alpha \in \Phi$ there exist $w\in W$ and $s\in S$ with $\alpha = w\alpha_s$. For $w\in W$ there exist $s_1, \ldots, s_k \in S$ with $w = s_1 \cdots s_k$. Note that $u_{\alpha} = u_{\alpha_s}^{\tau_k \cdots \tau_1}$ is a relation in $\mathcal{G}$, where $\tau_i = \tau_{s_i}$. Thus we can add these relations to the set of relations. Now the relations $u_{\alpha}^2$ are consequences of $u_{\alpha_s}^2$ for $\alpha \in \Phi \backslash \{ \alpha_s \mid s\in S \}$. Thus we can delete all relations $u_{\alpha}^2$ for $\alpha \in \Phi \backslash \{ \alpha_s \mid s \in S \}$. Moreover, we delete all commutation relations $[u_{\alpha}, u_{\beta}] = v$ with $\{ \alpha, \beta \} \not\subseteq \Phi_+$. This is possible, as the commutation relations are Weyl-invariant and for each prenilpotent pair $\{ \alpha, \beta \}$ there exists $w\in W$ with $\{ w\alpha, w\beta \} \subseteq \Phi_+$. As $u_{\alpha} = u_{\alpha_s}^{\tau_k \cdots \tau_1}$ is a relation, we replace in each relation every $u_{\alpha}$ by the corresponding element $u_{\alpha_s}^{\tau_k \cdots \tau_1}$. Now we delete all generators $u_{\alpha}$ with $\alpha \in \Phi \backslash \{ \alpha_s \mid s\in S \}$ and the corresponding relations $u_{\alpha} = u_{\alpha_s}^{\tau_k \cdots \tau_1}$. Note that the relation $\tau_s = u_{\alpha_s}^{\tau_s} u_{\alpha_s} u_{\alpha_s}^{\tau_s}$ is equivalent to $(u_{\alpha_s} \tau_s)^3 = 1$. Thus we have the following presentation, where $u_{\alpha}$ has to be understood as $u_{\alpha_s}^{\tau_k \cdots \tau_1}$:
	\[ \mathcal{G} = \left\langle \{ u_{\alpha_s}, \tau_s \mid s\in S \} \;\middle|\; \begin{cases*}
		\forall s\in S: u_{\alpha_s}^2 = \tau_s^2 = (u_{\alpha_s} \tau_s)^3 = 1 \\
		\forall \{ \alpha, \beta \} \subseteq \Phi_+ \text{ prenilpotent:} \\
		\qquad [u_{\alpha}, u_{\beta}] = v \text{ for suitable } v \in U_{(\alpha, \beta)}
	\end{cases*} \right\rangle
	\]	
	Now we assume that $\mathcal{G}$ is finitely presented. By Lemma~\ref{Lemma: finite presentation with finite generating set} there exists a finite set $F$ of the set of relations such that $\mathcal{G} = \langle \{ u_{\alpha_s}, \tau_s \mid s\in S \} \mid F \rangle$. Let $k := \max \{ k_{\alpha} \mid u_{\alpha} \text{ appears in some } f \in F \}$. We consider the RGD-systems $\mathcal{D}_k = (G, (V_{\alpha})_{\alpha \in \Phi})$ obtained from Corollary \ref{Corollary: D_n}. Then $[V_{\alpha}, V_{\beta}] = 1$ for $\alpha, \beta \in \Phi_+$ with $\alpha \subseteq \beta$, if there exists $w\in W$ with $\ell(w) \leq k$ and $w \in (-\alpha) \cap (-\beta)$. Moreover, $[V_{\delta}, V_{\gamma}] \neq 1$ for some $\delta \subsetneq \gamma \in \Phi_+$. It is not hard to see that we obtain a homomorphism $\phi: \mathcal{G} \to \mathcal{D}_k$ from the finite presentation to $\mathcal{D}_k$ such that $u_{\alpha_s} \mapsto u_{\alpha_s}, \tau_s \mapsto \tau_s$ (recall that for $\alpha \subsetneq \beta$ we have $[U_{\alpha}, U_{\beta}] = 1$ in $\mathcal{G}$). The commutation relations of $\mathcal{G}$ and $\mathcal{D}_k$ imply $1 = \phi(1) = \phi([U_{\delta}, U_{\gamma}]) = [\phi(U_{\delta}), \phi(U_{\gamma})] = [V_{\delta}, V_{\gamma}] \neq 1$. This is a contradiction and the Kac-Moody group is not finitely presented.
\end{proof}

\begin{theorem}\label{Uplusnotfinitelygenerated}
	Let $\mathcal{D} = (G, (U_{\alpha})_{\alpha \in \Phi})$ be an RGD-system of type $(4, 4, 4)$ over $\FF_2$. Then the group $U_+$ is not finitely generated.
\end{theorem}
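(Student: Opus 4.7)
The strategy is to exhibit $U_+$ as a strictly ascending union of finitely generated subgroups, which forces $U_+$ itself to be infinitely generated.

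Set $\mathcal{M} := \mathcal{M}_{\mathcal{D}}$ (cf.\ Example~\ref{expintrgrable}). Since $\mathcal{M}$ is integrable it is Weyl-invariant, and in particular locally Weyl-invariant, so the machinery of Sections~\ref{Section: Natural subgroups}--\ref{Section: Main result} applies. Lemma~\ref{Lemma: U_+ isomorphic to G} provides $U_+ \cong \varinjlim G_i$, and by Theorem~\ref{Ginatural} combined with Proposition~\ref{Giinjective}, every canonical homomorphism $G_i \to G_{i+1}$ is injective. We thus obtain an ascending chain $G_0 \leq G_1 \leq \cdots \leq U_+$ with $U_+ = \bigcup_{i \geq 0} G_i$. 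Because the indexing sets $C_i, D_i$ are finite and each $U_w, V_{w'}$ is a finite $2$-group, each $G_i$ is itself a finitely generated group. If $U_+$ were finitely generated, any finite generating set would lie in some $G_I$, forcing $U_+ = G_I$; the theorem would then follow from proving the strict inclusion $G_i \subsetneq G_{i+1}$ for every $i \geq 0$.

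To detect the strict inclusion, I would use a size bound for the \emph{finite} subgroups of $G_i$. Iterating the decomposition $G_{i+1} \cong \star_{G_i} B_P$ of Proposition~\ref{Giinjective} together with $B_P = G_i \star_{H_P} G_P$, one unfolds each $G_i$ as the fundamental group of a finite graph of finite groups whose atomic vertex groups are precisely the finite $2$-groups $U_w$ and $V_{w'}$ entering the tree products $G_P, H_P$ for $P \in \mathcal{T}_j$, $j < i$. Their orders are of the form $2^{\ell(w)}$ respectively $2^{\ell(w')-1}$, and a direct inspection of the word lengths shows that the largest such atomic vertex group in $G_i$ is $V_{w_R r_{\{s,t\}} r r_{\{s,t\}}}$ for a residue $R \in \mathcal{T}_{i-1,1}$, which has order $2^{i+7}$. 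By classical Bass--Serre theory (a finite group acting on a tree without inversions fixes a vertex), every finite subgroup of $G_i$ is conjugate into an atomic vertex group, hence has order at most $2^{i+7}$.

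On the other hand, fix any $R \in \mathcal{T}_{i, 1}$ of type $\{s, t\}$. The tree product $G_R$ defined in Section~\ref{Section: Locally Weyl-invariant Commutator blueprints} contains the middle vertex group $V_{w_R r_{\{s, t\}} r r_{\{s, t\}}}$, which has order $2^{i+8}$. By Proposition~\ref{treeproducts} this vertex group embeds into $G_R$, and the injective composition $G_R \hookrightarrow B_R \hookrightarrow G_{i+1}$ from Proposition~\ref{Giinjective} embeds it further as a finite subgroup of $G_{i+1}$. Since $2^{i+8} > 2^{i+7}$, this subgroup is too large to lie inside $G_i$, which proves $G_i \subsetneq G_{i+1}$ and completes the argument.

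The main obstacle is precisely the size bound in the previous paragraph: one must carry out the unfolding supplied by Proposition~\ref{Giinjective} all the way down to the atomic vertex groups and keep careful track of the word lengths to verify that the maximal atomic vertex group in $G_i$ really is $V_{w_R r_{\{s, t\}} r r_{\{s, t\}}}$ with $R \in \mathcal{T}_{i-1, 1}$. Once this book-keeping is done, the Bass--Serre bound gives the required strict inequality at every step and the theorem follows.
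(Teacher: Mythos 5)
Your overall skeleton is exactly the paper's: use Lemma~\ref{Lemma: U_+ isomorphic to G}, Theorem~\ref{Ginatural} and Proposition~\ref{Giinjective} to realize $U_+$ as a strictly ascending union $\bigcup_i G_i$ of finitely generated subgroups, and note that a finitely generated $U_+$ would have to coincide with some $G_I$. The divergence is in how you justify the strict inclusion $G_i \subsetneq G_{i+1}$. The paper settles this with a one-line appeal to the construction, which is easily justified from the very decomposition you cite: by Proposition~\ref{Giinjective}, $G_{i+1} \cong \star_{G_i} B_P$ with $B_P = G_i \star_{H_P} G_P$, and by Corollary~\ref{AcapBisC} one has $G_i \cap G_P = H_P$ inside $B_P$. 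Since $H_P$ is a proper subgroup of $G_P$ (for instance, $J_{P,t} \cong H_P \star_{V_T} O_T$ by Lemma~\ref{Lemma: J_R,t cong H_R star V_T O_T} with $V_T$ a proper subgroup of $O_T$, and $J_{P,t}$ embeds in $G_P$ by Lemma~\ref{HRtoGRinjective}), any element of $G_P \setminus H_P$ witnesses $G_i \subsetneq B_P \leq G_{i+1}$. No detour through the orders of finite subgroups is required.

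Your Bass--Serre size-bound route is not wrong in spirit — each $G_i$ is indeed the fundamental group of a finite graph of finite $2$-groups, so finite subgroups are conjugate into vertex groups — but it is considerably heavier than needed, and you candidly flag that the crucial numerical bookkeeping (that the maximal atomic vertex group of $G_i$ has order exactly $2^{i+7}$) has not been carried out. That bound also needs a caveat at $i = 0$, where there is no $\mathcal{T}_{-1}$ (the actual maximum in $G_0$ is smaller than $2^{7}$, so the conclusion is unaffected, but the stated formula is not literally what you want), and one should check $\mathcal{T}_{i,1} \neq \emptyset$ for all $i$ or rephrase via $\mathcal{T}_{i,2}$. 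All of this is fixable, but it amounts to a long argument for a fact that the tree-product structure hands you immediately. Replacing the size-bound paragraph by the amalgam observation above recovers the paper's proof.
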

\begin{proof}
	The group $U_+$ is isomorphic to the direct limit of its subgroups $(U_w)_{w\in W}$ by \cite[Theorem $8.85$]{AB08}. We have shown in Lemma \ref{Lemma: U_+ isomorphic to G} that $U_+$ is isomorphic to the direct limit $G$ of the inductive system formed by the groups $G_i$. Moreover, the homomorphisms $G_i \to G_{i+1}$ are injective by Theorem \ref{Ginatural} and Proposition \ref{Giinjective}. Thus the homomorphisms $G_i \to G$ are injective by \cite[$1.4.9(iii)$]{Rob96}. By construction, the canonical homomorphism $G_i \to G_{i+1}$ is not surjective and, hence, $G_i \to G$ are not surjective as well. Assume that $U_+$ is finitely generated, i.e.\ $U_+ = \langle g_1, \ldots, g_n \rangle$. Since $U_+ = \langle u_{\alpha} \mid \alpha \in \Phi_+ \rangle$, there exists $i\in \NN$ with $U_+ = \langle U_w \mid w\in C_i \rangle$.	This implies $G = \langle U_w \mid w \in C_i \rangle = G_i$, i.e.\ the canonical homomorphism $G_i \to G$ is surjective. This is a contradiction and hence $U_+$ is not finitely generated.
\end{proof}

\appendix
\section{Figures}

We illustrate here all groups defined in Section~\ref{Section: Locally Weyl-invariant Commutator blueprints}.

\begin{figure}[h]
	\begin{minipage}{0.4\linewidth}
		\includegraphics[scale=0.7]{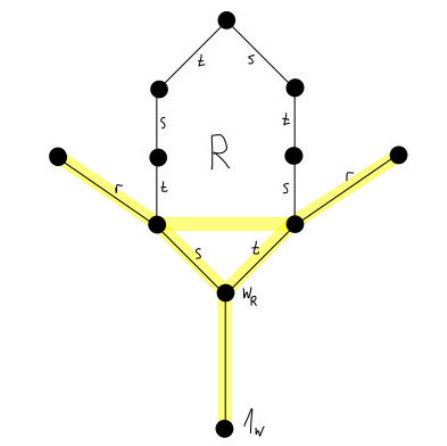}
		\caption{Illustration of the group $V_R$}
	\end{minipage}
	\begin{minipage}{0.4\linewidth}
		\includegraphics[scale=0.7]{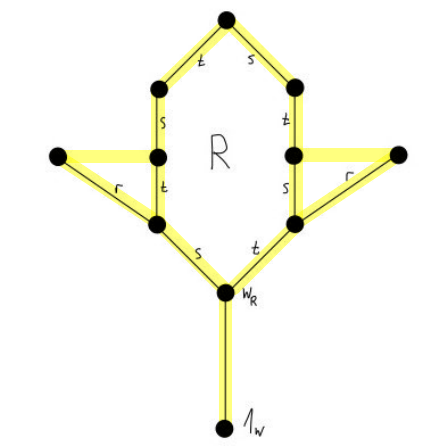}
		\caption{Illustration of the group $O_R$}
	\end{minipage}
\end{figure}

\begin{figure}[h]
	\centering
	\includegraphics[scale=0.6]{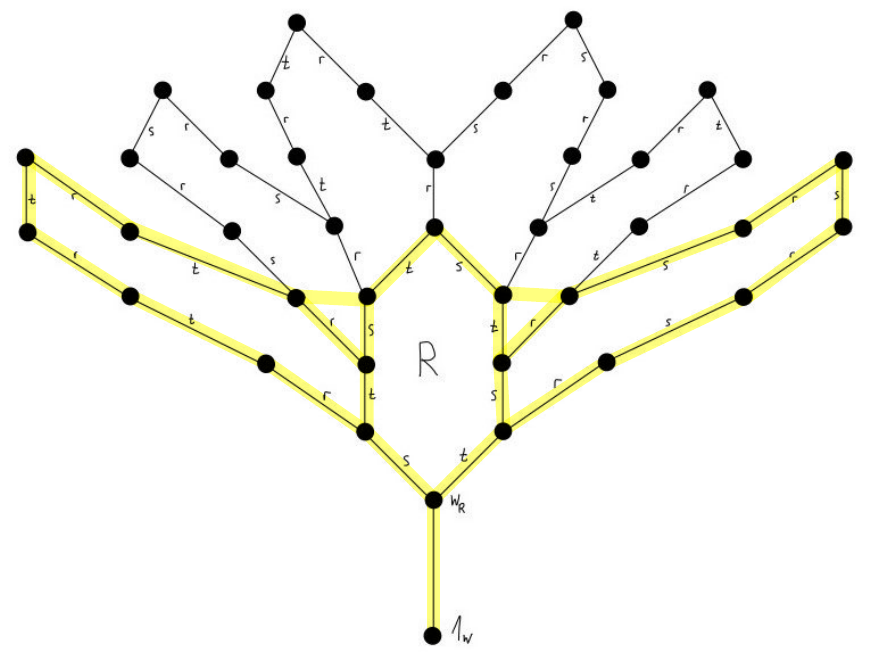}
	\caption{Illustration of the group $H_R$}
\end{figure}

\begin{figure}[h]
	\centering
	\includegraphics[scale=0.6]{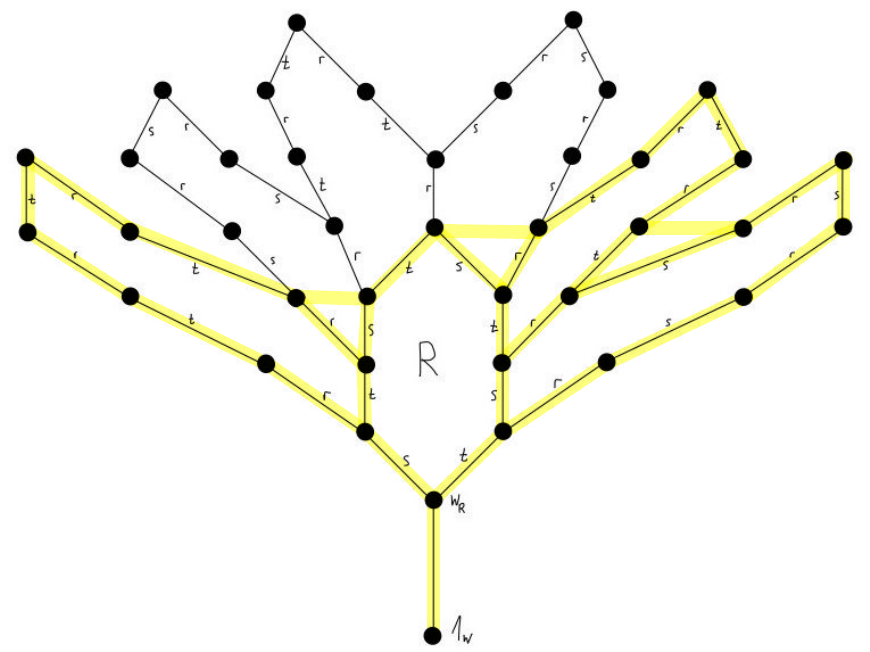}
	\caption{Illustration of the group $J_{R, t}$}
\end{figure}

\begin{figure}[h]
	\centering
	\includegraphics[scale=0.6]{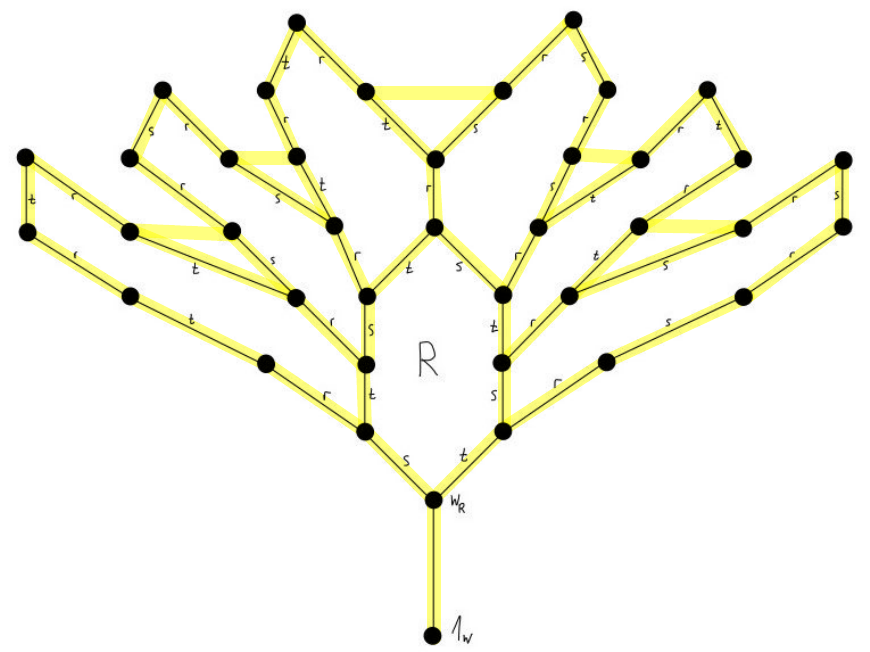}
	\caption{Illustration of the group $G_R$}
\end{figure}

\begin{figure}[h]
	\centering
	\includegraphics[scale=0.6]{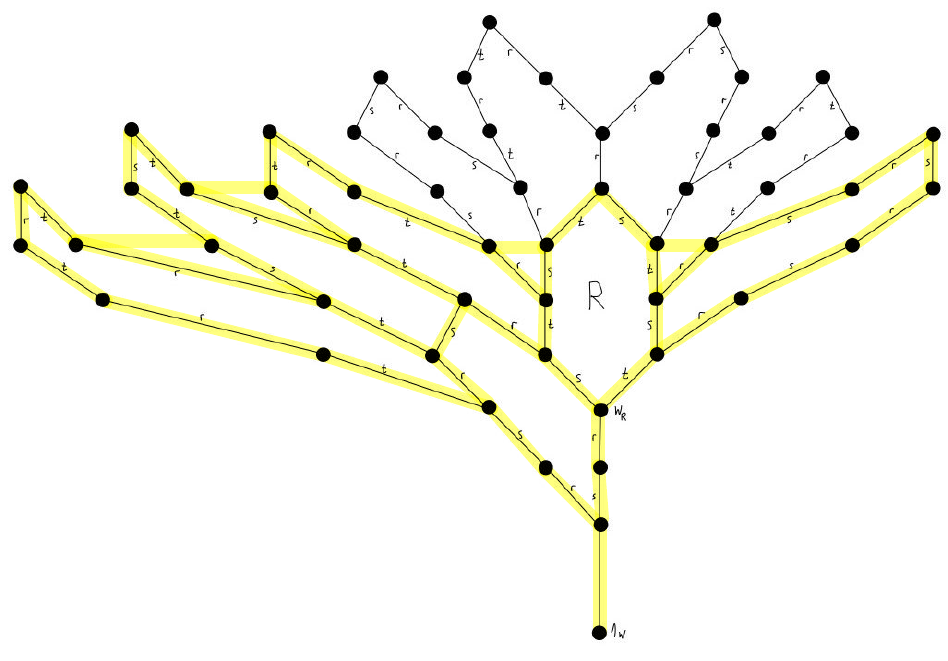}
	\caption{Illustration of the group $E_{R, s}$}
\end{figure}

\begin{figure}[h]
	\centering
	\includegraphics[scale=0.6]{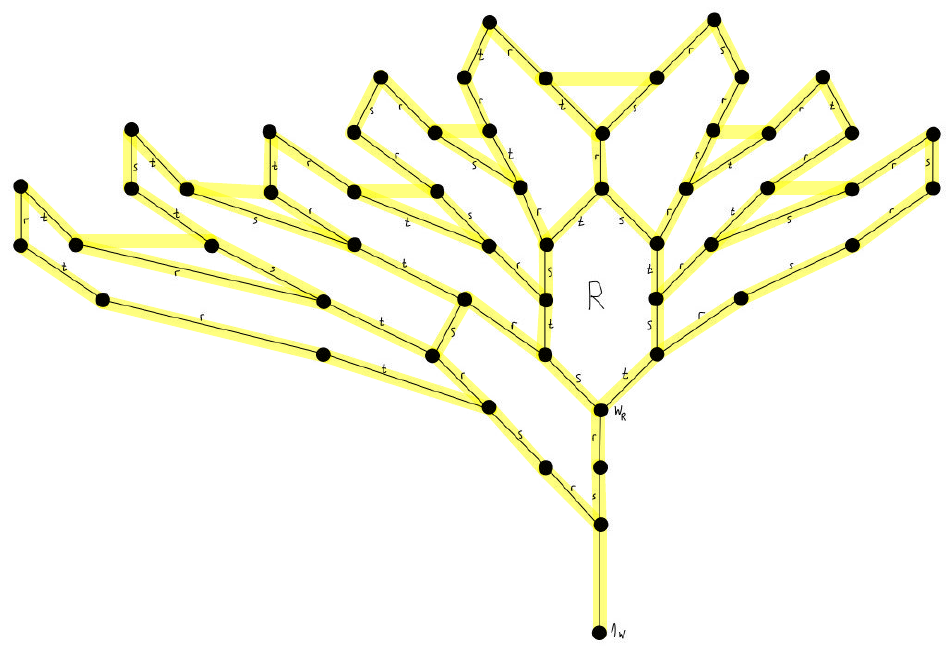}
	\caption{Illustration of the group $U_{R, s}$}
\end{figure}

\begin{figure}[h]
	\centering
	\includegraphics[scale=0.6]{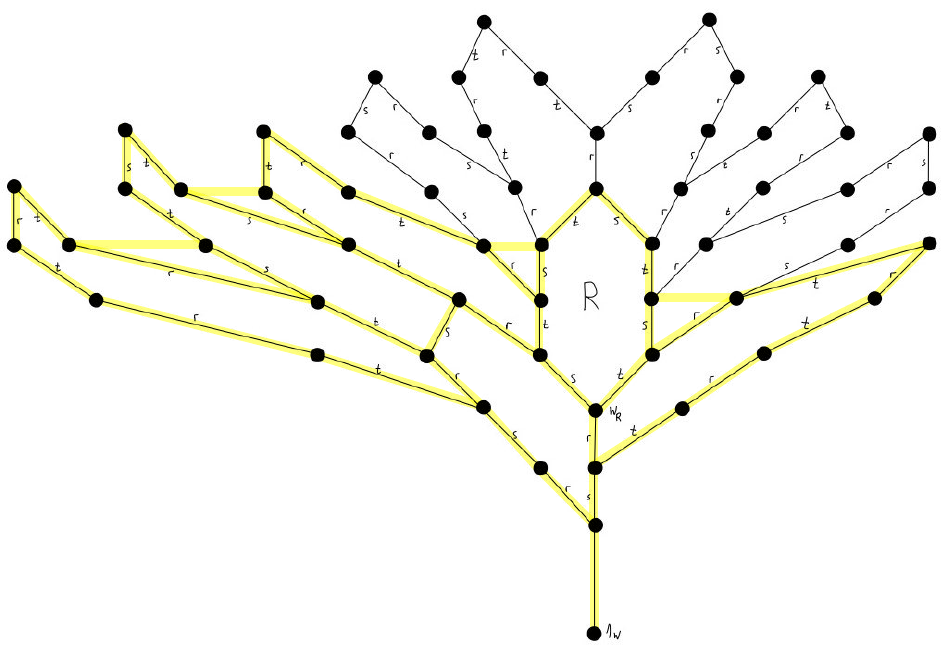}
	\caption{Illustration of the group $X_R$}
\end{figure}

\begin{figure}[h]
	\centering
	\includegraphics[scale=0.6]{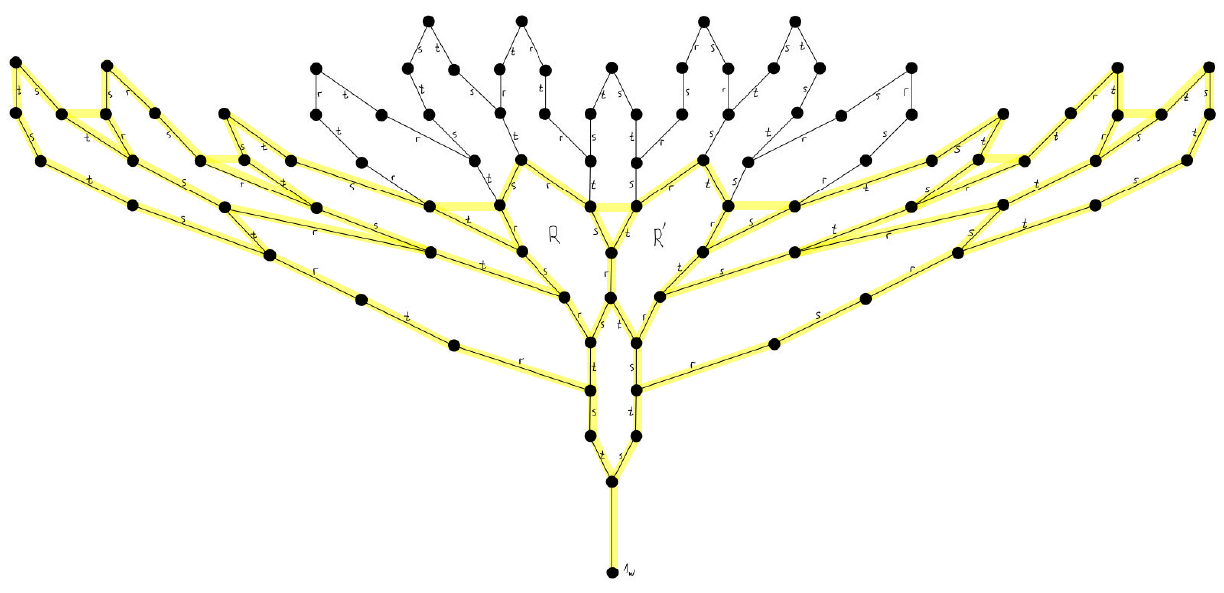}
	\caption{Illustration of the group $H_{\{R, R'\}}$}
\end{figure}

\begin{figure}[h]
	\centering
	\includegraphics[scale=0.6]{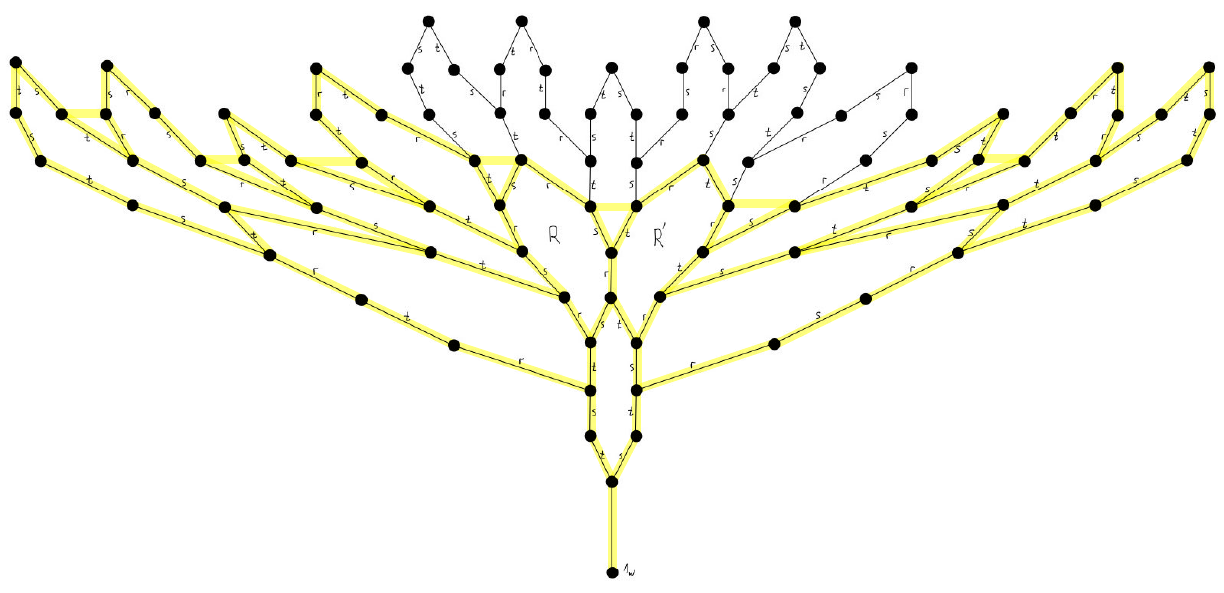}
	\caption{Illustration of the group $J_{(R, R')}$}
\end{figure}

\begin{figure}[h]
	\centering
	\includegraphics[scale=0.6]{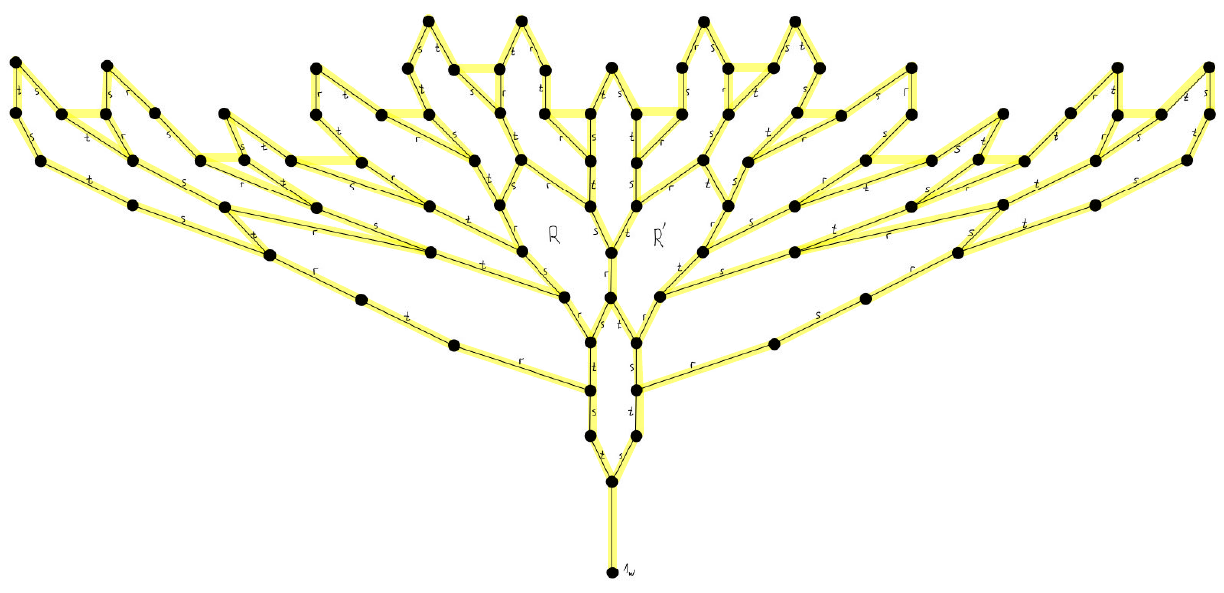}
	\caption{Illustration of the group $G_{\{R, R'\}}$}
\end{figure}

\begin{figure}[h]
	\centering
	\includegraphics[scale=0.6]{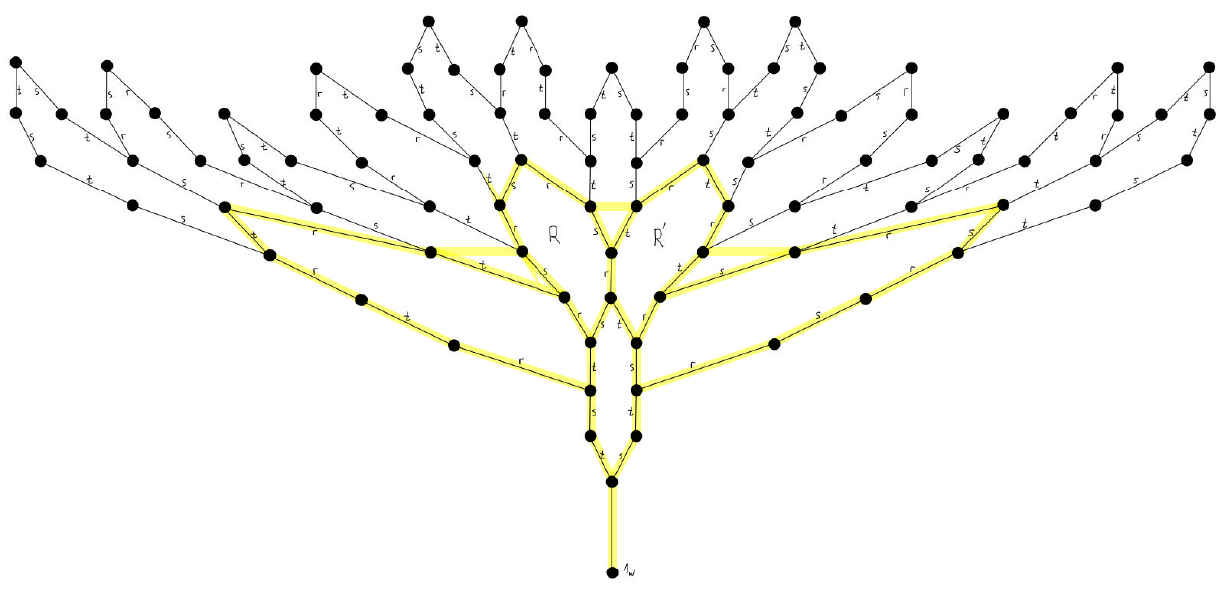}
	\caption{Illustration of the group $C$}
\end{figure}

\begin{figure}[h]
	\centering
	\includegraphics[scale=0.6]{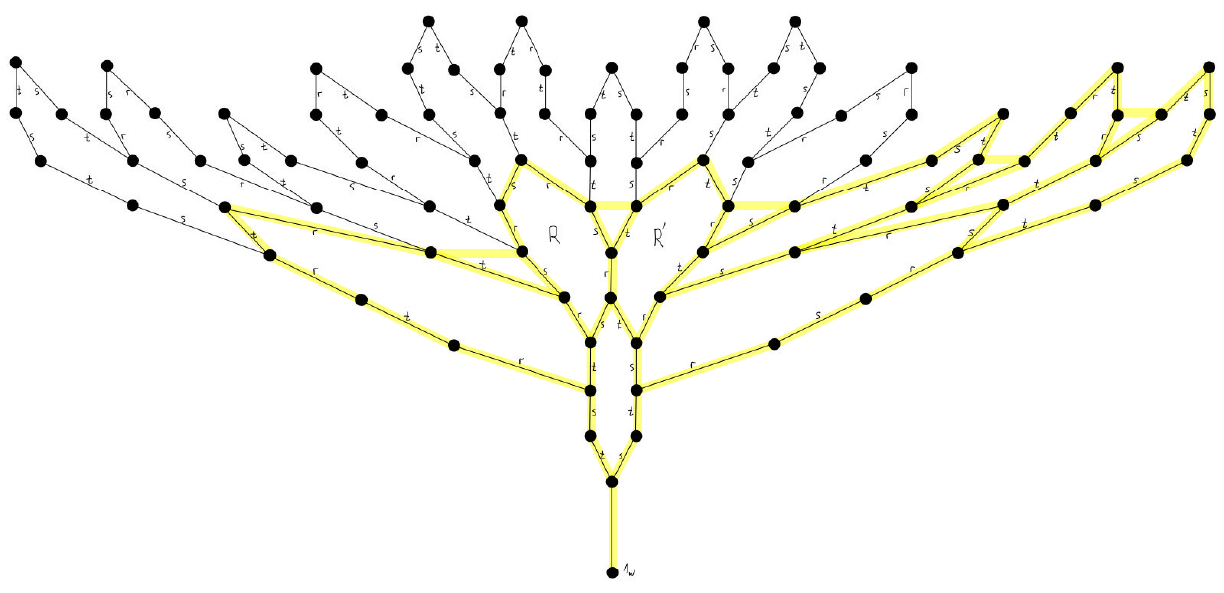}
	\caption{Illustration of the group $C_{(R', R)}$}
\end{figure}

\clearpage

\bibliographystyle{amsalpha}
%\bibliography{references}
\bibliography{../../../References/references}

\end{document}